\numberwithin{equation}{section}
\numberwithin{lemma}{section}
\newtheorem{assumption}[theorem]{\sc Assumption}
\newtheorem{algorithm}[theorem]{\sc Algorithm}
\providecommand{\Div}{\operatorname{div}}          
\providecommand{\curl}{\operatorname{{\bf curl}}}  
\providecommand*{\Dist}[2]{\operatorname{dist}({#1};{#2})}   
\providecommand*{\Dist}[2]{\Dist{#1}{#2}}
\providecommand{\Det}{\operatorname{det}}                    
\providecommand*{\Dom}[1]{{\rm Dom}({#1})}                   
\providecommand*{\Span}[1]{\operatorname{Span}\left\{{#1}\right\}}     
\providecommand{\esssup}{\operatorname*{esssup}}
\newcommand{\Vv}{{\mathbf{v}}}
\newcommand{\Vw}{{\mathbf{w}}}
\newcommand{\Ba}{{\boldsymbol{a}}}
\newcommand{\Bb}{{\boldsymbol{b}}}
\newcommand{\Be}{{\boldsymbol{e}}}
\newcommand{\Bf}{{\boldsymbol{f}}}
\newcommand{\Bg}{{\boldsymbol{g}}}
\newcommand{\Bn}{{\boldsymbol{n}}}
\newcommand{\Bp}{{\boldsymbol{p}}}
\newcommand{\Bq}{{\boldsymbol{q}}}
\newcommand{\Bu}{{\boldsymbol{u}}}
\newcommand{\Bv}{{\boldsymbol{v}}}
\newcommand{\Bw}{{\boldsymbol{w}}}
\newcommand{\Bx}{{\boldsymbol{x}}}
\newcommand{\Bz}{{\boldsymbol{z}}}
\newcommand{\BC}{{\boldsymbol{C}}}
\newcommand{\BH}{{\boldsymbol{H}}}
\newcommand{\BL}{{\boldsymbol{L}}}
\newcommand{\BR}{{\boldsymbol{R}}}
\newcommand{\BU}{{\boldsymbol{U}}}
\newcommand{\BV}{{\boldsymbol{V}}}
\newcommand{\BW}{{\boldsymbol{W}}}
\newcommand{\BX}{{\boldsymbol{X}}}
\newcommand{\etabf}{\boldsymbol{\eta}}
\newcommand{\thetabf}{\boldsymbol{\theta}}
\newcommand{\xibf}{\boldsymbol{\xi}}
\newcommand{\pibf}{\boldsymbol{\pi}}
\newcommand{\phibf}{\boldsymbol{\phi}}
\newcommand{\chibf}{\boldsymbol{\chi}}
\newcommand{\Thetabf}{\boldsymbol{\Theta}}
\newcommand{\Psibf}{\boldsymbol{\Psi}}
\newcommand{\Phibf}{\boldsymbol{\Phi}}
\newcommand{\Cb}{\mathcal{B}}
\newcommand{\Ce}{\mathcal{E}}
\newcommand{\Cf}{\mathcal{F}}
\newcommand{\Cg}{\mathcal{G}}
\newcommand{\Cl}{\mathcal{L}}
\newcommand{\Cp}{\mathcal{P}}
\newcommand{\Cq}{\mathcal{Q}}
\newcommand{\Cs}{\mathcal{S}}
\newcommand{\Ct}{\mathcal{T}}
\newcommand{\Cv}{\mathcal{V}}
\newcommand{\bbA}{\mathbb{A}}
\newcommand{\bbB}{\mathbb{B}}
\newcommand{\bbI}{\mathbb{I}}
\newcommand{\bbJ}{\mathbb{J}}
\newcommand{\bbL}{\mathbb{L}}
\newcommand{\bbR}{\mathbb{R}}
\newcommand{\bbU}{\mathbb{U}}
\newcommand{\bbW}{\mathbb{W}}
\providecommand*{\wt}[1]{\widetilde{#1}}
\newcommand*{\N}[1]{\left\|{#1}\right\|}     
\newcommand*{\dN}[1]{\big\|{#1}\big\|}     
\newcommand*{\tN}[1]{\big\|\hspace{-0.09em}\big|{#1}\big|\hspace{-0.09em}\big\|}     
\newcommand*{\TN}[1]{\Big\|\hspace{-0.09em}\Big|{#1}\Big|\hspace{-0.09em}\Big\|}     
\newcommand*{\SN}[1]{\left|{#1}\right|}      
\newcommand*{\Lp}[2][\defaultdomain]{L^{#2}({#1})}
\newcommand*{\Lpv}[2][\defaultdomain]{\BL^{#2}({#1})}
\newcommand*{\NLp}[3][\defaultdomain]{\N{#2}_{\Lp[#1]{#3}}}
\newcommand*{\NLpv}[3][\defaultdomain]{\N{#2}_{\Lpv[#1]{#3}}}
\newcommand*{\Ltwo}[1][\defaultdomain]{\Lp[#1]{2}}
\newcommand*{\Ltwov}[1][\defaultdomain]{\Lpv[#1]{2}}
\newcommand*{\NLtwo}[2][\defaultdomain]{\NLp[#1]{#2}{2}}
\newcommand*{\NLtwov}[2][\defaultdomain]{\NLpv[#1]{#2}{2}}
\newcommand*{\Linfv}[1][\defaultdomain]{\BL^{\infty}({#1})}
\newcommand*{\NLinfv}[2][\defaultdomain]{\N{#2}_{\Linfv[{#1}]}}
\newcommand*{\Hm}[2][\defaultdomain]{H^{#2}({#1})}
\newcommand*{\Hmv}[2][\defaultdomain]{\BH^{#2}({#1})}
\newcommand*{\bHm}[3][\defaultdomain]{H_{#3}^{#2}({#1})}
\newcommand*{\bHmv}[3][\defaultdomain]{\BH_{#3}^{#2}({#1})}
\newcommand*{\Hone}[1][\defaultdomain]{\Hm[#1]{1}}
\newcommand*{\Honev}[1][\defaultdomain]{\Hmv[#1]{1}}
\newcommand*{\zbHone}[1][\defaultdomain]{\bHm[#1]{1}{0}}
\newcommand*{\zbHonev}[1][\defaultdomain]{\bHmv[#1]{1}{0}}
\newcommand*{\NHone}[2][\defaultdomain]{{\N{#2}}_{\Hone[{#1}]}}
\newcommand*{\NHonev}[2][\defaultdomain]{{\N{#2}}_{\Honev[{#1}]}}
\newcommand*{\SNHone}[2][\defaultdomain]{{\SN{#2}}_{\Hone[{#1}]}}
\newcommand*{\SNHonev}[2][\defaultdomain]{{\SN{#2}}_{\Honev[{#1}]}}
\newcommand*{\jump}[2][]{\left \llbracket{#2}\right\rrbracket_{#1}}
\newcommand*{\avg}[2][]{\{\hskip -3.5pt\{{#2}\}\hskip -3.5pt\}_{#1}}
\newcommand*{\Avg}[2][]{\big\{\hskip -4.4pt\big\{{#2}
\big\}\hskip -4.4pt\big\}_{#1}}
\newcommand*{\avgg}[2][]{\left\{\hskip -3.5pt\left\{\hskip -3.5pt\left\{{#2}
\right\}\hskip -3.5pt\right\}\hskip -3.5pt\right\}_{#1}}
\newcommand*{\Avgg}[2][]{\left\{\hskip -4.5pt\left\{\hskip -4.5pt\left\{{#2}
\right\}\hskip -4.5pt\right\}\hskip -4.5pt\right\}_{#1}}
\newcommand*{\Aprod}[2][]{\big\langle{#2}\big\rangle_{#1}}
\newcommand{\D}{\mathrm{d}}
\newcommand{\ol}{\overline}
\newcommand{\ul}{\underline}
\newcommand{\be}{\begin{eqnarray}}
\newcommand{\ee}{\end{eqnarray}}
\newcommand{\ben}{\begin{eqnarray*}}
\newcommand{\een}{\end{eqnarray*}}
\begin{document}

\title{A high-order unfitted finite element method for moving interface problems
}
\titlerunning{\,}        

\author{Chuwen Ma\and Weiying Zheng$^*$
}

\authorrunning{\,} 

\institute{Chuwen Ma\at School of Mathematical Science, University of Chinese Academy of Sciences. Institute of Computational Mathematics and Scientific/Engineering Computing, Academy of Mathematics and System Sciences, Chinese Academy of Sciences, Beijing, 100190, China. \\
              \email{{chuwenii@lsec.cc.ac.cn}}           
           \and
           Weiying Zheng \at
              LSEC, Institute of Computational Mathematics and Scientific/Engineering Computing, Academy of Mathematics and System
              Sciences, Chinese Academy of Sciences, Beijing, 100190, China.
              School of Mathematical Science, University of Chinese Academy of Sciences.
              The author was supported in part by the National Science Fund for Distinguished Young Scholars 11725106 and by China NSF major project 11831016. \\
            \email{zwy.@lsec.cc.ac.cn}
}

\date{Received: date / Accepted: date}

\maketitle

\begin{abstract}
We propose a $k^{\rm th}$-order unfitted finite element method ($2\le k\le 4$) to solve moving interface problem of the Oseen equations. Thorough error estimates for the discrete solutions are presented by considering errors from interface-tracking, time integration, and spatial discretization. In literatures on time-dependent Stokes interface problems, error estimates for the discrete pressure are usually sub-optimal, namely, $(k-1)^{\rm th}$-order, under the $L^2$-norm. We have obtained a $(k-1)^{\rm th}$-order error estimate for the discrete pressure under the $H^1$-norm. Numerical experiments for a severely deforming interface show that optimal convergence orders are obtained for $k = 3$ and $4$.
\\[5pt] 
\textbf{Keywords}: 
optimal control, 
wave equation, 
unbounded domain, 
boundary integral equation, 
well-posedness, 
convolution quadrature, 
stability, 
convergence, 
error estimate.
\end{abstract}



\tableofcontents

\section{introduction}
Partial differential equations (PDEs) on time-varying domains
are frequently encountered in various applications in biology, physics, and engineering,
such as blood-flows, fluid-structure interaction, free-surface problems, etc.
It is well-known that moving interface problems may cause challenges to
high-order numerical simulations and rigorous error analysis. In this paper, we study numerical methods for two-dimensional Oseen equations with a time-varying interface.

Let $\Omega\subset\bbR^2$ be an open rectangle whose boundary is denoted by $\Sigma=\partial\Omega$. For any $t\ge 0$, let $\Omega_1(t)$ and $\Omega_2(t)$ be two time-varying sub-domains of $\Omega$ occupied by two immiscible fluids. We assume that $\Omega$ is fixed and denote the interface by $\Gamma(t)=\partial\Omega_1(t) \subset\Omega$. Then $\partial\Omega_2(t)=\Sigma\cup\Gamma(t)$ (see Fig.~\ref{fig:pro fig}).
The linear interface problem of two-phase incompressible fluids is given as follows
\begin{subequations}\label{eq:model}
	\begin{align}
		\frac{\partial \Bu_i}{\partial t} +(\Bw \cdot\nabla )\Bu_i
		-\nu_i\Delta \Bu_i +\nabla p_i = \Bf_i,\quad 	\Div \Bu_i=0	
		\quad &\text{in}\;\; \Omega_i(t),  \label{eq:Oseen0}\\
		\Bu_i|_{t=0}=\Bu_{i,0}\quad &\text{in}\;\;\Omega_i(0),  \label{eq:Oseen1} \\
		\nu_1 \partial_\Bn\Bu_1 - p_1 \Bn =\nu_2 \partial_\Bn\Bu_2- p_2 \Bn,\quad
		\Bu_1 =\Bu_2 \quad &\text{on}\;\; \Gamma(t),\label{eq:Oseen2} \\
		\Bu_2 =\textbf{0} \quad &\text{on}\;\; \Sigma,\label{eq:Oseen3}
	\end{align}	
\end{subequations}
where $\Bu_i$, $p_i$, and $\Bf_i$, $i=1,2$ stand for the flow velocity, the pressure, and the applied body force in each phase,  respectively. Here $\Bw$ is the advection velocity which drives the variation of $\Gamma(t)$, namely,
\ben
\Gamma(t) =\big\{\BX(t;0,\Bx): \forall\,\Bx\in\Gamma(0)\big\},
\een
where, for any $t\ge s\ge 0$ and any $\Bx\in\Omega$, $\BX(t;s,\Bx)$ is the solution to the initial-value problem of ordinary differential equation (ODE)
\begin{equation}\label{ODE}
	\frac{\D}{\D t}\BX(t;s,\Bx) = \Bw(\BX(t;s,\Bx),t),\qquad
	\BX(s;s,\Bx) =\Bx.
\end{equation}
Moreover, \eqref{eq:Oseen1} stands for the transmission conditions for the solution across $\Gamma(t)$, and the unit normal $\Bn$ points to the interior of $\Omega_2(t)$.
The viscosities $\nu_1,\nu_2$ are positive constants. Without loss of generality, we assume $0<\nu_2<\nu_1=1$ and define $\nu\in L^\infty(\Omega)$ by $\nu=\nu_i$ in $\Omega_i(t)$, $i=1,2$.

\begin{figure}[htp]
	\centering
	\begin{tikzpicture}[scale =1.5]	
		\draw[thick,fill=red!30! white] (0,0) rectangle (2,2);
		\draw[fill=yellow!30!white, thick] (1,1) ellipse [x radius=0.6cm, y radius=0.5cm];
		\node[left] at (1.32, 1) {$\tiny{\Omega_1(t)}$};
		\node[right] at (1.3, 0.45) {$\tiny{\Gamma(t)}$};
		\node[right] at (1.3,1.6) {$\Omega_2(t)$};
	\end{tikzpicture}
	\caption{$\Omega_i(t)$, $i=1,2$ is colored in yellow and red, respectively, and $\Gamma(t)$ is
		the moving interface.}     \label{fig:pro fig}
\end{figure}
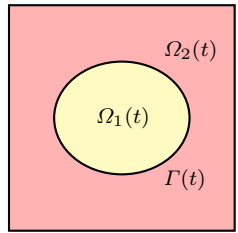

In the literature on interface problems,
numerical methods on unfitted
grids (referred to ``unfitted methods'' hereafter) are very popular during the past few decades.
To mention some of them, we refer
to \cite{bre12,mit05,pes77} for immersed boundary methods,
to \cite{lev94,li06} for immersed interface methods,
to \cite{li98,li03,lin09} for immersed finite element methods,
to \cite{bec09,hua17,liu20, han02,wu19} for Nitsche extended finite element methods,
to \cite{bor17,bur15} for cut finite element methods, and to \cite{adj19} for immersed discontinuous Galerkin method.
The essential idea is to double the degrees of freedom on interface elements so that interface conditions can be enforced explicitly in basis functions or weakly in discrete formulations.
Similar ideas are used in fictitious-domain methods
which enhance the stability of numerical solutions by penalizing face jumps of their normal derivatives \cite{bur10,bur12,jar09,mas14}.

Although unfitted methods have been well-developed for stationary problems,
they may encounter a significant challenge for solving dynamic interface problems.
Since the computational domain (or its sub-domains) is varying in time, numerical solutions computed in previous time steps are compounded with flow maps in the current time step.
Traditional methods for time integrations can not be applied directly to this case \cite{fri09,zun13}.
One way to deal with the issue is the space-time method, which uses discontinuous Galerkin method to the time variable and uses extended finite element method to the spatial variable \cite{leh13,leh15}. We refer to the very recent work \cite{guo21} which presents error estimates for immersed finite element method for the parabolic equation with a time-varying interface. In \cite{leh19}, Lehrenfeld and Olshanaskii propose an implicit Euler finite element method for the advection-diffusion equation on time-varying domains.
In \cite{wah20}, von Wahl and Richter and Lehrenfeld extend this method to solve the Stokes equations on a time-varying domain. By extending the discrete solution to a slightly large neighbourhood at every time step, Lou and Lehrenfeld propose a high-order method for solving the advection-diffusion equation \cite{lou21}. The method is based on isoparametric unfitted finite element and backward-differentiation formulas (BDF). In \cite{ma21},
the authors proposed a high-order finite element method for
solving the advection-diffusion equation on time-varying domain. Thorough error estimates are given for third- and fourth-order methods by taking account of errors from boundary tracking, time integration, and spatial discretization.

This paper extends the numerical study for varying-boundary problem of the advection-diffusion equation to varying-interface problem of the Oseen equations. The extension is essentially nontrivial, considering the discrete LBB condition for varying-interface problems and the time integration along characteristic curves.
We have overcome the difficulties by defining
a modified Stokes projection onto finite element spaces and introducing adequate penalty terms to the pressure variable. The main contributions of the paper are summarized as follows.
\begin{itemize}[leftmargin=5mm]
	\item [1.] For $2\le k\le 4$, we propose a $k^{\rm th}$-order numerical method for the varying-interface problem \eqref{eq:model}. The method uses $k^{\rm th}$-order time integration along characteristic curves and $k^{\rm th}$-order finite elements on unfitted Eulerian meshes. The interface is formed dynamically with the $(k+1)^{\rm th}$-order interface-tracking method in each time step.
	
	\item [2.] We present a thorough error analysis for the finite element method by taking into account all errors from spatial discretization, time integration, and interface-tracking process. Optimal error estimates are obtained for the discrete velocity under weighted $H^1$-norm.
	
	\item [3.] By incorporating adequate residual-based penalties into the discrete formulation, we have obtained the $(k-1)^{\rm th}$-order error estimates for the discrete pressure under weighted $H^1$-norm, while error estimates for the discrete pressure are of $(k-1)^{\rm th}$-order under the $L^2$-norm in the literature \cite{wah20}.
	
	\item [4.] Numerical experiment for a severely deforming interface show that optimal convergence orders are obtained for both the third-order and the fourth-order methods.
\end{itemize}
\vspace{2mm}

The rest of the paper is organized as follows.
In section 2, we reformulate the Oseen model in Lagrangian coordinates. Discrete flow maps are introduced via the $(k+1)^{\rm th}$-order Runge-Kutta (RK) scheme.
In section 3, we introduce the interface-tracking algorithm which generates the computational interface in each time step. Error estimates between the exact interface and the approximate one are cited from \cite{ma21}.
In section 4, we propose a $k^{\rm th}$-order unfitted finite element method for solving the interface problem in each time step. The well-posedness of the discrete problem is established.
In section 5, we introduce a modified Stokes projection operator and prove error estimates for the projection.
Section 6 is devoted to proving the stability of numerical solutions. In section~7, a thorough error analysis is presented by taking into account all errors from approximate interfaces, time integrations, and finite element discretization.
In section 8, optimal convergence orders are verified for both the third-order and the fourth-order methods by a numerical experiment with severely deforming interface.

\section{The interface-tracking algorithm}

In this section, we present an algorithm for constructing an approximate interface in each time step.
In practice, it is difficult to track the interface $\Gamma(t)$ with the true solution to the ODE \eqref{ODE}. A realistic approach is to track a finite number of control points on $\Gamma(0)$ by solving \eqref{ODE} numerically and construct an approximate interface based on these control points.

Let $T$ be the final time of evolution and let $t_n=n\tau$, $n=0,\cdots,N$, be a uniform partition of $[0,T]$ with time-step size $\tau=T/N$. Let $\BX^{n-j,n}:= \BX(t_n;t_{n-j},\bullet)$, $0\le j \le n$, denote exact flow maps at discrete time points.
The uniqueness of the solution to \eqref{ODE} implies that
$\BX^{n-j,n}$ is one-to-one and maps $\Omega_i(t_{n-j})$ to $\Omega_i(t_n)$ for $i=1,2$. For convenience,
we denote the inverse of $\BX^{n-j,n}$ by $\BX^{n,n-j} := (\BX^{n-j,n})^{-1}$.

\subsection{Approximate flow maps}

For any $\Bx^{n-1}\in\Omega$, the image
$\Bx^{n} = \BX^{n-1,n}_\tau(\Bx^{n-1})$ is calculated
by the RK-$(k+1)$ (the $(k+1)^{\rm th}$-order Runge-Kutta) scheme
\begin{equation}\label{eq:RK}
	\begin{cases}
		\Bx^{(i)} =\Bx^{n-1} +\tau \sum_{j=1}^{i-1}
		\alpha^{k+1}_{ij} \Bw(\Bx^{(j)},t^{(j)}),\quad
		t^{(j)}   = t_{n-1} + \gamma_j^{k+1} \tau,\vspace{1mm}\\
		\Bx^{n} =\Bx^{n-1} +\tau \sum_{i=1}^{n_{k+1}}
		\beta_{i}^{k+1} \Bw(\Bx^{(i)},t^{(i)}) .
	\end{cases}
\end{equation}
Here $n_{k+1}$ is the number of stages and $\alpha_{ij}^{k+1},\beta_i^{k+1},\gamma_i^{k+1}$ are coefficients of the RK-$(k+1)$ scheme,
satisfying $\alpha_{ij}^{k+1} =0$ for $j\geq i$.

We denote the map from $\Bx^{n-1}$ to $\Bx^{(i)}$ by
\ben
\phibf_{n-1}^{(i)}(\Bx^{n-1}) :=\Bx^{(i)}, \qquad
i=1,\cdots,n_{k+1}.
\een
Then $\BX^{n-1,n}_\tau$ can be represented explicitly as follows
\begin{equation}\label{Xn}
	\BX^{n-1,n}_\tau(\Bx) = \Bx +\tau\sum_{i=1}^{n_{k+1}}\beta_i^{k+1}
	\Bw\big(\phibf^{(i)}_{n-1}(\Bx),t^{(i)}\big).
\end{equation}
The multi-step mapping is defined by
\begin{equation}\label{Xmstep}
	\BX^{n-i,n}_\tau = \BX^{n-1,n}_\tau
	\circ \BX^{n-2,n-1}_\tau
	\circ\cdots\circ \BX^{n-i,n-i+1}_\tau,\qquad1\le i\le n.
\end{equation}
The inverse of $\BX^{n-i,n}_\tau$ is denoted by
$\BX^{n,n-i}_\tau := (\BX^{n-i,n}_\tau)^{-1}$.

\begin{lemma} [{\cite[Lemma~3.3]{ma21}}]
	There exists a constant $C>0$ independent of $\tau$ such that for $\nu=0,1$,
	\begin{align}	
		\N{\BX^{m,n}_\tau-\BX^{m,n}}_{\BW^{\mu,\infty}(\Omega)}
		\le C (n-m)\tau^{k+2-\mu},\qquad
		0\le m\le n\le N. 	\label{ieq:Xmn}
	\end{align}
\end{lemma}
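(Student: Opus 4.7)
The plan is to reduce the multi-step estimate to a one-step local truncation analysis followed by a telescoping argument along the composition \eqref{Xmstep}. Introduce the one-step error
\[
E^i_\tau(\Bx) := \BX^{i-1,i}_\tau(\Bx) - \BX^{i-1,i}(\Bx),\qquad m+1 \le i \le n,
\]
and rewrite the global error by telescoping,
\[
\BX^{m,n}_\tau(\Bx) - \BX^{m,n}(\Bx) = \sum_{i=m+1}^{n}\Big[\BX^{i,n}\big(\BX^{i-1,i}_\tau(\Bz_i)\big) - \BX^{i,n}\big(\BX^{i-1,i}(\Bz_i)\big)\Big],
\]
where $\Bz_i := \BX^{m,i-1}_\tau(\Bx)$ and I have used the semigroup identities $\BX^{i-1,n} = \BX^{i,n}\circ\BX^{i-1,i}$ together with $\BX^{m,i}_\tau = \BX^{i-1,i}_\tau\circ\BX^{m,i-1}_\tau$ from \eqref{Xmstep}.

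First I would establish the one-step estimates
\[
\N{E^i_\tau}_{\BW^{\mu,\infty}(\Omega)} \le C\tau^{k+2-\mu},\qquad \mu = 0,1,
\]
by Taylor-expanding both $\BX^{i-1,i}_\tau(\Bx)$ from \eqref{Xn} and the exact flow $\BX^{i-1,i}(\Bx)$ (viewed as the solution of \eqref{ODE} over a single step of length $\tau$) in powers of $\tau$ around $\Bx$. The classical order conditions of the RK-$(k+1)$ scheme guarantee that the two expansions coincide up to and including order $\tau^{k+1}$, yielding the $\mu=0$ bound. The $\mu=1$ bound follows by differentiating the same two Taylor expansions term-by-term in $\Bx$, which is permitted by assumed regularity of $\Bw$; the two derivative expansions then coincide up to order $\tau^{k}$, so the leading discrepancy in $D_\Bx E^i_\tau$ sits at $\tau^{k+1}$.

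Given the one-step estimates, the $\mu=0$ global bound follows by taking absolute values in the telescoping formula and applying the Lipschitz bound $\N{D\BX^{i,n}}_{\BL^{\infty}(\Omega)} \le e^{CT}$, obtained from Gronwall on the variational equation of \eqref{ODE}; summing $n-m$ one-step contributions of size $C\tau^{k+2}$ yields the desired estimate. For $\mu=1$, I would differentiate the telescoping formula via the chain rule. Each summand produces two types of contributions, one proportional to $D\BX^{i,n}\cdot DE^i_\tau$ (bounded by $C\tau^{k+1}$ by the $\mu=1$ one-step estimate) and one proportional to $D^2\BX^{i,n}\cdot E^i_\tau$ (bounded by $C\tau^{k+2}$). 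The Jacobian factor $D\BX^{m,i-1}_\tau$ arising through $\Bz_i$ is controlled uniformly in $i$ by a short discrete Gronwall, since each discrete step satisfies $D\BX^{j-1,j}_\tau = I + O(\tau)$ and $(n-m)\tau \le T$. Summing over $i$ gives the $\tau^{k+1}$ bound.

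The main obstacle is isolating the single power of $\tau$ lost in passing from $\mu=0$ to $\mu=1$: a soft interpolation between $L^\infty$ and $W^{2,\infty}$ bounds on $E^i_\tau$ would not recover the rate $\tau^{k+1}$. The sharpness must be extracted from the explicit Taylor structure of $E^i_\tau$ given by \eqref{Xn}, which is equivalent to applying the classical order analysis to the variational RK scheme obtained by differentiating \eqref{eq:RK} in $\Bx$. The remaining uniform control of Jacobians of the exact and discrete multi-step flows, which keeps the final constant independent of $n$, is routine provided $\Bw$ has enough spatial regularity.
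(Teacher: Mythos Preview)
The paper does not give its own proof of this lemma; it is quoted verbatim from \cite[Lemma~3.3]{ma21}. Your approach---telescoping the multi-step error through the composition \eqref{Xmstep}, invoking the classical RK-$(k+1)$ local truncation estimate for the one-step error, and propagating via the Lipschitz stability of the exact flow together with a discrete Gronwall bound on $D\BX^{m,i-1}_\tau$---is the standard argument and is correct.

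One clarification on your $\mu=1$ reasoning: differentiating in $\Bx$ does not by itself cost a power of $\tau$. If $\Bw$ is smooth enough (say $\Bw\in C^{k+2}$), the $\Bx$-derivative of the one-step RK map is exactly the RK-$(k+1)$ scheme applied to the augmented system $\dot\BX=\Bw(\BX,t)$, $\dot\bbJ=D_\Bx\Bw(\BX,t)\,\bbJ$, so the one-step $W^{1,\infty}$ error is again $O(\tau^{k+2})$, stronger than the $O(\tau^{k+1})$ you claim. The apparent loss of one power arises only when the regularity budget is tight (e.g.\ $\Bw\in C^{k+1}$ but not $C^{k+2}$): then the $\tau^{k+2}$ integral remainder in the Taylor expansion of the exact flow cannot be differentiated in $\Bx$, and one must stop the expansion one order earlier before differentiating. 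Either way your stated bound $\|D_\Bx E^i_\tau\|_{L^\infty}\le C\tau^{k+1}$ is valid, and the telescoping plus chain rule finishes the proof exactly as you outline.
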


\subsection{Interface-tracking algorithm}

Now we describe the algorithm which tracks the interface $\Gamma(t_n)$ approximately.
The algorithm is adopted from \cite{ma21}. Let $J$ be the number of control points on the initial boundary $\Gamma^0\equiv\Gamma(0)$, let $L$ be the arc length of $\Gamma^0$, and let $\eta:= L/J$ be the segment size for interface-tracking.
Let $\Cp^0= \left\{\Bp_j^0\in\Gamma^0: j=0,1,\cdots,J\right\}$ be a partition of $\Gamma^0$ with $\Bp_0^0=\Bp_{J}^0$.
Suppose there is a parametrization $\Gamma^0=\left\{\chibf_0(l): l\in [0,L]\right\}$ which satisfies  $\chibf_0\in\BC^4([0,L])$ and
\ben
\chibf_0(L_j)=\Bp^0_j,\qquad
L_j =j\eta ,\qquad 0\le j\le J .
\een
The set of nodes on $\Gamma^0$
is defined by $\Cl =\left\{L_j:\; j=0,1,\cdots,J\right\}$.
\vspace{2mm}

\begin{center}
\fbox{\parbox{0.975\textwidth}
{\begin{algorithm}[\hspace{-0.2mm}\cite{ma21}]\label{alg:spline}
Given $n\ge 1$ and $\Gamma^0_\eta \equiv\Gamma^0$, the interface-tracking algorithm for constructing $\Gamma^n_\eta$ from $\Gamma^{n-1}_\eta$ consists of two steps.
\begin{enumerate}[leftmargin=6mm]
\item Trace forward all control points in $\Cp^{n-1}$ to obtain the set of control points at $t=t_n$,
\ben
\Cp^n=\left\{\Bp^{n}_{j} =\BX^{n-1,n}_\tau(\Bp^{n-1}_{j}):\;	j=0,\cdots, J\right\}.
\een
		
\item Compute the cubic spline interpolation $\chibf_n\in\BC^2([0,L])$ based on $\Cl$ and $\Cp^n$. Define
\begin{equation}\label{eq:ln}
\Gamma^{n}_\eta:=\left\{\chibf_{n}(l): l\in [0,L]\right\}.
\end{equation}
\end{enumerate}
\end{algorithm}}}
\end{center}

%

\vspace{1mm}

In each time step, we clarify three interfaces, the exact interface $\Gamma(t_n)$,
the approximate interface $\Gamma^n$ obtained with the RK-$(k+1)$ scheme,
and the practical interface $\Gamma^n_\eta$ used for numerical computations. Then three interfaces are, respectively, parameterized as follows
\begin{align}\label{para-chi}
	\left\{
	\begin{array}{ll}
		\Gamma(t_n)= \left\{\hat\chibf_n(l): l\in [0,L]\right\},
		&\quad \hat\chibf_n=\BX^{0,n}\circ \chibf_0,	\vspace{1mm}\\
		\Gamma^n = \left\{\tilde\chibf_n(l): l\in [0,L]\right\},
		&\quad \tilde\chibf_n=\BX^{0,n}_\tau\circ \chibf_0,
		\vspace{1mm}\\
		\Gamma^n_\eta= \left\{\chibf_n(l): l\in [0,L]\right\},
		&\quad \chibf_n\in\BC^2([0,L]),
	\end{array}
	\right. \qquad 1\le n\le N.
\end{align}

\subsection{Error estimates for interface tracking}

Let $L^2(\Omega)$ denote the space of square-integrable functions on $\Omega$, $L^2_0(\Omega)$ the subspace of functions whose integrals on $\Omega$ are zero, and $H^m(\Omega)$  the subspace of functions whose partial derivatives of order up to $m$ belong to $L^2(\Omega)$. The inner product on $\Ltwo$ is denoted by
\ben
(u,v)_{\Omega} =\int_\Omega uv,\qquad \forall\,u,v\in\Ltwo.
\een
Moreover, $L^\infty(\Omega)$ denotes the space of functions which are essentially bounded on $\Omega$ and $W^{m,\infty}(\Omega)$ the subspace of functions whose partial derivatives of order up to $m$ belong to $L^\infty(\Omega)$. We denote vector-valued quantities by boldface symbols, such as $\Ltwov=(\Ltwo)^2$,
and denote matrix-valued quantities by blackboard bold symbols,
such as $\bbL^{2}(\Omega)=(\Ltwo)^{2\times2}$.
Hereafter, $(\cdot,\cdot)_{\Omega}$ denotes the inner products on $L^2(\Omega)$,  $\BL^2(\Omega)$, and $\bbL^2(\Omega)$, in their respective circumstances.

Throughout the paper, the notation $f \lesssim g$ means
$f\le Cg$ where $C$ is a generic constant and independent of sensitive quantities,
such as the segment size $\eta$,
the spatial mesh size $h$, the time-step size $\tau$,
the number of time steps $n$, and the viscosity $\nu$.
So are the constants $C_i$, $i=0,1,\cdots$, in the following texts.

To focus on high-order error estimates, we make the assumptions on the driving velocity $\Bw$ and the exact interfaces $\Gamma(t_n)$: \vspace{1mm}

\begin{assumption}\label{ass-1}
	Assume that
	\begin{itemize}[leftmargin=5mm]
		\item $\Bw\in C^r([0,T]\times\bar\Omega)$ and $\Bw(\cdot,t)\in \BC_0^r(\Omega)$, $r\geq 4$, for all $t\ge 0$, and that
		\vspace{1mm}
		
		\item the parametrization of $\Gamma(t_n)$ satisfies
		$\N{\hat{\chibf}_n}_{\BC^4([0,L])}\lesssim 1$ for all $0\leq n\leq N$.
	\end{itemize}
\end{assumption}
\vspace{2mm}
Let the Jacobi matrices of $\BX^{n-i,n}$,  $\BX^{n-i,n}_\tau$,
and $\BX^{n,n-i}$, $\BX^{n,n-i}_\tau$ be denoted by
\ben
\begin{array}{ll}
	\displaystyle
	\bbJ^{n-i,n}:= \frac{\partial \BX^{n-i,n}}{\partial\Bx},\quad
	& \bbJ^{n,n-i}:= \left(\bbJ^{n-i,n}\right)^{-1}, \vspace{2mm}\\
	\displaystyle
	\bbJ^{n-i,n}_\tau:= \frac{\partial \BX^{n-i,n}_\tau}{\partial\Bx},\quad
	&\bbJ^{n,n-i}_\tau:= \left(\bbJ^{n-i,n}_\tau\right)^{-1}.
\end{array}
\een
From \eqref{Xn}, it is easy to see that
\begin{equation}\label{Jnn1}
	\bbJ^{n-1,n}_\tau(\Bx)
	= \bbI + \tau \sum_{i=1}^{n_{k+1}} \beta_i^{k+1}
	\nabla \Bw\big(\phibf_{n-1}^{(i)}(\Bx) ,t^{(i)}\big)
	\nabla \phibf_{n-1}^{(i)}(\Bx), \qquad \forall\,\Bx\in\Omega,
\end{equation}
where
\begin{equation}
	\nabla \phibf^{(1)}_{n-1} =\bbI,\qquad
	\nabla \phibf^{(i)}_{n-1}= \bbI + \tau \sum_{j=1}^{i-1} \alpha_{ij}^{k+1} \nabla \Bw\big(\Bx^{(j)},t^{(j)}\big)
	\nabla \phibf^{(j)}_{n-1},\quad i\ge 2.
\end{equation}
From \cite[Lemma~3.3]{ma21}, we cite some useful estimates for the Jacobi matrices
\begin{align}	
	&\N{\bbJ^{m,n}}_{\bbW^{3,\infty}(\Omega)} +
	\N{\bbJ^{m,n}_\tau}_{\bbW^{3,\infty}(\Omega)}\lesssim 1,\qquad 0\le m, n\le N,
	\label{ieq:Jmn}\\
	&\N{\bbJ^{n-i,n}_\tau-\bbI}_{\bbL^{\infty}(\Omega)}
	+\N{\bbJ^{n,n-i}_\tau-\bbI}_{\bbL^{\infty}(\Omega)}
	\lesssim\tau, \qquad 0\le i\le k+1.	\label{ieq:Jtau}
\end{align}

\begin{theorem}\label{thm:chi}
	Let Assumption~\ref{ass-1} be satisfied. Then $\N{\tilde{\chibf}_n}_{\BC^4([0,L])}\lesssim 1$ and
	\begin{align}\label{chi-err}
		\begin{cases}
			\N{\chibf_n-\tilde\chibf_n}_{\BC^{\mu}([0,L])}
			\lesssim \eta^{4-\mu},\vspace{1mm} \\
			\N{\chibf_n-\hat\chibf_n}_{\BC^{\mu}([0,L])}
			\lesssim \eta^{4-\mu} +\tau^{k+1-\mu},
		\end{cases}
		\quad \mu=0,1.
	\end{align}
\end{theorem}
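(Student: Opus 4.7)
The plan is to establish the three assertions separately, in the natural order: first the a priori $\BC^4$ bound on $\tilde\chibf_n$, then the spline-interpolation estimate for $\chibf_n-\tilde\chibf_n$, and finally the flow-map estimate for $\tilde\chibf_n-\hat\chibf_n$ which is combined with the previous one by the triangle inequality.

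First I would treat $\|\tilde\chibf_n\|_{\BC^4([0,L])}\lesssim 1$. Since $\tilde\chibf_n=\BX^{0,n}_\tau\circ\chibf_0$ and $\chibf_0\in\BC^4([0,L])$, iterated application of the chain rule expresses $\tilde\chibf_n^{(m)}$ for $m\le 4$ as a polynomial in derivatives of $\BX^{0,n}_\tau$ up to order $4$ (evaluated at $\chibf_0$) and derivatives of $\chibf_0$ up to order $4$. The estimate \eqref{ieq:Jmn} gives $\|\bbJ^{0,n}_\tau\|_{\bbW^{3,\infty}(\Omega)}\lesssim 1$ uniformly in $n$, so $\|\BX^{0,n}_\tau\|_{\BW^{4,\infty}(\Omega)}\lesssim 1$ once one notes that $\BX^{0,n}_\tau$ itself is uniformly bounded (the bounded domain $\Omega$ is invariant under the flow, thanks to $\Bw\in\BC^r_0(\Omega)$). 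Combined with Assumption~\ref{ass-1} this yields the $\BC^4$-bound.

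For the second estimate, I would first record that the interpolation data of $\chibf_n$ are precisely the values of $\tilde\chibf_n$ at the nodes $L_j$. Indeed, the recursion $\Bp_j^n=\BX^{n-1,n}_\tau(\Bp_j^{n-1})$ together with the composition rule \eqref{Xmstep} and induction on $n$ give
\begin{equation*}
\Bp_j^n=\BX^{0,n}_\tau(\Bp_j^0)=\BX^{0,n}_\tau(\chibf_0(L_j))=\tilde\chibf_n(L_j).
\end{equation*}
Hence $\chibf_n$ is the periodic cubic spline interpolant of $\tilde\chibf_n$ on the uniform mesh $\{L_j\}$ with spacing $\eta$, and classical cubic spline theory applied componentwise gives $\|\chibf_n-\tilde\chibf_n\|_{\BC^\mu([0,L])}\lesssim \|\tilde\chibf_n\|_{\BC^4([0,L])}\,\eta^{4-\mu}$ for $\mu=0,1$. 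Using Step 1 the right-hand side is $\lesssim\eta^{4-\mu}$.

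The third estimate follows from the triangle inequality together with the bound on $\tilde\chibf_n-\hat\chibf_n=(\BX^{0,n}_\tau-\BX^{0,n})\circ\chibf_0$. The cited estimate \eqref{ieq:Xmn}, applied with $m=0$, gives $\|\BX^{0,n}_\tau-\BX^{0,n}\|_{\BW^{\mu,\infty}(\Omega)}\lesssim n\tau^{k+2-\mu}\le T\tau^{k+1-\mu}$ for $\mu=0,1$; the case $\mu=0$ transfers directly, while for $\mu=1$ the identity $(\tilde\chibf_n-\hat\chibf_n)'(l)=[\bbJ^{0,n}_\tau-\bbJ^{0,n}](\chibf_0(l))\,\chibf_0'(l)$ together with $\chibf_0'\in\BL^\infty$ yields the same order. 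Combining this with Step~2 proves the second line of \eqref{chi-err}.

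I expect the only genuinely technical point to be the $\BC^4$-bound in Step 1: one must expand the Runge--Kutta recursion \eqref{Jnn1} (and its analogue for $\nabla\phibf^{(i)}_{n-1}$) and verify that derivatives of order $\le 4$ accumulate only bounded multiplicative constants along the multi-step composition \eqref{Xmstep}. The structure is standard but somewhat bookkeeping-heavy; by contrast, Steps 2 and 3 are short applications of, respectively, textbook cubic spline estimates and the already-proved flow-map bound \eqref{ieq:Xmn}.
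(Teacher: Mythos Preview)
Your proposal is correct and follows essentially the same three-step decomposition as the paper: a $\BC^4$ bound on $\tilde\chibf_n$, the cubic-spline interpolation estimate, and the triangle inequality combined with the RK flow-map error \eqref{ieq:Xmn}. The only minor difference is in Step~1, where the paper argues recursively via $\tilde\chibf_n=\BX^{n-1,n}_\tau\circ\tilde\chibf_{n-1}$ and the single-step bounds $|\bbJ^{n-1,n}_\tau|_{\bbW^{j,\infty}}\lesssim\tau$ (accumulating factors $(1+C\tau)^n\lesssim 1$), whereas you invoke directly the already-established uniform bound $\|\bbJ^{0,n}_\tau\|_{\bbW^{3,\infty}}\lesssim 1$ from \eqref{ieq:Jmn}; your route is slightly more economical since it black-boxes a Gronwall step that the cited lemma has already absorbed.
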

\begin{proof}
	From \eqref{Xmstep} and \eqref{para-chi}, we have $\tilde\chibf_n(l) = \BX^{n-1,n}_\tau\circ\tilde\chibf_{n-1}$. The chain rule implies
	\begin{align*}
		&\tilde\chibf_n' = \prod_{m=1}^n \bbJ^{m-1,m}_\tau \tilde\chibf_0',\qquad
		\tilde\chibf_n'' = \big\{ \big(\tilde\chibf_{n-1}'\cdot\nabla\big) \bbJ^{n-1,n}_\tau \big\} \tilde\chibf_{n-1}'
		+\bbJ^{n-1,n}_\tau \tilde\chibf_{n-1}''.
	\end{align*}

	Since $\Bw\in\BC^r([0,T]\times\bar\Omega)$ with $r\geq 4$, from \eqref{Jnn1} we infer that $\bbJ^{n-1,n}_\tau\in \BC^3(\bar\Omega)$ and $\SN{\bbJ^{n-1,n}_\tau}_{\bbW^{j,\infty}(\Omega)}\lesssim \tau$ for $j=1,2,3$. Together with \eqref{ieq:Jmn}--\eqref{ieq:Jtau}, they show that
	\begin{align*}
		&\N{\tilde\chibf_n'}_{\BC([0,L])} \le (1+C\tau)^n \N{\tilde\chibf_0'}_{\BC([0,L])}
		\lesssim 1, \\
		&\N{\tilde\chibf_n''}_{\BC([0,L])} \le
		(1+C\tau)\big(C\tau +\N{\tilde\chibf_{n-1}''}_{\BC([0,L])}\big)
		\lesssim 1 + \N{\tilde\chibf_{0}''}_{\BC([0,L])}
		\lesssim 1.
	\end{align*}
	Furthermore, $\N{\tilde\chibf_n'''}_{\BC([0,L])}$ and $\big\|\tilde\chibf_n^{(4)}\big\|_{\BC([0,L])}$ can be estimated similarly.

	Since $\chibf_n$ is the cubic spline interpolation of $\tilde\chibf_n$, the first inequality of \eqref{chi-err} follows directly from standard error estimates for cubic spline interpolations.
	The second inequality can be proven by triangular inequality and standard error estimates for the RK-$(k+1)$ scheme.
\end{proof}
\vspace{1mm}

\begin{theorem}[\hspace{-0.2mm}{\cite[Theorem~3.5]{ma21}}]\label{thm:bdr}
	Suppose $r\ge \max(k+2,4)$. For any $0\le n,m\le N$ and $|m-n|\leq k$, there holds for $\mu=0,1$
	\begin{align}\label{eq:Gamma-tn-err}		
		\N{\chibf_n - \BX_{\tau}^{m,n}\circ\chibf_m}_{\BC^{\mu}([0,L])}
		&\lesssim \tau\eta^{-\mu}\sum_{i=0}^{k+1} \big(\tau^{i}\eta^{\min(4,r-i)}+ \tau^{k+1}\big).
	\end{align}
\end{theorem}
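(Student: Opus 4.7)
The plan is to reduce the two-time-level statement to a single-time-step consistency bound and then iterate. For $m<n$ (the case $m>n$ is reduced to $m<n$ by composing with the bijection $\BX_\tau^{n,m}$ and using the Lipschitz bound on $\BX_\tau^{m,n}$ that follows from \eqref{ieq:Jmn}), I would use the telescoping identity
\begin{equation*}
\chibf_n-\BX_\tau^{m,n}\circ\chibf_m
=\sum_{j=m+1}^{n}\bigl(\BX_\tau^{j,n}\circ\chibf_j-\BX_\tau^{j,n}\circ(\BX_\tau^{j-1,j}\circ\chibf_{j-1})\bigr),
\end{equation*}
where the identity $\BX_\tau^{j-1,n}=\BX_\tau^{j,n}\circ\BX_\tau^{j-1,j}$ has been used. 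The mean-value theorem together with the uniform bounds \eqref{ieq:Jmn}--\eqref{ieq:Jtau} on $\bbJ_\tau^{j,n}$ (and, for $\mu=1$, on $\nabla\bbJ_\tau^{j,n}$, via the chain and product rules) lets me pull each outer composition out of the $\BC^\mu$-norm. Since $n-m\le k$, the result will then follow from a one-step estimate of the required form, applied at most $k$ times.

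For the one-step estimate, the key structural observation is that $\chibf_n$ is \emph{exactly} the cubic-spline interpolant of the smooth function $f_n:=\BX_\tau^{n-1,n}\circ\chibf_{n-1}$ at the nodes $\{L_j\}_{j=0}^{J}$, because by construction $f_n(L_j)=\BX_\tau^{n-1,n}(\Bp_j^{n-1})=\Bp_j^n=\chibf_n(L_j)$. Let $F$ denote the linear cubic-spline interpolation operator attached to the nodes $\{L_j\}$ with the periodic boundary data inherited from Algorithm~\ref{alg:spline}. Then $\chibf_n=F[f_n]$, and \emph{crucially} $F[\chibf_{n-1}]=\chibf_{n-1}$ by idempotency of $F$ on periodic cubic splines. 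Taylor-expanding the discrete flow map in $\tau$ and invoking the order-$(k+1)$ consistency of the RK-$(k+1)$ scheme,
\begin{equation*}
\BX_\tau^{n-1,n}(\Bx)=\Bx+\sum_{i=1}^{k+1}\tau^{i}B_i(\Bx)+R_n(\tau,\Bx),
\qquad \N{R_n(\tau,\cdot)}_{\Wmpv{1}{\infty}}\lesssim\tau^{k+2},
\end{equation*}
where $B_i=\tfrac{1}{i!}D_t^{i-1}\Bw|_{t_{n-1}}$ with $D_t:=\partial_t+\Bw\cdot\nabla$, and then composing with $\chibf_{n-1}$ and applying $F$ yields
\begin{equation*}
\chibf_n-f_n=F[f_n]-f_n=\sum_{i=1}^{k+1}\tau^{i}(F[A_i]-A_i)+(F[R_n\circ\chibf_{n-1}]-R_n\circ\chibf_{n-1}),
\end{equation*}
with $A_i:=B_i\circ\chibf_{n-1}$, the would-be $i=0$ contribution dropping out by idempotency. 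Each $A_i$ is piecewise smooth with piecewise $\BC^{\min(4,r-i)}$-norm controlled by the $\BC^{r}$-norm of $\Bw$ and the piecewise $\BC^3$-norm of $\chibf_{n-1}$, the latter being uniformly bounded by Theorem~\ref{thm:chi} and the standard regularity of cubic-spline interpolants. Standard cubic-spline interpolation estimates then give $\|F[A_i]-A_i\|_{\BC^\mu([0,L])}\lesssim\eta^{\min(4,r-i)-\mu}$, and the $\BC^\mu$-stability of $F$ handles the remainder as $\|F[R_n\circ\chibf_{n-1}]-R_n\circ\chibf_{n-1}\|_{\BC^\mu([0,L])}\lesssim \tau^{k+2}\eta^{-\mu}$. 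Summing over $i$ reproduces the claimed one-step bound.

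The main obstacle is the regularity bookkeeping: the Taylor coefficient $B_i=\tfrac{1}{i!}D_t^{i-1}\Bw$ consumes $i-1$ spatial derivatives of $\Bw$, so the spline estimate for $A_i=B_i\circ\chibf_{n-1}$ loses its optimal order $\eta^{4-\mu}$ as soon as $r-i<4$, and this is precisely what produces the cut-off exponent $\min(4,r-i)$ in the statement. A secondary technicality is the $\mu=1$ case in the telescoping step: differentiating $\BX_\tau^{j,n}\circ h(l)$ by the chain rule yields $\bbJ_\tau^{j,n}(h(l))\,h'(l)$, so controlling the $\BC^1$-norm of a difference of compositions requires simultaneously $\|\bbJ_\tau^{j,n}\|_{\bbL^\infty(\Omega)}\lesssim 1$ and $\|\nabla\bbJ_\tau^{j,n}\|_{\bbL^\infty(\Omega)}\lesssim 1$, both of which are guaranteed by \eqref{ieq:Jmn}. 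The factor $\eta^{-\mu}$ appearing in the final estimate reflects exactly this single order of loss incurred each time a spatial derivative is taken of a bound originally stated in the $\BC^0$-norm of the boundary parametrization.
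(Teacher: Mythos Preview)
The paper does not prove this theorem; it is quoted from \cite[Theorem~3.5]{ma21} without argument, so there is no in-paper proof to compare your proposal against.

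That said, your approach is the natural one and is almost certainly what the cited reference does: telescope over single time steps using the uniform $W^{1,\infty}$ (and, for $\mu=1$, $W^{2,\infty}$) bounds on $\BX_\tau^{j,n}$ furnished by \eqref{ieq:Jmn}; recognise that $\chibf_n$ is exactly the cubic-spline interpolant $F[f_n]$ of $f_n=\BX_\tau^{n-1,n}\circ\chibf_{n-1}$, so that idempotency $F[\chibf_{n-1}]=\chibf_{n-1}$ removes the $\tau^0$ contribution; and then Taylor-expand the one-step map and apply piecewise spline error bounds to each $A_i$. There is, however, one bookkeeping slip. You correctly observe that $B_i=\tfrac{1}{i!}D_t^{i-1}\Bw$ consumes $i-1$ derivatives of $\Bw$, but this gives $B_i\in C^{r-i+1}$, hence $A_i$ is piecewise $C^{\min(4,r-i+1)}$ and the spline error for $A_i$ is $O(\eta^{\min(4,r-i+1)-\mu})$, not $O(\eta^{\min(4,r-i)-\mu})$ as you write. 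With the correct exponent, the $\tau^i$ contribution re-indexes precisely to the theorem's $\tau^{i+1}\eta^{\min(4,r-i)-\mu}$ term; with your exponent the resulting bound is one power of $\eta$ looser and does not reproduce \eqref{eq:Gamma-tn-err} as stated when $r$ is at or near the lower threshold $\max(k+2,4)$. This is a minor arithmetic fix, not a structural gap.
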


\subsection{A semi-discrete scheme}

For any $\Bx_0\in\Omega_0$, we use the continuous flow map and write $\Bx\equiv \Bx(t)=\BX(t;0,\Bx_0)$. The material derivative of $\Bu_i$ is defined as
\begin{equation}\label{eq:DuDt}
	\frac{\D}{\D t}\Bu_i(\Bx(t),t)
	= \frac{\partial \Bu_i}{\partial t}(\Bx,t)
	+\Bw(\Bx,t) \cdot\nabla_\Bx \Bu_i(\Bx,t),\qquad i=1,2.
\end{equation}
The momentum equation of \eqref{eq:model} can be written equivalently as follows
\begin{align}\label{eq:compact}
	\frac{\D\Bu_i}{\D t} -\nu_i\Delta \Bu_i +\nabla p_i = \Bf_i
	\quad \text{in}\;\; \Omega_{i}(t),  \qquad i=1,2.
\end{align}

We apply the $k^{\rm th}$-order backward differentiation formula (BDF-$k$) to \eqref{eq:compact} and
obtain a semi-discrete approximation to problem \eqref{eq:model}:
\begin{center}
	\fbox{\parbox{0.975\textwidth}
		{Given $(\Bu^m_1,\Bu^m_2)$ for $n-k\le m< n$, solve the coupled problems for $(\Bu_1^n,\Bu_2^n)$
			\begin{subequations}\label{eq:semi-disc}
				\begin{align}
					\frac{1}{\tau}\Lambda^k\bbU_i^n -\nu_i\Delta \Bu^n_i +\nabla p_i^n= \Bf^n_i,\quad
					\Div \Bu^n_i =0 \quad &\text{in}\;\; \Omega_{\eta,i}^n,
					\quad i=1,2, \label{eq:semi-disc-0}\\
					\nu_1 \partial_\Bn\Bu_1^n - p_1^n \Bn =\nu_2 \partial_\Bn\Bu_2^n- p_2^n\Bn,\quad
					\Bu_1^n =\Bu_2^n \quad &\text{on}\;\; \Gamma^n_\eta,
					\label{eq:semi-disc-1}\\
					\Bu^n_2 =\textbf{0} \quad &\text{on}\; \;\Sigma,\label{eq:semi-disc-2}
				\end{align}	
			\end{subequations}
			where $\bbU^n_i = \big[\BU^{n-k,n}_i,\cdots,\BU^{n,n}_i\big]$,
			$\BU_i^{m,n}=\Bu_i^m\circ\BX^{n,m}_\tau$, and
			$\Lambda^k\bbU^n_i	= \sum_{j=0}^k \lambda_j^k \BU^{n-j,n}_i$.}}
\end{center}\vspace{1mm}
Here $\Bf^n_i=\Bf_i(t_n)$ and $\lambda_0^k,\cdots,\lambda_k^k$ are coefficients for the BDF-$k$ (see \cite{liu13,ma21}).

\section{The finite element method}

The purpose of this section is to propose a fully discrete finite element scheme for solving \eqref{eq:model}. Suppose $\Gamma^n_\eta\subset \Omega$ is the approximate interface obtained with Algorithm~\ref{alg:spline}.
Let $\Omega^n_{\eta,1}$ denote the open domain surrounded by $\Gamma^n_\eta$,
namely, $\Gamma^n_\eta=\partial\Omega^n_{\eta,1}$.
Define
\ben
\Omega_{\eta,2}^n = \Omega\backslash\bar \Omega_{\eta,1}^n,\qquad
\Omega^n_\eta =\Omega\backslash\Gamma^n_\eta = \Omega^n_{\eta,1}\cup \Omega^n_{\eta,2}.
\een

\subsection{A semi-discrete scheme}

For any $\Bx_0\in\Omega_0$, we use the continuous flow map and write $\Bx\equiv \Bx(t)=\BX(t;0,\Bx_0)$. The material derivative of $\Bu_i$ is defined as
\begin{equation}\label{eq:DuDt}
	\frac{\D}{\D t}\Bu_i(\Bx(t),t)
	= \frac{\partial \Bu_i}{\partial t}(\Bx,t)
	+\Bw(\Bx,t) \cdot\nabla_\Bx \Bu_i(\Bx,t),\qquad i=1,2.
\end{equation}
The momentum equation of \eqref{eq:model} can be written equivalently as follows
\begin{align}\label{eq:compact}
	\frac{\D\Bu_i}{\D t} -\nu_i\Delta \Bu_i +\nabla p_i = \Bf_i
	\quad \text{in}\;\; \Omega_{i}(t),  \qquad i=1,2.
\end{align}

We apply the $k^{\rm th}$-order backward differentiation formula (BDF-$k$) to \eqref{eq:compact} and
obtain a semi-discrete approximation to problem \eqref{eq:model}:
\begin{center}
	\fbox{\parbox{0.975\textwidth}
		{Given $(\Bu^m_1,\Bu^m_2)$ for $n-k\le m< n$, solve the coupled problems for $(\Bu_1^n,\Bu_2^n)$
			\begin{subequations}\label{eq:semi-disc}
				\begin{align}
					\frac{1}{\tau}\Lambda^k\bbU_i^n -\nu_i\Delta \Bu^n_i +\nabla p_i^n= \Bf^n_i,\quad
					\Div \Bu^n_i =0 \quad &\text{in}\;\; \Omega_{\eta,i}^n,
					\quad i=1,2, \label{eq:semi-disc-0}\\
					\nu_1 \partial_\Bn\Bu_1^n - p_1^n \Bn =\nu_2 \partial_\Bn\Bu_2^n- p_2^n\Bn,\quad
					\Bu_1^n =\Bu_2^n \quad &\text{on}\;\; \Gamma^n_\eta,
					\label{eq:semi-disc-1}\\
					\Bu^n_2 =\textbf{0} \quad &\text{on}\; \;\Sigma,\label{eq:semi-disc-2}
				\end{align}	
			\end{subequations}
			where $\bbU^n_i = \big[\BU^{n-k,n}_i,\cdots,\BU^{n,n}_i\big]$,
			$\BU_i^{m,n}=\Bu_i^m\circ\BX^{n,m}_\tau$, and
			$\Lambda^k\bbU^n_i	= \sum_{j=0}^k \lambda_j^k \BU^{n-j,n}_i$.}}
\end{center}\vspace{1mm}
Here $\Bf^n_i=\Bf_i(t_n)$ and $\lambda_0^k,\cdots,\lambda_k^k$ are coefficients for the BDF-$k$ (see \cite{liu13,ma21}).

\subsection{Finite element meshes}
Let $\Ct_h$ be the uniform partition of $\Omega$ into closed squares of side-length $h$. It generates the covers of $\Omega^n_{\eta,1}$, $\Omega^n_{\eta,2}$, and $\Gamma^n_{\eta}$, which are, respectively, denoted by
\begin{align*}
	\Ct^n_{h,i} := \left\{K\in \Ct_h: K \cap \bar{\Omega}_{\eta,i}^n \neq \emptyset\right\}, 	\quad
	\Ct^n_{h,B} := \left\{K\in \Ct_h:\;
	\mathrm{length}(K\cap\Gamma^n_{\eta}) >0\right\}.
\end{align*}
For each $i$, we define a domain containing $\Omega^n_{\eta,i}$ and a domain contained in $\Omega^n_{\eta,i}$
\begin{equation}\label{fic-domain}
	\Omega^n_{h,i} :=\mathrm{interior}(
	\cup\{K: K\in\Ct^n_{h,i}\}),\quad
	\omega^n_{h,i} :=\mathrm{interior}(\cup\{K: K\in\Ct^n_{h,i}\backslash\Ct^n_{h,B}\}) .
\end{equation}
Let $\Ce_h$ denote the set of all interior edges of $\Ct_h$ and let $\Ce_{i,B}^{n}$ denote
the set of edges of $\Ct^n_{h,B}$ which are not on the boundary of $\Omega^n_{h,i}$, namely,
\ben
\Ce_{i,B}^{n}= \big\{E\in\Ce_h: \; E \not\subset\partial\Omega^n_{h,i}
\;\; \hbox{and}\;\; \exists K\in \Ct^n_{h,B}\;\;
\hbox{s.t.}\;\; E\subset\partial K\big\}.
\een
The sets are illustrated in Fig~\ref{fig:test}.
Now we make two mild assumptions on the mesh.
\vspace{1mm}

\begin{assumption}\label{ass-2}
	We assume that
	\begin{itemize}
		\item each element in $\Ct^n_{h,i}\backslash\Ct^n_{h,B}$ has at most two edges on  $\partial\omega^n_{h,i}$ for $i=1,2$, and that
		\vspace{1mm}
		
		\item there exists an integer $I>0$ such that, for $i=1,2$ and any $K\in\Ct_{h,B}^n$,
		one can find at most $I$ elements $\big\{K_j^{(i)}\big\}_{j=1}^{I}\subset\Ct^n_{h,i}$ (see Fig.~\ref{fig:A3}),
		which satisfy
		\ben
		K^{(i)}_{1}=K,\qquad K^{(i)}_{I}\subset\Omega^n_{\eta,i},\qquad
		K^{(i)}_{j}\cap K^{(i)}_{j+1}\in \Ce^n_{i,B}.
		\een
	\end{itemize}
\end{assumption}
\vspace{1mm}

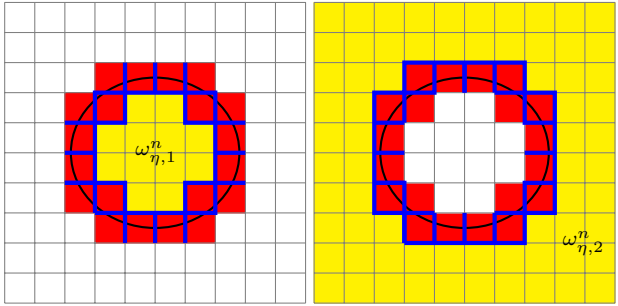
\begin{figure}[htbp]
	\centering
	\begin{tikzpicture}[scale =2.] \filldraw[red](0.2*3,0.2*2)--(0.2*3,0.2*3)--(0.2*2,0.2*3)--(0.2*2,0.2*7)--(0.2*3,0.2*7)--(0.2*3,0.2*8)--(0.2*4,0.2*8)--(0.2*7,0.2*8)--(0.2*7,0.2*7)--(0.2*8,0.2*7)--(0.2*8,0.2*3)--(0.2*7,0.2*3)--(0.2*7,0.2*2)--(0.2*3,0.2*2);  		\filldraw[yellow](0.2*3,0.2*6)--(0.2*4,0.2*6)--(0.2*4,0.2*7)--(0.2*5,0.2*7)--(0.2*6,0.2*7)--(0.2*6,0.2*6)--(0.2*7,0.2*6)--(0.2*7,0.2*5)--(0.2*7,0.2*4)--(0.2*6,0.2*4)--(0.2*6,0.2*3)--(0.2*5,0.2*3)--(0.2*4,0.2*3)--(0.2*4,0.2*4)--(0.2*3,0.2*4)--(0.2*3,0.2*5)--(0.2*3,0.2*6);
		\draw[black, thick] (1,1) ellipse [x radius=0.56cm, y radius=0.5cm];
		\filldraw[step =0.2cm,gray,thin] (0,0) grid (2cm,2cm);
		\draw [blue, ultra thick] (0.2*3,0.2*3)--(0.2*3,0.2*7)--(0.2*7,0.2*7)--(0.2*7,0.2*3)--(0.2*3, 0.2*3);
		\draw [blue,ultra thick] (0.2*2,0.2*4)--(0.2*3,0.2*4);
		\draw [blue, ultra thick] (0.2*2,0.2*5)--(0.2*3,0.2*5);
		\draw [blue,ultra thick] (0.2*2,0.2*6)--(0.2*3,0.2*6);
		\draw [blue,ultra thick] (0.2*2,0.2*4)--(0.2*3,0.2*4);
		\draw [blue,ultra thick] (0.2*4,0.2*2)--(0.2*4,0.2*4)--(0.2*3, 0.2*4);
		\draw [blue,ultra thick] (0.2*4,0.2*8)--(0.2*4,0.2*6)--(0.2*3, 0.2*6);
		\draw [blue,ultra thick] (0.2*6,0.2*2)--(0.2*6,0.2*4)--(0.2*8, 0.2*4);
		\draw [blue,ultra thick] (0.2*6,0.2*8)--(0.2*6,0.2*6)--(0.2*8, 0.2*6);
		\draw [blue,ultra thick] (0.2*5,0.2*2)--(0.2*5,0.2*3);
		\draw [blue,ultra thick] (0.2*7,0.2*5)--(0.2*8,0.2*5);
		\draw [blue,ultra thick] (0.2*5,0.2*7)--(0.2*5,0.2*8);
		\node[left] at (0.2*6,0.2*5) {$\omega_{\eta,1}^n$};
	\end{tikzpicture}\label{fig:sub1}
	\begin{tikzpicture}[scale =2.]
		\filldraw[yellow](0,0)--(0.2*10,0)--(0.2*10,0.2*10)--(0,0.2*10)--(0,0);
		\filldraw[red](0.2*3,0.2*2)--(0.2*3,0.2*3)--(0.2*2,0.2*3)--(0.2*2,0.2*7)--(0.2*3,0.2*7)--(0.2*3,0.2*8)--(0.2*4,0.2*8)--(0.2*7,0.2*8)--(0.2*7,0.2*7)--(0.2*8,0.2*7)--(0.2*8,0.2*3)--(0.2*7,0.2*3)--(0.2*7,0.2*2)--(0.2*3,0.2*2);  		
		\filldraw[white](0.2*3,0.2*6)--(0.2*4,0.2*6)--(0.2*4,0.2*7)--(0.2*5,0.2*7)--(0.2*6,0.2*7)--(0.2*6,0.2*6)--(0.2*7,0.2*6)--(0.2*7,0.2*5)--(0.2*7,0.2*4)--(0.2*6,0.2*4)--(0.2*6,0.2*3)--(0.2*5,0.2*3)--(0.2*4,0.2*3)--(0.2*4,0.2*4)--(0.2*3,0.2*4)--(0.2*3,0.2*5)--(0.2*3,0.2*6);
		\draw[black, thick] (1,1) ellipse [x radius=0.56cm, y radius=0.5cm];
		\filldraw[step =0.2cm,gray,thin] (0,0) grid (2cm,2cm);
		\draw[blue,ultra thick](0.2*3,0.2*2)--(0.2*7,0.2*2)--(0.2*7,0.2*3)--(0.2*8,0.2*3)--(0.2*8,0.2*7)--(0.2*7,0.2*7)--(0.2*7,0.2*8)--(0.2*3,0.2*8)--(0.2*3,0.2*7)--(0.2*2,0.2*7)--(0.2*2,0.2*3)--(0.2*3,0.2*3)--(0.2*3,0.2*2);
		\draw [blue,ultra thick] (0.2*4,0.2*8)--(0.2*4,0.2*7)--(0.2*3,0.2*7)--(0.2*3,0.2*6);
		\draw [blue,ultra thick] (0.2*2,0.2*4)--(0.2*3,0.2*4);
		\draw [blue, ultra thick] (0.2*2,0.2*5)--(0.2*3,0.2*5);
		\draw [blue,ultra thick] (0.2*2,0.2*6)--(0.2*3,0.2*6);
		\draw [blue,ultra thick] (0.2*2,0.2*4)--(0.2*3,0.2*4)--(0.2*3,0.2*3)--(0.2*4,0.2*3);
		\draw [blue,ultra thick] (0.2*6,0.2*7)--(0.2*7,0.2*7)--(0.2*7, 0.2*6)--(0.2*8,0.2*6);
		\draw [blue,ultra thick] (0.2*6,0.2*3)--(0.2*7,0.2*3)--(0.2*7, 0.2*4)--(0.2*8,0.2*4);
		\draw [blue,ultra thick] (0.2*6,0.2*8)--(0.2*6,0.2*7);
		\draw [blue,ultra thick] (0.2*5,0.2*8)--(0.2*5,0.2*7);
		\draw [blue,ultra thick] (0.2*4,0.2*2)--(0.2*4,0.2*3);
		\draw [blue,ultra thick] (0.2*3,0.2*2)--(0.2*3,0.2*3);
		\draw [blue,ultra thick] (0.2*5,0.2*2)--(0.2*5,0.2*3);
		\draw [blue,ultra thick] (0.2*6,0.2*2)--(0.2*6,0.2*3);
		\draw [blue,ultra thick] (0.2*7,0.2*5)--(0.2*8,0.2*5);
		\draw [blue,ultra thick] (0.2*5,0.2*7)--(0.2*5,0.2*8);	
		\node[right] at (0.2*8,0.2*2) {$\omega_{\eta,2}^n$};
	\end{tikzpicture}\label{fig:sub2}
	\caption{(I) $\Ct_{h,B}^n$: the set of red squares.
		(II) $\Ce_{1,B}^n$ (blue edges in the left figure) and $\Ce_{2,B}^n$ (blue edges in the right figure).
		(III) Left figure: $\Omega_{h,1}^n$ (red and yellow squares) and  $\omega_{h,1}^n$ (yellow squares).
		(IV) Right figure: $\Omega_{h,2}^n$ (red and yellow squares) and  $\omega_{h,2}^n$ (yellow squares).} \label{fig:test}
\end{figure}

\vspace{1mm}

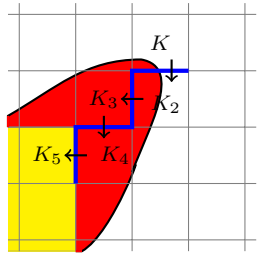
\begin{figure}
	\centering
	\begin{tikzpicture}[scale =1.5]
		\draw [thick,fill=red] (-0.1,0.5*2)--(-0.1,0.5*2+0.1) to [out=30,in=180](0.5*2+0.08,0.5*3+0.1)
		to [out=-10, in=70] (1,0.6) to [out=67,in=30] (0.556,-0.1)--(0.5,-0.1);
		\draw [gray,fill=yellow](-0.1,0.5*2)--(0.5*1,0.5*2)--(0.5*1,-0.1)--(-0.1,-0.1);
		\draw [yellow] (0.5,-0.1)--(-0.1,-0.1);
		\node[align=right] at (1.25,1.75) {$K$};
		\node[align=right] at (1.3,1.2) {$K_2$};
		\node[align=right] at (0.75,1.25) {$K_3$};
		\node[align=right] at(0.85,0.75) {$K_4$};
		\draw [step =0.5cm,gray,thin] (0,0) grid (2cm,2cm);
		\draw [gray](0,0)--(-0.1,0);
		\draw [gray](0,0.5*1)--(-0.1,0.5*1);
		\draw [gray](0,0.5*2)--(-0.1,0.5*2);
		\draw [gray](0,0.5*3)--(-0.1,0.5*3);
		\draw [gray](0,0.5*4)--(-0.1,0.5*4);
		\draw [gray](0,0.5*4)--(0,0.5*4+0.1);
		\draw [gray](0.5*1,0.5*4)--(0.5*1,0.5*4+0.1);		
		\draw [gray](0.5*2,0.5*4)--(0.5*2,0.5*4+0.1);
		\draw [gray](0.5*3,0.5*4)--(0.5*3,0.5*4+0.1);
		\draw [gray](0.5*4,0.5*4)--(0.5*4,0.5*4+0.1);
		\draw [gray](0.5*4,0)--(0.5*4+0.1,0);
		\draw [gray](0.5*4,0.5*1)--(0.5*4+0.1,0.5*1);
		\draw [gray](0.5*4,0.5*2)--(0.5*4+0.1,0.5*2);
		\draw [gray](0.5*4,0.5*3)--(0.5*4+0.1,0.5*3);
		\draw [gray](0.5*4,0.5*4)--(0.5*4+0.1,0.5*4);
		\node[align=left] at(0.25,0.75) {$K_5$};
		\draw[blue, ultra thick] (0.5*3,0.5*3)--(0.5*2,0.5*3)--(0.5*2,0.5*2)--(0.5*1,0.5*2)--(0.5,0.5*1);
		\draw[thick,->](0.75,1.1)--(0.75,0.9);
		\draw[thick,->](0.6,0.75)--(0.4,0.75);
		\draw[red,thick] (0.556,-0.1)--(0.5,-0.1);
		\draw[red,thick] (-0.1,1.1)--(-0.1,0.5*2);
		\draw[thick,->] (1.35,1.6)--(1.35,1.4);
		\draw[thick,->] (1.1,1.25)--(0.9,1.25);
		\draw[gray](0,0)--(0,-0.1);
		\draw[gray](0.5*2,0)--(0.5*2,-0.1);
		\draw[gray](0.5*3,0)--(0.5*3,-0.1);
		\draw[gray](0.5*4,0)--(0.5*4,-0.1);
	\end{tikzpicture}
	\caption{An illustration of Assumption~\ref{ass-2} with $I=5$.}
	\label{fig:A3}
\end{figure}


\subsection{Finite element spaces}

Let $Q_k$ be the space of polynomials whose degrees are no more than $k$ for each variable. The finite element spaces with and without homogeneous boundary conditions are defined over $\Ct_h$ as
\begin{align*}
	V_{h}(k,\Omega)&= \left\{q\in \Hone: q|_K\in Q_{k}(K), 
	\forall\, K\in \Ct_h\right\},\\
	V_{h,0}(k,\Omega)&= V_{h}(k,\Omega)\cap\zbHone.
\end{align*}
Correspondingly, $\BV_{h}(k,\Omega)$ and $\BV_{h,0}(k,\Omega)$ denote finite element spaces of vector-valued functions.
For a subdomain $D\subset\Omega$, we write
\ben
\BV_h(k,D) = \BV_{h}(k,\Omega)|_{D},\qquad
\BV_{h,0}(k,D) = \BV_{h,0}(k,\Omega)|_{D}.
\een
The spaces for the discrete velocity and the discrete pressure are, respectively, defined as
\begin{align*}
	\boldsymbol{\Cv}^n_h:=\,& \big\{(\Bv_{h,1},\Bv_{h,2}): \Bv_{h,1}\in \BV_{h}(k,\Omega^n_{h,1}), \;
	\Bv_{h,2}\in \BV_{h,0}(k,\Omega_{h,2}^n)\big\},\\
	\mathcal{Q}^n_h:=\,& \big\{(q_{h,1},q_{h,2}): q_{h,i}\in
	V_h(k-1,\Omega^n_{h,i}), \; i=1,2,\;
	\sum_{i=1,2}(q_{h,i},\nu_i^{-1})_{\Omega^n_{\eta,i}}=0\big\}.
\end{align*}
As a convention, we extend each $v_{h,i}\in V_h(k,\Omega^n_{h,i})$ to the exterior of $\Omega^n_{h,i}$ such that the extension, denoted still by $v_{h,i}$,
belongs to $V_{h}(k,\Omega)$
and vanishes at all degrees of freedom outside $\ol{\Omega^n_{h,i}}$.
It is easy to see that
\begin{equation}\label{ieq:vh-ext}
	\N{v_{h,i}}_{H^m(\Omega)}\lesssim \N{v_{h,i}}_{H^m(\Omega^n_{h,i})},
	\quad m=0,1,\quad i=1,2.
\end{equation}

\subsection{The discrete problem}

Following Hansbo, Larson, and Zahedi \cite{han14}, we define the ``harmonic weights''
\ben
\kappa_1 = \nu_2/(\nu_1+\nu_2),\qquad \kappa_2 =\nu_1/(\nu_1+\nu_2).
\een
Let $\ul\Ba=(\Ba_1,\Ba_2)$ be a pair of vectoral (or scalar) functions satisfying $\Omega^n_{\eta,i} \subset \Dom{\Ba_i}$ for $i=1,2$. The magic formula shows
\begin{equation*}
	\jump{\ul \Ba\,\ul \Bb} = \jump{\ul \Ba}\avg{\ul \Bb} + \jump{\ul \Bb}\avgg{\ul \Ba},
\end{equation*}
where
the jump and  averages of $\ul\Ba$ across $\Gamma^n_\eta$ are, respectively, defined by
\ben
\jump{\ul\Ba} = \Ba_1-\Ba_2,\quad \avg{\ul\Ba} = \kappa_1 \Ba_1 + \kappa_2 \Ba_2,
\quad \avgg{\ul \Ba}=\kappa_2 \Ba_1 +\kappa_1 \Ba_2 \quad  \hbox{on}\;\; \Gamma^n_\eta.
\een
We abuse the notation and define
$\nu = \nu_i$ in $\Omega_{\eta,i}^n$, $i=1,2$, for all $0\leq n\leq N$.
It is easy to see that
\begin{align}\label{avg-nu}
	\kappa_1\nu_1=\kappa_2\nu_2 = \nu_1\nu_2/(\nu_1+\nu_2) = \avg{\nu}/2.
\end{align}
For any edge $E\in\Ce_h$, let $\Bn_E$ be the unit normal on $E$ and define the jump of $v$ across $E$ by
\ben
\jump{v}(\Bx) =\lim_{\varepsilon\to 0+}
\left[v(\Bx-\varepsilon\Bn_E) - v(\Bx+\varepsilon\Bn_E)\right],
\quad \forall\,\Bx\in E.
\een
For a function $\alpha\in L^\infty(\Omega^n_\eta)$, 
we define 
the weighted inner product of $\ul\Ba$ and $\ul\Bb$ as
\ben
\Aprod[\Omega^n_\eta]{\alpha\ul\Ba,\ul\Bb}
= (\alpha\Ba_1,\Bb_1)_{\Omega_{\eta,1}^n}
+(\alpha\Ba_2,\Bb_2)_{\Omega_{\eta,2}^n} .
\een
Let $\Delta_h$ denote the discrete Laplacian operator on piecewise regular functions,
\ben
\Delta_h\Bv_h\in\Ltwov\;\;\hbox{satisfying}\;\;
(\Delta_h\Bv_h)\big|_K \equiv \Delta(\Bv_h|_K),\quad
\forall\, K\in\Ct_h.
\een
It is clear that $\Delta_h\Bv=\Delta\Bv$ if $\Bv\in\BH^2(\Omega)$.
We also use the symbols and write
\ben
\Div\ul{\Bv_h}=(\Div\Bv_{h,1},\Div\Bv_{h,2}),\;
\nabla\ul{\Bv_h}=(\nabla\Bv_{h,1},\nabla\Bv_{h,2}),\;
\Delta_h\ul{\Bv_h}=(\Delta_h\Bv_{h,1},\Delta_h\Bv_{h,2}).
\een

Now we introduce the fully discrete approximation to the semi-discrete problem \eqref{eq:semi-disc}. Suppose that the discrete solutions $\Bu^m_h\in{\boldsymbol{\Cv}}^m_h$ are obtained for $n-k\le m< n$. Define
\begin{align*}
	\ul{\BU_{h}^{m,n}}:=\ul{\Bu_{h}^{m}}\circ\BX_\tau^{n,m},\quad \ul{\bbU^n_h}=\big[\ul{\BU^{n-k,n}_h},\cdots,\ul{\BU^{n,n}_h}\big],\quad
	\Lambda^k\ul{\bbU^n_h} = \sum_{j=0}^k \lambda_j^k \ul{\BU^{n-j,n}_h}.
\end{align*}
\begin{center}
	\fbox{\parbox{0.975\textwidth}
		{The fully discrete problem reads: Find $\big(\ul{\Bu^n_h},\ul{p^n_h}\big) \in{\boldsymbol{\Cv}}^n_h\times\Cq^n_h$ such that
			for any $(\ul{\Bv_h},\ul{q_h})\in{\boldsymbol{\Cv}}^n_h\times\Cq^n_h$,
			\begin{subequations} \label{eq:disc}
				\begin{align}
					&\frac{1}{\tau}\Aprod[\Omega^n_\eta]{\Lambda^k \ul{\bbU_h^n},\ul{\Bv_h}}
					+\mathscr{A}_h^n(\ul{\Bu_h^n},\ul{\Bv_h})
					+\mathscr{B}^n_0(\ul{\Bv_h},\ul{p_h^n})
					= \Aprod[\Omega^n_\eta]{\ul\Bf^n,\ul{\Bv_h}}, \label{eq:disc-u}\\
					&\mathscr{B}^n_0(\ul{\Bu_h^n},\ul{q_h}) - \mathscr{J}_p^n(\ul{p_h^n},\ul{q_h})-
					\mathscr{R}_h^n(\ul{\bbU_h^n},\ul{p_h^n};\ul{q_h}) = 0.  \label{eq:disc-p}
				\end{align}
	\end{subequations}}}
\end{center}\vspace{2mm}
Here $\ul{\Bf}^n=\big(\Bf^n_1,\Bf^n_2\big)$ and $\mathscr{R}_h^n$ is the residual functional from the semi-discrete momentum equation
\begin{align*}
	\mathscr{R}_h^n(\ul\bbU_h^n,\ul{p_h^n};\ul{q_h}) =\,&\gamma_1\nu_2 h^2
	\Aprod[\Omega^n_\eta]{\tau^{-1} \Lambda^k\ul{\bbU_h^n}- \nu \Delta_h \ul{\Bu_h^n}
		+\nabla \ul{p_h^n}-\ul{\Bf}^n,\nu^{-1}\nabla \ul{q_h}},
\end{align*}
where $\gamma_1\in (0,1)$.
The bilinear forms in \eqref{eq:disc} are defined as follows
\begin{align*}
	\mathscr{A}_h^n(\ul{\Bu_h},\ul{\Bv_h})=\,&
	\Aprod[\Omega_{\eta}^n]{\nu\nabla\ul{\Bu_h}, \nabla\ul{\Bv_h}}
	-\mathscr{F}^n(\ul{\Bu_h},\ul{\Bv_h})
	+\mathscr{J}_0^n(\ul{\Bu_h},\ul{\Bv_h})
	+\mathscr{J}_{\Bu}^n(\ul{\Bu_h},\ul{\Bv_h}), \\
	\mathscr{F}^n(\ul{\Bu_h},\ul{\Bv_h})=\,&
	\int_{\Gamma_\eta^n}\left(
	\Avg{\nu\partial_{\Bn}\ul{\Bu_h}}\cdot \jump{\ul{\Bv_h}}
	+\Avg{\nu \partial_{\Bn} \ul{\Bv_h}}\cdot\jump{\ul{\Bu_h}}\right),\\
	\mathscr{J}_0^n(\ul{\Bu_h},\ul{\Bv_h}) =\,& \gamma_0 \avg{\nu}h^{-1}
	\int_{\Gamma_\eta^n} \jump{\ul{\Bu_h}}\cdot\jump{\ul{\Bv_h}}, \\
	\mathscr{B}_0^n(\ul{\Bv_h},\ul{q_h})
	=\,&-\Aprod[\Omega^n_\eta]{\Div \ul{\Bv_h}, \ul{q_h}}
	+ \int_{\Gamma_\eta^n}\big(\jump{\ul{\Bv_h}}\cdot\Bn\big) \Avg{\ul{q_h}}, \\
	\mathscr{J}_{\Bu}^n(\ul{\Bu_h},\ul{\Bv_h})=\,&
	\sum_{i=1}^2 \sum_{E\in \Ce_{i,B}^{n}}
	\sum_{l=1}^k h^{2l-1}/((l-1)!)^2\int_E \nu_i
	\jump{\partial_{\Bn}^l \Bu_{h,i}} \cdot
	\jump{\partial_{\Bn}^l \Bv_{h,i}}, \\
	\mathscr{J}^n_p(\ul{p_h},\ul{q_h}) =\,&
	\sum_{i=1}^2\sum_{E\in \Ce_{i,B}^{n}}
	\sum_{l=1}^{k-1} h^{2l+1}/(l!)^2\int_E \nu_i^{-1}
	\jump{\partial_{\Bn}^l p_{h,i}}
	\jump{\partial_{\Bn}^l q_{h,i}},
\end{align*}
where $\partial_{\Bn} \ul{\Bv_h}\equiv \big(\partial_{\Bn}\Bv_{h,1},\partial_{\Bn}\Bv_{h,2}\big)$ and $\partial_\Bn^l\Bv_{h,i}$ stands for the $l^{\rm th}$-order normal derivative of $\Bv_{h,i}$. The parameters $\gamma_0,\gamma_1$ are positive and independent of $\tau$, $h$, and $\eta$.
\vspace{1mm}

\begin{remark}
	The interface-zone penalties $\mathscr{J}_{\Bu}^n$ and $\mathscr{J}_p^n$ are used to enhance the stability of the discrete velocity and the discrete pressure, respectively. The residual-based bilinear form $\mathscr{R}_h^n$ is favorable to proving optimal $H^1$-error estimate for the discrete pressure.
\end{remark}
\vspace{1mm}

\begin{remark}
	The discrete scheme \eqref{eq:disc} is consistent with the semi-discrete problem \eqref{eq:semi-disc}. In fact, let $\ul\Bu^n=(\Bu^n_1,\Bu^n_2)$, $\ul p^n=(p^n_1,p^n_2)$ be the solution to \eqref{eq:semi-disc} and write
	\ben
	\ul{\BU^{m,n}}:=\big(\Bu_1^{m}\circ\BX_\tau^{n,m}, \Bu_2^{m}\circ\BX_\tau^{n,m}\big),
	\qquad
	\ul{\bbU^n}=\big[\ul{\BU^{n-k,n}},\cdots,\ul{\BU^{n,n}}\big].
	\een
	Then \eqref{eq:disc} still holds if we replace $\big(\ul{\Bu^n_h}, \ul{p^n_h}, \ul{\bbU^n_h}\big)$ with
	$\big(\ul\Bu^n,\ul p^n,\ul{\bbU^n}\big)$.
\end{remark}

\section{The well-posedness of discrete problems}

The purpose of this section is to prove the well-posedness of \eqref{eq:disc} in each time step.
First we introduce some product spaces and mesh-dependent norms.

\subsection{Mesh-dependent norms}

For a positive function $\alpha\in L^\infty(\Omega)$ and an integer $m\ge 0$, we define the norms on $H^m(\Omega_{\eta,1}^n)\times H^m(\Omega_{\eta,2}^n)$ and
$L^2(\Omega_{\eta,1}^n)\times L^2(\Omega_{\eta,2}^n)$, respectively, as
\begin{align*}
	\N{\ul{v}}_{m,\Omega_\eta^n} =\Big(\sum_{i=1,2}\N{v_i}^2_{H^m(\Omega^n_{\eta,i})}\Big)^{\frac12},
	\qquad
	\N{\alpha\ul{v}}_{0,\Omega_\eta^n} =\Big(\sum_{i=1,2}
	\N{\alpha v_i}^2_{L^2(\Omega^n_{\eta,i})}\Big)^{\frac12}.
\end{align*}
For convenience, we also introduce two product spaces
\begin{align*}
	\boldsymbol{\Cv}^n =\,& \big\{\ul\Bv\in \Honev[\Omega_{h,1}^n]
	\times \Honev[\Omega_{h,2}^n]: \Bv_2|_{\Sigma} =0,\;\hbox{and}\;
	\partial_{\Bn}\Bv_1, \partial_{\Bn}\Bv_2\in\BL^2(\Gamma_\eta^n)\big\},\\
	\Cq^n =\,& \big\{\ul{q}\in\Ltwo[\Omega_{h,1}^n]\times\Ltwo[\Omega_{h,2}^n]:
	\sum_{i=1}^2(q_{i},\nu_i^{-1})_{\Omega_{\eta,i}}=0,\;
	\hbox{and}\; q_1,q_2\in\Ltwo[\Gamma_\eta^n]\big\}.
\end{align*}
Clearly $\boldsymbol{\Cv}^n_h\subset \boldsymbol{\Cv}^n$ and $\Cq^n_h\subset \Cq^n$.
The mesh-dependent norms on ${\boldsymbol{\Cv}}^n$ and $\Cq^n$  are defined as
\begin{align*}
	\N{\ul{\Bv}}_{{\boldsymbol{\Cv}}} =\,& \Big\{\big\|\nu^{\frac12}\nabla\ul{\Bv}\big\|_{0,\Omega^n_\eta}^2
	+\mathscr{J}_0^n(\ul{\Bv},\ul{\Bv})
	+h\avg{\nu}^{-1}\NLtwov[\Gamma_\eta^n]{\avg{\nu\partial_{\Bn}\ul{\Bv}}}^2\Big\}^{\frac12},\\
	\N{\ul{q}}_{\Cq} =\,& \Big\{\big\|\nu^{-\frac12}\ul{q}\big\|_{0,\Omega^n_\eta}^2
	+h\avg{\nu}^{-1}\N{\Avg{\ul{q}}}_{L^2(\Gamma_{\eta}^n)}^2\Big\}^{\frac12}.
\end{align*}
We define three more norms for piecewise regular functions
\begin{align}
	&\tN{\ul{\Bv}}_{{\boldsymbol{\Cv}}} =\Big\{
	\big\|\nu^{\frac12}\nabla\ul{\Bv}\big\|_{0,\Omega^n_\eta}^2
	+\mathscr{J}_0^n(\ul{\Bv},\ul{\Bv})
	+\mathscr{J}_{\Bu}^n(\ul{\Bv},\ul{\Bv})\Big\}^{\frac12},
	\label{eq:norm-V1} \\
	&\tN{\ul{q}}_{\Cq} = \Big\{\big\|\nu^{-\frac12}\ul{q}\big\|_{0,\Omega^n_\eta}^2
	+\mathscr{J}_p^n(\ul{q},\ul{q})\Big\}^{\frac12}, \label{eq:norm-Q0} \\
	&\tN{\ul{q_h}}_{1,\Cq}=\Big\{\tN{\ul{q_h}}_{\Cq}^2 +\sum_{i=1,2}
	\dN{\nu^{-\frac12}\nabla q_{h,i}}_{L^2(\Omega^n_{h,i})}^2\Big\}^{\frac12}.
	\label{eq:norm-Q1}
\end{align}
They induce the product norm
\ben
\tN{(\ul{\Bv},\ul{q})}_{\boldsymbol{\Cv},\Cq} =\big(
\tN{\ul{\Bv}}_{{\boldsymbol{\Cv}}}^2 +\tN{\ul{q}}_{\Cq}^2\big)^{\frac12}.
\een

\begin{lemma}\label{lem:trace2}
	Let Assumption~\ref{ass-2} be satisfied.
	For any  $\ul{\Bv_h}\in {\boldsymbol{\Cv}}^n_h$ and $\ul{q_h}\in \Cq^n_h$, there hold
	\begin{align}
		h\avg{\nu}^{-1}\N{\Avg{\nu\partial_{\Bn}\ul{\Bv_h}}}_{\BL^2(\Gamma_\eta^n)}^2
		\lesssim\,& \big\|\nu^{\frac12}\nabla\ul{\Bv_h}\big\|_{0,\Omega^n_\eta}^2
		+\mathscr{J}_{\Bu}^n(\ul{\Bv_h},\ul{\Bv_h}), \label{eq:trace2}\\
		h\avg{\nu}^{-1}\N{\Avg{\ul{q_h}}}_{\BL^2(\Gamma_\eta^n)}^2
		\lesssim\,& \tN{\ul{q_h}}_{\Cq}^2. \label{eq:trace3}
	\end{align}
\end{lemma}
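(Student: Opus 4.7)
The plan is to localize each trace over $\Gamma_\eta^n$ to the individual interface elements $K\in\Ct^n_{h,B}$, pass from $\Gamma_\eta^n\cap K$ into the bulk of $K$ via a polynomial inverse trace inequality, and then use the element chain $K=K_1^{(i)},K_2^{(i)},\ldots,K_I^{(i)}\subset\Omega^n_{\eta,i}$ guaranteed by Assumption~\ref{ass-2} to reach a square that lies strictly inside $\Omega^n_{\eta,i}$. The cost of crossing each link of the chain will be booked against the interface-zone penalty $\mathscr{J}_\Bu^n$ or $\mathscr{J}_p^n$.

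For \eqref{eq:trace2}, fix $K\in\Ct^n_{h,B}$. Since $\chibf_n\in\BC^2([0,L])$ with uniformly bounded derivatives by Theorem~\ref{thm:chi}, $\mathrm{length}(\Gamma_\eta^n\cap K)\lesssim h$; as $\partial_\Bn\Bv_{h,i}|_K$ is polynomial of degree $\le k-1$, a standard inverse trace inequality gives $\|\partial_\Bn\Bv_{h,i}\|^2_{L^2(\Gamma_\eta^n\cap K)}\lesssim h^{-1}\|\partial_\Bn\Bv_{h,i}\|^2_{L^2(K)}$. For two neighbouring squares $K_j^{(i)},K_{j+1}^{(i)}$ sharing an edge $E\in\Ce^n_{i,B}$, the classical ghost-penalty estimate applied to $\nabla\Bv_{h,i}$ (a polynomial of degree $\le k-1$) yields
\[
\|\nabla\Bv_{h,i}\|^2_{L^2(K_j^{(i)})} \lesssim \|\nabla\Bv_{h,i}\|^2_{L^2(K_{j+1}^{(i)})} + \sum_{l=1}^{k} \frac{h^{2l-1}}{((l-1)!)^2}\big\|\jump{\partial_\Bn^l \Bv_{h,i}}\big\|^2_{L^2(E)},
\]
where the tangential parts of the jumps vanish because $\Bv_{h,i}$ is $H^1$-conforming. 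Iterating along the uniformly bounded chain of length $I$ controls $\|\nabla\Bv_{h,i}\|^2_{L^2(K)}$ by the bulk contribution $\|\nabla\Bv_{h,i}\|^2_{L^2(K_I^{(i)})}$ plus a finite sum of edge jumps indexed by $\Ce^n_{i,B}$. Summing over $K\in\Ct^n_{h,B}$, multiplying by $h\avg{\nu}^{-1}$, and invoking the identity $\kappa_i\nu_i=\avg{\nu}/2$ together with $\avg{\nu}\le 2\nu_i$ (cf.~\eqref{avg-nu}) converts the right-hand side into the energy norm $\|\nu^{1/2}\nabla\ul{\Bv_h}\|^2_{0,\Omega^n_\eta}$ plus exactly the weights that define $\mathscr{J}_\Bu^n(\ul{\Bv_h},\ul{\Bv_h})$.

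Inequality \eqref{eq:trace3} follows by the same scheme applied to the scalar polynomial $q_{h,i}$ of degree $\le k-1$. The ghost-penalty estimate now produces jumps of orders $l=1,\ldots,k-1$ with weights $h^{2l+1}/(l!)^2$, which match the definition of $\mathscr{J}_p^n$ after $\nu_i^{-1}$ is introduced via $\avg{\nu}\,\nu_i^{-2}\le 2\nu_i^{-1}$, while the bulk residuum contributes to $\|\nu^{-1/2}\ul{q_h}\|^2_{0,\Omega^n_\eta}\le\tN{\ul{q_h}}^2_{\Cq}$. The main technical subtlety is to track the viscosity weights so that the implicit constants remain independent of the contrast $\nu_1/\nu_2$: every polynomial inverse inequality used along the chain is viscosity-free, and $\nu_i$ (respectively $\nu_i^{-1}$) is reinstated only at the final step through $\avg{\nu}\le 2\nu_i$; the uniform bound $I$ on the chain length from Assumption~\ref{ass-2} is precisely what keeps the accumulated constant independent of how $\Gamma_\eta^n$ cuts through $\Ct_h$.
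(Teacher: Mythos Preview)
Your proposal is correct and follows essentially the same route as the paper. The paper's version is more compressed: it invokes \cite[Lemma~5.2]{ma21} to obtain the ghost-penalty norm equivalence $\sum_i\nu_i\|\nabla\Bv_{h,i}\|_{L^2(\Omega^n_{h,i})}^2\lesssim\sum_i\nu_i\|\nabla\Bv_{h,i}\|_{L^2(\omega^n_{h,i})}^2+\mathscr{J}^n_\Bu(\ul{\Bv_h},\ul{\Bv_h})$ as a black box---which is precisely the element-chain argument you sketch---and combines this with a trace inequality on $\Gamma_K=K\cap\Gamma^n_\eta$ taken from \cite{guz18} (valid for general $H^1$ functions, then sharpened by an inverse estimate). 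You instead unpack the chain argument explicitly and appeal directly to a polynomial inverse trace on $\Gamma_\eta^n\cap K$; the underlying mechanism and the way the viscosity weights enter through $\kappa_i\nu_i=\avg{\nu}/2$ are identical.
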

\begin{proof}
	Thanks to Assumption~\ref{ass-2}, we can cite \cite[Lemma~5.2]{ma21} for the inequalities
	\begin{align}
		\N{\nabla^{\mu}\Bv_{h,i}}_{\BL^2(\Omega^n_{h,i})}^2
		\lesssim\,& \N{\nabla^{\mu}\Bv_{h,i}}_{\BL^2(\omega_{h,i}^n)}^2
		+\sum_{E\in \Ce_{i,B}^n}\sum_{l=1}^k \frac{h^{2l+1-2\mu}}{((l-1)!)^2}
		\int_{E} \SN{\jump{\partial_{\Bn}^l \Bv_{h,i}}}^2,
		\label{norm-equal-v}\\
		\N{q_{h,i}}_{L^2(\Omega^n_{h,i})}^2
		\lesssim\,& \N{q_{h,i}}_{L^2(\omega_{h,i}^n)}^2
		+\sum_{E\in \Ce_{i,B}^n}\sum_{l=1}^{k-1} \frac{h^{2l+1}}{(l!)^2}
		\int_{E} \SN{\jump{\partial_{\Bn}^l q_{h,i}}}^2,
		\label{norm-equal-q}
	\end{align}
	where $\mu =0,1$ and $i=1,2$. The definitions of $\mathscr{J}_{\Bu}^n$ and $\mathscr{J}^n_p$ show that
	\begin{align}
		\sum_{i=1,2}\nu_i\N{\nabla^{\mu} \Bv_{h,i}}_{\BL^2(\Omega^n_{h,i})}^2
		\lesssim\,& \sum_{i=1,2}\nu_i \N{\nabla^{\mu}\Bv_{h,i}}_{\BL^2(\omega_{h,i}^n)}^2 +h^{2-2\mu}\mathscr{J}_{\Bu}^n(\ul{\Bv_h},\ul{\Bv_h}),
		\label{normEqual1}\\
		\sum_{i=1,2}\nu_i^{-1}\N{q_{h,i}}_{L^2(\Omega^n_{h,i})}^2
		\lesssim\,& \sum_{i=1,2}\nu_i^{-1}\N{q_{h,i}}_{L^2(\omega_{h,i}^n)}^2
		+\mathscr{J}_p^n(\ul{q_h},\ul{q_h}). \label{normEqual2}
	\end{align}

	For any $K\in\Ct^n_{h,B}$ and $\Gamma_K=K\cap\Gamma^n_\eta$, we cite \cite{guz18} for the trace inequality
	\begin{align}	\label{ieq:trace0}
		\NLtwo[\partial K]{v} + \NLtwo[\Gamma_K]{v}
		\lesssim h^{-\frac12}\NLtwo[K]{v}+h^{\frac12}\SNHone[K]{v},
		\; \forall\,v\in H^1(K).
	\end{align}
	It follows from \eqref{avg-nu} and the triangular inequality that
	\begin{align}
		&h\avg{\nu}^{-1}\N{\Avg{\nu\partial_{\Bn}\ul{\Bv_h}}}_{\BL^2(\Gamma_K)}^2
		\lesssim h\sum_{i=1,2}\kappa_i\nu_i \SNHonev[\Gamma_K]{\Bv_{h,i}}^2
		\lesssim \sum_{i=1,2}\nu_i \SN{\Bv_{h,i}}_{\BH^1(K)}^2, \label{normEqual4} \\
		&h\avg{\nu}^{-1}\N{\Avg{\ul{q_h}}}_{L^2(\Gamma_K)}^2
		\lesssim h \sum_{i=1,2}\kappa_i\nu_i^{-1}\N{q_{h,i}}_{L^2(\Gamma_K)}^2
		\lesssim \sum_{i=1,2}\nu_i^{-1} \N{q_{h,i}}_{L^2(K)}^2.\label{normEqual5}
	\end{align}
	Taking the sum of \eqref{normEqual4} over all $K\in \Ct_{h,B}^n$ and using \eqref{normEqual1} yield  \eqref{eq:trace2}. Taking the sum of \eqref{normEqual5} over all $K\in \Ct_{h,B}^n$ and use \eqref{normEqual2} yield \eqref{eq:trace3}.
\end{proof}
\vspace{1mm}

\begin{lemma}\label{lem:ch-up}
	For any $\ul{\Bv_h}\in {\boldsymbol{\Cv}}^n_h$ and $\ul{q_h}\in \Cq^n_h$, there hold
	\begin{align*}
		&h^2\sum_{i=1,2}\nu_i\NLtwov[\Omega^n_{h,i}]{\Delta_h\Bv_{h,i} }^2
		\lesssim \N{\nu^{\frac12}\nabla\ul{\Bv_h}}_{0,\Omega^n_\eta}^2
		+\mathscr{J}_{\Bu}^n(\ul{\Bv_h},\ul{\Bv_h}),\\
		&h^2\sum_{i=1,2}\nu_i^{-1}\NLtwo[\Omega^n_{h,i}]{\nabla q_{h,i}}^2
		\lesssim \tN{\ul{q_h}}_{\Cq}^2.
	\end{align*}
\end{lemma}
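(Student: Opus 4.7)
The plan is to combine a standard elementwise inverse inequality with the norm-equivalence results \eqref{normEqual1}--\eqref{normEqual2} that were already established in the proof of Lemma~\ref{lem:trace2}. Since $\Bv_{h,i}\big|_K\in\BQ_k(K)$ and $q_{h,i}\big|_K\in Q_{k-1}(K)$ on each square $K\in\Ct_h$, the scaling (inverse) inequality on reference elements gives
\begin{equation*}
h^{2}\NLtwov[K]{\Delta \Bv_{h,i}}^{2}\lesssim \NLtwov[K]{\nabla\Bv_{h,i}}^{2},\qquad
h^{2}\NLtwov[K]{\nabla q_{h,i}}^{2}\lesssim \NLtwo[K]{q_{h,i}}^{2}.
\end{equation*}
Summing over $K\in\Ct^n_{h,i}$ and using the definition of $\Delta_h$ yields
\begin{equation*}
h^{2}\NLtwov[\Omega^n_{h,i}]{\Delta_h\Bv_{h,i}}^{2}\lesssim
\NLtwov[\Omega^n_{h,i}]{\nabla \Bv_{h,i}}^{2},\qquad
h^{2}\NLtwo[\Omega^n_{h,i}]{\nabla q_{h,i}}^{2}\lesssim
\NLtwo[\Omega^n_{h,i}]{q_{h,i}}^{2}.
\end{equation*}

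Next I would multiply by $\nu_i$ (resp.\ $\nu_i^{-1}$), sum over $i=1,2$, and invoke the norm-equivalence estimates \eqref{normEqual1} (with $\mu=1$) and \eqref{normEqual2} just established in the proof of Lemma~\ref{lem:trace2}; these transfer the norm over the enlarged domain $\Omega^n_{h,i}$ to a norm over the inner domain $\omega^n_{h,i}$ at the cost of the jump penalties $\mathscr{J}^n_{\Bu}$ and $\mathscr{J}^n_p$, respectively. Since $\Ct^n_{h,i}\setminus\Ct^n_{h,B}$ consists only of squares uncut by $\Gamma^n_\eta$ that meet $\bar\Omega^n_{\eta,i}$, we have the inclusion $\omega^n_{h,i}\subset \Omega^n_{\eta,i}$ (see Fig.~\ref{fig:test}), so
\begin{equation*}
\sum_{i=1,2}\nu_i\NLtwov[\omega^n_{h,i}]{\nabla\Bv_{h,i}}^{2}
\le \big\|\nu^{\frac12}\nabla\ul{\Bv_h}\big\|_{0,\Omega^n_\eta}^{2},\qquad
\sum_{i=1,2}\nu_i^{-1}\NLtwo[\omega^n_{h,i}]{q_{h,i}}^{2}
\le \big\|\nu^{-\frac12}\ul{q_h}\big\|_{0,\Omega^n_\eta}^{2}.
\end{equation*}
Chaining the three displays proves the first estimate at once, and the second estimate follows after recalling the definition \eqref{eq:norm-Q0} of $\tN{\ul{q_h}}_{\Cq}$.

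There is no real obstacle here: the argument is purely local once \eqref{normEqual1}--\eqref{normEqual2} are available, and those are precisely the ``ghost-penalty'' type identities that Assumption~\ref{ass-2} was designed to guarantee. The only mild point worth explicit comment is that the inverse inequalities must be applied on uncut physical elements $K$ (not on $K\cap\Omega^n_{\eta,i}$), which is why the intermediate norm is taken over $\Omega^n_{h,i}$ rather than $\Omega^n_{\eta,i}$; controlling the part of $\Omega^n_{h,i}$ lying outside $\Omega^n_{\eta,i}$ is exactly the role played by the interface-zone penalties $\mathscr{J}^n_{\Bu}$ and $\mathscr{J}^n_p$.
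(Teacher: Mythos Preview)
Your proposal is correct and is exactly the argument the paper has in mind: the paper's one-line proof simply says the lemma ``is a direct result of \eqref{normEqual1}--\eqref{normEqual2} and inverse estimates,'' and your write-up spells this out by applying the elementwise inverse inequality on each $K\in\Ct^n_{h,i}$, summing, and then invoking \eqref{normEqual1} (with $\mu=1$) and \eqref{normEqual2} together with $\omega^n_{h,i}\subset\Omega^n_{\eta,i}$.
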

\begin{proof}
	The lemma is a direct result of \eqref{normEqual1}--\eqref{normEqual2} and inverse estimates.
\end{proof}
\vspace{1mm}

\begin{corollary}
	For any $\ul{\Bv_h}\in {\boldsymbol{\Cv}}^n_h$ and $\ul{q_h}\in \Cq^n_h$, there hold
	\begin{align}
		\N{\ul{\Bv_h}}_{{\boldsymbol{\Cv}}}^2
		\lesssim \tN{\ul{\Bv_h}}_{{\boldsymbol{\Cv}}}^2 &\eqsim
		\sum\limits_{i=1,2}\nu_i \SNHonev[\Omega_{h,i}^n]{\Bv_{h,i}}^2
		+ \mathscr{J}^n_0(\ul{\Bv_h},\ul{\Bv_h}),\label{norm-equi0}\\
		\N{\ul{q_h}}^2_{\Cq} \lesssim \tN{\ul{q_h}}_{\Cq}^2
		&\eqsim\sum_{i=1}^2\nu_i^{-1}\N{q_{h,i}}^2_{L^2(\Omega_{h,i}^n)}.\label{eq:qh eqsim}
	\end{align}
\end{corollary}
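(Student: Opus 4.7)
The plan is to prove the two chains of inequalities in sequence by combining Lemma~\ref{lem:trace2}, the interior/extended norm equivalences \eqref{normEqual1}--\eqref{normEqual2}, and standard polynomial inverse estimates on elements of $\Ct_h$.

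\textbf{Velocity estimate.} First I would establish $\N{\ul{\Bv_h}}_{\boldsymbol{\Cv}}^2\lesssim\tN{\ul{\Bv_h}}_{\boldsymbol{\Cv}}^2$ simply by replacing the boundary average term $h\avg{\nu}^{-1}\N{\Avg{\nu\partial_\Bn\ul{\Bv_h}}}_{\BL^2(\Gamma_\eta^n)}^2$ in the definition of $\N{\cdot}_{\boldsymbol{\Cv}}$ using the trace bound \eqref{eq:trace2} from Lemma~\ref{lem:trace2}; the remaining two terms agree with those in $\tN{\cdot}_{\boldsymbol{\Cv}}$. For the equivalence $\tN{\ul{\Bv_h}}_{\boldsymbol{\Cv}}^2\eqsim \sum_{i}\nu_i\SNHonev[\Omega_{h,i}^n]{\Bv_{h,i}}^2+\mathscr{J}_0^n(\ul{\Bv_h},\ul{\Bv_h})$, I would argue both directions. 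The lower bound on $\tN{\cdot}_{\boldsymbol{\Cv}}$ uses $\mu=1$ in \eqref{normEqual1}: since $\omega_{h,i}^n\subset\Omega_{\eta,i}^n$, one has
\begin{equation*}
\sum_{i=1,2}\nu_i\SNHonev[\Omega_{h,i}^n]{\Bv_{h,i}}^2
\lesssim\sum_{i=1,2}\nu_i\N{\nabla\Bv_{h,i}}_{\BL^2(\Omega_{\eta,i}^n)}^2+\mathscr{J}_\Bu^n(\ul{\Bv_h},\ul{\Bv_h})
=\N{\nu^{\frac12}\nabla\ul{\Bv_h}}_{0,\Omega_\eta^n}^2+\mathscr{J}_\Bu^n(\ul{\Bv_h},\ul{\Bv_h}),
\end{equation*}
and the $\mathscr{J}_0^n$ term is already present in $\tN{\cdot}_{\boldsymbol{\Cv}}$. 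For the reverse direction, note that $\Omega_{\eta,i}^n\subset\Omega_{h,i}^n$ gives $\N{\nu^{\frac12}\nabla\ul{\Bv_h}}_{0,\Omega_\eta^n}^2\le \sum_i\nu_i\SNHonev[\Omega_{h,i}^n]{\Bv_{h,i}}^2$, while the jump penalty $\mathscr{J}_\Bu^n$ is controlled by a standard inverse/trace inequality: on each $K\in\Ct_h$ and each edge $E\subset\partial K$, the polynomial inverse estimate yields
\begin{equation*}
h^{2l-1}\N{\partial_\Bn^l \Bv_{h,i}}_{\BL^2(E)}^2\lesssim \N{\nabla\Bv_{h,i}}_{\BL^2(K)}^2,\qquad 1\le l\le k,
\end{equation*}
and summing over $E\in\Ce_{i,B}^n$ and $l$ gives $\mathscr{J}_\Bu^n(\ul{\Bv_h},\ul{\Bv_h})\lesssim \sum_i\nu_i\SNHonev[\Omega_{h,i}^n]{\Bv_{h,i}}^2$.

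\textbf{Pressure estimate.} The argument for $\ul{q_h}$ is parallel. The bound $\N{\ul{q_h}}_\Cq^2\lesssim\tN{\ul{q_h}}_\Cq^2$ is exactly \eqref{eq:trace3} combined with the obvious identity on the $L^2$ part. For $\tN{\ul{q_h}}_\Cq^2\eqsim \sum_i\nu_i^{-1}\NLtwo[\Omega_{h,i}^n]{q_{h,i}}^2$, one direction follows from \eqref{normEqual2}:
\begin{equation*}
\sum_{i=1,2}\nu_i^{-1}\N{q_{h,i}}_{L^2(\Omega_{h,i}^n)}^2
\lesssim\sum_{i=1,2}\nu_i^{-1}\N{q_{h,i}}_{L^2(\Omega_{\eta,i}^n)}^2+\mathscr{J}_p^n(\ul{q_h},\ul{q_h})
=\tN{\ul{q_h}}_\Cq^2,
\end{equation*}
and the reverse uses $\Omega_{\eta,i}^n\subset\Omega_{h,i}^n$ together with the inverse estimate $h^{2l+1}\N{\partial_\Bn^l q_{h,i}}_{L^2(E)}^2\lesssim\N{q_{h,i}}_{L^2(K)}^2$ to absorb $\mathscr{J}_p^n$.

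\textbf{Main obstacle.} None of the steps is deep; the bookkeeping issue to watch is that $\mathscr{J}_\Bu^n$ and $\mathscr{J}_p^n$ are summed only over edges in $\Ce_{i,B}^n$, i.e.\ edges of interface-touching elements, so Assumption~\ref{ass-2} is implicitly needed to ensure that \eqref{normEqual1}--\eqref{normEqual2} can be applied; beyond that, the inverse-estimate bounds on $\mathscr{J}_\Bu^n$ and $\mathscr{J}_p^n$ are local and valid on every mesh edge. Handling the factors $\nu_i$ and $\nu_i^{-1}$ consistently on each phase is purely notational, since the weights act as piecewise constants.
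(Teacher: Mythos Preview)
Your proposal is correct and follows essentially the same approach as the paper: the first inequalities come from Lemma~\ref{lem:trace2}, and the norm equivalences are obtained by combining \eqref{normEqual1}--\eqref{normEqual2} with elementwise inverse estimates to control $\mathscr{J}_{\Bu}^n$ and $\mathscr{J}_p^n$. The only cosmetic difference is that the paper writes the inverse estimate for the jump $\jump{\partial_{\Bn}^l \Bv_{h,i}}$ on an edge $E$ in terms of the two neighbouring elements $K_1\cup K_2$ rather than a single $K$, but the argument is the same.
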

\begin{proof}
	The inequalities $\N{\ul{q_h}}_{\Cq} \lesssim \tN{\ul{q_h}}_{\Cq}$ and  $\N{\ul{\Bv_h}}_{{\boldsymbol{\Cv}}}
	\lesssim \tN{\ul{\Bv_h}}_{{\boldsymbol{\Cv}}}$ follow directly from Lemma~\ref{lem:trace2}.
	For $i=1,2$ and $1\le l\le k$, using inverse estimates, we have
	\begin{equation*}
		h^{2l-1}\int_{E}\SN{\jump{\partial_\Bn^l \Bv_{h,i}}}^2
		\lesssim h^{2l-2}\N{\nabla^l\Bv_{h,i}}_{\BL^{2}(K_1\cup K_2)}^2
		\lesssim\N{\nabla\Bv_{h,i}}_{\BL^{2}(K_1\cup K_2)}^2,\;
	\end{equation*}
	where $E\in \Ce^n_{i,B}$, $K_1,K_2\in\Ct^n_{h,i}$ are the two elements sharing $E$. Together with \eqref{normEqual1}, it yields
	\begin{equation*}
		\sum_{i=1,2}\nu_i\NLtwov[\Omega_{h,i}^n]{\nabla\Bv_{h,i}}^2\eqsim
		\dN{\nu^{\frac12}\nabla\ul{\Bv_h}}_{0,\Omega^n_\eta}^2
		+ \mathscr{J}_{\Bu}^n(\ul{\Bv_h},\ul{\Bv_h}),
	\end{equation*}
	which shows
	the equivalence $\tN{\ul{\Bv_h}}_{{\boldsymbol{\Cv}}} \eqsim
	\sum\limits_{i=1,2}\nu_i \SNHonev[\Omega_{h,i}^n]{\Bv_{h,i}}^2
	+ \mathscr{J}^n_0(\ul{\Bv_h},\ul{\Bv_h})$.
	Similarly, we can prove
	\begin{equation*}
		\sum_{i=1,2}\nu_i^{-1}\NLtwo[\Omega_{h,i}^n]{q_{h,i}}^2	\eqsim
		\tN{\ul{q_h}}_{\Cq}^2 .
	\end{equation*}
	The proof is finished.
\end{proof}

\subsection{Inf-sup conditions}

Inf-sup conditions play the key role in proving the well-posedness of mixed problems. In \cite[Lemma~3.11]{han14}, Hansbo, Larson, and Azhedi proved the inf-sup condition for mixed finite element approximation to Stokes interface problem, where the velocity is discretized with linear finite elements on a triangular mesh of size $h/2$, while the pressure is discretized with linear finite elements on a triangular mesh of size $h$.
Here we follow them to establish the inf-sup condition on $\boldsymbol{\Cv}^n_h\times \Cq^n_h$.
We also refer to \cite{ols06} which studies mixed finite element method for Stokes interface problem on body-fitted meshes.

\begin{lemma} \label{lem:infsup}
	Let Assumption~\ref{ass-2} be satisfied and let $h$ be small enough. For any $\ul{q_h}\in\Cq^n_h$, there exists a function
	$\ul{\Bv_h}\equiv (\Vv_h|_{\Omega^n_{h,1}},\Vv_h|_{\Omega^n_{h,2}})$,
	$\Vv_h\in\BV_{h,0}(k,\Omega)$ satisfies such that
	\begin{align*}
		\tN{\ul{\Bv_h}}_{\boldsymbol{\Cv}}^2
		\lesssim \tN{\ul{q_h}}_{\Cq}^2 \lesssim
		\Aprod[\Omega^n_\eta]{\Div\ul{\Bv_h},\ul{q_h}}
		+ \mathscr{J}^n_p(\ul{q_h},\ul{q_h}) .
	\end{align*}
\end{lemma}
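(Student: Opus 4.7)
The plan is to establish the discrete inf-sup via Fortin's device: construct a globally continuous test velocity $\Vv$ by a weighted Bogovskii-type solve, then project it onto $\BV_{h,0}(k,\Omega)$ by a Scott-Zhang quasi-interpolation to obtain $\Vv_h$, and finally bound both sides of the claim using Lemmas~\ref{lem:trace2}--\ref{lem:ch-up} together with the residual-based penalty $\mathscr{J}_p^n$. The overall structure follows the Stokes interface analysis of Hansbo, Larson and Zahedi~\cite{han14}, adapted to the high-order, multi-edge-jump penalties used here.

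Step 1 (continuous lift). For each $i$, let $\bar c_i:=|\Omega^n_{\eta,i}|^{-1}\int_{\Omega^n_{\eta,i}}q_{h,i}$ and apply the classical Bogovskii theorem on the Lipschitz sub-domain $\Omega^n_{\eta,i}$ to obtain $\Vv_i\in\zbHonev[\Omega^n_{\eta,i}]$ with $\Div\Vv_i=q_{h,i}-\bar c_i$ and $|\Vv_i|_{\BH^1(\Omega^n_{\eta,i})}\lesssim\|q_{h,i}-\bar c_i\|_{L^2(\Omega^n_{\eta,i})}$. The constraint $\sum_i(q_{h,i},\nu_i^{-1})_{\Omega^n_{\eta,i}}=0$ built into $\Cq^n_h$ is precisely the compatibility condition for a second, global, Bogovskii solve producing $\Vw\in\zbHonev$ with $\Div\Vw=\nu_i^{-1}\bar c_i$ on $\Omega^n_{\eta,i}$. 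Because each $\Vv_i$ vanishes on $\Gamma^n_\eta$, the rescaled patchwork
\begin{equation*}
\Vv:=\nu_i^{-1}\Vv_i+\Vw \quad\text{in }\Omega^n_{\eta,i},\; i=1,2,
\end{equation*}
lies in $\zbHonev$ and satisfies $\Div\Vv=\nu_i^{-1}q_{h,i}$ in each sub-domain. Using the linear relation between $\bar c_1$ and $\bar c_2$ forced by the weighted-mean constraint, the $\nu$-weights cancel and one obtains the $\nu$-independent estimate $\|\nu^{1/2}\nabla\ul{\Vv}\|_{0,\Omega^n_\eta}^2\lesssim\|\nu^{-1/2}\ul{q_h}\|_{0,\Omega^n_\eta}^2$.

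Step 2 (discrete lift and upper bound). Let $\Pi_h:\zbHonev\to\BV_{h,0}(k,\Omega)$ be a Scott-Zhang quasi-interpolant and set $\Vv_h:=\Pi_h\Vv$. Global continuity of $\Vv_h$ forces $\mathscr{J}_0^n(\ul{\Vv_h},\ul{\Vv_h})=0$, and elementwise inverse/trace estimates on the edges of $\Ce^n_{i,B}$ absorb $\mathscr{J}_\Bu^n(\ul{\Vv_h},\ul{\Vv_h})$ into $\|\nu^{1/2}\nabla\ul{\Vv_h}\|_{0,\Omega^n_\eta}^2$. Combining Scott-Zhang $H^1$-stability with Step~1 yields the first inequality
\begin{equation*}
\tN{\ul{\Vv_h}}_{\boldsymbol{\Cv}}^2\lesssim\dN{\nu^{1/2}\nabla\ul{\Vv_h}}_{0,\Omega^n_\eta}^2\lesssim\dN{\nu^{-1/2}\ul{\Vv}}_{\boldsymbol{\Cv}}^2+\text{penalty terms}\lesssim\tN{\ul{q_h}}_{\Cq}^2.
\end{equation*}

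Step 3 (lower bound, the hard part). Decompose
\begin{equation*}
\Aprod[\Omega^n_\eta]{\Div\ul{\Vv_h},\ul{q_h}}=\Aprod[\Omega^n_\eta]{\Div\Vv,\ul{q_h}}+\Aprod[\Omega^n_\eta]{\Div(\Vv_h-\Vv),\ul{q_h}}.
\end{equation*}
By construction the first term equals $\|\nu^{-1/2}\ul{q_h}\|_{0,\Omega^n_\eta}^2$, which together with the equivalence in \eqref{eq:qh eqsim} and the forthcoming penalty already gives a lower bound proportional to $\tN{\ul{q_h}}_{\Cq}^2-\mathscr{J}_p^n(\ul{q_h},\ul{q_h})$. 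For the error term, piecewise integration by parts on each $\Omega^n_{\eta,i}$ (legitimate since $\Vv_h-\Vv\in\zbHonev$) produces a volume contribution $-\sum_i(\Vv_h-\Vv,\nabla q_{h,i})_{\Omega^n_{\eta,i}}$ and an interface contribution $\int_{\Gamma^n_\eta}(\Vv_h-\Vv)\cdot\Bn\,\jump{\ul{q_h}}$. The volume piece is absorbed into $\tfrac12\|\nu^{-1/2}\ul{q_h}\|_{0,\Omega^n_\eta}^2$ via Cauchy-Schwarz, the Scott-Zhang bound $\|\Vv_h-\Vv\|_{L^2(K)}\lesssim h|\Vv|_{\BH^1(\tilde K)}$, Lemma~\ref{lem:ch-up} in the form $h\,\|\nu^{-1/2}\nabla q_{h,i}\|_{L^2(\Omega^n_{h,i})}\lesssim\tN{\ul{q_h}}_{\Cq}$, and Young's inequality, provided $h$ is small. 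The interface integral is treated similarly using the trace approximation $\|\Vv_h-\Vv\|_{L^2(\Gamma^n_\eta\cap K)}\lesssim h^{1/2}|\Vv|_{\BH^1(\tilde K)}$ and the element-chain mechanism of Assumption~\ref{ass-2}, which (in the spirit of \eqref{norm-equal-q}) allows us to trade $\|\jump{\ul{q_h}}\|_{L^2(\Gamma^n_\eta)}$ for bulk $L^2$-norms plus the penalty $\mathscr{J}_p^n(\ul{q_h},\ul{q_h})$. This is precisely why $\mathscr{J}_p^n$ appears on the right of the stated inequality. The main obstacle is therefore the interface integral on $\Gamma^n_\eta$: its jump factor is not one of the edge quantities directly penalized, and controlling it while keeping constants free of the viscosity contrast $\nu_1/\nu_2$ is exactly what forces the rescaled decomposition $\Vv=\nu_i^{-1}\Vv_i+\Vw$ in Step~1, so that each $\nu_i^{-1}$ meets a matching $\nu_i$ from the $\boldsymbol{\Cv}$-norm and cancels cleanly.
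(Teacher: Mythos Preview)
Your Step~3 contains a genuine gap: the volume error term is not small in $h$. After integrating by parts you bound
\[
\big|(\Vv_h-\Vv,\nabla q_{h,i})_{\Omega^n_{\eta,i}}\big|
\le \|\Vv_h-\Vv\|_{L^2(\Omega^n_{\eta,i})}\,\|\nabla q_{h,i}\|_{L^2(\Omega^n_{\eta,i})}.
\]
Since the Bogovski\u{\i} lift $\Vv$ lies only in $\zbHonev$, Scott--Zhang gives at best $\|\Vv_h-\Vv\|_{L^2}\lesssim h\,|\Vv|_{H^1}$, while Lemma~\ref{lem:ch-up} (or a plain inverse estimate) yields $\|\nabla q_{h,i}\|_{L^2}\lesssim h^{-1}\|q_{h,i}\|_{L^2}$. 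The two powers of $h$ cancel and the product is of size $\tN{\ul{q_h}}_{\Cq}^2$, with a constant you do not control; the clause ``provided $h$ is small'' has no leverage. Exactly the same cancellation occurs in the interface term, where $\|\Vv_h-\Vv\|_{L^2(\Gamma^n_\eta)}\lesssim h^{1/2}|\Vv|_{H^1}$ meets $\|\jump{\ul{q_h}}\|_{L^2(\Gamma^n_\eta)}\lesssim h^{-1/2}\sum_i\|q_{h,i}\|_{L^2(\Omega^n_{h,i})}$. This is the classical obstruction to proving the Taylor--Hood inf--sup by a one-shot Scott--Zhang Fortin operator; a working Fortin argument for $Q_k$--$Q_{k-1}$ needs an additional local correction step (or a Verf\"urth/macro-element argument), not just $H^1$-stability and $O(h)$ approximation.

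The paper proceeds differently. It does \emph{not} go through a continuous Bogovski\u{\i} lift. Instead it invokes the discrete $Q_k$--$Q_{k-1}$ inf--sup of Brezzi--Falk~\cite{bre91} directly on each interior region $\omega^n_{h,i}$ (which is well away from $\Gamma^n_\eta$). The pressure is split as $\ul{q_h}=\ul{q_0}+\ul{\hat q_h}$, where $\ul{q_0}$ is the projection onto the one-dimensional space $\Span{(\nu_1|\Omega^n_{\eta,1}|^{-1},-\nu_2|\Omega^n_{\eta,2}|^{-1})}$; a global $\Vv_0\in\BV_{h,0}(k,\Omega)$ handles $\ul{q_0}$ as in \cite[Lemma~3.9]{han14}, and the discrete inf--sup on $\omega^n_{h,i}$ produces $\ul{\hat\Bv_h}$ supported in $\omega^n_{h,i}$ that controls $\ul{\hat q_h}$ there. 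The ghost penalty $\mathscr{J}^n_p$ then transfers control of $\ul{\hat q_h}$ from $\omega^n_{h,i}$ to all of $\Omega^n_{h,i}$ via \eqref{normEqual2}. The role of $\mathscr{J}^n_p$ is thus to bridge the interior-to-fictitious gap, not to tame an interface integral arising from a Fortin residual.
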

\begin{proof}
	In \cite[Theorem~3.2]{bre91}, Brezzi and Falk proved the inf-sup condition for the $Q_k$--$Q_{k-1}$ Hood-Taylor triangular finite elements.
	A careful inspection shows that the theorem holds for any rectangular mesh in which each element has at most two edges on the boundary of domain. Therefore, we have, for any $\xi_h\in V_{h}(k-1,\omega^n_{h,i})\cap L^2_0(\omega^n_{h,i})$,
	\begin{align}\label{infsup-1}
		\sup_{\Bw_h\in \BV_{h}(k,\omega^n_{h,i})\cap\zbHonev[\omega^n_{h,i}]}
		\frac{(\Div\Bw_h,\xi_h)_{\omega^n_{h,i}}}{\NHonev[\omega^n_{h,i}]{\Bw_h}}
		\gtrsim \NLtwo[\omega^n_{h,i}]{\xi_h},\quad
		\quad i=1,2.
	\end{align}
	The rest of the proof uses \eqref{infsup-1} and is parallel to the proof of \cite[Lemma~3.11]{han14} where the inf-sup condition is proven for linear finite elements on triangular meshes. Here we only sketch the proof and omit the details.
	
	{\bf Step~1.} Define $\sigma_1\equiv \nu_1\SN{\Omega^n_{\eta,1}}^{-1}$ and $\sigma_2\equiv -\nu_2\SN{\Omega^n_{\eta,2}}^{-1}$ and write $\ul\sigma= (\sigma_1,\sigma_2)$.
	Let $\ul{q_0}$ be the projection of $\ul{q_h}$ onto the one-dimensional space $\Span{\ul{\sigma}}$ under the inner product $\Aprod[\Omega^n_\eta]{\nu^{-1}\cdot,\cdot}$. Following the proof of \cite[Lemma~3.9]{han14}, we can find a function $\Vv_0\in\BV_{h,0}(k,\Omega)$ such that
	$\ul{\Bv_0}\equiv (\Vv_0|_{\Omega^n_{h,1}},\Vv_0|_{\Omega^n_{h,2}})$ satisfies
	\begin{align*}
		\tN{\ul{\Bv_0}}_{\boldsymbol{\Cv}}^2\lesssim
		\big\|\nu^{-\frac12}\ul{q_0}\big\|_{0,\Omega^n_\eta}^2
		\leq \tN{\ul{q_0}}_{\Cq}^2
		\lesssim \Aprod[\Omega^n_\eta]{\Div\ul{\Bv_0},\ul{q_0}}.
	\end{align*}

	{\bf Step~2.} For $\ul{\hat q_h}=\ul{q_h} -\ul{q_0}$, following the proof of \cite[Lemma~3.10]{han14} and using \eqref{eq:qh eqsim}, we can find a $\ul{\hat\Bv_h}\in\boldsymbol{\Cv}^n_h$ which satisfies $\hat\Bv_{h,i}=0$ in $\Omega^n_{h,i}\backslash\omega^n_{h,i}$, $i=1,2$, and
	\begin{align*}
		\tN{\ul{\hat\Bv_h}}_{\boldsymbol{\Cv}}^2\lesssim
		\tN{\ul{\hat{q}_h}}_{\Cq}^2\eqsim
		\sum_{i=1,2}\nu^{-1}_i\NLtwo[\Omega^n_{h,i}]{\hat q_{h,i}}^2
		\lesssim \Aprod[\Omega^n_\eta]{\Div\ul{\hat\Bv_h},\ul{\hat q_h}}
		+ \mathscr{J}^n_p(\ul{\hat q_h},\ul{\hat q_h}).
	\end{align*}
	Define $\hat\Vv_h=\hat\Bv_{h,1}$ in $\Omega^n_{h,1}$ and $\hat\Vv_h=\hat\Bv_{h,2}$ elsewhere. It is clear that $\hat\Vv_h\in\BV_{h,0}(k,\Omega)$.

	{\bf Step~3.} Define $\ul{\Bv_h}=\ul{\Bv_0}+\alpha \ul{\hat\Bv_h}$ and $\ul{q_h} = \ul{q_0}+ \ul{\hat q_h}$ for some $\alpha>0$. By the Cauchy-Schwarz inequality and the equality $\Aprod[\Omega^n_\eta]{\Div\ul{\hat\Bv_h},\ul{q_0}}=0$, we have
	\begin{align*}
		\Aprod[\Omega^n_\eta]{\Div\ul{\Bv_h},\ul{q_h}}
		=\,&\Aprod[\Omega^n_\eta]{\Div\ul{\Bv_0},\ul{q_0}}
		+\alpha\Aprod[\Omega^n_\eta]{\Div\ul{\hat\Bv_h},\ul{\hat q_h}}
		+\Aprod[\Omega^n_\eta]{\Div\ul{\Bv_0},\ul{\hat q_h}}\\
		\ge\,& C_0 \tN{\ul{q_0}}_{\Cq}^2
		+C_1\alpha \tN{\ul{\hat q_h}}_{\Cq}^2
		-\alpha \mathscr{J}^n_p(\ul{\hat q_h},\ul{\hat q_h})
		-C_2\tN{\ul{q_0}}_{\Cq}\tN{\ul{\hat q_h}}_{\Cq} \\
		\ge\,&\big(C_0-2C_1^{-1}C_2^2\alpha^{-1}\big) \tN{\ul{q_0}}_{\Cq}^2
		+0.5C_1\alpha\tN{\ul{\hat q_h}}_{\Cq}^2
		-\alpha \mathscr{J}^n_p(\ul{\hat q_h},\ul{\hat q_h}),
	\end{align*}
	where $C_0,C_1,C_2$ are positive constants independent of $h$, $\tau$, and $\nu$.
	Since $\mathscr{J}^n_p(\ul{\hat q_h},\ul{\hat q_h})=\mathscr{J}^n_p(\ul{q_h},\ul{q_h})$,
	choosing $\alpha=4C_2^2/(C_0C_1)$ and using \eqref{normEqual2}, we find that
	\begin{align*}
		\tN{\ul{q_h}}_{\Cq}^2\le\,&
		2\tN{\ul{q_0}}_{\Cq}^2+2\tN{\ul{\hat q_h}}_{\Cq}^2
		\lesssim\Aprod[\Omega^n_\eta]{\Div\ul{\Bv_h},\ul{q_0}}
		+\mathscr{J}^n_p(\ul{q_h},\ul{q_h}) ,\\
		\tN{\ul{\Bv_h}}_{\boldsymbol{\Cv}}^2 \le\,&
		2\tN{\ul{\Bv_0}}_{\boldsymbol{\Cv}}^2 +
		2\tN{\ul{\hat\Bv_h}}_{\boldsymbol{\Cv}}^2 \lesssim
		\big\|\nu^{-\frac12}\ul{q_0}\big\|_{0,\Omega^n_\eta}^2
		+\tN{\ul{\hat q_h}}_{\Cq}^2
		\leq \tN{\ul{q_h}}_{\Cq}^2.
	\end{align*}
	Then $\Vv_h:=\Vv_0+\alpha\hat\Vv_h\in\BV_{h,0}(k,\Omega)$ and satisfies
	$\Vv_h=\Bv_{h,i}$ in $\Omega^n_{h,i}$ for $i=1,2$.
\end{proof}
\vspace{1mm}

\begin{lemma}\label{lem:Ah}
	Suppose $\gamma_0$ is large enough. For any $\ul{\Bu_h}, \ul{\Bv_h}\in {\boldsymbol{\Cv}}_h^n$
	and $\ul{q_h}\in\Cq_h^n$,
	\begin{align*}
		\SN{\mathscr{B}^n_0(\ul{\Bv_h},\ul{p_h})}
		\lesssim\,& \N{\ul{\Bv_h}}_{\boldsymbol{\Cv}}\N{\ul{q_h}}_{\Cq},\\
		\SN{\mathscr{A}_h^{n}(\ul{\Bu_h},\ul{\Bv_h})}
		\lesssim\,& \tN{\ul{\Bu_h}}_{\boldsymbol{\Cv}}
		\tN{\ul{\Bv_h}}_{\boldsymbol{\Cv}}, \qquad
		\mathscr{A}_h^n(\ul{\Bv_h},\ul{\Bv_h}) \ge
		0.9\tN{\ul{\Bv_h}}_{\boldsymbol{\Cv}}^2.
	\end{align*}
\end{lemma}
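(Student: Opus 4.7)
The three estimates proceed in sequence. For continuity of $\mathscr{B}_0^n$, I would split the form into its volume and interface pieces. The volume piece is paired via weighted Cauchy--Schwarz, writing $(\Div\Bv_{h,i})\,q_{h,i} = (\nu_i^{1/2}\Div\Bv_{h,i})(\nu_i^{-1/2}q_{h,i})$, so it is bounded by $\dN{\nu^{1/2}\nabla\ul{\Bv_h}}_{0,\Omega^n_\eta}\dN{\nu^{-1/2}\ul{q_h}}_{0,\Omega^n_\eta}$. For the interface piece, the scaling $(\avg{\nu}/h)^{1/2}\cdot(h/\avg{\nu})^{1/2}$ pairs $\jump{\ul{\Bv_h}}$ against $\Avg{\ul{q_h}}$, matching exactly the jump terms entering $\N{\ul{\Bv_h}}_{\boldsymbol{\Cv}}$ (via $\mathscr{J}_0^n$) and $\N{\ul{q_h}}_{\Cq}$.

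For the continuity bound on $\mathscr{A}_h^n$, the volume integral $\Aprod[\Omega^n_\eta]{\nu\nabla\ul{\Bu_h},\nabla\ul{\Bv_h}}$ and the two jump forms $\mathscr{J}_0^n$, $\mathscr{J}_{\Bu}^n$ pair directly against the corresponding pieces of $\tN{\cdot}_{\boldsymbol{\Cv}}$ by Cauchy--Schwarz in their respective inner products. The flux term $\mathscr{F}^n$ splits into two symmetric pieces; for each I would insert the scaling factor $(\avg{\nu}/h)^{1/2}(h/\avg{\nu})^{1/2}$, pairing $\Avg{\nu\partial_\Bn\ul{\cdot}}$ against $\jump{\ul{\cdot}}$, and invoke inequality \eqref{eq:trace2} of Lemma~\ref{lem:trace2} on the normal-derivative factor so both factors are controlled by $\tN{\cdot}_{\boldsymbol{\Cv}}$.

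The substantive part is coercivity. Setting $\ul{\Bu_h}=\ul{\Bv_h}$ and noting that $\mathscr{F}^n$ becomes symmetric, one has
\[
\mathscr{A}_h^n(\ul{\Bv_h},\ul{\Bv_h}) = \dN{\nu^{1/2}\nabla\ul{\Bv_h}}_{0,\Omega^n_\eta}^2 + \mathscr{J}_0^n(\ul{\Bv_h},\ul{\Bv_h}) + \mathscr{J}_{\Bu}^n(\ul{\Bv_h},\ul{\Bv_h}) - 2\int_{\Gamma^n_\eta}\Avg{\nu\partial_\Bn\ul{\Bv_h}}\cdot\jump{\ul{\Bv_h}},
\]
so the first three terms are already $\tN{\ul{\Bv_h}}_{\boldsymbol{\Cv}}^2$. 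Applying Young's inequality with a free parameter $\delta>0$ to the cross term,
\[
2\Big|\int_{\Gamma^n_\eta}\Avg{\nu\partial_\Bn\ul{\Bv_h}}\cdot\jump{\ul{\Bv_h}}\Big| \le \delta\,\frac{h}{\avg{\nu}}\N{\Avg{\nu\partial_\Bn\ul{\Bv_h}}}_{\BL^2(\Gamma^n_\eta)}^2 + \frac{1}{\delta}\,\frac{\avg{\nu}}{h}\N{\jump{\ul{\Bv_h}}}_{\BL^2(\Gamma^n_\eta)}^2.
\]
By \eqref{eq:trace2} the first term on the right is bounded by $C_{\mathrm{tr}}\delta\bigl(\dN{\nu^{1/2}\nabla\ul{\Bv_h}}_{0,\Omega^n_\eta}^2+\mathscr{J}_{\Bu}^n(\ul{\Bv_h},\ul{\Bv_h})\bigr)$ with $C_{\mathrm{tr}}$ the trace constant; the second equals $(\delta\gamma_0)^{-1}\mathscr{J}_0^n(\ul{\Bv_h},\ul{\Bv_h})$.

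The main obstacle, and the reason for requiring ``$\gamma_0$ large enough'', is choosing $\delta$ and $\gamma_0$ so that the combined loss from the two right-hand-side terms is at most $0.1\tN{\ul{\Bv_h}}_{\boldsymbol{\Cv}}^2$. Concretely, I would fix $\delta=1/(20C_{\mathrm{tr}})$, which gives $C_{\mathrm{tr}}\delta\le 0.05$, and then demand $\gamma_0 \ge 400\,C_{\mathrm{tr}}$, which yields $(\delta\gamma_0)^{-1}\le 0.05$. Subtracting, we obtain $\mathscr{A}_h^n(\ul{\Bv_h},\ul{\Bv_h})\ge 0.9\,\tN{\ul{\Bv_h}}_{\boldsymbol{\Cv}}^2$. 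The quantitative tracking of $C_{\mathrm{tr}}$ through Lemma~\ref{lem:trace2} is routine but is the delicate point; all remaining steps are textbook Cauchy--Schwarz manipulations.
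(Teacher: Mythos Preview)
Your proposal is correct and follows essentially the same route as the paper: the continuity of $\mathscr{B}_0^n$ and $\mathscr{A}_h^n$ by weighted Cauchy--Schwarz, and the coercivity by writing $\mathscr{A}_h^n(\ul{\Bv_h},\ul{\Bv_h})=\tN{\ul{\Bv_h}}_{\boldsymbol{\Cv}}^2-\mathscr{F}^n(\ul{\Bv_h},\ul{\Bv_h})$, then bounding $|\mathscr{F}^n|$ via Young's inequality with a free parameter combined with the trace estimate \eqref{eq:trace2}. The paper is terser (it calls the continuity of $\mathscr{B}_0^n$ ``obvious'' and leaves the choice of $\varepsilon$ and $\gamma_0$ implicit), whereas you track the constants explicitly, but the argument is the same.
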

\begin{proof}
	The continuity of $\mathscr{B}^n_0$ is obvious.
	By Lemma~\ref{lem:trace2}, it is clear that, for any $\varepsilon>0$,
	\begin{align*}
		\SN{\mathscr{F}^n(\ul{\Bv_h},\ul{\Bv_h})}
		\le\,& (\varepsilon h)^{-1}\avg{\nu}
		\N{\jump{\ul{\Bv_h}}}_{\BL^2(\Gamma_{\eta}^n)}^2
		+ \varepsilon h \avg{\nu}^{-1}
		\N{\Avg{\nu\partial_{\Bn}\ul{\Bv_h}}}_{\BL^2(\Gamma_\eta^n)}^2 \\
		\le\,& (\varepsilon^{-1}\gamma_0^{-1} +C_0\varepsilon)
		\tN{\ul{\Bv_h}}_{\boldsymbol{\Cv}}^2 .
	\end{align*}
	Choosing $\varepsilon$ small enough and $\gamma_0$ large enough such that
	$\varepsilon^{-1}\gamma_0^{-1} +C_0\varepsilon\le 0.1$, we have $\mathscr{A}_h^n\big(\ul{\Bv_h},\ul{\Bv_h}\big)
	= \tN{\ul{\Bv_h}}^2_{\boldsymbol{\Cv}}-\mathscr{F}^n(\ul{\Bv_h},\ul{\Bv_h})
	\ge 0.9\tN{\ul{\Bv_h}}^2_{\boldsymbol{\Cv}}$. The proof for the continuity of $\mathscr{A}^n_h$ is similar.
\end{proof}
\vspace{1mm}

\subsection{The well-posedness}

To favor the analysis, we rewrite \eqref{eq:disc} into an equivalent variational form: find $\big(\ul{\Bu_h},\ul{p_h}\big)\in{\boldsymbol{\Cv}}^n_h\times\Cq^n_h$ such that
\begin{align}\label{eq:disc-alpha}
	\mathscr{K}_1((\ul{\Bu_h},\ul{p_h}),(\ul{\Bv_h},\ul{q_h}))
	=\Cg(\ul{\Bv_h},\ul{q_h}),\qquad \forall\, (\ul{\Bv_h},\ul{q_h})\in{\boldsymbol{\Cv}}^n_h\times\Cq^n_h,
\end{align}
where
\begin{align*}
	&\mathscr{K}_1((\ul{\Bu_h},\ul{p_h}),(\ul{\Bv_h},\ul{q_h})) =
	\tau^{-1}\lambda_0^k\Aprod[\Omega^n_\eta]{\ul{\Bu_h},\ul{\Bv_h}}
	+\mathscr{A}_h^n(\ul{\Bu_h},\ul{\Bv_h})
	+\mathscr{B}_0^n(\ul{\Bv_h},\ul{p_h})\\ 
	&\hspace{37mm}
	-\mathscr{B}_0^n(\ul{\Bu_h},\ul{q_h})
	+\mathscr{J}_p^n(\ul{p_h},\ul{q_h})
	+ \gamma_1\nu_2 h^2 \mathscr{B}^n_1(\ul{\Bu_h},\ul{q_h})\\
	&\hspace{37mm}
	+\gamma_1\nu_2 h^2\Aprod[\Omega^n_\eta]{\nu^{-1}\nabla \ul{p_h},\nabla \ul{q_h}}, \\
	&\mathscr{B}^n_1(\ul{\Bu_h},\ul{q_h}) = \sum_{i=1,2} \sum_{K\in \Ct_h}
	\nu^{-1}_i\int_{K\cap \Omega_{\eta,i}^n}
	\left(\tau^{-1} \lambda_0^k\Bu_{h,i}- \nu_i \Delta \Bu_{h,i}\right)
	\cdot\nabla q_{h,i},\\
	&\Cg(\ul{\Bv_h},\ul{q_h}) =\sum_{i=1,2}\int_{\Omega^n_{\eta,i}}
	\Big(\Bf^n_i -\frac{1}{\tau}\sum_{j=1}^k\lambda_j^k
	\BU_{h,i}^{n-j,n}\Big)\cdot
	\left(\Bv_{h,i}-\gamma_1\nu_2 h^2\nu_i^{-1}\nabla q_{h,i}\right) .
\end{align*}

\begin{center}
	\fbox{\parbox{0.975\textwidth}{
			\begin{theorem} \label{thm:alpha}
				Suppose that $\gamma_0,\gamma_1^{-1}$ are large enough and $h=O(\tau)$. Then problem \eqref{eq:disc-alpha} has a unique solution.
	\end{theorem}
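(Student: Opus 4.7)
The plan is to exploit the fact that \eqref{eq:disc-alpha} is a square linear system on the finite-dimensional space $\boldsymbol{\Cv}^n_h\times\Cq^n_h$, so existence and uniqueness are equivalent, and it suffices to show that the homogeneous problem ($\Cg\equiv 0$) implies $(\ul{\Bu_h},\ul{p_h})=(\mathbf{0},0)$. I will do this in two stages: diagonal energy testing, combined with Lemma~\ref{lem:Ah}, Lemma~\ref{lem:ch-up}, and the hypothesis $h=O(\tau)$, will yield $\ul{\Bu_h}=\mathbf{0}$ together with $\mathscr{J}_p^n(\ul{p_h},\ul{p_h})=0$; the discrete inf-sup condition of Lemma~\ref{lem:infsup} will then close the argument for the pressure.

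First I would set $\Cg\equiv 0$ and substitute $(\ul{\Bv_h},\ul{q_h})=(\ul{\Bu_h},\ul{p_h})$ into \eqref{eq:disc-alpha}; the antisymmetric pair $\mathscr{B}_0^n(\ul{\Bu_h},\ul{p_h})-\mathscr{B}_0^n(\ul{\Bu_h},\ul{p_h})$ cancels, producing the energy identity
\[
\tau^{-1}\lambda_0^k\N{\ul{\Bu_h}}_{0,\Omega^n_\eta}^2+\mathscr{A}_h^n(\ul{\Bu_h},\ul{\Bu_h})+\mathscr{J}_p^n(\ul{p_h},\ul{p_h})+\gamma_1\nu_2 h^2\N{\nu^{-\frac12}\nabla\ul{p_h}}_{0,\Omega^n_\eta}^2=-\gamma_1\nu_2 h^2\mathscr{B}_1^n(\ul{\Bu_h},\ul{p_h}).
\]
By Lemma~\ref{lem:Ah} with $\gamma_0$ large, the second term is bounded below by $0.9\tN{\ul{\Bu_h}}_{\boldsymbol{\Cv}}^2$. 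I would then bound the right-hand side by Cauchy--Schwarz after splitting $\tau^{-1}\lambda_0^k\ul{\Bu_h}-\nu\Delta_h\ul{\Bu_h}$ into its two summands: Lemma~\ref{lem:ch-up} lets me trade $h^2\N{\nu^{\frac12}\Delta_h\ul{\Bu_h}}_{0,\Omega^n_\eta}^2$ for $\tN{\ul{\Bu_h}}_{\boldsymbol{\Cv}}^2$ (absorbable into the $\mathscr{A}_h^n$-contribution), while Young's inequality with parameter tuned to $\tau$, combined with $h\leq C_0\tau$ and $\nu_2\leq\nu_i$, turns the mass part into a multiple of $\tau^{-1}\lambda_0^k\N{\ul{\Bu_h}}_{0,\Omega^n_\eta}^2$ plus $\gamma_1\nu_2 h^2\N{\nu^{-\frac12}\nabla\ul{p_h}}_{0,\Omega^n_\eta}^2$. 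Choosing $\gamma_1^{-1}$ large enough absorbs the whole right-hand side into the positive left-hand side terms, yielding $\ul{\Bu_h}=\mathbf{0}$ and $\mathscr{J}_p^n(\ul{p_h},\ul{p_h})=0$.

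To conclude $\ul{p_h}=0$, I would invoke Lemma~\ref{lem:infsup} to produce $\ul{\Bv_h}\equiv(\Vv_h|_{\Omega^n_{h,1}},\Vv_h|_{\Omega^n_{h,2}})$ with $\Vv_h\in\BV_{h,0}(k,\Omega)$, so $\jump{\ul{\Bv_h}}=\mathbf{0}$ on $\Gamma^n_\eta$ and hence $\mathscr{B}_0^n(\ul{\Bv_h},\ul{p_h})=-\Aprod[\Omega^n_\eta]{\Div\ul{\Bv_h},\ul{p_h}}$, together with $\tN{\ul{p_h}}_{\Cq}^2\lesssim\Aprod[\Omega^n_\eta]{\Div\ul{\Bv_h},\ul{p_h}}+\mathscr{J}_p^n(\ul{p_h},\ul{p_h})$. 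Inserting $(\ul{\Bv_h},0)$ into the homogeneous momentum equation with $\ul{\Bu_h}=\mathbf{0}$ reduces it to $\mathscr{B}_0^n(\ul{\Bv_h},\ul{p_h})=0$, that is, $\Aprod[\Omega^n_\eta]{\Div\ul{\Bv_h},\ul{p_h}}=0$; combined with $\mathscr{J}_p^n(\ul{p_h},\ul{p_h})=0$, the inf-sup bound yields $\tN{\ul{p_h}}_{\Cq}=0$, and then the norm equivalence \eqref{eq:qh eqsim} gives $\ul{p_h}=0$.

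The hard part will be the absorption of the cross term $\gamma_1\nu_2 h^2\mathscr{B}_1^n(\ul{\Bu_h},\ul{p_h})$: its mass part scales like $\gamma_1\nu_2 h^2\tau^{-1}$, exceeding the $\gamma_1\nu_2 h^2$ weighting of the pressure-gradient control by a factor $h/\tau\lesssim 1$, so it cannot be absorbed using the pressure side alone. The Young splitting must redirect the excess onto the velocity mass term $\tau^{-1}\lambda_0^k\N{\ul{\Bu_h}}_{0,\Omega^n_\eta}^2$ by tuning the parameter to $\tau$, which is precisely where both $h=O(\tau)$ and the smallness of $\gamma_1$ enter the hypothesis.
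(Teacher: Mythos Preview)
Your proof is correct and takes a genuinely different route from the paper. The paper establishes a full inf-sup condition for $\mathscr{K}_1$ by testing with $(\ul{\Bv_h},\ul{q_h})=(\ul{\Bu_h}+\alpha\ul{\Bw_h},\ul{p_h})$, where $\ul{\Bw_h}$ is the inf-sup witness from Lemma~\ref{lem:infsup}; it then balances the cross terms $\Aprod[\Omega^n_\eta]{\ul{\Bu_h},\ul{\Bw_h}}$, $\mathscr{A}_h^n(\ul{\Bu_h},\ul{\Bw_h})$, and $\mathscr{B}_1^n(\ul{\Bu_h},\ul{p_h})$ simultaneously, using Poincar\'e's inequality (available because $\ul{\Bw_h}$ comes from a single $\Vv_h\in\zbHonev$) together with Lemma~\ref{lem:ch-up} and the constraint $4C_2\gamma_1\le\min(\nu_2^{-1},\lambda_0^k\tau^{-1})$, and finally tunes $\alpha$. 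Your two-stage argument---diagonal testing to force $\ul{\Bu_h}=\mathbf{0}$ and $\mathscr{J}_p^n(\ul{p_h},\ul{p_h})=0$, then Lemma~\ref{lem:infsup} separately for the pressure---is more economical for the bare statement of well-posedness, since it avoids the simultaneous balancing and the Poincar\'e step entirely. The paper's approach, on the other hand, yields the quantitative inf-sup bound \eqref{insup-K1}, which in principle gives a stability estimate for the single-step problem (though the paper does not exploit it later; the stability analysis in Section~\ref{sec:stab} proceeds via $\mathscr{K}_0$ and the telescope identity instead). Your absorption of the mass part of $\mathscr{B}_1^n$ mirrors exactly the paper's estimate \eqref{discwell2} and its subsequent use, so the smallness requirements on $\gamma_1$ and the role of $h=O(\tau)$ coincide with the paper's.
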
}}
\end{center}
\vspace{1mm}

\begin{proof}
	Since \eqref{eq:disc-alpha} is a linear problem on finite-dimensional space, it suffices to prove
	that there exits an $\alpha_0>0$ such that for any $(\ul{\Bu_h},\ul{p_h})\in \boldsymbol{\Cv}_h^n\times\mathcal{Q}_h^n$
	\begin{align}\label{insup-K1}
		\sup_{(\ul{\Bv_h},\ul{q_h})\in \boldsymbol{\Cv}_h^n\times\mathcal{Q}_h^n}
		\frac{\mathscr{K}_1((\ul{\Bu_h},\ul{p_h}),(\ul{\Bv_h},\ul{q_h}))}
		{\tN{(\ul{\Bv_h},\ul{q_h})}_{\boldsymbol{\Cv},\Cq}}
		> \alpha_0 \tN{(\ul{\Bu_h},\ul{p_h})}_{\boldsymbol{\Cv},\Cq}.
	\end{align}
	From Lemma~\ref{lem:infsup}, there exists a $\Vw_h\in\BV_{h,0}(k,\Omega)$ and a
	$\ul{\Bw_h}\in \boldsymbol{\Cv}^n_h$ such that $\Bw_{h,i}=\Vw_h$ in $\Omega^n_{h,i}$, $i=1,2$, and
	\begin{align}\label{discwell0}
		C_0\tN{\ul{p_h}}_{\Cq}^2 \le \Aprod[\Omega^n_\eta]{\Div\ul{\Bw_h},\ul{p_h}}
		+ \mathscr{J}^n_p(\ul{p_h},\ul{p_h}),\quad
		\tN{\ul{\Bw_h}}_{\boldsymbol{\Cv}}
		\le C_1\tN{\ul{p_h}}_{\Cq},
	\end{align}
	where $C_0,C_1$ are positive constants independent of $h,\tau$, and $\nu$.
	Choosing $(\ul{\Bv_h},\ul{q_h}) = (\ul{\Bu_h} + \alpha \ul{\Bw_h},\ul{p_h})$ for some $\alpha\in (0,1)$ to be specified later, we have
	\begin{align}
		\mathscr{K}_1((\ul{\Bu_h},\ul{p_h}),(\ul{\Bv_h},\ul{p_h})) \ge \,&
		\lambda_0^k\tau^{-1}\N{\ul{\Bu_h}}_{0,\Omega^n_\eta}^2
		+\mathscr{A}_h^n(\ul{\Bu_h},\ul{\Bu_h})
		+ C_0\alpha\tN{\ul{p_h}}_{\Cq}^2\notag\\
		&+\lambda_0^k\alpha \tau^{-1}\Aprod[\Omega^n_\eta]{\ul{\Bu_h},\ul{\Bw_h}}
		+\alpha\mathscr{A}_h^n(\ul{\Bu_h},\ul{\Bw_h})\notag \\
		&+\gamma_1\nu_2 h^2 \big\|\nu^{-\frac12}\nabla \ul{p_h}\big\|_{0,\Omega^n_\eta}^2
		+ \gamma_1\nu_2 h^2 \mathscr{B}^n_1(\ul{\Bu_h},\ul{p_h}). \label{est:K}
	\end{align}
	Using Lemma~\ref{lem:ch-up} and the assumption $h=O(\tau)$, we have
	\begin{align}
		\mathscr{B}^n_1(\ul{\Bu_h},\ul{p_h})
		&\le  \big\|\nu^{-\frac12}\big(\tau^{-1} \lambda_0^k\Bu_{h,i}
		- \nu\Delta_h\Bu_h\big)\big\|_{0,\Omega^n_\eta}
		\big\|\nu^{-\frac12}\nabla \ul{p_h}\big\|_{0,\Omega^n_\eta} \notag \\
		&\le C_2 h^{-2}\big(\big\|\nu^{-\frac12}\ul{\Bu_h}\big\|^2_{0,\Omega^n_\eta}
		+\tN{\ul{\Bu_h}}_{\boldsymbol{\Cv}}^2\big)
		+\frac{1}{2}\big\|\nu^{-\frac12}\nabla \ul{p_h}\big\|_{0,\Omega^n_\eta}^2,
		\label{discwell2}
	\end{align}
	where $C_2>0$ is a constant independent of $h$, $\tau$, and $\nu$.
	Moreover, by the relation between $\Vw_h$ and $\ul{\Bw_h}$ and Poincar\'{e}'s inequality, there exists a constant $C>0$ independent of $h,\tau$ such that
	\begin{align*}
		\big|\Aprod[\Omega^n_\eta]{\ul{\Bu_h},\ul{\Bw_h}}\big|
		\le\,& \N{\ul{\Bu_h}}_{0,\Omega^n_\eta} \NLtwo{\Vw_h}
		\le C\N{\ul{\Bu_h}}_{0,\Omega^n_\eta} \SNHonev{\Vw_h}\notag\\
		\le \,&C\N{\ul{\Bu_h}}_{0,\Omega^n_\eta} \tN{\ul{\Bw_h}}_{\boldsymbol{\Cv}}.
	\end{align*}
	Together with \eqref{discwell0} and Lemma~\ref{lem:Ah}, this implies
	\begin{align}
		\big|\Aprod[\Omega^n_\eta]{\ul{\Bu_h},\ul{\Bw_h}}\big|
		\le\,& C\N{\ul{\Bu_h}}_{0,\Omega^n_\eta} \tN{\ul{p_h}}_{\Cq}\notag \\
		\le\,& C_3\lambda_0^k\tau^{-1}\N{\ul{\Bu_h}}_{0,\Omega^n_\eta}^2 
		+\frac{\tau}{4\lambda_0^k}C_0\tN{\ul{p_h}}_{\Cq}^2,   \label{discwell3} \\
		\SN{\mathscr{A}_h^{n}(\ul{\Bu_h},\ul{\Bw_h})}
		\le\,& C\tN{\ul{\Bu_h}}_{\boldsymbol{\Cv}}\tN{\ul{p_h}}_{\Cq}
		\le C_4\tN{\ul{\Bu_h}}_{\boldsymbol{\Cv}}^2
		+\frac{1}{4}C_0\tN{\ul{p_h}}_{\Cq}^2.  \label{discwell4}
	\end{align}
	Since $\mathscr{A}_h^n(\ul{\Bu_h},\ul{\Bu_h})\ge 0.9\tN{\ul{\Bu_h}}_{\boldsymbol{\Cv}}^2$
	by Lemma~\ref{lem:Ah}, inserting \eqref{discwell2}--\eqref{discwell4} into \eqref{est:K} leads to
	\begin{align*}
		\mathscr{K}_1((\ul{\Bu_h},\ul{p_h}),(\ul{\Bv_h},\ul{q_h})) \ge \,&
		\big\{\lambda_0^k\tau^{-1}-C_3\alpha(\lambda_0^k/\tau)^2-C_2\gamma_1\big\}
		\N{\ul{\Bu_h}}_{0,\Omega^n_\eta}^2 \\
		&+(0.9-C_4\alpha-C_2\gamma_1\nu_2)\tN{\ul{\Bu_h}}_{\boldsymbol{\Cv}}^2
		+ \frac{1}{2}C_0\alpha\tN{\ul{p_h}}_{\Cq}^2.
	\end{align*}
	Suppose $\gamma_1$ is small enough such that $4C_2\gamma_1\le \min(\nu_2^{-1},\lambda_0^k\tau^{-1})$. Then \eqref{insup-K1} holds with $\alpha_0 = 0.5\min(1,C_0\alpha)$ if we take $\alpha=0.1 \min\{1, C_4^{-1},(C_3\lambda_0^k)^{-1}\tau\}$.
\end{proof}



\section{Modified Stokes-projection operator}

In this section we define a modified Stokes-projection operator from $\boldsymbol{\Cv}^n\times \Cq^n$
to $\boldsymbol{\Cv}^n_h\times\Cq^n_h$. It is a powerful tool for proving  the stability and error estimates of numerical solutions.
To begin with, we make the assumption on the regularity of the solution to Stokes interface problem: \vspace{1mm}

\begin{assumption}\label{ass-3}
	For any $\xibf\in\Ltwov[\Omega^n_\eta]$, the Stokes interface problem
	\begin{align*}
		\begin{cases}
			-\nu \Delta \Bz + \nabla r = \xibf, \quad
			\Div \Bz =0 \quad \text{in}\;\; \Omega^n_\eta,  \vspace{1mm}\\
			\jump{(\nu \nabla \Bz +r\bbI)\cdot\Bn} = \jump{\Bz}=0
			\quad \text{on}\;\; \Gamma_\eta^n,  \vspace{1mm} \\
			\Bz=0 \quad \text{on} \;\;\Sigma,
		\end{cases}
	\end{align*}
	has a unique solution which satisfies
	\begin{equation*}
		\sum_{i=1,2}\Big(\nu_i^{\frac12}\big\|\Bz\big\|_{\BH^2(\Omega^n_{\eta,i})}
		+\nu_i^{-\frac12}\big\|r\big\|_{H^1(\Omega^n_{\eta,i})}\Big)
		\lesssim \NLtwov[\Omega^n_\eta]{\xibf}.
	\end{equation*}
\end{assumption}

For the case that $\Omega^n_{\eta,1}$ and $\Omega^n_{\eta,2}$ have smooth boundaries, the solution to elliptic interface problem has the $H^2$-regularity in each sub-domain (cf. \cite{bab70}). Here we do not intend to prove the result for Stokes interface problem and just treat it as an assumption. Another issue involves the outer boundary $\Sigma=\partial\Omega$ which is not smooth, but Lipschitz continuous. Our method and theory can be extended straightforwardly to the case that $\Sigma$ is smooth. Since the paper is focused on dealing with the moving interface $\Gamma(t)$, we only consider the case that $\Sigma$ is fitted with $\Ct_h$ for simplicity.

\subsection{The definition}

For any $(\ul{\Bu},\ul{p})\in \boldsymbol{\Cv}^n\times \Cq^n$ and $f\in (\Cq^n_h)'$, the modified Stokes projection $\Cs^n(\ul{\Bu},\ul{p},f) \equiv (\ul{\Bu_h}, \ul{p_h})\in {\boldsymbol{\Cv}}_h^n\times \Cq_h^n$ is defined by the solution to the mixed problem
\begin{subequations}\label{def-proj}
	\begin{align}
		\mathscr{A}_h^n(\ul{\Bu_h}, \ul{\Bv_h}) + \mathscr{B}^n_0(\ul{\Bv_h},\ul{p_h})
		&= a_h^n(\ul{\Bu}, \ul{\Bv_h}) + \mathscr{B}^n_0(\ul{\Bv_h},\ul{p}),
		\quad \forall\,\ul{\Bv_h}\in {\boldsymbol{\Cv}}_h^n, \label{proj-u}\\
		\mathscr{B}^n_0(\ul{\Bu_h}, \ul{q_h}) - \mathscr{J}_p^n(\ul{p_h},\ul{q_h}) \label{proj-p}
		& =f(\ul{q_h}),\quad \forall\,\ul{q_h}\in\Cq^n_h,
	\end{align}
\end{subequations}
where $a_h^n(\ul{\Bu},\ul{\Bv})=
\Aprod[\Omega_{\eta}^n]{\nu\nabla\ul{\Bu}, \nabla\ul{\Bv}}
-\mathscr{F}^n(\ul{\Bu},\ul{\Bv})
+\mathscr{J}_0^n(\ul{\Bu},\ul{\Bv})$. We define two norms on $(\Cq^n_h)'$ as
\begin{equation}\label{eq:normf}
	\N{f}_{(\Cq^n_h)'} = \sup_{\ul{0}\ne\ul{q_h}\in\Cq^n_h}
	\frac{f(\ul{q_h})}{\tN{\ul{q_h}}_\Cq},\qquad
	\N{f}_{1,(\Cq^n_h)'}:=\sup_{\ul{0}\ne\ul{q_h}\in\Cq_h}
	\frac{f(\ul{q_h})}{\tN{\ul{q_h}}_{1,\Cq}}.
\end{equation}

\begin{theorem} \label{thm:infsup}
	Let the assumptions in Lemma~\ref{lem:infsup} be satisfied. Define
	\begin{align}
		\mathscr{K}_0((\ul{\Bu_h},\ul{p_h}),(\ul{\Bv_h},\ul{q_h})) =\,&
		\mathscr{A}_h^n(\ul{\Bu_h},\ul{\Bv_h})
		+\mathscr{B}^n_0(\ul{\Bv_h},\ul{p_h})
		-\mathscr{B}^n_0(\ul{\Bu_h},\ul{q_h})\notag\\
		&+ \mathscr{J}_p^n(\ul{p_h},\ul{q_h}).\label{K0}
	\end{align}
	For any $(\ul{\Bu_h},\ul{p_h})\in \boldsymbol{\Cv}_h^n\times\mathcal{Q}_h^n$, there exists a $\ul{\Bv_h}\in {\boldsymbol{\Cv}}_h^n$ such that
	\begin{align}\label{ieq:K0}
		\tN{\ul{\Bv_h}}_{\boldsymbol{\Cv}}^2\lesssim
		\tN{(\ul{\Bu_h},\ul{p_h})}_{{\boldsymbol{\Cv}},\Cq}^2 \lesssim
		\mathscr{K}_0((\ul{\Bu_h},\ul{p_h}),(\ul{\Bv_h},\ul{p_h})).
	\end{align}
\end{theorem}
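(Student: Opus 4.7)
The plan is to run the standard Brezzi-type coupling trick: combine the coercivity of $\mathscr{A}_h^n$ on the discrete divergence-constrained velocity (Lemma~\ref{lem:Ah}) with the discrete inf-sup condition on the pressure (Lemma~\ref{lem:infsup}), using a small coupling parameter $\alpha>0$ to prevent the cross-term from eating the diagonal contributions. The residual-pressure jump functional $\mathscr{J}_p^n$, which appears symmetrically in $\mathscr{K}_0$, plays the role of a ``stabilizer'' that closes the gap between $\Aprod[\Omega_\eta^n]{\Div\,\ul{\Bw_h},\ul{p_h}}$ and $\tN{\ul{p_h}}_{\Cq}^2$ on our unfitted mesh.

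First, given $(\ul{\Bu_h},\ul{p_h})\in\boldsymbol{\Cv}_h^n\times\Cq_h^n$, I invoke Lemma~\ref{lem:infsup} to produce $\ul{\Bw_h}\in\boldsymbol{\Cv}_h^n$ of the special form $\Bw_{h,i}=\Vw_h|_{\Omega_{h,i}^n}$ with $\Vw_h\in\BV_{h,0}(k,\Omega)$, and therefore with $\jump{\ul{\Bw_h}}=0$ on $\Gamma_\eta^n$, such that $\tN{\ul{\Bw_h}}_{\boldsymbol{\Cv}}\lesssim\tN{\ul{p_h}}_{\Cq}$ and $C_0\,\tN{\ul{p_h}}_{\Cq}^{2}\le\Aprod[\Omega_\eta^n]{\Div\ul{\Bw_h},\ul{p_h}}+\mathscr{J}_p^n(\ul{p_h},\ul{p_h})$. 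The single-valuedness of $\ul{\Bw_h}$ across the interface kills the boundary term in $\mathscr{B}_0^n(\ul{\Bw_h},\ul{p_h})$, so the latter reduces to $-\Aprod[\Omega_\eta^n]{\Div\ul{\Bw_h},\ul{p_h}}$, and the inf-sup bound rewrites as $-\mathscr{B}_0^n(\ul{\Bw_h},\ul{p_h})\ge C_0\tN{\ul{p_h}}_{\Cq}^{2}-\mathscr{J}_p^n(\ul{p_h},\ul{p_h})$.

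Second, I choose the test pair $\ul{\Bv_h}:=\ul{\Bu_h}-\alpha\,\ul{\Bw_h}$ for a parameter $\alpha\in(0,1)$ to be fixed at the end. Plugging into \eqref{K0} and using that the $\mathscr{B}_0^n(\ul{\Bu_h},\ul{p_h})$ terms telescope, one obtains
\begin{align*}
\mathscr{K}_0\big((\ul{\Bu_h},\ul{p_h}),(\ul{\Bv_h},\ul{p_h})\big)
=\mathscr{A}_h^n(\ul{\Bu_h},\ul{\Bu_h})
-\alpha\,\mathscr{A}_h^n(\ul{\Bu_h},\ul{\Bw_h})
-\alpha\,\mathscr{B}_0^n(\ul{\Bw_h},\ul{p_h})
+\mathscr{J}_p^n(\ul{p_h},\ul{p_h}).
\end{align*}
Lemma~\ref{lem:Ah} gives $\mathscr{A}_h^n(\ul{\Bu_h},\ul{\Bu_h})\ge 0.9\,\tN{\ul{\Bu_h}}_{\boldsymbol{\Cv}}^{2}$ and $|\mathscr{A}_h^n(\ul{\Bu_h},\ul{\Bw_h})|\lesssim\tN{\ul{\Bu_h}}_{\boldsymbol{\Cv}}\tN{\ul{\Bw_h}}_{\boldsymbol{\Cv}}\lesssim\tN{\ul{\Bu_h}}_{\boldsymbol{\Cv}}\tN{\ul{p_h}}_{\Cq}$. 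Combined with the inf-sup rewrite above, a Young's inequality on the cross term absorbs $\alpha C_1\tN{\ul{\Bu_h}}_{\boldsymbol{\Cv}}\tN{\ul{p_h}}_{\Cq}$ into $\tfrac14\tN{\ul{\Bu_h}}_{\boldsymbol{\Cv}}^{2}+O(\alpha^{2})\tN{\ul{p_h}}_{\Cq}^{2}$.

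Finally, picking $\alpha$ small enough (depending only on the constants $C_0,C_1$ from Lemmas~\ref{lem:infsup}--\ref{lem:Ah}, hence independent of $h$, $\tau$, $\eta$, $\nu$) so that the residual coefficient of $\tN{\ul{p_h}}_{\Cq}^{2}$ is positive and that of $\mathscr{J}_p^n(\ul{p_h},\ul{p_h})$ stays nonnegative yields $\mathscr{K}_0\gtrsim\tN{\ul{\Bu_h}}_{\boldsymbol{\Cv}}^{2}+\tN{\ul{p_h}}_{\Cq}^{2}\eqsim\tN{(\ul{\Bu_h},\ul{p_h})}_{{\boldsymbol{\Cv}},\Cq}^{2}$, which is the right-hand inequality in \eqref{ieq:K0}. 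The left-hand inequality follows from the triangle bound $\tN{\ul{\Bv_h}}_{\boldsymbol{\Cv}}\le\tN{\ul{\Bu_h}}_{\boldsymbol{\Cv}}+\alpha\tN{\ul{\Bw_h}}_{\boldsymbol{\Cv}}\lesssim\tN{\ul{\Bu_h}}_{\boldsymbol{\Cv}}+\tN{\ul{p_h}}_{\Cq}$. The only delicate point is verifying that the boundary contribution of $\mathscr{B}_0^n(\ul{\Bw_h},\ul{p_h})$ vanishes; this is why Lemma~\ref{lem:infsup} was stated with the extra condition that $\ul{\Bw_h}$ comes from a single $\BV_{h,0}(k,\Omega)$ function. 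Beyond that check, the argument is a routine Brezzi--Verf\"urth-style combination and does not require any further interface-zone analysis.
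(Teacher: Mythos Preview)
Your proposal is correct and follows essentially the same approach as the paper: invoke Lemma~\ref{lem:infsup} to obtain $\ul{\Bw_h}$ with $\jump{\ul{\Bw_h}}=0$ (so that $\mathscr{B}_0^n(\ul{\Bw_h},\ul{p_h})=-\Aprod[\Omega_\eta^n]{\Div\ul{\Bw_h},\ul{p_h}}$), set $\ul{\Bv_h}=\ul{\Bu_h}-\alpha\ul{\Bw_h}$, use the coercivity and continuity from Lemma~\ref{lem:Ah}, and close with Young's inequality for a suitably small $\alpha$. The only cosmetic difference is in how the Young split is weighted (the paper absorbs the cross term into a $\tN{\ul{\Bu_h}}_{\boldsymbol{\Cv}}^2$ term with an $O(\alpha)$ coefficient rather than your $O(\alpha^2)$ pressure term), but both choices work.
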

\begin{proof}
	The proof is similar to that of Theorem~\ref{thm:alpha}.
	By Lemma~\ref{lem:infsup}, there exist a $\ul{\Bw_h}\in\boldsymbol{\Cv}^n_h$ satisfying $\jump{\ul{\Bw_h}}=0$ on $\Gamma^n_\eta$ and two positive constants $C_0,C_1$ independent of $\tau,h$ such that
	\begin{align*}
		C_0\tN{\ul{\Bw_h}}_{\boldsymbol{\Cv}}^2
		\le C_1\tN{\ul{p_h}}_{\Cq}^2 \le
		\Aprod[\Omega^n_\eta]{\Div\ul{\Bw_h},\ul{p_h}}
		+ \mathscr{J}^n_p(\ul{p_h},\ul{p_h}) .
	\end{align*}
	Set $\ul{\Bv_h} =\ul{\Bu_h}-\alpha\ul{\Bw_h}$ and $\ul{q_h}=\ul{p_h}$ for some $\alpha\in (0,1)$ to be specified. By Lemma~\ref{lem:Ah}, there exists a constant $C_2>0$ such that
	\begin{align*}
		\mathscr{K}_0((\ul{\Bu_h},\ul{p_h}),(\ul{\Bv_h},\ul{q_h}))
		=\,& \mathscr{A}_h^n(\ul{\Bu_h},\ul{\Bu_h}-\alpha\ul{\Bw_h})
		+\alpha\Aprod[\Omega^n_\eta]{\Div\ul{\Bw_h},\ul{p_h}}
		+ \mathscr{J}_p^n(\ul{p_h},\ul{p_h}) \\
		\ge\,& 0.9\tN{\ul{\Bu_h}}_{\boldsymbol{\Cv}}^2
		-C_2\alpha\tN{\ul{\Bu_h}}_{\boldsymbol{\Cv}}\tN{\ul{\Bw_h}}_{\boldsymbol{\Cv}}
		+C_1\alpha \tN{\ul{p_h}}_{\Cq}^2\\
		\ge\,& (0.9-0.5C_2^{2}C_1C_0^{-2}\alpha)\tN{\ul{\Bu_h}}_{\boldsymbol{\Cv}}^2
		+0.5C_1\alpha \tN{\ul{p_h}}_{\Cq}^2.
	\end{align*}
	Taking $\alpha = C_2^{-2}C_1^{-1}C_0^{2}$ leads to \eqref{ieq:K0}.
\end{proof}

\begin{theorem}\label{thm:stokes-proj}
	Problem \eqref{def-proj} has a unique solution which satisfies
	\ben
	\tN{(\ul{\Bu_h},\ul{p_h})}_{{\boldsymbol{\Cv}},\Cq}
	\lesssim \tN{(\ul{\Bu},\ul{p})}_{{\boldsymbol{\Cv}},\Cq} + \N{f}_{(\Cq^n_h)'}.
	\een
\end{theorem}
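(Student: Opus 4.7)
The plan is to collapse the two equations of \eqref{def-proj} into a single saddle-point identity driven by $\mathscr{K}_0$, and then invoke the inf-sup condition of Theorem~\ref{thm:infsup} for both well-posedness and stability. Subtracting \eqref{proj-p} from \eqref{proj-u} (in accordance with the sign convention built into $\mathscr{K}_0$) yields the monolithic statement
\begin{equation*}
\mathscr{K}_0\big((\ul{\Bu_h},\ul{p_h}),(\ul{\Bv_h},\ul{q_h})\big)
= a_h^n(\ul{\Bu},\ul{\Bv_h}) + \mathscr{B}^n_0(\ul{\Bv_h},\ul{p}) - f(\ul{q_h})
\end{equation*}
for every $(\ul{\Bv_h},\ul{q_h})\in\boldsymbol{\Cv}^n_h\times\Cq^n_h$, so the problem is equivalent to inverting $\mathscr{K}_0$ against this right-hand side.

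Since the system is square and linear on a finite-dimensional space, existence is equivalent to uniqueness. If $(\ul{\Bu_h},\ul{p_h})$ solves the homogeneous problem, Theorem~\ref{thm:infsup} supplies a test $\ul{\Bv_h}$ with
\begin{equation*}
\tN{(\ul{\Bu_h},\ul{p_h})}_{\boldsymbol{\Cv},\Cq}^2
\lesssim \mathscr{K}_0\big((\ul{\Bu_h},\ul{p_h}),(\ul{\Bv_h},\ul{p_h})\big) = 0,
\end{equation*}
forcing $(\ul{\Bu_h},\ul{p_h})=\ul 0$ and thereby giving both uniqueness and existence.

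For the quantitative bound, I would apply Theorem~\ref{thm:infsup} to the solution pair to produce $\ul{\Bv_h}$ with $\tN{\ul{\Bv_h}}_{\boldsymbol{\Cv}}\lesssim\tN{(\ul{\Bu_h},\ul{p_h})}_{\boldsymbol{\Cv},\Cq}$ and
\begin{equation*}
\tN{(\ul{\Bu_h},\ul{p_h})}_{\boldsymbol{\Cv},\Cq}^2
\lesssim a_h^n(\ul{\Bu},\ul{\Bv_h})+\mathscr{B}^n_0(\ul{\Bv_h},\ul{p})-f(\ul{p_h}).
\end{equation*}
Each right-hand side term is then handled by continuity in the mesh-dependent norms: Cauchy-Schwarz on the bulk terms, together with the weighted interface bound using $\kappa_1\nu_1=\kappa_2\nu_2=\avg{\nu}/2$ on the flux contribution of $\mathscr{F}^n$ (mirroring the calculations already carried out in Lemma~\ref{lem:Ah}), gives $|a_h^n(\ul{\Bu},\ul{\Bv_h})|+|\mathscr{B}^n_0(\ul{\Bv_h},\ul{p})|\lesssim\tN{(\ul{\Bu},\ul{p})}_{\boldsymbol{\Cv},\Cq}\tN{\ul{\Bv_h}}_{\boldsymbol{\Cv}}$, while \eqref{eq:normf} yields $|f(\ul{p_h})|\le\N{f}_{(\Cq^n_h)'}\tN{\ul{p_h}}_{\Cq}$. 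Combining these, absorbing a factor of $\tN{(\ul{\Bu_h},\ul{p_h})}_{\boldsymbol{\Cv},\Cq}$, completes the estimate.

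The main obstacle is the continuity argument applied to the non-discrete argument $\ul{\Bu}\in\boldsymbol{\Cv}^n$: the flux term in $\mathscr{F}^n$ demands control of $\N{\Avg{\nu\partial_\Bn\ul{\Bu}}}_{\BL^2(\Gamma^n_\eta)}$, a quantity naturally measured by $\N{\cdot}_{\boldsymbol{\Cv}}$ rather than the jump-based $\tN{\cdot}_{\boldsymbol{\Cv}}$. This is precisely why $\boldsymbol{\Cv}^n$ is built to include $\partial_\Bn\Bu_i\in\BL^2(\Gamma^n_\eta)$; the tactic is to dump the weighted flux on $\ul{\Bu}$ and pair it with $\N{\jump{\ul{\Bv_h}}}_{\BL^2(\Gamma^n_\eta)}$, which is bounded by $\mathscr{J}_0^n(\ul{\Bv_h},\ul{\Bv_h})^{1/2}\lesssim\tN{\ul{\Bv_h}}_{\boldsymbol{\Cv}}$, so that only the discrete test function ever needs the $\tN{\cdot}$ norm to be invoked via the trace estimate of Lemma~\ref{lem:trace2}.
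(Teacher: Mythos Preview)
Your approach is essentially identical to the paper's: both rewrite \eqref{def-proj} as a single identity in $\mathscr{K}_0$, invoke the inf-sup condition of Theorem~\ref{thm:infsup} for well-posedness, and then bound the right-hand side by continuity of $a_h^n$, $\mathscr{B}_0^n$, and the dual norm of $f$. Your discussion of the flux term in $\mathscr{F}^n(\ul{\Bu},\ul{\Bv_h})$ is in fact more careful than the paper's own proof, which simply writes the continuity bound without isolating this point; your observation that the weighted flux of the continuous argument $\ul{\Bu}$ must be paired with $\mathscr{J}_0^n(\ul{\Bv_h},\ul{\Bv_h})^{1/2}$ (so that only the discrete test function ever needs Lemma~\ref{lem:trace2}) is exactly the right mechanism.
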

\begin{proof}
	Note that \eqref{def-proj} is equivalent to the variational problem
	\begin{align}\label{def-proj-K0}
		\mathscr{K}_0((\ul{\Bu_h},\ul{p_h}),(\ul{\Bv_h},\ul{q_h}))
		=a_h^n(\ul{\Bu}, \ul{\Bv_h}) + \mathscr{B}^n_0(\ul{\Bv_h},\ul{p})
		-f(\ul{q_h}),\;
		\forall\,(\ul{\Bv_h},\ul{q_h})\in {\boldsymbol{\Cv}}_h^n\times \Cq_h^n.
	\end{align}
	By Theorem~\ref{thm:infsup}, we have the inf-sup condition
	\begin{align}\label{ieq:infsup}
		\tN{(\ul{\Bu_h},\ul{p_h})}_{{\boldsymbol{\Cv}},\Cq}\lesssim
		\sup_{(\ul{\Bv_h},\ul{q_h})\in\boldsymbol{\Cv}_h^n\times \Cq_h^n}
		\frac{\mathscr{K}_0((\ul{\Bu_h},\ul{p_h}),(\ul{\Bv_h},\ul{q_h}))}
		{\tN{(\ul{\Bv_h},\ul{q_h})}_{{\boldsymbol{\Cv}},\Cq}}.
	\end{align}
	This implies that \eqref{def-proj-K0} has a unique solution. Moreover, combining \eqref{def-proj-K0} and \eqref{ieq:infsup} yields
	\begin{align*}
		\tN{(\ul{\Bu_h},\ul{p_h})}_{{\boldsymbol{\Cv}},\Cq}\lesssim\,&
		\sup_{(\ul{\Bv_h},\ul{q_h})\in\boldsymbol{\Cv}_h^n\times \Cq_h^n}
		\frac{a_h^n(\ul{\Bu}, \ul{\Bv_h}) + \mathscr{B}^n_0(\ul{\Bv_h},\ul{p})
			-f(\ul{q_h})}{\tN{(\ul{\Bv_h},\ul{q_h})}_{{\boldsymbol{\Cv}},\Cq}}\\
		\lesssim\;& \tN{(\ul{\Bu},\ul{p})}_{{\boldsymbol{\Cv}},\Cq} + \N{f}_{(\Cq^n_h)'}.
	\end{align*}
	The proof is finished.
\end{proof}
\vspace{1mm}

\subsection{Quasi-interpolation operators}

For $1\le m\le k$ and $D\subseteq\Omega$, let $\pi_{m,D}$: $\Hone[D]\to V_h(m,D)$ be the quasi-interpolation (or Scott-Zhang) operator which respects homogeneous Dirichlet boundary conditions \cite{sco90}. We have the well-known results: for any $K\in \Ct_h$ and $E\in \Ce_h$,
\begin{align}
	\N{\pi_{m,\Omega} v - v}_{H^l(K)}&\lesssim h^{\min \{m+1-l,s-l\}}\SN{v}_{H^{s}(D_K)}, \label{int-errK}\\
	\N{\pi_{m,\Omega} v - v}_{H^l(E)}&\lesssim h^{\min \{m+1/2-l,s-1/2-l\}}\SN{v}_{H^{s}(D_E)},\quad 0\leq l\leq s,
	\label{int-errE}
\end{align}
where $D_A$ is the union of all elements having non-empty intersection with $A$, for $A=K$ or $E$. Similarly, let $\pibf_{m,D}$ denote the interpolation operator on vector-valued functions. For any $\ul{\Bv}\in\Honev[\Omega^n_{h,1}]\times \Honev[\Omega^n_{h,2}]$ and any
$\ul{q}\in\Hone[\Omega^n_{h,1}]\times \Hone[\Omega^n_{h,2}]$, we define
\begin{align*}
	\ul{\pibf_m}(\ul{\Bv}) &= \big(\pibf_{m,\Omega^n_{h,1}}\Bv_{1},
	\pibf_{m,\Omega^n_{h,2}}\Bv_{2}\big)\in \boldsymbol{\Cv}^n_h,\\
	\ul{\pi_m}(\ul{q}) &= \big(\pi_{m,\Omega^n_{h,1}}q_{1},
	\pi_{m,\Omega^n_{h,2}}q_{2}\big)\in \Cq^n_h.
\end{align*}

\begin{lemma}\label{lem:scott-zhang}
	Suppose that the components of $\ul\Bv$ and $\ul{q}$ satisfy $\Bv_i\in\BH^{k+1}(\Omega^n_{h,i})$, $q_i\in H^{k}(\Omega^n_{h,i})$ for $i=1,2$. Let $\ul{\Bv_h}=\ul{\pibf_k}(\ul{\Bv})$ and $\ul{q_h}=\ul{\pi_{k-1}}(\ul{q})$. Then
	\begin{align}
		&\big\|\ul{\Bv}-\ul{\Bv_h}\big\|_{\boldsymbol{\Cv}}^2
		+\tN{\ul{\Bv}-\ul{\Bv_h}}_{\boldsymbol{\Cv}}^2
		+h^2\N{\nu^{\frac12}\Delta_h(\ul{\Bv} -\ul{\Bv_h})}_{0,\Omega^n_\eta}^2\notag \\
		\lesssim\; &h^{2k}\sum_{i=1,2}\nu_i\SN{\Bv_i}_{\BH^{k+1}(\Omega_{h,i}^n)}^2,
		\label{ieq:v-vh}\\
		&\big\|\ul{q}-\ul{q_h}\big\|_{\Cq}^2 +\tN{\ul{q}-\ul{q_h}}_{\Cq}^2 +
		h^2\sum_{i=1,2}\N{\nu^{-\frac12}\nabla(q_i-q_{h,i})}_{0,\Omega^n_{h,i}}^2 \notag \\
		\lesssim \;&h^{2k}\sum_{i=1,2}\nu_i^{-1}\SN{q_i}_{H^{k}(\Omega_{h,i}^n)}^2.
		\label{ieq:q-qh}
	\end{align}
\end{lemma}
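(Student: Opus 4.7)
The plan is to reduce both estimates to element-wise and edge-wise bounds via the Scott--Zhang estimates \eqref{int-errK}--\eqref{int-errE} combined with the cut-element trace inequality \eqref{ieq:trace0}, and then sum the contributions. Because the interpolation on each sub-domain is performed independently with $\pi_{m,\Omega^n_{h,i}}$, the analysis decouples into $i=1$ and $i=2$, and by \eqref{ieq:vh-ext} it suffices to work on each $\Omega^n_{h,i}$ separately.

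For \eqref{ieq:v-vh} I would handle the five constituent terms in order. The bulk piece $\|\nu^{1/2}\nabla(\ul\Bv-\ul\Bv_h)\|_{0,\Omega^n_\eta}^2$ and the $\Delta_h$-piece follow directly from \eqref{int-errK} with $(m,s)=(k,k+1)$ and $l=1,2$, the $h^2$ prefactor exactly compensating the reduced order in the $H^2$-seminorm. For the interface trace term $h\langle\nu\rangle^{-1}\|\{\!\{\nu\partial_\Bn(\ul\Bv-\ul\Bv_h)\}\!\}\|_{L^2(\Gamma^n_\eta)}^2$ and the jump $\mathscr{J}_0^n$, I would apply \eqref{ieq:trace0} on each cut element $K\in\Ct^n_{h,B}$ with $v=\partial_\Bn(\Bv_i-\pi_k\Bv_i)$ or $v=\Bv_i-\pi_k\Bv_i$, respectively, and then use \eqref{int-errK}; the weights $h$ and $h^{-1}$ precisely balance so that both contribute $h^{2k}\nu_i|\Bv_i|_{H^{k+1}(D_K)}^2$. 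For the interior jump penalty $\mathscr{J}_\Bu^n$ I would exploit $\Bv_i\in \BH^{k+1}(\Omega^n_{h,i})$, which implies $\jump{\partial_\Bn^l\Bv_i}=0$ across any $E\in\Ce^n_{i,B}$, so that $\jump{\partial_\Bn^l(\Bv_i-\Bv_{h,i})}=-\jump{\partial_\Bn^l\pi_k\Bv_i}$. Applying \eqref{ieq:trace0} separately on each of the two elements sharing $E$, together with \eqref{int-errK} applied to the $H^l$- and $H^{l+1}$-seminorms, gives
\begin{equation*}
\|\partial_\Bn^l(\Bv_i-\Bv_{h,i})\|_{L^2(E)}
\lesssim h^{k+\frac12-l}|\Bv_i|_{H^{k+1}(D_K)},
\qquad 1\le l\le k,
\end{equation*}
so that each summand in $\mathscr{J}_\Bu^n$ has weight $h^{2l-1}\cdot h^{2k+1-2l}=h^{2k}$. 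Summing over all elements and edges and using the bounded overlap of the Scott--Zhang patches yields \eqref{ieq:v-vh}.

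The pressure estimate \eqref{ieq:q-qh} is obtained in exactly the same way, with $(m,s)=(k-1,k)$ and with $\mathscr{J}_p^n$ playing the role of $\mathscr{J}_\Bu^n$ but with normal-derivative jumps only up to order $k-1$; each constituent contributes $h^{2k}\nu_i^{-1}|q_i|_{H^k(D_K)}^2$, and the boundary $L^2$-term on $\Gamma^n_\eta$ appearing in $\|\cdot\|_{\Cq}$ is treated via \eqref{ieq:trace0} applied to $q_i-\pi_{k-1}q_i$, whose $h^{1/2}$ weight matches the factor $h$ in the definition of $\|\cdot\|_\Cq$.

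The main obstacle is bookkeeping of the $h$-exponents in the jump penalties, particularly in $\mathscr{J}_\Bu^n$: one must apply \eqref{ieq:trace0} on each neighboring element separately, recognize that the true-solution jumps vanish, and verify that the $h^{2l-1}$ prefactor exactly combines with $h^{2k+1-2l}$ from the trace/interpolation estimates to produce the uniform $h^{2k}$ order independently of $l$. Once this is in place, all other pieces reduce to straightforward applications of \eqref{int-errK}--\eqref{int-errE} and the summation over elements is routine.
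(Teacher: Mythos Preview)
Your proposal is correct and follows essentially the same route as the paper. The only minor difference is that for the interior-edge jump penalties $\mathscr{J}_\Bu^n$ and $\mathscr{J}_p^n$ the paper invokes the Scott--Zhang edge estimate \eqref{int-errE} directly to obtain the $h^{k+\frac12-l}$ bound, whereas you derive the same bound by combining the trace inequality \eqref{ieq:trace0} on each neighboring element with the element estimate \eqref{int-errK}; the two arguments are equivalent and yield identical exponents.
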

\begin{proof}
	Since $q_i\in H^k(\Omega_{h,i}^n)$ and $\Bv_i \in \BH^{k+1}(\Omega_{h,i}^n)$, from \eqref{int-errE} we have
	\begin{align*}
		&\mathscr{J}_p^n(\ul{q}-\ul{q_h},\ul{q}-\ul{q_h}) \lesssim
		\sum_{\mbox{\tiny$\begin{array}{c}
					K\in\Ce_{i,B}^n,\\
					i=1,2
				\end{array}$}}h^{2k}
		\nu_i^{-1}\SN{q_i}_{H^{k}(D_E)}^2
		\lesssim h^{2k}\sum_{i=1}^2\nu_i^{-1}\SN{q_i}_{H^{k}(\Omega_{h,i}^n)}^2,\\
		&\mathscr{J}_u^n(\ul\Bv-\ul{\Bv_h},\ul\Bv-\ul{\Bv_h})
		\lesssim 
		\sum_{\mbox{\tiny$\begin{array}{c}
					K\in\Ce_{i,B}^n,\\
					i=1,2
				\end{array}$}}h^{2k}
		\nu_i\SN{\Bv_i}_{\BH^{k+1}(D_E)}^2
		\lesssim h^{2k}\sum_{i=1}^2\nu_i\SN{\Bv_i}_{\BH^{k+1}(\Omega_{h,i}^n)}^2.
	\end{align*}
	For any $K\in \Ct_{h,B}^n$ and $\Gamma_K =\Gamma_{\eta}^n \cap K$,
	from \eqref{ieq:trace0} we have
	\begin{align}
		&\avg{\nu}\N{\jump{\ul\Bv-\ul{\Bv_h}}}_{\BL^2(\Gamma_K)}^2
		\lesssim
		\sum_{i=1}^2
		\sum_{j=0}^1 h^{2j-1}
		\kappa_i\nu_i \SN{\Bv_i-\Bv_{h,i}}_{\BH^{j}(K)}^2 \notag \\
		&\avg{\nu}^{-1}\N{\Avg{\nu\partial_\Bn(\Bv-\ul{\Bv_h})}}_{\BL^2(\Gamma_K)}^2
		\lesssim\sum_{i=1}^2 
		\sum_{j=1}^2h^{2j-3}\kappa_i\nu_i\SN{\Bv_i-\Bv_{h,i}}_{\BH^{j}(K)}^2,\label{eq:v-vh} \\
		&\avg{\nu}^{-1}\N{\Avg{\ul{q}-\ul{q_h}}}_{L^2(\Gamma_K)}^2
		\lesssim\sum_{i=1}^2
		\sum_{j=0}^1 h^{2j-1}\kappa_i\nu_i^{-1}\SN{q_i-q_{h,i}}_{H^j(K)}^2,\label{eq:q-qh}
	\end{align}
	where $\SN{\cdot}_{\BH^0(K)}= \N{\cdot}_{\BL^2(K)}$.
	We conclude that
	\begin{align*}
		&h^{-1}\avg{\nu}\NLtwov[\Gamma^n_\eta]{\jump{\ul\Bv-\ul{\Bv_h}}}^2
		+h\avg{\nu}^{-1}\N{\Avg{\nu\partial_{\Bn}(\ul\Bv-\ul{\Bv_h})}}_{\BL^2(\Gamma_\eta^n)}^2\\
		&\hspace{41mm}\lesssim  h^{2k}\sum_{i=1,2}\nu_i \SN{\Bv_i}^2_{H^{k+1}(\Omega^n_{h,i})},\\
		&h\avg{\nu}^{-1}\N{ \Avg{\ul{q}-\ul{q_h}}}^2_{L^2(\Gamma_\eta^n)}
		\lesssim h^{2k} \sum_{i=1,2}\nu_i^{-1}\SN{q_i}^2_{H^k(\Omega^n_{h,i})}.
	\end{align*}
	The other terms in \eqref{ieq:v-vh}--\eqref{ieq:q-qh} can be estimated similarly by using
	\eqref{int-errK}.
\end{proof}

\subsection{Error estimates for modified Stokes projections}

Now we define the operator $\Cb^n_0$: $\boldsymbol{\Cv}^n\to (\Cq^n_h)'$ as follows:
for any $\ul\Bu\in\boldsymbol{\Cv}^n$, $\Cb^n_0\ul\Bu\in(\Cq^n_h)'$ satisfies
\ben
(\Cb^n_0\ul\Bu)(\ul{q_h})= \mathscr{B}^n_0(\ul{\Bu},\ul{q_h}),
\qquad \forall\,\ul{q_h}\in\Cq^n_h.
\een
For convenience in notation, we define
\ben
M_k(\ul{\Bu},\ul{p}) =\sum_{i=1}^2\big(\nu_i\SN{\Bu_i}_{\BH^{k+1}(\Omega_{h,i}^n)}^2
+\nu_i^{-1}\SN{p_i}_{H^k(\Omega_{h,i}^n)}^2\big).
\een

\begin{center}
	\fbox{\parbox{0.975\textwidth}
		{\begin{theorem}\label{thm:stokes-H1}
				Let Assumption~\ref{ass-2} be satisfied and let $(\ul{\Bu_h}, \ul{p_h})\equiv \Cs^n (\ul{\Bu},\ul{p},f)$ be the solution to problem~\eqref{def-proj}. Suppose
				$M_k(\ul{\Bu},\ul{p})< \infty$. Then
				\begin{align*}
					&\tN{(\ul{\Bu}-\ul{\Bu_h},\ul{p} -\ul{p_h})}_{{\boldsymbol{\Cv}},\Cq}
					+h\dN{\nu^{\frac12}\Delta_h(\ul\Bu -\ul{\Bu_h})}_{0,\Omega^n_\eta}
					+h\big\|\nu^{-\frac12}\nabla(\ul{p}-\ul{p_h})\big\|_{0,\Omega^n_\eta} \\
					\lesssim\,& h^k M_k^{\frac12}(\ul{\Bu},\ul{p}) + \N{f-\Cb^n_0\Bu}_{(\Cq^n_h)'}.
				\end{align*}
	\end{theorem}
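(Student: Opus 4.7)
The plan is to run a standard Galerkin-style error analysis for mixed problems, built on the inf-sup condition of Theorem~\ref{thm:infsup} together with the interpolation estimates of Lemma~\ref{lem:scott-zhang}. First I would introduce the Scott--Zhang interpolants $\ul{\Bu_I}=\ul{\pibf_k}(\ul{\Bu})\in{\boldsymbol{\Cv}}_h^n$ and $\ul{p_I}=\ul{\pi_{k-1}}(\ul{p})\in\Cq_h^n$ and split
\[
\ul{\Bu}-\ul{\Bu_h}=(\ul{\Bu}-\ul{\Bu_I})+(\ul{\Bu_I}-\ul{\Bu_h}),\qquad
\ul{p}-\ul{p_h}=(\ul{p}-\ul{p_I})+(\ul{p_I}-\ul{p_h}).
\]
The interpolation parts are handled directly by Lemma~\ref{lem:scott-zhang}, which also absorbs the $h\Delta_h$ and $h\nabla$ contributions from the extra terms on the left-hand side.

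For the discrete parts $(\ul\Be_h,\ul\varepsilon_h):=(\ul{\Bu_I}-\ul{\Bu_h},\ul{p_I}-\ul{p_h})\in{\boldsymbol{\Cv}}_h^n\times\Cq_h^n$, I would subtract \eqref{def-proj} from the analogous identity with $(\ul{\Bu_I},\ul{p_I})$ to obtain
\[
\mathscr{K}_0\bigl((\ul\Be_h,\ul\varepsilon_h),(\ul{\Bv_h},\ul{q_h})\bigr)=\mathscr{R}(\ul{\Bv_h},\ul{q_h}),
\]
where, using $\mathscr{A}_h^n=a_h^n+\mathscr{J}_\Bu^n$, the residual reads
\[
\mathscr{R}(\ul{\Bv_h},\ul{q_h})=a_h^n(\ul{\Bu_I}-\ul{\Bu},\ul{\Bv_h})+\mathscr{J}_\Bu^n(\ul{\Bu_I},\ul{\Bv_h})+\mathscr{B}_0^n(\ul{\Bv_h},\ul{p_I}-\ul{p})-\mathscr{B}_0^n(\ul{\Bu_I}-\ul{\Bu},\ul{q_h})+\mathscr{J}_p^n(\ul{p_I},\ul{q_h})-(f-\Cb_0^n\ul{\Bu})(\ul{q_h}).
\]
Applying the inf-sup bound \eqref{ieq:K0} yields
\[
\tN{(\ul\Be_h,\ul\varepsilon_h)}_{{\boldsymbol{\Cv}},\Cq}\lesssim\sup_{(\ul{\Bv_h},\ul{q_h})}\frac{\mathscr{R}(\ul{\Bv_h},\ul{q_h})}{\tN{(\ul{\Bv_h},\ul{q_h})}_{{\boldsymbol{\Cv}},\Cq}}.
\]
Each term in $\mathscr{R}$ is then bounded by Cauchy--Schwarz together with continuity of $a_h^n$ and $\mathscr{B}_0^n$ (analogous to Lemma~\ref{lem:Ah}) and the interpolation estimates \eqref{ieq:v-vh}--\eqref{ieq:q-qh}; the penalty contributions $\mathscr{J}_\Bu^n(\ul{\Bu_I},\cdot)$ and $\mathscr{J}_p^n(\ul{p_I},\cdot)$ are estimated using Lemma~\ref{lem:scott-zhang} since the jumps of $\partial_\Bn^l\ul{\Bu_I}$ and $\partial_\Bn^l\ul{p_I}$ coincide with those of $\partial_\Bn^l(\ul{\Bu_I}-\ul{\Bu})$ and $\partial_\Bn^l(\ul{p_I}-\ul{p})$; the last term is controlled by $\N{f-\Cb_0^n\ul{\Bu}}_{(\Cq_h^n)'}\,\tN{\ul{q_h}}_\Cq$. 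This gives
\[
\tN{(\ul\Be_h,\ul\varepsilon_h)}_{{\boldsymbol{\Cv}},\Cq}\lesssim h^k M_k^{\frac12}(\ul{\Bu},\ul{p})+\N{f-\Cb_0^n\ul{\Bu}}_{(\Cq_h^n)'},
\]
and the triangle inequality then completes the estimate of $\tN{(\ul{\Bu}-\ul{\Bu_h},\ul{p}-\ul{p_h})}_{{\boldsymbol{\Cv}},\Cq}$.

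To control the remaining two terms, I would again split via the interpolants: the parts $h\dN{\nu^{1/2}\Delta_h(\ul{\Bu}-\ul{\Bu_I})}_{0,\Omega_\eta^n}$ and $h\dN{\nu^{-1/2}\nabla(\ul{p}-\ul{p_I})}_{0,\Omega_\eta^n}$ are already included in the error bounds of Lemma~\ref{lem:scott-zhang}. For the discrete parts $\ul\Be_h$ and $\ul\varepsilon_h$, which lie in $\boldsymbol{\Cv}_h^n$ and $\Cq_h^n$ respectively, Lemma~\ref{lem:ch-up} supplies exactly the needed inverse estimates, bounding $h\dN{\nu^{1/2}\Delta_h\ul\Be_h}_{0,\Omega_\eta^n}$ by $\tN{\ul\Be_h}_{{\boldsymbol{\Cv}}}$ and $h\dN{\nu^{-1/2}\nabla\ul\varepsilon_h}_{0,\Omega^n_\eta}$ by $\tN{\ul\varepsilon_h}_\Cq$; these were just estimated in the previous paragraph.

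The main obstacle is the accounting of the consistency error: because $\mathscr{A}_h^n$ differs from $a_h^n$ by the stabilizer $\mathscr{J}_\Bu^n$, the projection is not Galerkin-orthogonal with respect to $\mathscr{A}_h^n$ applied to the exact solution, and the residual $\mathscr{J}_\Bu^n(\ul{\Bu_I},\ul{\Bv_h})$ must be estimated carefully; the jumps of normal derivatives of $\ul{\Bu}$ across edges in $\Ce_{i,B}^n$ vanish only inside each phase, so one must replace $\ul{\Bu_I}$ by $\ul{\Bu_I}-\ul{\Bu}$ inside the jumps before invoking \eqref{ieq:v-vh}. A parallel remark applies to $\mathscr{J}_p^n(\ul{p_I},\cdot)$. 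Finally, one must track the viscosity-weighted scalings $\nu_i$, $\avg{\nu}$, and $\kappa_i$ (using $\kappa_1\nu_1=\kappa_2\nu_2=\avg{\nu}/2$) to guarantee the constants are uniform in the viscosity contrast.
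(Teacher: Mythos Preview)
Your proposal is correct and follows essentially the same route as the paper: split via the Scott--Zhang interpolants, derive the error equation for the discrete part in terms of $\mathscr{K}_0$, invoke the inf-sup bound of Theorem~\ref{thm:infsup}, estimate the residual via continuity and Lemma~\ref{lem:scott-zhang} (using that $\mathscr{J}_\Bu^n(\ul{\Bu},\cdot)=\mathscr{J}_p^n(\ul{p},\cdot)=0$), and finally handle the $h\Delta_h$ and $h\nabla$ terms by combining Lemma~\ref{lem:scott-zhang} for the interpolation part with Lemma~\ref{lem:ch-up} for the discrete part. Apart from a harmless sign in front of $(f-\Cb_0^n\ul{\Bu})(\ul{q_h})$ in your residual (it should be $+$), the argument matches the paper's proof.
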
}}
\end{center}
\vspace{1mm}

\begin{proof}
	For convenience, we write
	\ben
	\begin{array}{lll}
		\ul{\Be_u}=\ul{\Bu}-\ul{\Bu_h},\quad
		&\ul{\xibf_h}=\ul{\Bu} -\ul{\pibf_k}(\ul{\Bu}),\quad
		&\ul{\etabf_h}=\ul{\pibf_k}(\ul{\Bu})-\ul{\Bu_h},\vspace{1mm}\\
		\ul{e_p}=\ul{p}-\ul{p_h},\quad
		&\ul{r_h}=\ul{p} -\ul{\pi_{k-1}}(\ul{p}),\quad
		&\ul{s_h}=\ul{\pi_{k-1}}(\ul{p})-\ul{p_h}.
	\end{array}
	\een
	Since $\Bu_i\in\BH^{k+1}(\Omega^n_{h,i})$ and $p_i\in H^{k}(\Omega^n_{h,i})$, we have
	$\mathscr{J}_\Bu^n(\ul{\Bu},\ul{\Bv_h}) =\mathscr{J}_p^n(\ul{p},\ul{q_h})=0$
	for all $\ul{\Bv_h}\in\boldsymbol{\Cv}^n_h$ and $\ul{q_h}\in\Cq^n_h$. From \eqref{def-proj-K0}, we have
	\begin{align*}
		\mathscr{K}_0((\ul{\Be_u},\ul{e_p}),(\ul{\Bv_h},\ul{q_h}))
		=\mathscr{B}^n_0(\ul\Bu,\ul{q_h})-f(q_h),\quad
		\forall\,(\ul{\Bv_h},\ul{q_h})\in {\boldsymbol{\Cv}}_h^n\times \Cq_h^n.
	\end{align*}
	By Theorem~\ref{thm:infsup}, there is a $\ul{\Bw_h}\in \boldsymbol{\Cv}^n_h$ which satisfies $\tN{\ul{\Bw_h}}_{\boldsymbol{\Cv}}\lesssim \tN{(\ul{\etabf_h},\ul{s_h})}_{{\boldsymbol{\Cv}},\Cq}$ and
	\begin{align*}
		\tN{(\ul{\etabf_h},\ul{s_h})}_{{\boldsymbol{\Cv}},\Cq}^2
		\lesssim\,&
		\mathscr{K}_0((\ul{\etabf_h},\ul{s_h}),(\ul{\Bw_h},\ul{s_h})) \\
		=\,& \mathscr{K}_0((\ul{\Be_u},\ul{e_p}),(\ul{\Bw_h},\ul{s_h}))
		-\mathscr{K}_0((\ul{\xibf_h},\ul{r_h}),(\ul{\Bw_h},\ul{s_h})) \\
		=\,&\mathscr{B}^n_0(\ul\Bu,\ul{s_h})-f(s_h)
		-\mathscr{K}_0((\ul{\xibf_h},\ul{r_h}),(\ul{\Bw_h},\ul{s_h}))\\
		\lesssim\,& \big(\N{f-\Cb^n_0\Bu}_{(\Cq^n_h)'}
		+\tN{(\ul{\xibf_h},\ul{r_h})}_{{\boldsymbol{\Cv}},\Cq}\big)
		\tN{(\ul{\Bw_h},\ul{s_h})}_{{\boldsymbol{\Cv}},\Cq} \\
		\lesssim\,&  \big(\N{f-\Cb^n_0\Bu}_{(\Cq^n_h)'}
		+\tN{(\ul{\xibf_h},\ul{r_h})}_{{\boldsymbol{\Cv}},\Cq}\big)
		\tN{(\ul{\etabf_h},\ul{s_h})}_{{\boldsymbol{\Cv}},\Cq},
	\end{align*}
	where we have used Lemma~\ref{lem:Ah} in the second inequality. It follows that
	\ben
	\tN{(\ul{\etabf_h},\ul{s_h})}_{{\boldsymbol{\Cv}},\Cq}\lesssim
	\N{f-\Cb^n_0\Bu}_{(\Cq^n_h)'}
	+\tN{(\ul{\xibf_h},\ul{r_h})}_{{\boldsymbol{\Cv}},\Cq}.
	\een
	Together with Lemma~\ref{lem:scott-zhang}, this yields the estimate for
	$\tN{(\ul{\Bu}-\ul{\Bu_h},\ul{p} -\ul{p_h})}_{{\boldsymbol{\Cv}},\Cq}$.

	Finally, from Lemma~\ref{lem:ch-up} and Lemma~\ref{lem:scott-zhang}, we have
	\begin{align*}
		\dN{\nu^{\frac 12}\Delta_h\ul{\Be_u}}_{0,\Omega^n_\eta}
		+\big\|\nu^{-\frac 12}\nabla\ul{e_p}\big\|_{0,\Omega^n_\eta}
		\lesssim\,& \big\|\nu^{\frac 12}\Delta_h\ul{\xibf_h}\big\|_{0,\Omega^n_\eta}
		+\big\|\nu^{-\frac 12}\nabla \ul{r_h} \big\|_{0,\Omega^n_\eta}\\
		&+h^{-1}\tN{(\ul{\etabf_h},\ul{s_h})}_{{\boldsymbol{\Cv}},\Cq}\\
		\lesssim \,&  h^{-1} \Big(h^kM_k^{\frac12}(\ul{\Bu},\ul{p})
		+\N{f-\Cb^n_0\Bu}_{(\Cq^n_h)'}\Big).
	\end{align*}
	The proof is finished.
\end{proof}
\vspace{1mm}

\begin{center}
	\fbox{\parbox{0.975\textwidth}
		{\begin{theorem}\label{thm:stokes-L2}
				Let Assumption~\ref{ass-2} and Assumption~\ref{ass-3} be satisfied.
				Then upon a hidden constant depending only $\Omega^n_{\eta,1}$ and $\Omega^n_{\eta,2}$, $(\ul{\Bu_h}, \ul{p_h})\equiv \Cs^n (\ul{\Bu},\ul{p},f)$ satisfies
				\begin{align*}
					\N{\ul{\Bu}-\ul{\Bu_h}}_{0,\Omega^n_\eta} \lesssim
					h \tN{(\ul{\Bu}-\ul{\Bu_h},\ul{p}-\ul{p_h})}_{{\boldsymbol{\Cv}},\Cq}
					+\N{f-\Cb^n_0\Bu}_{1,(\Cq^n_h)'}.
				\end{align*}
	\end{theorem}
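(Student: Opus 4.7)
The plan is an Aubin--Nitsche duality argument tailored to the unfitted mixed setting. Set $\ul\xibf:=\ul\Bu-\ul{\Bu_h}$ and $\ul\rho_h:=\ul p-\ul{p_h}$. I would first invoke Assumption~\ref{ass-3} with data $\ul\xibf\in\Ltwov[\Omega^n_\eta]$ to obtain an adjoint pair $(\ul\Bz,\ul r)$ with piecewise $\BH^2\times H^1$ regularity satisfying $\sum_i\big(\nu_i^{1/2}\|\Bz_i\|_{\BH^2(\Omega^n_{\eta,i})}+\nu_i^{-1/2}\|r_i\|_{H^1(\Omega^n_{\eta,i})}\big)\lesssim\N{\ul\xibf}_{0,\Omega^n_\eta}$. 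Testing the adjoint PDE against $\ul\xibf$ subdomain-wise, integrating by parts, dropping the $\Sigma$-contribution (since $\xibf_2|_\Sigma=\mathbf{0}$), and regrouping the $\Gamma^n_\eta$-boundary integrals via the magic formula together with the adjoint interface condition, one arrives at an error identity of the form
\begin{equation*}
\N{\ul\xibf}_{0,\Omega^n_\eta}^2 = \mathscr{A}_h^n(\ul\xibf,\ul\Bz)+\mathscr{B}^n_0(\ul\xibf,\ul r)+R,
\end{equation*}
where $R$ collects the interface integrals (involving $\jump{\ul r}$ paired with non-standard averages of $\ul\xibf$) that are left over after exploiting the harmonic-weight structure encoded in $\mathscr{A}_h^n$ and $\mathscr{B}^n_0$.

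Next I would exploit the Galerkin-style consistency of the modified Stokes projection. Since $\ul\Bu$ and $\ul p$ are piecewise smooth, $\mathscr{J}_\Bu^n(\ul\Bu,\cdot)=\mathscr{J}_p^n(\ul p,\cdot)=0$, and the error equations obtained from \eqref{def-proj} read
\begin{equation*}
\mathscr{A}_h^n(\ul\xibf,\ul{\Bv_h})+\mathscr{B}^n_0(\ul{\Bv_h},\ul\rho_h)=0,\qquad \mathscr{B}^n_0(\ul\xibf,\ul{q_h})-\mathscr{J}_p^n(\ul\rho_h,\ul{q_h})=(\Cb^n_0\Bu-f)(\ul{q_h}),
\end{equation*}
for every $(\ul{\Bv_h},\ul{q_h})\in\boldsymbol{\Cv}^n_h\times\Cq^n_h$. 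Choosing $\ul{\Bv_h}=\ul{\pibf_k}(\ul\Bz)$ and $\ul{q_h}=\ul{\pi_{k-1}}(\ul r)$, and using $\mathscr{B}^n_0(\ul\Bz,\ul\rho_h)=0$ (from $\Div\ul\Bz=0$ together with $\jump{\ul\Bz}=0$), the displayed identity collapses to
\begin{equation*}
\N{\ul\xibf}_{0,\Omega^n_\eta}^2 = \mathscr{A}_h^n\big(\ul\xibf,\ul\Bz-\ul{\pibf_k}(\ul\Bz)\big)+\mathscr{B}^n_0\big(\ul\Bz-\ul{\pibf_k}(\ul\Bz),\ul\rho_h\big)+\mathscr{B}^n_0\big(\ul\xibf,\ul r-\ul{\pi_{k-1}}(\ul r)\big)+\mathscr{J}_p^n\big(\ul\rho_h,\ul{\pi_{k-1}}(\ul r)\big)+(\Cb^n_0\Bu-f)\big(\ul{\pi_{k-1}}(\ul r)\big)+R.
\end{equation*}
The first three terms are controlled by Lemma~\ref{lem:Ah} combined with Lemma~\ref{lem:scott-zhang} applied to the dual solution (with effective order $1$ since $\ul\Bz\in\BH^2$ and $\ul r\in H^1$), producing a factor $h\,\tN{(\ul\xibf,\ul\rho_h)}_{\boldsymbol{\Cv},\Cq}\N{\ul\xibf}_{0,\Omega^n_\eta}$ after invoking Assumption~\ref{ass-3}; the $\mathscr{J}_p^n$-coupling is absorbed similarly via Cauchy--Schwarz once one controls $\mathscr{J}_p^n\big(\ul{\pi_{k-1}}(\ul r),\ul{\pi_{k-1}}(\ul r)\big)^{1/2}\lesssim h\sum_i\nu_i^{-1/2}\|r_i\|_{H^1(\Omega^n_{\eta,i})}$ by inverse/trace estimates on the edges of $\Ce^n_{i,B}$.

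For the residual functional, the definition \eqref{eq:normf} gives $\big|(\Cb^n_0\Bu-f)\big(\ul{\pi_{k-1}}(\ul r)\big)\big|\le\N{f-\Cb^n_0\Bu}_{1,(\Cq^n_h)'}\tN{\ul{\pi_{k-1}}(\ul r)}_{1,\Cq}$, and the $L^2$- and $H^1$-stability of the Scott--Zhang operator (together with the $\mathscr{J}_p^n$ control just obtained) yields $\tN{\ul{\pi_{k-1}}(\ul r)}_{1,\Cq}\lesssim\sum_i\nu_i^{-1/2}\|r_i\|_{H^1(\Omega^n_{\eta,i})}\lesssim\N{\ul\xibf}_{0,\Omega^n_\eta}$ by Assumption~\ref{ass-3}. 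Assembling all contributions and dividing through by $\N{\ul\xibf}_{0,\Omega^n_\eta}$ delivers the announced bound. I expect the main technical obstacle to be the careful treatment of the interface residual $R$: when the viscous and pressure boundary terms are expanded on $\Gamma^n_\eta$ via the magic formula, non-standard averages (such as $\avgg{\ul\xibf}$ paired with $\jump{\ul r}$) appear from both contributions and must be controlled by trace inequalities on cut elements (using Lemma~\ref{lem:trace2} and the geometric hypotheses in Assumption~\ref{ass-2}), combined with the $H^1$-bound on $\ul r$ supplied by Assumption~\ref{ass-3}, in order to absorb $R$ into $h\,\tN{(\ul\xibf,\ul\rho_h)}_{\boldsymbol{\Cv},\Cq}\N{\ul\xibf}_{0,\Omega^n_\eta}$.
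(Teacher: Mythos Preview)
Your overall strategy (Aubin--Nitsche duality against the Stokes interface adjoint) coincides with the paper's, but three technical points diverge and one of them is a genuine gap.

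\textbf{The residual $R$ vanishes.} The interface terms you worry about disappear cleanly: integration by parts of the adjoint equation produces $-\int_{\Gamma^n_\eta}\jump{\ul\xibf}\cdot\avgg{\nu\partial_\Bn\ul\Bz-\ul r\Bn}$, while $a_h^n(\ul\Bz,\ul\xibf)+\mathscr{B}^n_0(\ul\xibf,\ul r)$ contributes $-\int_{\Gamma^n_\eta}\jump{\ul\xibf}\cdot\big(\avg{\nu\partial_\Bn\ul\Bz}-\avg{\ul r}\Bn\big)$. Since $\avgg{\ul a}-\avg{\ul a}=(\kappa_2-\kappa_1)\jump{\ul a}$ for any pair $\ul a$, the adjoint flux condition $\jump{\nu\partial_\Bn\ul\Bz-\ul r\Bn}=0$ forces $\avgg{}=\avg{}$ for this combined stress, so $R=0$ exactly. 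The paper simply states the resulting identity $a_h^n(\ul\Bz,\ul{\Be_u})+\mathscr{B}^n_0(\ul{\Be_u},\ul r)=\langle\ul{\Be_u},\ul{\Be_u}\rangle$ without spelling this out.

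\textbf{Using $\mathscr{A}_h^n(\ul\xibf,\ul\Bz)$ is ill-defined.} Because $\mathscr{A}_h^n=a_h^n+\mathscr{J}^n_\Bu$ and the ghost penalty $\mathscr{J}^n_\Bu$ involves $\jump{\partial_\Bn^l\Bz_i}$ for $l$ up to $k$, while $\Bz_i$ is only $H^2$, the quantity $\mathscr{A}_h^n(\ul\xibf,\ul\Bz)$ does not make sense for $k\ge 2$. The paper works instead with $a_h^n$ in the dual identity and treats the ghost-penalty contribution $\mathscr{J}^n_\Bu(\ul{\Bu_h},\ul{\Bz_h})$ as a separate term.

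\textbf{The paper uses linear interpolation for the dual.} Rather than $\ul{\pibf_k}(\ul\Bz)$ and $\ul{\pi_{k-1}}(\ul r)$, the paper takes $\ul{\Bz_h}=\ul{\pibf_1}(\ul\Bz)$ and $\ul{r_h}=\ul{\pi_1}(\ul r)$. This is deliberate: linearity forces $\jump{\partial_\Bn^l\Bz_{h,i}}=0$ and $\jump{\partial_\Bn^l r_{h,i}}=0$ for $l\ge 2$, so the sums in $\mathscr{J}^n_\Bu(\ul{\Bu_h},\ul{\Bz_h})$ and $\mathscr{J}^n_p(\ul{p_h},\ul{r_h})$ collapse to the $l=1$ terms, which are then bounded via the trace inequality and the $H^2\times H^1$ regularity. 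Your higher-order choice can be made to work (subtract a local $P_1$ interpolant and use inverse estimates), but the paper's route is shorter.

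\textbf{Missing extension step.} The dual pair $(\Bz,r)$ lives only on $\Omega^n_{\eta,i}$, whereas the Scott--Zhang operators $\pibf_{m,\Omega^n_{h,i}}$, the ghost penalties, and the norm $\tN{\cdot}_{1,\Cq}$ require functions on the fictitious domains $\Omega^n_{h,i}\supset\Omega^n_{\eta,i}$. The paper inserts a Stein extension $\Bz_i\in\BH^2(\Omega^n_{h,i})$, $r_i\in H^1(\Omega^n_{h,i})$ with $\nu_i^{1/2}\|\Bz_i\|_{\BH^2(\Omega^n_{h,i})}+\nu_i^{-1/2}\|r_i\|_{H^1(\Omega^n_{h,i})}\lesssim\|\Be_u\|_{\BL^2(\Omega^n_\eta)}$ before interpolating. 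Without this, your bounds on $\mathscr{J}^n_p(\ul{\pi_{k-1}}(\ul r),\ul{\pi_{k-1}}(\ul r))$ and on $\tN{\ul{\pi_{k-1}}(\ul r)}_{1,\Cq}$ are not justified, since they use norms over $\Omega^n_{h,i}$.
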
}}
\end{center}
\vspace{1mm}

\begin{proof}
	We shall use the duality technique to prove the lemma. Let $\Be_u\in\Ltwov[\Omega^n_\eta]$ be defined as $\Be_u :=\Bu_i-\Bu_{h,i}$ in $\Omega^n_{\eta,i}$ for $i=1,2$. Consider the auxiliary problem
	\begin{align}\label{stokes-eu}
		\begin{cases}
			-\nu \Delta \Bz + \nabla r = \Be_u, \quad
			\Div \Bz =0 \quad \text{in}\;\; \Omega^n_\eta,  \vspace{1mm}\\
			\jump{(\nu \nabla \Bz +r\bbI)\cdot\Bn} = \jump{\Bz}=0
			\quad \text{on}\;\; \Gamma_\eta^n,  \vspace{1mm} \\
			\Bz=0 \quad \text{on} \;\;\Sigma.
		\end{cases}
	\end{align}
	By Assumption~\ref{ass-3}, we have
	$\big\|\nu^{\frac12}\Bz\big\|_{\BH^2(\Omega^n_\eta)}
	+\big\|\nu^{-\frac12}r\big\|_{H^1(\Omega^n_\eta)}
	\lesssim \|\Be_u\|_{L^2(\Omega^n_\eta)}$.
	By Stein's extension theory (see \cite[Chapter~6]{ste70}), there exist $\Bz_i\in\BH^2(\Omega^n_{h,i})$ and $r_i\in H^1(\Omega^n_{h,i})$, $i=1,2$, which satisfy $\Bz_i=\Bz$ and $r_i=r$ in $\Omega^n_{\eta,i}$, and
	\begin{align}
		&\nu_i^{\frac12}\|\Bz_i\|_{\BH^2(\Omega^n_{h,i})}
		+\nu_i^{-\frac12}\|r_i\|_{H^1(\Omega^n_{h,i})} \notag \\
		\lesssim\;&
		\|\nu_i^{\frac12}\Bz\|_{H^2(\Omega^n_{\eta,i})} +\|\nu_i^{-\frac12}r\|_{H^1(\Omega^n_{\eta,i})}
		\lesssim \|\Be_u\|_{L^2(\Omega_{\eta}^n)}.\label{z-ext}
	\end{align}
	Define $\ul{\Bz}=(\Bz_1,\Bz_2)$, $\ul{r}=(r_1,r_2)$, $\ul{\Bz_h}=\ul{\pibf_1}(\ul{\Bz})$, $\ul{r_h}=\ul{\pi_1}(\ul{r})$, $\ul{\Be_z} =\ul{\Bz}-\ul{\Bz_h}$, and $\ul{e_r} =\ul{r}-\ul{r_h}$. From
	Lemma~\ref{lem:scott-zhang} and \eqref{z-ext}, we obtain
	\begin{align}\label{ieq:ez}
		\tN{(\ul{\Be_z},\ul{e_r})}_{\boldsymbol{\Cv},\Cq} +
		h\big\|\nu^{-\frac12}\nabla\ul{e_r}\big\|_{0,\Omega^n_\eta}
		\lesssim h\|\Be_u\|_{L^2(\Omega_\eta^n)}.
	\end{align}
	
	Write $\ul{\Be_u} =\ul{\Bu}-\ul{\Bu_h}$ and $\ul{e_p} =\ul{p}-\ul{p_h}$.
	The weak form of \eqref{stokes-eu} yields
	\begin{align*}
		a_h^n(\ul{\Bz},\ul{\Be_u}) +\mathscr{B}^n_0(\ul{\Be_u},\ul{r})
		= \Aprod[\Omega^n_\eta]{\ul{\Be_u},\ul{\Be_u}},\qquad
		\mathscr{B}^n_0(\ul{\Bz},\ul{e_p}) =0.
	\end{align*}
	Write $f_{\Bu}:=f-\Cb^n_0\Bu$ for convenience.
	From \eqref{def-proj}, we also have
	\begin{align*}
		&a_h^n(\ul{\Be_u},\ul{\Bz_h}) +\mathscr{B}^n_0(\ul{\Bz_h},\ul{e_p})
		=\mathscr{J}^n_u(\ul{\Bu_h},\ul{\Bz_h}),\\
		&\mathscr{B}^n_0(\ul{\Be_u}, \ul{r_h})
		+\mathscr{J}_p^n(\ul{p_h},\ul{r_h}) +f_\Bu(\ul{r_h}) =0.
	\end{align*}
	Combining the four equalities above and noting that $a^n_h$ is symmetric, we have
	\begin{align}
		\|\Be_u\|_{\BL^2(\Omega^n_\eta)}^2
		=\,& a_h^n(\ul{\Be_z},\ul{\Be_u})+ \mathscr{B}^n_0(\ul{\Be_z},\ul{e_p})
		+\mathscr{B}^n_0(\ul{\Be_u},\ul{e_r}) +\mathscr{J}^n_u(\ul{\Bu_h},\ul{\Bz_h})\notag \\
		& -\mathscr{J}_p^n(\ul{p_h},\ul{r_h}) -f_\Bu(\ul{r_h})\notag \\
		=\,& a_h^n(\ul{\Be_z},\ul{\Be_u})+ \mathscr{B}^n_0(\ul{\Be_z},\ul{e_p})
		+\mathscr{B}^n_0(\ul{\Be_u},\ul{e_r})-f_\Bu(\ul{r_h})\notag\\
		&+h \sum_{i=1,2}\sum_{E\in \Ce_{i,B}^n}\int_{E} \nu_i\jump{\partial_{\Bn}(\Bu_i-\Bu_{h,i})}\jump{\partial_{\Bn}(\Bz_i-\Bz_{h,i})} \notag \\
		&-h^3 \sum_{i=1,2}\sum_{E\in \Ce_{i,B}^n}\int_{E} \nu^{-1}_i
		\jump{\partial_{\Bn} (p_i - p_{h,i})} \jump{\partial_{\Bn} r_{h,i}},
		\label{ieq:eu}
	\end{align}
	where in the second equality, we have used the fact that $\Bz_{h,i}$ and $r_{h,i}$ are piecewise linear.

	From \eqref{int-errK} and \eqref{z-ext}, we know that $\nu_i^{-\frac12}\NHone[\Omega^n_{h,i}]{r_{h,i}}\lesssim \nu_i^{-\frac12}\NHone[\Omega^n_{h,i}]{r}
	\lesssim \|\Be_u\|_{\BL^2(\Omega^n_\eta)}$ for $h$ small enough.
	By arguments similar to the proof of Lemma~\ref{lem:Ah} and using Theorem~\ref{thm:stokes-H1} and inequality \eqref{ieq:ez}, we have
	\begin{align}
		&\SN{a_h^n(\ul{\Be_z},\ul{\Be_u}) +\mathscr{B}^n_0(\ul{\Be_z},\ul{e_p})
			+\mathscr{B}^n_0(\ul{\Be_u},\ul{e_r})}
		\lesssim h \tN{(\ul{\Be_u},\ul{e_p})}_{\boldsymbol{\Cv},\Cq}\|\Be_u\|_{\BL^2(\Omega^n_\eta)},
		\label{ieq:aBB} \\
		&\SN{f_\Bu(\ul{r_h})} \lesssim \N{f_\Bu}_{1,(\Cq^n_h)'} \tN{\ul{r_h}}_{1,\Cq}
		\lesssim \N{f_\Bu}_{1,(\Cq^n_h)'}\|\Be_u\|_{\BL^2(\Omega^n_\eta)}.
		\label{ieq:fu}
	\end{align}
	By the trace inequality \eqref{ieq:trace0} and the Cauchy-Schwarz inequality, we also have
	\begin{align}
		&h\sum_{i=1,2}\sum_{E\in \Ce_{i,B}^n}\int_{E}\nu_i\jump{\partial_{\Bn}(\Bu_i-\Bu_{h,i})}
		\jump{\partial_{\Bn}(\Bz_i-\Bz_{h,i})} \notag\\
		\lesssim\,&\mathscr{J}^n_\Bu(\ul{\Be_u},\ul{\Be_u})^{\frac12}
		\sum_{i=1,2}\nu^{\frac12}_i\big(\SNHonev[\Omega^n_{h,i}]{\Bz_i-\Bz_{h,i}}
		+h\SN{\Bz_i}_{\BH^2(\Omega^n_{h,i})}\big) \notag\\
		\lesssim\,& h \mathscr{J}^n_\Bu(\ul{\Be_u},\ul{\Be_u})^{\frac12}
		\|\Be_u\|_{\BL^2(\Omega^n_\eta)}.
		\label{ieq:jump-u}
	\end{align}
	Similarly, by norm equivalence and the estimates in \eqref{z-ext}--\eqref{ieq:ez}, we get
	\begin{align}
		&h^3\sum_{i=1,2}\sum_{E\in \Ce_{i,B}^n}\int_{E} \nu^{-1}_i
		\jump{\partial_{\Bn} (p_i - p_{h,i})} \jump{\partial_{\Bn} r_{h,i}} \notag \\
		\lesssim\,& h\mathscr{J}^n_p(\ul{e_p},\ul{e_p})^{\frac12}
		\sum_{i=1,2}\nu^{-\frac12}_i \SNHone[\Omega^n_{h,i}]{r_{h,i}}  \notag \\
		\lesssim\,& h \mathscr{J}^n_p(\ul{e_p},\ul{e_p})^{\frac12}\|\Be_u\|_{\BL^2(\Omega^n_\eta)}. \label{ieq:jump-p}
	\end{align}
	The proof is finished by inserting \eqref{ieq:aBB}--\eqref{ieq:jump-p} into \eqref{ieq:eu}.
\end{proof}
\vspace{1mm}

\begin{remark}
	If $f=\Cb^n_0\ul\Bu$ and $M_k(\ul\Bu,\ul p)<\infty$, Theorems~\ref{thm:stokes-H1} and \ref{thm:stokes-L2} indicate that
	\begin{align}\label{ieq:stokes-L2}
		\N{\ul{\Bu}-\ul{\Bu_h}}_{0,\Omega^n_\eta} \lesssim h
		\tN{(\ul{\Bu}-\ul{\Bu_h},\ul{p}-\ul{p_h})}_{{\boldsymbol{\Cv}},\Cq}
		\lesssim h^{k+1}M_k^{\frac12}(\ul\Bu,\ul p).
	\end{align}
\end{remark}

\subsection{Modified Stokes projections of discrete solutions}

Note from \eqref{eq:disc} that the discrete solutions are coupled with flow maps.
Numerical solutions in previous time steps are not piecewise polynomials any more in the current time step. In the subsequent analysis, the modified Stokes projections will be applied to them.

Following same lines in \cite[Appendix~A]{ma21}, we have the preliminary but useful estimates for the pull-back map $v_h\to v_h\circ\BX^{n,n-l}$ for a finite element function $v_h\in V_h(k,\Omega)$.  \vspace{1mm}

\begin{lemma}\label{lem:uhX}
	Suppose $\eta = O(\tau^{\max(k/3,1)})$. Write $v_h^{n-l,n}=v_{h}\circ\BX_\tau^{n,n-l}$ for $v_h\in V_{h}(k,\Omega)$, $0\le l\le k\le n$.
	There is a constant $C>0$ independent of $\tau$, $h$, and $n$ such that, for $\mu=0,1$, $i=1,2$, and $l\leq j\leq k$,
	\begin{align}
		&\big|v_{h}^{n-l,n}\big|_{\Hone[\Gamma^n_\eta]}^2\le C h^{-1}\SNHone{v_{h}}^2,\label{eq:v-X3}\\
		&\dN{v_{h}^{n-l,n}}_{\Ltwo[\Gamma^n_\eta]}^2
		\le (1+C\tau)\NLtwo[\Gamma^{n-l}_\eta]{v_h}^2 +C\tau^{k+2}h^{-2}\NHone{v_h}^2, \label{eq:v-X2}\\
		&\dN{\nabla^\mu v_{h}^{n-l,n}}_{\Ltwov[\Omega^n_{\eta,i}]}^2
		\le (1+C\tau)\dN{\nabla^\mu v_{h}^{n-j,n-l}}_{\Ltwov[\Omega^{n-l}_{\eta,i}]}^2\notag \\
		&\hspace{35mm}+C\tau^{k+2}h^{-1} \NLtwov{\nabla^\mu v_h}^2.	\label{ieq:vhXn}
	\end{align}  	
\end{lemma}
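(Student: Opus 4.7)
The plan is to follow the approach of \cite[Appendix~A]{ma21}: combine change of variables along the approximate flow maps with the interface-tracking error bounds of Theorem~\ref{thm:bdr} and with inverse estimates for finite element functions. A common ingredient for all three inequalities is the Jacobian estimate \eqref{ieq:Jtau}, which gives $\N{\bbJ^{n-l,n}_\tau-\bbI}_{\bbL^{\infty}(\Omega)}\lesssim\tau$ and hence produces the prefactor $1+C\tau$ after any change of variables. The residual error of order $\tau^{k+2}$ comes from the geometric proximity
\[
\N{\BX^{n,n-l}_\tau\circ\chibf_n-\chibf_{n-l}}_{\BC^0([0,L])}\lesssim \tau^{k+2},
\]
which follows from Theorem~\ref{thm:bdr} together with the Lipschitz bound \eqref{ieq:Jmn} and the scaling hypothesis $\eta=O(\tau^{\max(k/3,1)})$.

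For \eqref{eq:v-X3}, I would apply the chain rule $\nabla v_h^{n-l,n}=(\bbJ^{n,n-l}_\tau)^{\top}(\nabla v_h)\circ\BX^{n,n-l}_\tau$, then move the integration from $\Gamma^n_\eta$ onto the surrogate curve $\BX^{n,n-l}_\tau(\Gamma^n_\eta)$ by change of variables. Applying the trace inequality \eqref{ieq:trace0} on every background element that this surrogate curve crosses and finishing with the elementwise inverse estimate $\N{\nabla^2 v_h}_{\Ltwov[K]}\lesssim h^{-1}\N{\nabla v_h}_{\Ltwov[K]}$ produces the $h^{-1}$ factor.

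For \eqref{eq:v-X2}, I would parameterize $\Gamma^n_\eta$ and $\Gamma^{n-l}_\eta$ by $\chibf_n$ and $\chibf_{n-l}$, and compare the two line integrals term by term. The discrepancy between $|\chibf_n'|$ and $|\bbJ^{n-l,n}_\tau\chibf_{n-l}'|$, controlled by the $\BC^1$-part of Theorem~\ref{thm:bdr}, contributes the $(1+C\tau)$ factor. The pointwise difference $v_h\big(\BX^{n,n-l}_\tau\circ\chibf_n(s)\big)-v_h(\chibf_{n-l}(s))$ is then estimated through the mean value theorem together with the two-dimensional global inverse inequality $\N{\nabla v_h}_{\Linfv}\lesssim h^{-1}\N{v_h}_{\Hone}$; squaring and integrating in $s\in[0,L]$ yields the announced $\tau^{k+2}h^{-2}\N{v_h}_{\Hone}^2$ remainder.

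For \eqref{ieq:vhXn}, I would pull back the integral by $\BX^{n,n-l}_\tau$ so that the integration domain becomes $\BX^{n,n-l}_\tau(\Omega^n_{\eta,i})$, which coincides with $\Omega^{n-l}_{\eta,i}$ up to a tubular neighbourhood $T$ of $\Gamma^{n-l}_\eta$ of width $O(\tau^{k+2})$. The Jacobian contribution from this change of variables is absorbed into $1+C\tau$, leaving the main task of bounding $\int_{T}|\nabla^\mu v_h|^2$. I would slice $T$ into curves parallel to $\Gamma^{n-l}_\eta$, apply \eqref{ieq:trace0} on each slice, and combine it with a single inverse estimate on $\nabla^\mu v_h$; this produces an $h^{-1}$ factor, and integration across the width of $T$ yields the extra $\tau^{k+2}$, matching the third estimate. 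The main obstacle throughout is avoiding a spurious $h^{-2}$ in this tubular argument: since $v_h$ is only $H^1$-smooth across element faces and the surrogate interface need not align with $\Ct_h$, one must use the layerwise trace bound rather than a naive $L^\infty$ estimate in order to save one factor of $h$.
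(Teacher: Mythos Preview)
The paper itself does not prove this lemma but defers entirely to \cite[Appendix~A]{ma21}, and your sketch reconstructs that argument faithfully: change of variables plus \eqref{ieq:Jtau} for the $(1+C\tau)$ factors, Theorem~\ref{thm:bdr} under the scaling $\eta=O(\tau^{\max(k/3,1)})$ for the $O(\tau^{k+2})$ geometric remainder, and inverse/trace estimates for the negative $h$-powers. One small simplification for \eqref{ieq:vhXn}: rather than slicing the strip $T$ into parallel curves and invoking \eqref{ieq:trace0} on each slice, it is cleaner to observe that $T$ meets only the interface-zone elements with $|T\cap K|\lesssim h\,\tau^{k+2}$ and apply the elementwise inverse estimate $\|\nabla^\mu v_h\|_{L^\infty(K)}^2\lesssim h^{-2}\|\nabla^\mu v_h\|_{L^2(K)}^2$ directly, which yields the factor $\tau^{k+2}h^{-1}$ without any layerwise trace argument.
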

\vspace{1mm}

Note that $\Omega^n_{\eta,i}\ne \Omega_i(t_n)$ for $i=1,2$. We also need to estimate functions on $\Omega^n_{\eta,i}\oplus \Omega_i(t_n)$, where $A\oplus B = (A\backslash B)\cup (B\backslash A)$ for two sets $A$ and $B$.
\vspace{1mm}

\begin{lemma}\label{lem:uX}
	Suppose $\eta = O(\tau^{\max(k/3,1)})$. Then for $i=1,2$ and $|m-n|\le k$,
	\begin{align}
		&\NLtwo[\Omega_{\eta,i}^n\oplus \Omega_i(t_n)]{v}^2 \lesssim \tau^{k+1}\NHone[\Omega]{v}^2,
		\quad\;\;\, \forall\,v\in \Hone,\label{eq:v-X0 H1} \\
		&\NLtwo[\Omega^n_{\eta,i}\oplus \BX_\tau^{m,n}(\Omega^m_{\eta,i})]{v}^2
		\lesssim \tau^{k+2}\|v\|_{H^1(\Omega)}^2,
		\quad \forall\,v\in H^1(\Omega), \label{eq:v-X4} \\
		&\NLtwov[\Omega^n_{\eta,i}\backslash \Omega_i(t_n)]{\nabla v_{h}}^2
		\lesssim \tau^{k+1}h^{-1}\NLtwov[\Omega^n_{h,i}]{\nabla v_h}^2, \quad
		\forall\,v_h\in V_{h}(k,\Omega), \label{eq:v-X5}\\
		&\NLtwov[\Omega^n_{\eta,i}\oplus \BX^{m,n}_\tau(\Omega^m_{\eta,i})]{\nabla v_{h}}^2
		\lesssim \tau^{k+2}h^{-1}\NLtwov[\Omega^n_{h,i}]{\nabla v_h}^2, \;
		\forall\,v_h\in V_{h}(k,\Omega). \label{eq:v-X0}
	\end{align}
\end{lemma}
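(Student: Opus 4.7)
The strategy is to reduce all four inequalities to thin-strip estimates around the smooth curve $\Gamma^n_\eta$ by combining Theorems~\ref{thm:chi} and \ref{thm:bdr} with a tubular-neighbourhood change of variables.

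\emph{Step 1 (width of the strip).} First I would bound the Hausdorff distance $\delta$ between the two interfaces that bound each symmetric difference. For \eqref{eq:v-X0 H1} and \eqref{eq:v-X5} the two interfaces are $\Gamma^n_\eta$ and $\Gamma(t_n)$, parametrised by $\chibf_n$ and $\hat\chibf_n$; Theorem~\ref{thm:chi} together with the standing scaling $\eta=O(\tau^{\max(k/3,1)})$ gives
\[
\delta\le \N{\chibf_n-\hat\chibf_n}_{\BC([0,L])}\lesssim \eta^{4}+\tau^{k+1}\lesssim \tau^{k+1}.
\]
For \eqref{eq:v-X4} and \eqref{eq:v-X0} the two interfaces are $\Gamma^n_\eta$ and $\BX_\tau^{m,n}(\Gamma^m_\eta)$; Theorem~\ref{thm:bdr} yields the sharper bound $\delta\lesssim \tau\eta^{4}+\tau^{k+2}\lesssim \tau^{k+2}$ (the worst term is the $i=0$ one). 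Since the cubic-spline parametrisation satisfies $\N{\chibf_n}_{\BC^2([0,L])}\lesssim 1$, for $\tau$ small enough $\Gamma^n_\eta$ admits a tubular neighbourhood $\Cs_\delta\subset\Omega$ of normal half-width $\delta$ that contains the relevant symmetric difference.

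\emph{Step 2 (Sobolev strip estimates).} For \eqref{eq:v-X0 H1}--\eqref{eq:v-X4} I would use the Stein extension to view $v\in H^1(\bbR^2)$, introduce normal coordinates $(\ell,s)\in[0,L]\times(-\delta,\delta)$ on $\Cs_\delta$, and use the uniform $C^2$-bound on $\chibf_n$ to bound the Jacobian and write
\[
\NLtwo[\Cs_\delta]{v}^{2}\lesssim \int_{-\delta}^{\delta}\int_0^L \SN{v(\chibf_n(\ell)+s\Bn(\ell))}^{2}\,\mathrm{d}\ell\,\mathrm{d}s.
\]
For each fixed $s$ the trace inequality on the shifted $C^2$ curve bounds the inner integral by $C\NHone[\bbR^2]{v}^{2}$ uniformly in $s$; integrating in $s$ produces the factor $2\delta$, and inserting the bounds on $\delta$ from Step~1 gives \eqref{eq:v-X0 H1} and \eqref{eq:v-X4}.

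\emph{Step 3 (finite-element strip estimates).} For \eqref{eq:v-X5}--\eqref{eq:v-X0}, $v_h|_K\in Q_k(K)$ on every $K\in\Ct_h$, so the standard inverse inequality gives $\NLinfv[K]{\nabla v_h}^{2}\lesssim h^{-2}\NLtwov[K]{\nabla v_h}^{2}$. The $C^2$-regularity of $\chibf_n$ implies $|K\cap \Cs_\delta|\lesssim h\delta$ uniformly in $K$, and in the regime $\delta\lesssim h$ (which is guaranteed since $h=O(\tau)$ in Theorem~\ref{thm:alpha}) every $K$ meeting the symmetric difference belongs to $\Ct^n_{h,B}\subset\Ct^n_{h,i}$, hence lies in $\Omega^n_{h,i}$. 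Summing over these elements gives
\[
\NLtwov[\Cs_\delta\cap\Omega^n_{h,i}]{\nabla v_h}^{2}\lesssim \delta\,h^{-1}\NLtwov[\Omega^n_{h,i}]{\nabla v_h}^{2},
\]
which combined with $\delta\lesssim\tau^{k+1}$ (resp.\ $\tau^{k+2}$) from Step~1 yields \eqref{eq:v-X5} (resp.\ \eqref{eq:v-X0}).

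\emph{Main obstacle.} The only genuinely delicate point is the case analysis in Step~1, namely verifying that every term of $\tau\sum_{i=0}^{k+1}\bigl(\tau^{i}\eta^{\min(4,r-i)}+\tau^{k+1}\bigr)$ from Theorem~\ref{thm:bdr} is dominated by $\tau^{k+2}$ under the scaling $\eta=O(\tau^{\max(k/3,1)})$ for each admissible order $k\in\{2,3,4\}$ (the worst case being the $i=0$ term $\tau\eta^{4}$). Once the strip widths are in hand, the tubular-neighbourhood reduction and the inverse estimate are routine.
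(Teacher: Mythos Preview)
Your proposal is correct and follows essentially the same route as the paper: bound the strip width via Theorems~\ref{thm:chi} and \ref{thm:bdr}, then use a thin-strip estimate for $H^1$ functions and an $L^\infty$ inverse inequality on interface elements for finite element functions. The only cosmetic difference is that the paper quotes the $H^1$ strip inequality $\|w\|_{L^2(D_\varepsilon)}\lesssim\sqrt{\varepsilon}\,\|w\|_{H^1(D)}$ directly from Nicaise--Sauter \cite{nic06} rather than deriving it via tubular coordinates, and it cites \eqref{eq:v-X0} from \cite{ma21} rather than reproving it; your Step~2 effectively sketches the proof of the Nicaise--Sauter lemma in this setting.
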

\begin{proof}
	The proof of \eqref{eq:v-X0} can be found in \cite[Appendix~A]{ma21}.
	It is left to prove \eqref{eq:v-X0 H1}--\eqref{eq:v-X5}.
	First we cite an important result from \cite[ Lemma~10 and (17)]{nic06}: for any Lipschitz domain $D$ and
	$w\in\Hone[D]$, there holds
	\begin{align}\label{Depsilon}
		\NLtwo[D_\varepsilon]{w} \le C\big(\sqrt{\varepsilon} \N{w}_{H^\kappa(D)}
		+ \varepsilon^{s}\N{w}_{H^s(D)}\big), \qquad 1/2 < \kappa\le s\le 1,
	\end{align}
	where $D_\varepsilon:=\{\Bx\in D: \mathrm{dist}(\Bx,\partial D)< \varepsilon\}$,
	$\varepsilon>0$ is a constant, and $C$ is a constant independent of $\varepsilon$. Using $\eta=O(\tau^{\max\{1,k/3\}})$ and Theorem~\ref{thm:chi},
	we infer that
	\begin{align}\label{dist-n}
		\text{dist}(\Gamma_\eta^n,\Gamma(t_n))=O(\tau^{k+1}).
	\end{align}
	Then using \eqref{Depsilon}, we immediately get \eqref{eq:v-X0 H1}, that is,
	\begin{align}
		\NLtwo[\Omega_{\eta,i}^n\oplus \Omega_i(t_n)]{v}^2 \lesssim
		\text{dist}(\Gamma_\eta^n,\Gamma(t_n))\NHone[\Omega]{v}^2 \lesssim
		\tau^{k+1}\NHone[\Omega]{v}^2.
	\end{align}
	Moreover, inequality \eqref{eq:v-X4} is a consequence of \eqref{Depsilon} and Theorem~\ref{thm:bdr}.

	Finally, from \eqref{dist-n} we find that
	\begin{align*}
		\NLtwov[\Omega^n_{\eta,i}\backslash \Omega_i(t_n)]{\nabla v_{h}}^2
		\lesssim \tau^{k+1}\sum_{K\in\Ct^n_{h,i}} h \NLinfv[K]{\nabla v_{h}}^2
		\lesssim \tau^{k+1} h^{-1} \NLtwov[\Omega^n_{h,i}]{\nabla v_{h}}^2.
	\end{align*}
	This is inequality \eqref{eq:v-X5}. The proof is finished.
\end{proof}
\vspace{1mm}

\begin{remark}\label{rem:Un}
	Using \eqref{ieq:vh-ext}, \eqref{avg-nu}, and Lemma~\ref{lem:uhX}, we easily get
	\begin{align}
		&\dN{\nu^{\frac12}\nabla^{\mu} \ul{\BU_h^{n-l,n}}}_{0,\Omega^n_\eta}^2
		\le  (1+C\tau)\dN{\nu^{\frac12}\nabla^{\mu}\ul{\BU_h^{n-l,n-1}}}_{0,\Omega_\eta^{n-1}}^2\notag \\
		&\hspace{35mm}+C\tau^{k+2}h^{1-2\mu} \mathscr{J}^n_\Bu(\ul{\Bu_h^{n-l}},\ul{\Bu_h^{n-l}}),
		\label{eq:temp3}\\
		&\avg{\nu}\NLtwov[\Gamma_\eta^n]{\jump{\ul{\BU_h^{n-l,n}}}}^2
		\le (1+C\tau)\avg{\nu}\NLtwov[\Gamma_\eta^{n-l}]{\jump{\ul{\Bu_h^{n-l}}}}^2 \notag \\
		&\hspace{40mm}+ C \tau^{k+2}h^{-2} \tN{\ul{\Bu_h^{n-l}}}_{\boldsymbol{\Cv}}^2,
		\label{eq:temp2}\\
		&\NLtwo[\Gamma_{\eta}^n]{\Avg{\nu\partial_\Bn \ul{\BU_h^{n-l,n}}}}^2
		\lesssim \avg{\nu} h^{-1}\tN{\ul{\Bu_h^{n-l}}}_{\boldsymbol{\Cv}}^2,
		\label{eq:temp4}
	\end{align}
	where the jump and average are defined as
	\begin{align*}
		\jump{\ul{\BU_h^{n-l,n}}} =\,& \Bu_{h,1}^{n-l}\circ \BX_\tau^{n,n-l}-\Bu_{h,2}^{n-l}\circ\BX_\tau^{n,n-l}, \\
		\Avg{\nu\partial_{\Bn} \ul{\BU_h^{n-l,n}}} =\,&
		\kappa_1\nu_1\partial_{\Bn} \big(\Bu_{h,1}^{n-l}\circ \BX_\tau^{n,n-l}\big)
		+\kappa_2\nu_2\partial_{\Bn} \big(\Bu_{h,2}^{n-l}\circ \BX_\tau^{n,n-l}\big).
	\end{align*}
	Together with \eqref{norm-equi0}, they show $\dN{\ul{\BU_h^{n-1,n}}}_{\boldsymbol{\Cv}} \lesssim
	\tN{\ul{\Bu_h^{n-1}}}_{\boldsymbol{\Cv}}$.
\end{remark}
\vspace{1mm}

\begin{lemma}\label{lem:BUhqh}
	Let $\ul{\Bu^{n-1}_h}$ be the discrete velocity at $t_{n-1}$ and $\ul{q_h}\in\Cq^n_h$. Extend $\Bu^{n-1}_{h,i}$ and $q_{h,i}$ from $\Omega^n_{h,i}$ to $\Omega$ according to the convention in \eqref{ieq:vh-ext} and denote their extensions still by $\Bu^{n-1}_{h,i}\in \BV_h(k,\Omega)$ and $q_{h,i}\in V_h(k-1,\Omega)$.
	Moreover, let $\hat q_{h,i} =\pi_{1,\Omega}\big(q_{h,i}\circ \BX_\tau^{n-1,n}\big)\in V_h(1,\Ct_h)$ be the Scott-Zhang interpolations of $q_{h,i}\circ \BX_\tau^{n-1,n}$ for $i=1,2$. Then $\ul{\hat q_h}=\big(\hat q_{h,1},\hat q_{h,2}\big)$ satisfies
	\begin{align*}
		\SN{\mathscr{B}^n_0(\ul{\BU_h^{n-1,n}},\ul{q_h}) -
			\mathscr{B}^{n-1}_0(\ul{\Bu_h^{n-1}},\ul{\hat q_h})}
		\lesssim h \tN{\ul{\Bu^{n-1}_h}}_{\boldsymbol{\Cv}}
		\tN{\ul{q_h}}_{1,\Cq}.
	\end{align*}
\end{lemma}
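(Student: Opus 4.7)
The plan is to introduce the pullback $\ul{\mathring q_h} := \ul{q_h}\circ \BX_\tau^{n-1,n}$ as an intermediate piecewise-$H^1(\Omega)$ quantity and split the target expression as
\begin{equation*}
\mathscr{B}^n_0(\ul{\BU_h^{n-1,n}},\ul{q_h}) - \mathscr{B}^{n-1}_0(\ul{\Bu_h^{n-1}},\ul{\hat q_h}) = I_1 + I_2,
\end{equation*}
where $I_2 := \mathscr{B}^{n-1}_0(\ul{\Bu_h^{n-1}}, \ul{\mathring q_h - \hat q_h})$ isolates the Scott--Zhang interpolation error and $I_1 := \mathscr{B}^n_0(\ul{\BU_h^{n-1,n}},\ul{q_h}) - \mathscr{B}^{n-1}_0(\ul{\Bu_h^{n-1}},\ul{\mathring q_h})$ carries the change-of-variables error from the $t_n$-geometry to the $t_{n-1}$-geometry.

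For $I_2$, since $\hat q_{h,i}$ is the $P_1$ Scott--Zhang interpolant of $\mathring q_{h,i}$, the standard bound $\|\mathring q_{h,i} - \hat q_{h,i}\|_{L^2(\Omega)} \lesssim h\,\|\nabla \mathring q_{h,i}\|_{L^2(\Omega)}$ together with the chain rule and $\bbJ_\tau^{n-1,n} = \bbI + O(\tau)$ from \eqref{ieq:Jtau} yields $\nu_i^{-1/2}\|\mathring q_{h,i} - \hat q_{h,i}\|_{L^2(\Omega)}\lesssim h\,\tN{\ul{q_h}}_{1,\Cq}$. The volume part of $I_2$ is then controlled by Cauchy--Schwarz against $\nu_i^{1/2}\|\Div \Bu^{n-1}_{h,i}\|_{L^2(\Omega^{n-1}_{\eta,i})}\lesssim \tN{\ul{\Bu^{n-1}_h}}_{\boldsymbol{\Cv}}$. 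The interface part uses the $\mathscr{J}_0^{n-1}$-bound $\avg{\nu}^{1/2}\|\jump{\ul{\Bu^{n-1}_h}}\|_{L^2(\Gamma^{n-1}_\eta)}\lesssim h^{1/2}\tN{\ul{\Bu^{n-1}_h}}_{\boldsymbol{\Cv}}$ combined with the element-wise trace inequality \eqref{ieq:trace0} applied to $\mathring q_{h,i} - \hat q_{h,i}$, producing a sharp $h^{1/2}$-factor on the pressure side. Collecting these estimates gives $|I_2|\lesssim h\,\tN{\ul{\Bu^{n-1}_h}}_{\boldsymbol{\Cv}}\tN{\ul{q_h}}_{1,\Cq}$.

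For $I_1$, I would apply the change of variables $\Bx = \BX_\tau^{n-1,n}(\By)$ to both the volume and the surface integrals entering $\mathscr{B}^n_0(\ul{\BU_h^{n-1,n}},\ul{q_h})$. The chain rule combined with the volume element $\det \bbJ_\tau^{n-1,n}$ converts $(\Div \BU^{n-1,n}_{h,i})\,q_{h,i}\,d\Bx$ into $\operatorname{tr}(\nabla \Bu^{n-1}_{h,i}\cdot\operatorname{cof}(\bbJ_\tau^{n-1,n})^T)\mathring q_{h,i}\,d\By$ on $\tilde\Omega^{n-1}_{\eta,i}:=\BX_\tau^{n,n-1}(\Omega^n_{\eta,i})$, and the interface integral transforms analogously with a surface Jacobian that is $1+O(\tau)$ and a transported unit normal that is $\Bn^{n-1}+O(\tau)$. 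Using $\operatorname{cof}(\bbJ_\tau^{n-1,n})^T = \bbI + O(\tau)$ from \eqref{ieq:Jtau} isolates the leading contribution, which reproduces the analogous integrals in $\mathscr{B}^{n-1}_0(\ul{\Bu_h^{n-1}},\ul{\mathring q_h})$ integrated over $\tilde\Omega^{n-1}_{\eta,i}$ and $\tilde\Gamma^{n-1}_\eta := \BX_\tau^{n,n-1}(\Gamma^n_\eta)$, while the geometric mismatches $\tilde\Omega^{n-1}_{\eta,i}\oplus \Omega^{n-1}_{\eta,i}$ and $\tilde\Gamma^{n-1}_\eta\oplus\Gamma^{n-1}_\eta$ have thickness $O(\tau^{k+1})$ by Theorem~\ref{thm:bdr}, whose contributions are controlled by the strip estimates \eqref{eq:v-X4}--\eqref{eq:v-X0} together with the transported trace bounds of Remark~\ref{rem:Un}. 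All these residuals are at most $\tau\,\tN{\ul{\Bu^{n-1}_h}}_{\boldsymbol{\Cv}}\tN{\ul{q_h}}_{1,\Cq}$, which under the $h\sim\tau$ regime governing the scheme yields the advertised $h$-bound.

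The main obstacle is the interface contribution to $I_1$: since $\hat q_h$ is defined on the fixed Eulerian mesh $\Ct_h$ while $\mathring q_h$ inherits the pulled-back mesh structure of $q_h$, and since $\tilde\Gamma_\eta^{n-1}\neq \Gamma_\eta^{n-1}$ in general, the $L^2(\Gamma_\eta^{n-1})$-trace of $\ul{\mathring q_h}$ only admits an $h^{-1/2}$-type estimate and must be paired with the $h^{1/2}$-factor coming from the velocity jump penalty $\mathscr{J}_0^{n-1}$; in parallel, the $O(\tau)$-corrections coming from the Jacobian, the surface element and the normal must be routed onto $\nabla q_h$ rather than $q_h$ itself, so as to exploit the stronger $\tN{\ul{q_h}}_{1,\Cq}$-norm rather than $\tN{\ul{q_h}}_\Cq$ on the right-hand side.
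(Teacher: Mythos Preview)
Your proposal is correct and leads to the same bound, and the core ingredients you invoke---the Jacobian perturbation \eqref{ieq:Jtau}, the parametrization closeness of Theorem~\ref{thm:bdr}, the strip estimates of Lemma~\ref{lem:uX}, and the Scott--Zhang error \eqref{int-errK}---are exactly those used in the paper. The organization differs: you first isolate the interpolation error by introducing $\ul{\mathring q_h}=\ul{q_h}\circ\BX_\tau^{n-1,n}$ and then treat the pure change-of-variables residual, whereas the paper works directly with the parametrizations $\chibf_n,\chibf_{n-1}$ of the two interfaces and writes the difference as a single six-term decomposition $I_0,\dots,I_5$ (four interface terms and two volume terms). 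Your $I_2$ corresponds to the paper's $I_3$ together with the interpolation piece of its $I_5$; your $I_1$ corresponds to the paper's $I_1,I_2,I_4$ and the domain-mismatch pieces of $I_5$.

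Two small remarks on your write-up. First, your reference to ``strip estimates'' for the interface mismatch $\tilde\Gamma_\eta^{n-1}\oplus\Gamma_\eta^{n-1}$ is a slight misnomer: this is a comparison of line integrals over two nearby curves, not a thin-layer volume integral, and the paper handles it by pulling both integrals back to the common parameter interval $[0,L]$ via $\chibf_n$ and $\chibf_{n-1}$ and invoking Theorem~\ref{thm:bdr} pointwise; your pull-back idea amounts to the same thing once made explicit. Second, your last paragraph overstates the obstacle: the $O(\tau)$ corrections from the surface Jacobian and the transported normal need not be ``routed onto $\nabla q_h$''---in the paper they are simply paired with $\N{\Avg{\ul{q_h}}}_{L^2(\Gamma_\eta^n)}$ via \eqref{eq:trace3}, which costs only $\tN{\ul{q_h}}_{\Cq}$. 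The stronger norm $\tN{\ul{q_h}}_{1,\Cq}$ is genuinely needed only where the Scott--Zhang interpolation error enters, i.e.\ in your $I_2$ (the paper's $I_3$ and part of $I_5$).
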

\begin{proof}
	For convenience, we denote the unit normals on $\Gamma^n_\eta$, $\Gamma^{n-1}_\eta$ by $\Bn_n$ and $\Bn_{n-1}$, respectively. Using the parametric representations of $\Gamma^n_\eta$ and $\Gamma^{n-1}_\eta$, we find that
	\begin{align*}
		\int_{\Gamma^n_\eta} \jump{\ul{\BU_h^{n-1,n}}\cdot \Bn_n}\Avg{\ul{q_h}}
		=& \int_0^L \jump{\ul{\BU_h^{n-1,n}}\cdot \Bn_n}
		\circ\chibf_n\Avg{\ul{q_h}}\circ\chibf_n \SN{\chibf_n'}
		=\sum_{j=0}^3 I_j,
	\end{align*}
	where
	\begin{align*}
		I_0 = & \int_0^L \left( \jump{\ul{\Bu_h^{n-1}}\cdot \Bn_{n-1}}
		\Avg{\ul{\hat q_h}}\right)\circ\chibf_{n-1} \SN{\chibf_{n-1}'}
		=\int_{\Gamma^{n-1}_\eta}\jump{\ul{\Bu_h^{n-1}}\cdot\Bn_{n-1}} \Avg{\ul{\hat q_h}}, \\
		I_1 = & \int_0^L (\jump{\ul{\BU_h^{n-1,n}}\cdot\Bn_n}\circ\chibf_n|\chibf_n'|
		-\jump{\ul{\Bu_h^{n-1}}\cdot \Bn_{n-1}}\circ\chibf_{n-1}
		\SN{\chibf_{n-1}'} )\Avg{\ul{q_h}}\circ\chibf_n,\\
		I_2 = &\int_0^L \jump{\ul{\Bu_h^{n-1}}\cdot\Bn_{n-1}}\circ\chibf_{n-1}
		\left(\Avg{\ul{q_h}}\circ\chibf_n
		-\Avg{\ul{q_h}\circ\BX_\tau^{n-1,n}}\circ\chibf_{n-1}\right)\SN{\chibf_{n-1}'},\\
		I_3=& \int_0^L \jump{\ul{\Bu_h^{n-1}}\cdot\Bn_{n-1}}\circ\chibf_{n-1} \left(\Avg{\ul{q_h}\circ\BX_\tau^{n-1,n}}
		-\Avg{\ul{\hat q_h}}\right)\circ\chibf_{n-1}\SN{\chibf_{n-1}'}.
	\end{align*}
	This shows that $\mathscr{B}^n_0(\ul{\BU_h^{n-1,n}},\ul{q_h})= \mathscr{B}^{n-1}_0(\ul{\Bu_h^{n-1}},\ul{\hat q_h}) +
	\sum_{j=1}^5I_j$, where
	\begin{align*}
		I_4 =\,&  -\sum_{i=1,2}\int_{\Omega^n_{\eta,i}}(\nabla\Bu_{h,i}^{n-1}) \circ \BX_\tau^{n,n-1}: (\bbJ_\tau^{n,n-1} -\bbI)^\top q_{h,i}, \\
		I_5 =\,& -\sum_{i=1,2}\int_{\Omega^{n-1}_{\eta,i}}
		\Div\Bu_{h,i}^{n-1}\hat q_{h,i}\big(\det{\bbJ_\tau^{n-1,n}}-1\big)\\
		&-\sum_{i=1,2}\int_{\Omega_{\eta,i}^n}
		\Div\Bu_{h,i}^{n-1}\big(q_{h,i}\circ \BX_\tau^{n-1,n}-\hat q_{h,i}\big)
		\det{\bbJ_\tau^{n-1,n}} \notag\\
		&-\sum_{i=1,2}\int_{\BX_\tau^{n,n-1}(\Omega^n_{\eta,i})\backslash\Omega^{n-1}_{\eta,i}}
		\Div\Bu_{h,i}^{n-1}\, q_{h,i}\circ \BX_\tau^{n-1,n} \det{\bbJ_\tau^{n-1,n}}\\
		&+\sum_{i=1,2}\int_{\Omega^{n-1}_{\eta,i}\backslash\BX_\tau^{n,n-1}(\Omega^n_{\eta,i})}
		\Div\Bu_{h,i}^{n-1}\, q_{h,i}\circ \BX_\tau^{n-1,n} \det{\bbJ_\tau^{n-1,n}}.
	\end{align*}
	Here $\bbA:\bbB=\sum_{i,j} \bbA_{i,j}\bbB_{i,j}$ stands for the Hadamard product of matrices $\bbA$ and $\bbB$.
	It suffices to estimate $I_0,\cdots,I_5$ term by term.

	By \eqref{ieq:Jtau} and \eqref{ieq:vh-ext}, the estimation for $I_4$ is easy and gives
	\begin{align*}
		\SN{I_4} \lesssim \tau \sum_{i=1,2}
		\big|\Bu_{h,i}^{n-1}\big|_{\Honev} \NLtwo[\Omega^n_{\eta,i}]{q_{h,i}}
		\lesssim h \tN{\ul{\Bu^{n-1}_h}}_{\boldsymbol{\Cv}}
		\big\|\nu^{-\frac12}\ul{q_h}\big\|_{0,\Omega^n_\eta}.  
	\end{align*}
	Thanks to \eqref{int-errK}, \eqref{ieq:Jtau}, and \eqref{ieq:vh-ext}, we have the error estimates
	\begin{align}\label{err:qh}
		\NLtwo{q_{h,i}\circ \BX_\tau^{n-1,n}-\hat q_{h,i}} 
		\lesssim& h\big|q_{h,i}\circ \BX_\tau^{n-1,n}\big|_{\Hone} \notag \\
		\lesssim& h\SNHone{q_{h,i}}\lesssim h\NHone[\Omega^n_{h,i}]{q_{h,i}}.
	\end{align}
	Using \eqref{ieq:Jtau}, \eqref{norm-equal-q}, \eqref{int-errK}, and Lemma~\ref{lem:uX}, we obtain
	\begin{align*}
		\SN{I_5} \lesssim\,& h\sum_{i=1,2}\big|\Bu^{n-1}_{h,i}\big|_{\Honev}\NHone{q_{h,i}}
		\lesssim h \tN{\ul{\Bu^{n-1}_h}}_{\boldsymbol{\Cv}}\tN{\ul{q_h}}_{1,\Cq}.  
	\end{align*}

	Note that $\Bn_n=(-\chi_{n,2}',\chi_{n,1}')/\SN{\chibf_n'}$. From Theorem~\ref{thm:bdr} and \eqref{ieq:Jtau}, we have
	\begin{align*}
		\N{\chibf_n-\chibf_{n-1}}_{\BC^1([0,L])}\leq
		&\N{\chibf_n-\BX_\tau^{n-1,n}\circ\chibf_{n-1}}_{\BC^1([0,L])}\\
		&+\N{\BX_\tau^{n-1,n}\circ\chibf_{n-1} - \chibf_{n-1}}_{\BC^1([0,L])}
		\lesssim \tau.
	\end{align*}
	This implies
	$\N{\Bn_n\circ\chibf_n-\Bn_{n-1}\circ\chibf_{n-1} |\chibf_n'|^{-1}|\chibf_{n-1}'|}_{\BL^\infty(0,L)}
	\lesssim \tau$.
	By Theorem~\ref{thm:bdr}, Lemma~\ref{lem:trace2}, and norm equivalence, $I_1$ can be estimated as follows
	\begin{align*}
		\SN{I_1} \lesssim\, &\int_0^L
		\SN{\jump{\ul{\Bu_h^{n-1}}(\BX_\tau^{n,n-1}\circ\chibf_n)}
			-\jump{\ul{\Bu_h^{n-1}}(\chibf_{n-1})}}
		\SN{\Avg{q_h}\circ\chibf_n}\SN{\chibf_n'} \notag\\
		&+\tau\int_0^L  \SN{\jump{\ul{\Bu_h^{n-1}}\cdot \Bn_{n-1}}\circ\chibf_{n-1}}
		\SN{\Avg{q_h}\circ\chibf_n}\SN{\chibf_n'} \notag\\
		\lesssim\,&\tau^{k+2}h^{-\frac12}\sum_{i=1,2}
		\SNHonev[\Omega^{n-1}_{h,i}]{\Bu_{h,i}^{n-1}}\NLtwo[\Gamma^n_\eta]{\Avg{\ul{q_h}}}\\
		&+\tau \NLtwov[\Gamma_\eta^{n-1}]{\jump{\ul{\Bu_h^{n-1}}}}
		\NLtwo[\Gamma^n_\eta]{\Avg{\ul{q_h}}} \notag \\
		\lesssim\,&h\, \tN{\ul{\Bu_h^{n-1}}}_{\boldsymbol{\Cv}} \tN{\ul{q_h}}_{\Cq}.  
	\end{align*}
	Similarly, by Theorem~\ref{thm:bdr} and inverse estimate, $I_2$ is estimated as follows
	\begin{align*}
		\SN{I_2}\lesssim\,&\int_0^L \SN{\jump{\ul{\Bu_h^{n-1}}\cdot\Bn_{n-1}}\circ\chibf_{n-1}}
		\Big|\Avg{\ul{q_h}}\circ\chibf_n -\Avg{\ul{q_h}\circ\BX_\tau^{n-1,n}}\circ\chibf_{n-1}\Big|
		|\chibf_{n-1}'| \notag\\
		\lesssim\,& \tau^{k+2}h^{-3/2}\NLtwo[\Gamma^{n-1}_\eta]{\jump{\ul{\Bu_h^{n-1}}}}
		\sum_{i=1,2}\NLtwo[\Omega^{n}_{h,i}]{q_{h,i}}  \notag \\
		\lesssim\,& h^{k+1} \tN{\ul{\Bu_h^{n-1}}}_{\boldsymbol{\Cv}} \tN{\ul{q_h}}_{\Cq}.  
	\end{align*}
	Finally, from \eqref{eq:q-qh} and the interpolation error estimate \eqref{int-errK}, we have
	\begin{align*}
		\SN{I_{3}}\lesssim\,& \NLtwo[\Gamma^{n-1}_\eta]{\jump{\ul{\Bu_h^{n-1}}}}
		\NLtwo[\Gamma^{n-1}_\eta]{\Avg{\ul{q_h}\circ\BX_\tau^{n-1,n}-\ul{\hat q_h}}}\\
		\lesssim\,& h \tN{\ul{\Bu_h^{n-1}}}_{\boldsymbol{\Cv}} \tN{\ul{q_h}}_{1,\Cq}. 
	\end{align*}
	The proof is finished.
\end{proof}

\begin{corollary}\label{cor:stokes-L2-1}
	Let Assumption~\ref{ass-2}, Assumption~\ref{ass-3},
	and the assumptions in Lemma~\ref{lem:uhX} be satisfied. Suppose
	$k\ge 2$ and $h=O(\tau)$.
	Then $\big(\ul{\hat{\BU}_h^{n-1,n}},\ul{\hat{p}_h^{n-1,n}}\big)
	=\Cs^n\big(\ul{\BU_h^{n-1,n}},\ul{0},0\big)$ satisfies
	\begin{align}
		\tN{\big(\ul{\hat{\BU}_h^{n-1,n}},\ul{\hat{p}_h^{n-1,n}}
			\big)}_{\boldsymbol{\Cv},\Cq}   \lesssim\,&
		\tN{\ul{\Bu_h^{n-1}}}_{\boldsymbol{\Cv}},\label{eq:stab-proj0}\\
		\N{\ul{\BU_h^{n-1,n}} - \ul{\hat{\BU}_h^{n-1,n}}}_{0,\Omega_\eta^n}
		\lesssim \,& \gamma_1\nu_2 h\Big\{\nu_2^{-\frac12}\big\|\Lambda^k\ul{\bbU_h^{n-1}} -\tau\ul{\Bf}^{n-1}\big\|_{0,\Omega^{n-1}_\eta}\notag \\
		&+h\big\|\nu^{-\frac12}\nabla\ul{p_h^{n-1}}\big\|_{0,\Omega^{n-1}_\eta}\Big\}
		\notag\\
		&+h\Big\{\tN{\Bu_h^{n-1}}_{\boldsymbol{\Cv}}
		+\mathscr{J}_p^{n-1}(\ul{p_h^{n-1}},\ul{p^{n-1}_h})^{\frac12}\Big\}.	
		\label{eq:stab-proj1}
	\end{align}
\end{corollary}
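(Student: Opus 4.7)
My plan is to derive both estimates from the framework of Theorems~\ref{thm:stokes-proj} and \ref{thm:stokes-L2} instantiated at $(\ul{\Bu},\ul{p},f)=(\ul{\BU_h^{n-1,n}},\ul{0},0)$, and then to translate the resulting bounds (which live naturally on $\Omega^n_\eta$) back to time $t_{n-1}$: on the velocity side via the pullback estimates collected in Remark~\ref{rem:Un}, and on the divergence side via Lemma~\ref{lem:BUhqh} combined with the discrete pressure equation \eqref{eq:disc-p} at step $n-1$.

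For \eqref{eq:stab-proj0}, I would avoid invoking Theorem~\ref{thm:stokes-proj} as a black box, because the penalty $\mathscr{J}^n_\Bu(\ul{\BU_h^{n-1,n}},\ul{\BU_h^{n-1,n}})$ hidden inside $\tN{\ul{\BU_h^{n-1,n}}}_{\boldsymbol{\Cv}}$ has no direct pullback estimate in the excerpt. Instead, I start from \eqref{def-proj} with $\ul{p}=\ul{0}$ and $f=0$, apply the inf-sup result of Theorem~\ref{thm:infsup}, and obtain
\[
\tN{(\ul{\hat\BU_h^{n-1,n}},\ul{\hat p_h^{n-1,n}})}_{\boldsymbol{\Cv},\Cq}
\lesssim \sup_{(\ul{\Bv_h},\ul{q_h})}\frac{a_h^n(\ul{\BU_h^{n-1,n}},\ul{\Bv_h})}
{\tN{(\ul{\Bv_h},\ul{q_h})}_{\boldsymbol{\Cv},\Cq}}.
\]
Each of the three pieces $\Aprod[\Omega^n_\eta]{\nu\nabla\cdot,\nabla\cdot}-\mathscr{F}^n+\mathscr{J}^n_0$ of $a_h^n(\ul{\BU_h^{n-1,n}},\ul{\Bv_h})$ is then bounded by Cauchy--Schwarz after inserting \eqref{eq:temp3}--\eqref{eq:temp4} from Remark~\ref{rem:Un} together with the trace bound of Lemma~\ref{lem:trace2}; the parasitic factors $\tau^{k+2}h^{-p}$ that arise are absorbed using $h=O(\tau)$, yielding the desired $\tN{\ul{\Bu_h^{n-1}}}_{\boldsymbol{\Cv}}$.

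For \eqref{eq:stab-proj1}, I would apply Theorem~\ref{thm:stokes-L2}: its first right-hand-side term is controlled by \eqref{eq:stab-proj0} and \eqref{eq:temp3} at cost $h\tN{\ul{\Bu_h^{n-1}}}_{\boldsymbol{\Cv}}$, so the real work lies in bounding $\N{\Cb^n_0\ul{\BU_h^{n-1,n}}}_{1,(\Cq_h^n)'}$. Given $\ul{q_h}\in\Cq_h^n$, let $\ul{\hat q_h}$ be as in Lemma~\ref{lem:BUhqh}; that lemma transfers $\mathscr{B}^n_0(\ul{\BU_h^{n-1,n}},\ul{q_h})$ to $\mathscr{B}^{n-1}_0(\ul{\Bu_h^{n-1}},\ul{\hat q_h})$ up to $O(h\tN{\ul{\Bu_h^{n-1}}}_{\boldsymbol{\Cv}}\tN{\ul{q_h}}_{1,\Cq})$. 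The discrete pressure equation \eqref{eq:disc-p} at step $n-1$ replaces the latter by $\mathscr{J}^{n-1}_p(\ul{p_h^{n-1}},\ul{\hat q_h})+\mathscr{R}^{n-1}_h(\ul{\bbU_h^{n-1}},\ul{p_h^{n-1}};\ul{\hat q_h})$. Since $\ul{\hat q_h}$ is piecewise linear, an inverse estimate yields $\mathscr{J}^{n-1}_p(\ul{\hat q_h},\ul{\hat q_h})^{1/2}\lesssim h\tN{\ul{q_h}}_{1,\Cq}$, and Cauchy--Schwarz on $\mathscr{J}^{n-1}_p$ produces the term $h\,\mathscr{J}^{n-1}_p(\ul{p_h^{n-1}},\ul{p_h^{n-1}})^{1/2}$. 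Splitting $\mathscr{R}^{n-1}_h$ into its force/material-derivative piece, its Laplacian piece, and its pressure-gradient piece, and bounding each by Cauchy--Schwarz (using Lemma~\ref{lem:ch-up} for the Laplacian piece to convert $h\|\nu^{1/2}\Delta_h\ul{\Bu_h^{n-1}}\|$ into $\tN{\ul{\Bu_h^{n-1}}}_{\boldsymbol{\Cv}}$), produces the three remaining groups on the right-hand side of \eqref{eq:stab-proj1}.

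The main obstacle I expect is bookkeeping rather than a deep difficulty: ensuring the powers of $\nu_2$, $\gamma_1$, $h$, and $\tau$ line up exactly as stated. Reproducing the combination $\gamma_1\nu_2 h$ in front of the first two groups and the bare $h$ in front of the last two requires careful choice of where Cauchy--Schwarz is applied (weighting with $\nu_2^{-1/2}$ for the $\Lambda^k\ul{\bbU_h^{n-1}}-\tau\ul{\Bf}^{n-1}$ piece but with $\nu^{-1/2}$ for the pressure-gradient piece) and using $h=O(\tau)$ at exactly the right point to trade $h^2/\tau$ for $h$ in the force/material-derivative piece.
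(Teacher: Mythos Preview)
Your proposal is correct and follows essentially the same route as the paper's proof. For \eqref{eq:stab-proj0} the paper simply writes ``comes directly from Theorem~\ref{thm:stokes-proj} and Remark~\ref{rem:Un}'', implicitly using the fact that the proof of Theorem~\ref{thm:stokes-proj} actually bounds $a_h^n(\ul{\Bu},\ul{\Bv_h})$ rather than the full $\tN{\cdot}_{\boldsymbol{\Cv}}$-norm; your explicit argument via \eqref{eq:temp3}--\eqref{eq:temp4} is precisely how this gap is filled, and your caution about $\mathscr{J}^n_\Bu(\ul{\BU_h^{n-1,n}},\cdot)$ lacking a pullback estimate is well placed. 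For \eqref{eq:stab-proj1} your plan---Theorem~\ref{thm:stokes-L2}, then Lemma~\ref{lem:BUhqh}, then \eqref{eq:disc-p}, then bounding $\mathscr{J}^{n-1}_p$ and the three pieces of $\mathscr{R}^{n-1}_h$ separately (with Lemma~\ref{lem:ch-up} for the Laplacian piece)---matches the paper step for step.
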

\begin{proof}
	The stability \eqref{eq:stab-proj0} comes directly from Theorem~\ref{thm:stokes-proj} and Remark~\ref{rem:Un}. It is left to prove \eqref{eq:stab-proj1}. From Theorem~\ref{thm:stokes-L2}, Remark~\ref{rem:Un}, and inequality \eqref{eq:stab-proj0}, we have
	\begin{align}
		\dN{\ul{\BU_h^{n-1,n}} - \ul{\hat{\BU}_h^{n-1,n}}}_{0,\Omega^n_\eta}
		\lesssim\,& h \tN{\ul{\BU_h^{n-1,n}}- \ul{\hat{\BU}_h^{n-1,n}}}_{\boldsymbol{\Cv}}	
		+\dN{\Cb^n_0\ul{\BU_h^{n-1,n}}}_{1,(\Cq^n_h)'} \notag\\
		\lesssim\,& h \tN{\Bu_h^{n-1}}_{\boldsymbol{\Cv}}	
		+\dN{\Cb^n_0\ul{\BU_h^{n-1,n}}}_{1,(\Cq^n_h)'}. \label{ieq:err-Un1}
	\end{align}
	For any $\ul{0}\ne \ul{q_h}\in\Cq^n_h$, we extend $q_{h,i}$ to $\Omega$ according to the convention in \eqref{ieq:vh-ext} and let $\hat q_{h,i} =\pi_{1,\Omega}\big(q_{h,i}\circ \BX_\tau^{n-1,n}\big)\in V_h(1,\Ct_h)$ be the Scott-Zhang interpolation for $i=1,2$. Setting $\ul{\hat q_h}=\big(\hat q_{h,1},\hat q_{h,2}\big)$ and using Lemma~\ref{lem:BUhqh}, we find that
	\begin{align}\label{eq:Bn1-1}
		\SN{\big(\Cb^n_0\ul{\BU_h^{n-1}}\big)(\ul{q_h})} \le
		\SN{\mathscr{B}^{n-1}_0(\ul{\Bu_h^{n-1}},\ul{\hat q_h})}
		+ Ch\tN{\ul{\Bu_h^{n-1}}}_{\boldsymbol{\Cv}} \tN{\ul{q_h}}_{1,\Cq}.
	\end{align}
	It suffices to estimate $\mathscr{B}^{n-1}_0(\ul{\Bu_h^{n-1}},\ul{\hat q_h})$.

	From equation \eqref{eq:disc-p}, we have
	\ben
	\mathscr{B}^{n-1}_0(\ul{\Bu_h^{n-1}},\ul{\hat q_h})
	=\mathscr{J}_p^{n-1}(\ul{p_h^{n-1}},\ul{\hat q_h})
	+ \mathscr{R}_h^{n-1}(\ul\bbU_h^{n-1},\ul{p_h^{n-1}};\ul{\hat q_h}).
	\een
	By norm equivalence and \eqref{int-errK}, the first term on the right-hand side satisfies
	\begin{align*}
		\SN{\mathscr{J}_p^{n-1}(\ul{p_h^{n-1}},\ul{\hat q_h})}
		\lesssim\,& h\mathscr{J}_p^{n-1}(\ul{p_h^{n-1}},\ul{p^{n-1}_h})^{\frac12}
		\sum_{i=1,2}\nu^{-\frac12}_i\SNHone[\Omega^n_{h,i}]{\hat q_{h,i}} \notag \\
		\lesssim\,& h \mathscr{J}_p^{n-1}(\ul{p_h^{n-1}},\ul{p^{n-1}_h})^{\frac12}
		\tN{\ul{q_h}}_{1,\Cq}.
	\end{align*}
	By inverse estimates and \eqref{int-errK}, the second term satisfies
	\begin{align*}
		\SN{\mathscr{R}_h^{n-1}(\ul\bbU_h^{n-1},\ul{p_h^{n-1}};\ul{\hat q_h})}
		\lesssim\,& \gamma_1\nu_2 h \left\{\nu_2^{-\frac12}\big\|\Lambda^k\ul{\bbU_h^{n-1}}
		-\tau\ul{\Bf}^{n-1}\big\|_{0,\Omega^{n-1}_\eta}
		+\tN{\ul{\Bu_h^{n-1}}}_{\boldsymbol{\Cv}}\right. \\
		&\left.+h\big\|\nu^{-\frac12}\nabla\ul{p_h^{n-1}}\big\|_{0,\Omega^{n-1}_\eta}
		\right\}\tN{\ul{q_h}}_{1,\Cq} .
	\end{align*}
	Together with \eqref{eq:Bn1-1} and \eqref{eq:normf}, they yield the desired result
	\begin{align*}
		\dN{\Cb^n_0\ul{\BU_h^{n-1,n}}}_{1,(\Cq^n_h)'}
		\lesssim \,& \gamma_1\nu_2 h\Big\{\nu_2^{-\frac12}\big\|\Lambda^k\ul{\bbU_h^{n-1}}
		-\tau\ul{\Bf}^{n-1}\big\|_{0,\Omega^{n-1}_\eta}\\
		&+h\big\|\nu^{-\frac12}\nabla\ul{p_h^{n-1}}\big\|_{0,\Omega^{n-1}_\eta}\Big\} \\
		&+h\Big\{\tN{\Bu_h^{n-1}}_{\boldsymbol{\Cv}}
		+\mathscr{J}_p^{n-1}(\ul{p_h^{n-1}},\ul{p^{n-1}_h})^{\frac12}\Big\}.
	\end{align*}
	The proof is finished.
\end{proof}

\section{Stability of numerical solutions}\label{sec:stab}

The purpose of this section is to prove the stability of numerical solutions. It also paves the way to a priori finite element error estimates. First we cite the telescope formulas for BDF schemes from \cite[Section~2 and Appendix~A]{liu13}.

\begin{lemma}\label{lem:tel}
	Suppose $1\le k\le 4$ and let $a_0=1$ and $a_1=\delta_{k,3}+\delta_{k,4}$,
	where $\delta_{ij}$ is the Kronecker delta. Define $\Lambda^0 \ul{\bbU^{n}_h}=\ul{\Bu_h^n}$ and
	\ben
	\Psibf^k_l (\ul{\bbU^{n}_h}) =\sum_{j=1}^{l} c_{l,j}^k\ul{\BU_{h}^{n+1-j,n}},\qquad
	\Phibf_l^k (\ul{\bbU^{n}_h})= \sum_{j=1}^{l} c_{l,j}^k \ul{\BU_{h}^{n-j,n}}.
	\een
	where the parameters $c_{l,j}^k$, $1\le j\le l\le k+1$, are given in \cite[Table~2.2]{liu13}. Then
	\begin{equation*}
		\big(\Lambda^k\ul{\bbU^{n}_h}\big)
		\sum_{l=0}^1 a_l \Lambda^l \ul{\bbU^{n}_h} =\sum_{l=1}^{k+1}
		\big[\Psibf_l^k (\ul{\bbU^{n}_h})\big]^2
		- \sum_{l=1}^k \big[\Phibf_l^k (\ul{\bbU^{n}_h})\big]^2 .
	\end{equation*}
\end{lemma}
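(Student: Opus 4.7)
Since the lemma is quoted directly from \cite{liu13} and is purely algebraic, the plan is to verify it as an identity among the $k+2$ symbols $\ul{\BU^{n-j,n}_h}$, $j=0,\ldots,k+1$, separately for each $k\in\{1,2,3,4\}$. There is no analysis to do beyond bookkeeping: the symbols are treated as abstract vectors in an inner-product space, since the identity in the statement is pointwise (no integration).

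First I would expand both sides as symmetric quadratic forms in the abstract symbols. Using $\Lambda^0\ul{\bbU^n_h}=\ul{\BU^{n,n}_h}$, $\Lambda^1\ul{\bbU^n_h}=\ul{\BU^{n,n}_h}-\ul{\BU^{n-1,n}_h}$, and $\Lambda^k\ul{\bbU^n_h}=\sum_{j=0}^k \lambda_j^k\,\ul{\BU^{n-j,n}_h}$, the left-hand side $(\Lambda^k\ul{\bbU^n_h})\,(a_0\Lambda^0+a_1\Lambda^1)\ul{\bbU^n_h}$ becomes a quadratic form whose symmetric matrix $M^k\in\mathbb{R}^{(k+2)\times(k+2)}$ is built out of the BDF coefficients $\{\lambda_j^k\}$ and the multiplier pair $(a_0,a_1)$. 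The right-hand side is likewise a quadratic form with matrix
\[
N^k \;=\; \sum_{l=1}^{k+1}\Bc_l^{+,k}(\Bc_l^{+,k})^{\!\top} \;-\; \sum_{l=1}^{k}\Bc_l^{-,k}(\Bc_l^{-,k})^{\!\top},
\]
where $\Bc_l^{+,k}$ and $\Bc_l^{-,k}$ are the coefficient vectors of $\Psibf_l^k$ and $\Phibf_l^k$, read off \cite[Table~2.2]{liu13} and padded with zeros to length $k+2$. The identity then reduces to the matrix equation $M^k=N^k$.

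Second I would check $M^k=N^k$ entry by entry. For $k\in\{1,2\}$ we have $a_1=0$, so the multiplier is just $\Lambda^0$ and the $(k+2)\times(k+2)$ comparison is short and can be done by hand; these are the classical BDF-$1$ and BDF-$2$ energy identities. For $k\in\{3,4\}$ we take $a_1=1$, which is the Nevanlinna--Odeh correction that restores $G$-stability, because $M^k$ with $a_1=0$ fails to admit such a positive-plus-negative sum-of-squares decomposition already at $k=3$. The main obstacle is precisely this: without the $\Lambda^1$ multiplier no identity of the stated form exists, so one needs the correct multiplier pair $(a_0,a_1)$ and coefficient table $\{c_{l,j}^k\}$ simultaneously. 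Both are supplied by \cite{liu13}, so the remaining task is a mechanical (and, for $k=4$, algebraically heavy) symbolic verification using the tabulated values.
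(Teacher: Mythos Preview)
Your proposal is correct. The paper does not give a proof of this lemma at all; it simply cites the telescope formulas from \cite[Section~2 and Appendix~A]{liu13}, so your plan of direct algebraic verification (expanding both sides as symmetric quadratic forms in the symbols $\ul{\BU_h^{n-j,n}}$ and comparing the resulting $(k+2)\times(k+2)$ coefficient matrices for each $k\in\{1,2,3,4\}$) is exactly how one would reconstruct the cited result from scratch.
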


\begin{lemma}\label{lem:phL2}
	Assume that the penalty parameter $\gamma_0$ in $\mathscr{J}^n_0$ is large enough. Let $\ul{p^n_h}$ be the discrete pressure at $t_n$. Then
	\begin{align*}
		\tN{\ul{p^n_h}}_{\Cq}^2\lesssim\,&  \mathscr{J}^n_p(\ul{p^n_h},\ul{p^n_h})
		+\frac{1}{\nu_2\tau^2}\sum_{j=0}^k \Big\{\dN{\ul{\Bu_h^{n-j}}}_{0,\Omega^{n-j}_{\eta}}^2
		+ h^{k+3}\nu_2^{-1}\tN{\ul{\Bu^{n-j}_h}}_{\boldsymbol{\Cv}}^2 \Big\} \\
		&+\tN{\ul{\Bu^n_h}}_{\boldsymbol{\Cv}}^2
		+\dN{\nu^{-\frac 12}\ul{\Bf^{n}}}_{0,\Omega^n_{\eta}}^2.
	\end{align*}
\end{lemma}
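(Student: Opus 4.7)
The plan is to exploit the inf-sup condition (Lemma~\ref{lem:infsup}) to reduce an $L^2$-style bound on $\ul{p^n_h}$ to a bound on a divergence-pairing, which can then be rewritten via the discrete momentum equation \eqref{eq:disc-u}. First, I apply Lemma~\ref{lem:infsup} with $\ul{q_h}=\ul{p^n_h}$ to produce $\ul{\Bv_h}\in\boldsymbol{\Cv}^n_h$ coming from a single continuous $\Vv_h\in\BV_{h,0}(k,\Omega)$, so that $\jump{\ul{\Bv_h}}=0$ on $\Gamma^n_\eta$ and
$$
\tN{\ul{\Bv_h}}_{\boldsymbol{\Cv}}^2 \;\lesssim\; \tN{\ul{p^n_h}}_\Cq^2 \;\lesssim\; \Aprod[\Omega^n_\eta]{\Div\ul{\Bv_h},\ul{p^n_h}} + \mathscr{J}_p^n(\ul{p^n_h},\ul{p^n_h}).
$$
The vanishing of $\jump{\ul{\Bv_h}}$ collapses the interface term in $\mathscr{B}^n_0$, giving $\Aprod[\Omega^n_\eta]{\Div\ul{\Bv_h},\ul{p^n_h}}=-\mathscr{B}^n_0(\ul{\Bv_h},\ul{p^n_h})$.

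Second, I test \eqref{eq:disc-u} with this $\ul{\Bv_h}$, which yields
$$
-\mathscr{B}^n_0(\ul{\Bv_h},\ul{p^n_h}) \;=\; \mathscr{A}^n_h(\ul{\Bu_h^n},\ul{\Bv_h}) + \tau^{-1}\Aprod[\Omega^n_\eta]{\Lambda^k\ul{\bbU_h^n},\ul{\Bv_h}} - \Aprod[\Omega^n_\eta]{\ul{\Bf}^n,\ul{\Bv_h}},
$$
and I estimate each term against $\tN{\ul{\Bv_h}}_{\boldsymbol{\Cv}}$. For the diffusion, Lemma~\ref{lem:Ah} gives $|\mathscr{A}^n_h(\ul{\Bu_h^n},\ul{\Bv_h})|\lesssim \tN{\ul{\Bu_h^n}}_{\boldsymbol{\Cv}}\tN{\ul{\Bv_h}}_{\boldsymbol{\Cv}}$. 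For the forcing and the time-derivative term I use Cauchy--Schwarz in the $\nu$-weighted inner product together with the Poincar\'e inequality applied to $\Vv_h$, which (since $\Vv_h$ vanishes on $\Sigma$ and $\ul{\Bv_h}$ is just its restriction) produces $\|\ul{\Bv_h}\|_{0,\Omega^n_\eta}\lesssim \nu_2^{-1/2}\tN{\ul{\Bv_h}}_{\boldsymbol{\Cv}}$. This yields $|\Aprod[\Omega^n_\eta]{\ul{\Bf}^n,\ul{\Bv_h}}|\lesssim \dN{\nu^{-1/2}\ul{\Bf}^n}_{0,\Omega^n_\eta}\tN{\ul{\Bv_h}}_{\boldsymbol{\Cv}}$ and $\tau^{-1}|\Aprod[\Omega^n_\eta]{\Lambda^k\ul{\bbU_h^n},\ul{\Bv_h}}|\lesssim \tau^{-1}\nu_2^{-1/2}\|\Lambda^k\ul{\bbU_h^n}\|_{0,\Omega^n_\eta}\tN{\ul{\Bv_h}}_{\boldsymbol{\Cv}}$.

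Third, and this is the main obstacle, I need to transport $\Lambda^k\ul{\bbU_h^n}=\sum_{j=0}^k \lambda_j^k\ul{\BU_h^{n-j,n}}$ off the composed flow maps and onto the native domains $\Omega^{n-j}_\eta$. I iterate the one-step pullback estimate of Remark~\ref{rem:Un} with $\mu=0$ (telescoping $\BX_\tau^{n,n-j}$ through intermediate time levels): each step accumulates a factor $(1+C\tau)$ and a residual of size $\tau^{k+2}h\,\mathscr{J}^n_\Bu(\ul{\Bu_h^{n-l}},\ul{\Bu_h^{n-l}})$. After at most $k$ applications the cumulative factor is $O(1)$ and, bounding the interface-zone penalty by $\mathscr{J}^n_\Bu(\ul{\Bu_h^{n-l}},\ul{\Bu_h^{n-l}})\lesssim \tN{\ul{\Bu_h^{n-l}}}_{\boldsymbol{\Cv}}^2$ and invoking $h=O(\tau)$ to convert $\tau^{k+2}h$ into $h^{k+3}$, I arrive at
$$
\|\Lambda^k\ul{\bbU_h^n}\|_{0,\Omega^n_\eta}^2 \;\lesssim\; \sum_{j=0}^{k}\Big(\dN{\ul{\Bu_h^{n-j}}}_{0,\Omega^{n-j}_\eta}^2 + h^{k+3}\nu_2^{-1}\tN{\ul{\Bu_h^{n-j}}}_{\boldsymbol{\Cv}}^2\Big).
$$
The $\nu_2^{-1}$ inside the parentheses arises because the weighted norm $\|\nu^{1/2}\cdot\|_0^2$ that naturally appears in Remark~\ref{rem:Un} must be dropped to an unweighted $L^2$ norm, costing one factor of $\nu_2^{-1}$.

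Finally, I collect the three bounds, divide through by $\tN{\ul{\Bv_h}}_{\boldsymbol{\Cv}}$, use $\tN{\ul{\Bv_h}}_{\boldsymbol{\Cv}}\lesssim \tN{\ul{p^n_h}}_\Cq$ from Lemma~\ref{lem:infsup}, square, and apply Young's inequality to absorb any residual $\tN{\ul{p^n_h}}_\Cq^2$ on the left; the $\mathscr{J}^n_p$ term carried from the inf-sup step survives as the first term on the right-hand side. The delicate point throughout is tracking the exact powers of $\nu_2$ and $h$ through the pullback iteration, since a single sloppy step would spoil the target exponent $h^{k+3}\nu_2^{-1}$ in the statement.
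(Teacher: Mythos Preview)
Your proposal is correct and follows essentially the same route as the paper: invoke the inf-sup Lemma~\ref{lem:infsup} to produce a test velocity, substitute into the discrete momentum equation \eqref{eq:disc-u}, bound $\mathscr{A}^n_h$ via Lemma~\ref{lem:Ah}, bound the forcing and time-derivative terms via Cauchy--Schwarz and Poincar\'e, and transport $\Lambda^k\ul{\bbU_h^n}$ to the native domains by iterating \eqref{eq:temp3} (this is exactly the paper's inequality \eqref{ieq:LamU}).

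One small difference worth noting: you exploit the fact that the inf-sup test function from Lemma~\ref{lem:infsup} comes from a single $\Vv_h\in\BV_{h,0}(k,\Omega)$, so $\jump{\ul{\Bv_h}}=0$ and the interface term in $\mathscr{B}^n_0$ vanishes outright. The paper instead retains that interface term, bounds it by $\gamma_0^{-1/2}\tN{\ul{p^n_h}}_\Cq\tN{\ul{\Bv_h}}_{\boldsymbol{\Cv}}$ via \eqref{eq:trace3}, and absorbs it on the left using the hypothesis that $\gamma_0$ is large. Your observation makes the argument slightly cleaner for this lemma, though the $\gamma_0$-large assumption is still needed elsewhere. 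A minor imprecision: the factor $\nu_2^{-1}$ in front of $h^{k+3}$ does not come from ``dropping the weight'' on the main $L^2$ term, but rather from bounding the \emph{unweighted} $\sum_i\|\Bu^{n-l}_{h,i}\|_{\Ltwov}^2$ in \eqref{ieq:vhXn} by the $\nu_i$-weighted penalty $\mathscr{J}^n_\Bu$ via \eqref{norm-equal-v}; this costs $\nu_2^{-1}$ on the penalty side only. The conclusion is unaffected.
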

\begin{proof}
	By Lemma~\ref{lem:infsup}, there is a 
	$\ul{\Bv_h}\equiv (\Vv_h|_{\Omega^n_{h,1}},\Vv_h|_{\Omega^n_{h,2}})$,
	$\Vv_h\in\BV_{h,0}(k,\Omega)$, satisfies
	\begin{align}\label{est:p1}
		\dN{\nu^{\frac 12}\ul{\Bv_h}}_{0,\Omega^n_\eta}^2\lesssim
		\tN{\ul{\Bv_h}}_{\boldsymbol{\Cv}}^2
		\lesssim \tN{\ul{p^n_h}}_{\Cq}^2 \lesssim
		\Aprod[\Omega^n_\eta]{\Div\ul{\Bv_h},\ul{p^n_h}}
		+ \mathscr{J}^n_p(\ul{p^n_h},\ul{p^n_h}),
	\end{align}
	where we have used Poincar\'{e}'s inequality in the first inequality due to $\Vv_h\in\zbHonev$.
	From \eqref{eq:temp3} and \eqref{norm-equal-v}, we have
	\begin{align}\label{ieq:LamU}
		\dN{\Lambda^k\ul{\bbU_h^{n}}}_{0,\Omega^n_{\eta}}^2
		\lesssim \sum_{j=0}^k \Big\{\dN{\ul{\Bu_h^{n-j}}}_{0,\Omega^{n-j}_{\eta}}^2
		+\nu_2^{-1}h^{k+3}\mathscr{J}^n_\Bu\big(\ul{\Bu_h^{n-j}},\ul{\Bu_h^{n-j}}\big)\Big\}.
	\end{align}
	Using \eqref{eq:disc-u} and \eqref{eq:trace3}, we find that
	\begin{align*}
		\Aprod[\Omega^n_\eta]{\Div\ul{\Bv_h},\ul{p^n_h}}
		=\,& \tau^{-1}\Aprod[\Omega^n_\eta]{\Lambda^k \ul\bbU_h^n,\ul{\Bv_h}}
		+\mathscr{A}_h^n(\ul{\Bu_h^n},\ul{\Bv_h})\\
		&+\int_{\Gamma^n_\eta}\jump{\ul{\Bv_h}}\cdot\Bn \Avg{p^n_h}
		- \Aprod[\Omega^n_\eta]{\ul\Bf^n,\ul{\Bv_h}} \notag \\
		\lesssim\,&\tau^{-1}\dN{\nu^{\frac 12}\ul{\Bv_h}}_{0,\Omega^n_\eta}
		\sum_{j=0}^k \dN{\nu^{-\frac 12}\ul{\Bu_h^{n-j}}}_{0,\Omega^{n-j}_{\eta}}\\
		&+\sum_{j=0}^k \nu^{-1}_2h^{(k+3)/2}\mathscr{J}^n_\Bu\big(\ul{\Bu_h^{n-j}},\ul{\Bu_h^{n-j}}\big)^{\frac12}\\
		&+\Big(\tN{\ul{\Bu^n_h}}_{\boldsymbol{\Cv}}
		+\gamma_0^{-\frac12}\tN{\ul{p^n_h}}_{\Cq}\Big)
		\tN{\ul{\Bv_h}}_{\boldsymbol{\Cv}}\\
		&+\dN{\nu^{\frac 12}\ul{\Bv_h}}_{0,\Omega^n_\eta}
		\dN{\nu^{-\frac 12}\ul{\Bf^{n}}}_{0,\Omega^n_{\eta}}.
	\end{align*}
	The proof is finished by using \eqref{est:p1} and assuming that $\gamma_0$ is large enough.
\end{proof}
\vspace{1mm}

\begin{center}
	\fbox{\parbox{0.975\textwidth}
		{\begin{theorem}\label{thm:uh-stab}
				Suppose $2\leq k\leq 4$, $\tau\le h=O(\tau)$, and $\gamma_0,\gamma_1^{-1}$ are large enough.
				Let $\ul{\Bu_h^n}$ be the solution to the discrete problem \eqref{eq:disc}.
				There is an $h_0>0$ small enough such that,
				for any $h\in(0, h_0]$ and $m\ge k$,
				\begin{align*}
					&\big\|\ul{\Bu^m_h}\big\|^2_{0,\Omega^n_{\eta}}
					+\sum_{n=k}^m\tau \big(\tN{\ul{\Bu_h^n}}^2_{\boldsymbol{\Cv}}
					+\gamma_1\nu_2 h^2\tN{\ul{p_h}}_{1,\Cq}^2\big)\\
					\lesssim& \sum_{n=0}^m\tau\NLtwov[\Omega^n_\eta]{\ul\Bf^n}^2
					+\sum_{n=0}^{k-1}\big(\big\|\ul{\Bu_h^n}\big\|_{0,\Omega^n_\eta}^2 +
					\tau\tN{\ul{\Bu_h^n}}^2_{\boldsymbol{\Cv}}\big).
				\end{align*}
	\end{theorem}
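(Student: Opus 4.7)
The strategy follows the Dahlquist/Nevanlinna--Odeh G-stability tradition, encoded for BDF schemes by Lemma~\ref{lem:tel}, adapted to the moving-interface setting. The natural BDF-$k$ multiplier $a_0\ul{\Bu_h^n}+a_1\ul{\BU_h^{n-1,n}}$ is not admissible because $\ul{\BU_h^{n-1,n}}=\ul{\Bu_h^{n-1}}\circ\BX_\tau^{n,n-1}$ is no longer piecewise polynomial on $\Ct_h$. The key device is to replace it by its modified Stokes projection $\ul{\hat{\BU}_h^{n-1,n}}\in\boldsymbol{\Cv}_h^n$ provided by Corollary~\ref{cor:stokes-L2-1}, take
\[
\ul{\Bv_h^n}:=a_0\ul{\Bu_h^n}+a_1\ul{\hat{\BU}_h^{n-1,n}}\in\boldsymbol{\Cv}_h^n
\]
as test function in \eqref{eq:disc-u}, and use $\ul{q_h}=-\ul{p_h^n}$ in \eqref{eq:disc-p}. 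Adding the two identities yields the discrete energy balance
\[
\tau^{-1}\Aprod[\Omega^n_\eta]{\Lambda^k\ul{\bbU_h^n},\ul{\Bv_h^n}}+\mathscr{A}_h^n(\ul{\Bu_h^n},\ul{\Bv_h^n})+a_1\mathscr{B}_0^n(\ul{\hat{\BU}_h^{n-1,n}},\ul{p_h^n})+\mathscr{J}_p^n(\ul{p_h^n},\ul{p_h^n})+\mathscr{R}_h^n(\ul{\bbU_h^n},\ul{p_h^n};\ul{p_h^n})=\Aprod[\Omega^n_\eta]{\ul{\Bf^n},\ul{\Bv_h^n}}.
\]

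Next I would restore $\ul{\BU_h^{n-1,n}}$ in place of $\ul{\hat{\BU}_h^{n-1,n}}$ inside the first term: the $L^2$ error from \eqref{eq:stab-proj1}, multiplied by $\tau^{-1}\|\Lambda^k\ul{\bbU_h^n}\|_{0,\Omega^n_\eta}$ bounded via \eqref{ieq:LamU}, reduces under $\tau\le h$ to a combination of $\tN{\ul{\Bu_h^{n-1}}}_{\boldsymbol{\Cv}}^2$, $\mathscr{J}_p^{n-1}(\ul{p_h^{n-1}},\ul{p_h^{n-1}})$ and $\gamma_1\nu_2 h^2\|\nu^{-1/2}\nabla\ul{p_h^{n-1}}\|_{0,\Omega^{n-1}_\eta}^2$, all absorbable one step down the discrete Gr\"onwall. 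After this substitution the integrand matches Lemma~\ref{lem:tel}, so integrating the pointwise identity over $\Omega^n_\eta$ gives $\sum_{l=1}^{k+1}\|\Psibf_l^k(\ul{\bbU_h^n})\|_{0,\Omega^n_\eta}^2-\sum_{l=1}^{k}\|\Phibf_l^k(\ul{\bbU_h^n})\|_{0,\Omega^n_\eta}^2$. Applying \eqref{ieq:vhXn} from Lemma~\ref{lem:uhX} entry by entry converts each $\|\Phibf_l^k(\ul{\bbU_h^n})\|_{0,\Omega^n_\eta}^2$ into $(1+C\tau)\|\Phibf_l^k(\ul{\bbU_h^{n-1}})\|_{0,\Omega^{n-1}_\eta}^2$ up to a $\tau^{k+2}h^{-1}\tN{\ul{\Bu_h^{n-j}}}_{\boldsymbol{\Cv}}^2$ remainder, which is exactly the telescoping structure needed for G-stability.

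For the other terms, Lemma~\ref{lem:Ah} combined with \eqref{eq:stab-proj0} gives $\mathscr{A}_h^n(\ul{\Bu_h^n},\ul{\Bv_h^n})\ge 0.9\tN{\ul{\Bu_h^n}}_{\boldsymbol{\Cv}}^2-C\tN{\ul{\Bu_h^n}}_{\boldsymbol{\Cv}}\tN{\ul{\Bu_h^{n-1}}}_{\boldsymbol{\Cv}}$, the cross term being absorbed by summation by parts in $n$. Expanding $\mathscr{R}_h^n$ produces the decisive dissipation $\gamma_1\nu_2 h^2\|\nu^{-1/2}\nabla\ul{p_h^n}\|_{0,\Omega^n_\eta}^2$; its cross couplings with $\Lambda^k\ul{\bbU_h^n}$, $\nu\Delta_h\ul{\Bu_h^n}$ and $\ul{\Bf^n}$ are killed by Young's inequality together with Lemma~\ref{lem:ch-up} and \eqref{ieq:LamU}, once $\gamma_1^{-1}$ is chosen large. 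The awkward coupling $a_1\mathscr{B}_0^n(\ul{\hat{\BU}_h^{n-1,n}},\ul{p_h^n})$ is rewritten via Lemma~\ref{lem:BUhqh} as $a_1\mathscr{B}_0^{n-1}(\ul{\Bu_h^{n-1}},\ul{\hat q_h})$ plus $O(h)\tN{\ul{\Bu_h^{n-1}}}_{\boldsymbol{\Cv}}\tN{\ul{p_h^n}}_{1,\Cq}$; the first piece is converted through \eqref{eq:disc-p} at $t_{n-1}$ into contributions of $\mathscr{J}_p^{n-1}$ and $\mathscr{R}_h^{n-1}$ that are controlled by the energy at $t_{n-1}$, while the $O(h)\tN{\ul{p_h^n}}_{1,\Cq}$ factor is absorbed by the fresh $h^2\tN{\ul{p_h^n}}_{1,\Cq}^2$ dissipation.

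Summing for $n=k,\ldots,m$ collapses the $\Psibf_l^k,\Phibf_l^k$ terms, leaving $\|\ul{\Bu_h^m}\|_{0,\Omega^m_\eta}^2+\sum_{n=k}^m\tau(\tN{\ul{\Bu_h^n}}_{\boldsymbol{\Cv}}^2+\gamma_1\nu_2 h^2\|\nu^{-1/2}\nabla\ul{p_h^n}\|_{0,\Omega^n_\eta}^2)$ on the left and only $\sum\tau\NLtwov[\Omega^n_\eta]{\ul{\Bf^n}}^2$ and $n<k$ data on the right; a discrete Gr\"onwall neutralizes the accumulated $(1+C\tau)^n$ factors. The gradient control on $\ul{p_h^n}$ is upgraded to the full $\tN{\ul{p_h^n}}_{1,\Cq}$ by inserting Lemma~\ref{lem:phL2}, which bounds $\tN{\ul{p_h^n}}_{\Cq}^2$ in terms of $\mathscr{J}_p^n$, $\tN{\ul{\Bu_h^n}}_{\boldsymbol{\Cv}}^2$ and past velocities and forcing, all already controlled. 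The main obstacle is the scale matching in the two consistency errors produced by the Stokes-projection surrogate and by Lemma~\ref{lem:BUhqh}: each must be absorbable either by the fresh dissipation at $t_n$ or by the one-step-earlier energy, and it is precisely the joint hypothesis $\tau\le h=O(\tau)$, $\gamma_0$ large, $\gamma_1^{-1}$ large, and $h\le h_0$ that makes this bookkeeping close.
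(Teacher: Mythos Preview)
Your multiplier is miswritten. Lemma~\ref{lem:tel} requires testing with $\sum_{l=0}^1 a_l\Lambda^l\ul{\bbU_h^n}=(a_0+a_1)\ul{\Bu_h^n}-a_1\ul{\BU_h^{n-1,n}}$, not $a_0\ul{\Bu_h^n}+a_1\ul{\BU_h^{n-1,n}}$; for $k=3,4$ the correct surrogate is $\ul{\Bv_h}=2\ul{\Bu_h^n}-\ul{\hat\BU_h^{n-1,n}}$, which is exactly the test function the paper uses. With your $\ul{\Bv_h^n}$ the telescope identity does not hold after undoing the projection.

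The decisive gap, however, is your handling of the cross term $\mathscr{A}_h^n(\ul{\Bu_h^n},\ul{\hat\BU_h^{n-1,n}})$. Bounding it by $C\tN{\ul{\Bu_h^n}}_{\boldsymbol{\Cv}}\tN{\ul{\Bu_h^{n-1}}}_{\boldsymbol{\Cv}}$ via Lemma~\ref{lem:Ah} and \eqref{eq:stab-proj0} gives only a \emph{generic} $C$, because the projection stability constant in \eqref{eq:stab-proj0} is generic. After Young's inequality this term cannot be absorbed by the $0.9$ (or even $1.8$) coercivity; no ``summation by parts in $n$'' rescues it. The paper closes the estimate by a device your plan omits: using the Stokes-projection identities \eqref{def-proj} with $(\ul{p},f)=(\ul{0},0)$ together with \eqref{eq:disc-p}, one obtains
\[
\mathscr{A}_h^n(\ul{\hat\BU_h^{n-1,n}},\ul{\Bu_h^n})+\mathscr{B}_0^n(\ul{\hat\BU_h^{n-1,n}},\ul{p_h^n})
=a_h^n(\ul{\BU_h^{n-1,n}},\ul{\Bu_h^n})-\mathscr{R}_h^n(\ul{\bbU_h^n},\ul{p_h^n};\ul{\hat p_h^{n-1,n}}),
\]
which replaces $\mathscr{A}_h^n$ by $a_h^n$ (no $\mathscr{J}_\Bu^n$ penalty) acting on the \emph{unprojected} pull-back $\ul{\BU_h^{n-1,n}}$. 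Then the sharp transport estimates \eqref{eq:temp3}--\eqref{eq:temp4} give constants of the form $1+O(\tau)$ and $O(\gamma_0^{-1/2})+O(h^{1/2})$ in the three pieces of $a_h^n$, leading to the bound \eqref{est:anh} with constant $\tfrac12+2\varepsilon$. This \emph{is} absorbable against the $1.8$ coercivity and, after summing in $n$, leaves the positive net coefficient $0.8-O(\varepsilon)$ in front of $\sum_n\tau\tN{\ul{\Bu_h^n}}_{\boldsymbol{\Cv}}^2$. The same projection identity also disposes of $\mathscr{B}_0^n(\ul{\hat\BU_h^{n-1,n}},\ul{p_h^n})$ directly (it equals $\mathscr{J}_p^n(\ul{\hat p_h^{n-1,n}},\ul{p_h^n})$ by \eqref{proj-p}), so your detour through Lemma~\ref{lem:BUhqh} is unnecessary here.
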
}}
\end{center}
\vspace{1mm}

\begin{proof}
	Without loss of generality, we only prove the lemma for $k=4$. The proofs for $k< 4$ are similar.
	The discrete problem can be written as follows
	\begin{align}\label{eq:Uh}
		&\Aprod[\Omega^n_{\eta}]{\Lambda^4 \ul{\bbU_h^n}, \ul{\Bv_h}}
		+ \tau \mathscr{K}_0((\ul{\Bu_h^n},\ul{p_h^n}),(\ul{\Bv_h},\ul{q_h})) \notag \\
		&+ \gamma_1\nu_2 \tau h^2\mathscr{B}^n_1(\ul{\Bu_h^n},\ul{q_h})
		+\gamma_1\nu_2 \tau h^2\Aprod[\Omega^n_\eta]{\nu^{-1}\nabla \ul{p_h^n},\nabla \ul{q_h}}
		\notag \\
		=\,&\tau\Aprod[\Omega^n_{\eta}]{\ul\Bf^n, \ul{\Bv_h}}
		-\gamma_1\nu_2 \tau h^2\sum_{i=1,2}\nu_i^{-1}
		\Big(\Bf^n_i-\sum_{j=1}^k \lambda_j^k \BU_h^{n-j,n},
		\nabla q_{h,i}\Big)_{\Omega^n_{\eta,i}}.
	\end{align}	
	For $k=4$, the coefficients $a_0=a_1=1$ in Lemma~\ref{lem:tel}. From \cite{liu13}, the coefficients $\lambda_0^k,\cdots,\lambda_k^k$ for the BDF-$k$ scheme also imply that $\Lambda^0\ul{\bbU_h^n} =\ul{\Bu^n_h}$
	and $\Lambda^1\ul{\bbU_h^n} =\ul{\Bu^n_h} - \ul{\BU_h^{n-1,n}}$. Then
	\ben
	\sum_{l=0,1} a_l\Lambda^l \ul{\bbU^{n}_h}
	= 2\ul{\Bu^n_h} - \ul{\BU_h^{n-1,n}} .
	\een
	Define $\big(\ul{\hat\BU_h^{n-1,n}}, \ul{\hat p_h^{n-1,n}}\big) =\Cs^n(\ul{\BU_h^{n-1,n}},\ul{0},0)$
	, take $(\ul{\Bv_h},\ul{q_h})=\big(2 \ul{\Bu_h^n} -\ul{\hat\BU_h^{n-1,n}},\ul{0}\big)$
	in \eqref{eq:Uh}. Using Lemma~\ref{lem:tel}, we have
	\begin{equation}\label{eq:stab0}
		\sum_{l=1}^5 \dN{\Psibf_l^4(\underline{\bbU_h^{n}})}_{0,\Omega^n_{\eta}}^2 -
		\sum_{l=1}^4 \dN{\Phibf_l^4(\underline{\bbU_h^{n}})}_{0,\Omega^n_{\eta}}^2
		+ \tau \mathscr{K}_0((\ul{\Bu_h^n},\ul{p_h^n});(\ul{\Bv_h},\ul{0}))
		= \tau (A_1 +A_2),
	\end{equation}
	where $A_1=\Aprod[\Omega^n_{\eta}]{\ul\Bf^n, \ul{\Bv_h}}$
	and $A_2 = \tau^{-1}\Aprod[\Omega^n_{\eta}]{\Lambda^4\ul{\bbU_h^{n}},\ul{\hat\BU_h^{n-1,n}} - \ul{\BU_h^{n-1,n}}}$. We are going to estimate each term in \eqref{eq:stab0} separately.
	
	By Corollary~\ref{cor:stokes-L2-1} and inequality \eqref{ieq:LamU}, we have
	\begin{align}
		\SN{A_1} \le \,& \N{\ul\Bf^n}_{0,\Omega^n_\eta}\Big\{2\dN{\ul{\Bu^n_h}}_{0,\Omega^n_\eta}
		+\dN{\ul{\BU_h^{n-1,n}}}_{0,\Omega^n_\eta}
		+\dN{\ul{\BU_h^{n-1,n}}- \ul{\hat\BU_h^{n-1,n}}}_{0,\Omega^n_\eta}\Big\}\notag \\
		\le \,& C\N{\ul\Bf^n}_{0,\Omega^n_\eta}\Big\{\dN{\ul{\Bu^{n-1}_h}}_{0,\Omega^{n-1}_\eta}
		+\dN{\ul{\Bu^n_h}}_{0,\Omega^n_\eta}
		\notag \\
		&+ h\tN{\ul{\Bu_h^{n-1}}}_{\boldsymbol{\Cv}} 
		+\gamma_1\nu_2^{\frac12}h\big\|\Lambda^k\ul{\bbU_h^{n-1}}
		-\tau\ul{\Bf}^{n-1}\big\|_{0,\Omega^{n-1}_\eta}\notag \\
		&+\gamma_1\nu_2 h^2\big\|\nu^{-\frac12}\nabla\ul{p_h^{n-1}}\big\|_{0,\Omega^n_\eta} 
		+h\mathscr{J}_p^{n-1}(\ul{p_h^{n-1}},\ul{p^{n-1}_h})^{\frac12}\Big\}\notag \\
		\le& C\Big(1+\gamma_1\nu_2^{\frac12}h\Big)\sum_{j=0}^5
		\Big\{\dN{\ul{\Bu_h^{n-j}}}^2_{0,\Omega^{n-j}_{\eta}}
		+\nu_2^{-1}h^4\tN{\ul{\Bu_h^{n-j}}}_{\boldsymbol{\Cv}}^2
		+\N{\ul\Bf^{n-j}}_{0,\Omega^{n-j}_\eta}^2 \Big\} \notag \\
		&+h^2\tN{\ul{\Bu_h^{n-1}}}_{\boldsymbol{\Cv}}^2+(\gamma_1\nu_2)^2 h^4\big\|\nu^{-\frac12}\nabla\ul{p_h^{n-1}}\big\|_{0,\Omega^{n-1}_\eta}^2
		+h^2\mathscr{J}_p^{n-1}(\ul{p_h^{n-1}},\ul{p^{n-1}_h}). \label{est:A1}
	\end{align}
	For a parameter $\varepsilon \in (0,1)$ to be specified later,
	$A_2$ can be estimated similarly as follows
	\begin{align}
		\SN{A_2}\le \,& C\dN{\Lambda^4\ul{\bbU_h^{n}}}_{0,\Omega^n_{\eta}}
		\Big\{\gamma_1\nu_2^{\frac 12}\big\|\Lambda^4\ul{\bbU_h^{n-1}} -\tau\ul{\Bf}^{n-1}\big\|_{0,\Omega^{n-1}_\eta}
		+ \tN{\ul{\Bu_h^{n-1}}}_{\boldsymbol{\Cv}}\notag \\
		& +\gamma_1\nu_2 h\big\|\nu^{-\frac 12}\nabla\ul{p_h^{n-1}}\big\|_{0,\Omega^{n-1}_\eta}
		+\mathscr{J}_p^{n-1}(\ul{p_h^{n-1}},\ul{p^{n-1}_h})^{\frac12}\Big\} \notag\\
		\le\,& \gamma_1\nu_2^{\frac 12}h^2\N{\ul\Bf^{n-1}}_{0,\Omega^{n-1}_\eta}^2
		+ \frac{C+C\gamma_1\nu_2^{\frac12}}{\varepsilon}
		\sum_{j=0}^5\Big\{\dN{\ul{\Bu_h^{n-j}}}^2_{0,\Omega^{n-j}_{\eta}}
		+\frac{h^4}{\nu_2}\tN{\ul{\Bu_h^{n-j}}}_{\boldsymbol{\Cv}}^2\Big\}\notag \\
		&+\varepsilon \Big\{\tN{\ul{\Bu_h^{n-1}}}_{\boldsymbol{\Cv}}^2
		+(\gamma_1\nu_2)^2 h^2\big\|\nu^{-\frac 12}\nabla\ul{p_h^{n-1}}\big\|_{0,\Omega^{n-1}_\eta}^2
		+\mathscr{J}_p^{n-1}(\ul{p_h^{n-1}},\ul{p^{n-1}_h})\Big\} .   \label{est:A2}
	\end{align}
	Note that $\Phibf_l^4 (\underline{\bbU_h^n})=\Psibf_l^4 (\underline{\bbU_h^{n-1}})\circ \BX_{\tau}^{n,n-1}$.
	By \eqref{ieq:vhXn} and \eqref{eq:temp3}, we have
	\begin{align}\label{stab-est1}
		\dN{\Phibf_l^4 (\underline{\bbU_h^n})}_{0,\Omega_{\eta}^n}^2
		\le \dN{\Psibf_l^4 (\underline{\bbU_h^{n-1}})}_{0,\Omega_{\eta}^{n-1}}^2
		+ C\tau \sum_{j=1}^l\Big\{\big\|\ul{\Bu_h^{n-j}}\big\|_{0,\Omega^{n-j}_\eta}^2
		+ \frac{h^4}{\nu_2}\tN{\ul{\Bu_h^{n-j}}}^2_{\boldsymbol{\Cv}}\Big\}.
	\end{align}

	Now it is left to estimate $\mathscr{K}_0^n((\ul{\Bu_h^n},\ul{p_h^n}),(\ul{\Bv_h},\ul{0}))$.
	Since $\mathscr{A}^n_h$ is symmetric, from \eqref{def-proj} and \eqref{eq:disc-p}, we know that
	\begin{align}
		\mathscr{K}_0^n((\ul{\Bu_h^n},\ul{p_h^n}),(\ul{\Bv_h},\ul{0}))=\,&
		2\mathscr{A}_h^n(\ul{\Bu_h^n},\ul{\Bu_h^n})
		+2\mathscr{B}^n_0(\ul{\Bu_h^n},\ul{p_h^n})
		- \mathscr{A}_h^n(\ul{\hat\BU_h^{n-1,n}},\ul{\Bu_h^n}) \notag \\
		&-\mathscr{B}_0^n(\ul{\hat\BU_h^{n-1,n}},\ul{p_h^n}) \notag \\ 
		=\,&2\mathscr{A}_h^n(\ul{\Bu_h^n},\ul{\Bu_h^n})
		+2\mathscr{J}_p^n(\ul{p_h^n},\ul{p^n_h})
		+2\mathscr{R}_h^n(\ul\bbU_h^n,\ul{p_h^n};\ul{p^n_h})\notag \\
		&-a_h^n(\ul{\BU_h^{n-1,n}},\ul{\Bu_h^n}) 
		+\mathscr{R}_h^n(\ul\bbU_h^n,\ul{p_h^n};\ul{\hat p_h^{n-1,n}}).   \label{stab-est2}
	\end{align}
	By Lemma~\ref{lem:ch-up}, \eqref{eq:temp3}, and \eqref{ieq:LamU}, we deduce that
	\begin{align}
		\mathscr{R}_h^n(\ul\bbU_h^n,\ul{p_h^n};\ul{p^n_h}) =\,& \gamma_1\nu_2 h^2
		\Aprod[\Omega^n_\eta]{\tau^{-1} \Lambda^4\ul{\bbU_h^n}- \nu \Delta_h \ul{\Bu_h^n}
			+\nabla \ul{p_h^n}-\ul{\Bf}^n,\nu^{-1}\nabla \ul{p^n_h}} \notag\\
		\ge\,&\frac{3}{4} \gamma_1\nu_2 h^2\big\|\nu^{-\frac12}\nabla \ul{p^n_h}\big\|_{0,\Omega^n_\eta}^2
		-C\gamma_1 h^2\N{\ul\Bf^n}_{0,\Omega^n_\eta}^2
		-C\gamma_1\nu_2\tN{\ul{\Bu_h^n}}_{\boldsymbol{\Cv}}^2\notag \\
		&- C \gamma_1 \sum_{j=0}^4 \Big(\dN{\ul{\Bu_h^{n-j}}}_{0,\Omega^{n-j}_{\eta}}^2
		+\nu_2^{-1}h^4\tN{\ul{\Bu_h^{n-j}}}_{\boldsymbol{\Cv}}^2\Big)
		.
		\label{est:Rnh1}
	\end{align}
	Similarly, using inverse estimates, \eqref{eq:stab-proj0}, and \eqref{ieq:LamU}, we have
	\begin{align}
		&\mathscr{R}_h^n(\ul\bbU_h^n,\ul{p_h^n};\ul{\hat p_h^{n-1,n}})
		=\, \gamma_1\nu_2 h^2\Aprod[\Omega^n_\eta]{\frac{1}{\tau} \Lambda^4\ul{\bbU_h^n}- \nu \Delta_h \ul{\Bu_h^n}
			+\nabla \ul{p_h^n}-\ul{\Bf}^n,\frac{1}{\nu}\nabla \ul{\hat p_h^{n-1,n}}} \notag\\
		\le \,& C\gamma_1 \tN{\ul{\hat p_h^{n-1,n}}}_{\Cq}
		\Big(\dN{\Lambda^4 \ul{\bbU_h^n}}_{0,\Omega_\eta^n}
		+h\dN{\ul\Bf^n}_{0,\Omega^n_\eta}\Big) \notag \\ 
		&+C\gamma_1\nu_2 \tN{\ul{\hat p_h^{n-1,n}}}_{\Cq}
		\Big(\tN{\ul{\Bu_h^n}}_{\boldsymbol{\Cv}}
		+\dN{\nu^{-\frac 12}\nabla \ul{p_h^n}}_{0,\Omega_\eta^n}\Big) \notag \\
		\le \,&  C \gamma_1 \sum_{j=0}^4\Big(\dN{\ul{\Bu_h^{n-j}}}_{0,\Omega^{n-j}_{\eta}}^2
		+\nu_2^{-1}h^4\tN{\ul{\Bu_h^{n-j}}}_{\boldsymbol{\Cv}}^2\Big) 
		+C\gamma_1\nu_2 \tN{\ul{\Bu_h^n}}_{\boldsymbol{\Cv}}^2\notag \\
		\,&+C \gamma_1 h^2\N{\ul\Bf^n}_{0,\Omega^n_\eta}^2 
		+\frac{1}{2}\gamma_1\nu_2 h^2\dN{\nu^{-\frac{1}{2}} \nabla \ul{p_h^n}}_{0,\Omega_\eta^n}^2.
		\label{est:Rnh2}
	\end{align}
	By \eqref{eq:trace2} and \eqref{eq:temp3}--\eqref{eq:temp4}, we have
	\begin{align*}
		&\SN{a^n_h(\ul{\BU_h^{n-1,n}},\ul{\Bu_h^n})}
		\le\,\big\|\nu^{\frac12}\nabla\ul{\Bu_h^n}\big\|_{0,\Omega^n_\eta}
		\Big\{\big\|\nu^{\frac12}\nabla\ul{\Bu_h^{n-1}}\big\|_{0,\Omega_\eta^{n-1}}
		+Ch\tN{\ul{\Bu_h^{n-1}}}_{\boldsymbol{\Cv}}\Big\} \\
		&+C\gamma_0^{-\frac12}\mathscr{J}^n_0(\ul{\Bu_h^{n}},\ul{\Bu_h^{n}})^{\frac12}
		\tN{\ul{\Bu_h^{n-1}}}_{\boldsymbol{\Cv}}\\
		&+C\tN{\ul{\Bu_h^n}}_{\boldsymbol{\Cv}}\Big\{
		\gamma_0^{-\frac 12}\mathscr{J}^{n-1}_0(\ul{\Bu_h^{n-1}},\ul{\Bu_h^{n-1}})^{\frac12}
		+Ch^{\frac 12}\tN{\ul{\Bu_h^{n-1}}}_{\boldsymbol{\Cv}} \Big\}\\
		&+(1+C\tau)\mathscr{J}^n_0(\ul{\Bu_h^{n}},\ul{\Bu_h^{n}})^{\frac12}\Big\{
		\mathscr{J}^{n-1}_0(\ul{\Bu_h^{n-1}},\ul{\Bu_h^{n-1}})^{\frac12} +
		C\sqrt{h\gamma_0}\tN{\ul{\Bu_h^{n-1}}}_{\boldsymbol{\Cv}} \Big\} \\
		\le\,& \left\{0.5+\varepsilon
		+C\varepsilon^{-1}(h+\gamma_0^{-1}+h\gamma_0)\right\}\Big\{
		\tN{\ul{\Bu_h^{n}}}_{\boldsymbol{\Cv}}^2
		+\tN{\ul{\Bu_h^{n-1}}}_{\boldsymbol{\Cv}}^2\Big\}.
	\end{align*}
	Choosing $\gamma_0$ large enough such that $2C\gamma_0^{-1}\leq \varepsilon^2$
	and choosing  $h$ small enough such that
	$2C(1+\gamma_0)h\le \varepsilon^2$, we find that
	\begin{align}\label{est:anh}
		\SN{a^n_h(\ul{\BU_h^{n-1,n}},\ul{\Bu_h^n})}
		\le (0.5+2\varepsilon)\Big(\tN{\ul{\Bu_h^{n}}}_{\boldsymbol{\Cv}}^2
		+\tN{\ul{\Bu_h^{n-1}}}_{\boldsymbol{\Cv}}^2 \Big).
	\end{align}
	
	Now we assume that $h$ and $\gamma_1$ are small enough such that
	$\nu_2^{-1}h^4\leq \varepsilon^2\ll 1 $ and $ \gamma_1\lesssim 1$.
	Then inserting \eqref{est:Rnh1}--\eqref{est:anh} into \eqref{stab-est2} and using Lemma~\ref{lem:Ah} and Lemma~\ref{lem:trace2}, we get
	\begin{align}
		\mathscr{K}_0^n((\ul{\Bu_h^n},\ul{p_h^n}),(\ul{\Bv_h},\ul{0}))\ge\,&
		(1.3-2\varepsilon-C\gamma_1\nu_2)\tN{\ul{\Bu_h^n}}^2_{\boldsymbol{\Cv}}
		-(0.5+2\varepsilon)\tN{\ul{\Bu_h^{n-1}}}^2_{\boldsymbol{\Cv}}\notag \\
		&-C\Big\{\sum_{j=0}^4\dN{\ul{\Bu_h^{n-j}}}_{0,\Omega_\eta^n}^2
		+\varepsilon\tN{\ul{\Bu_h^{n-j}}}_{\boldsymbol{\Cv}}^2
		+h^2\N{\ul{\Bf^n}}_{0,\Omega_\eta^n}^2\Big\} \notag \\
		&+2\mathscr{J}_p^n(\ul{p_h^n},\ul{p^n_h})
		+\gamma_1\nu_2h^2 \dN{\nu^{-\frac 12}\nabla \rho_h^n}_{0,\Omega_\eta^n}^2.\label{eq:K00}
	\end{align}
	By norm equivalence, we have
	\begin{equation}
		\dN{\nu^{\frac12}\nabla \ul{p_h^n}}_{0,\Omega_\eta^n}^2
		\geq \tN{\ul{p_h^n}}_{1,\Cq}^2 - \tN{\ul{p_h^n}}_{\Cq}^2 -Ch^{-2}\mathscr{J}_p^n(\ul{p_h^n},\ul{p_h^n}).
	\end{equation}
	Combining the above inequality with Lemma~\ref{lem:phL2} leads to
	\begin{align}
		\mathscr{K}_0^n((\ul{\Bu_h^n},\ul{p_h^n}),(\ul{\Bv_h},\ul{0}))&\ge\,
		(1.3-2\varepsilon-C\gamma_1\nu_2)\tN{\ul{\Bu_h^n}}^2_{\boldsymbol{\Cv}}
		-(0.5+2\varepsilon)\tN{\ul{\Bu_h^{n-1}}}^2_{\boldsymbol{\Cv}}\notag \\
		& -C\Big\{\sum_{j=0}^4\dN{\ul{\Bu_h^{n-j}}}_{0,\Omega_\eta^n}^2
		+\varepsilon\tN{\ul{\Bu_h^{n-j}}}_{\boldsymbol{\Cv}}^2
		+h^2\N{\ul{\Bf^n}}_{0,\Omega_\eta^n}^2\Big\}\notag \\
		& +\gamma_1\nu_2h^2 \tN{\ul{\rho_h^n}}_{1,\Cq}^2
		+(2-C\gamma_1\nu_2)\mathscr{J}_p^n(\ul{p_h^n},\ul{p^n_h})
		.    \label{est:K0}
	\end{align}
	Now substitute \eqref{est:A1}--\eqref{stab-est1} and \eqref{est:K0} into \eqref{eq:stab0} and take the sum of both sides over $n=4,\cdots,m$. After proper arrangements, we arrive at
	\begin{align*}
		&\sum_{l=1}^4 \dN{\Psibf_l^4(\underline{\bbU_h^{m}})}_{0,\Omega^m_{\eta}}^2
		+(0.8-C\varepsilon -C\gamma_1\nu_2 -Ch^2)
		\sum_{n=4}^m\tau\tN{\ul{\Bu_h^{n}}}_{\boldsymbol{\Cv}}^2 \\
		&\quad+(1-\gamma_1\nu_2 h^2 -\varepsilon\gamma_1\nu_2)
		\gamma_1\nu_2 h^2\sum_{n=4}^m \tau\tN{\ul{p^n_h}}_{1,\Cq}^2\\
		&\quad +(2-\varepsilon -C\gamma_1\nu_2)\sum_{n=4}^m\tau \mathscr{J}_p^n(\ul{p_h^n},\ul{p^n_h})  \\
		\le\,&C\sum_{n=4}^m\tau\big\|\ul{\Bu_h^n}\big\|_{0,\Omega^n_\eta}^2
		+C\sum_{n=0}^m\tau \dN{\ul\Bf^n}_{0,\Omega^n_\eta}^2
		+C\sum_{n=0}^3\Big(\dN{\ul{\Bu_h^n}}^2_{0,\Omega^n_{\eta}}
		+\tau\tN{\ul{\Bu_h^{n}}}_{\boldsymbol{\Cv}}^2\Big)
	\end{align*}
	From \cite[Table~2.2]{liu13}, we have $\Psibf_1^4(\underline{\bbU_h^{m}})=0.06\ul{\Bu^m_h}$.
	In the above inequality, we let $\varepsilon$, $\gamma_1$, and $h$ to be small enough
	such that 
	\ben
	C(\varepsilon+\gamma_1\nu_2)<0.3,\quad
	\varepsilon \gamma_1\nu_2<0.5, \quad Ch^2<0.1.
	\een 
	It results in
	\begin{align*}
		&\big\|\underline{\Bu_h^m}\big\|^2_{0,\Omega^m_{\eta}}
		+\sum_{n=4}^m\tau\tN{\ul{\Bu_h^{n}}}_{\boldsymbol{\Cv}}^2
		+\gamma_1\nu_2 h^2\sum_{n=4}^m \tau\tN{\ul{p^n_h}}_{1,\Cq}^2 \\
		\le\,&C\sum_{n=4}^m\tau\big\|\ul{\Bu_h^n}\big\|_{0,\Omega^n_\eta}^2
		+C\sum_{n=0}^m\tau \N{\ul\Bf^n}_{0,\Omega^n_\eta}^2
		+C\sum_{n=0}^3\Big(\dN{\ul{\Bu_h^n}}^2_{0,\Omega^n_{\eta}}
		+\tau\tN{\ul{\Bu_h^{n}}}_{\boldsymbol{\Cv}}^2\Big).
	\end{align*}
	The proof is finished by using Gronwall's inequality.
\end{proof}

\section{Finite element error estimates}
\label{sec:err}
The purpose of this section is to prove the error estimates between the exact solution and the finite element solution.
Throughout this section we assume the the index $r$ in Assumption~\ref{ass-1} satisfies $r\ge \max(4,k+2)$.

\subsection{Extension operator}

First we write $\Omega(t) = \Omega_1(t)\cup \Omega_2(t)$ and define
\begin{align*}
	Q_{T} =\,& \left\{(\Bx,t):\Bx\in \Omega(t),\,t\in [0,T]\right\}, \\
	Q_{i,T} =\,& \left\{(\Bx,t):\Bx\in \Omega_i(t),\,t\in [0,T]\right\}, \quad i=1,2.
\end{align*}
Given an integer $0\le m\le k+1$, we define the Bochner space
\begin{align*}
	&L^\infty\big(0,T;H^m(\Omega_i(t))\big)\\
	=\,&\big\{v_i\in L^2(Q_{i,T}):
	\esssup_{t\in [0,T]}\N{v_i(\BX(t;0,\cdot),t)}_{H^m(\Omega_i(0))}
	<\infty\big\} .
\end{align*}
For any Lipschitz domain $D\subset \Omega$, by \cite[Chapter 6]{ste70},
there is an extension operator $\Ce_{D}$: $H^{k+1}(D)\to H^{k+1}(\bbR^2)$ depending only on $D$ and $k$
such that
\begin{equation*}
	\left(\Ce_{D} w\right)|_{D} =w,\qquad
	\|\Ce_{D}w\|_{H^{k+1}(\bbR^2)}\lesssim \|w\|_{H^{k+1}(D)} \qquad
	\forall\,w\in H^{k+1}(D).
\end{equation*}
Since $\BX(t;0,\cdot)$: $\Omega_i(0)\to\Omega_i(t)$ is one-to-one, we have $w\circ\BX(t;0,\cdot)\in H^{k+1}(\Omega_i(0))$ for any $w\in H^{k+1}(\Omega_i(t))$. Let the extension operator from
$H^{k+1}(\Omega_i(t))$ to $H^{k+1}(\bbR^2)$ be defined as
\ben
\mathsf{E}_t^i w :=
\big[\mathsf{E}_0^i(w\circ\BX(t;0,\cdot))\big]\circ \BX(0;t,\cdot),\quad
\hbox{where}\;\; \mathsf{E}_0^i := \Ce_{\Omega_i(0)}.
\een
The global extension operator
$\mathsf{E}^i$: $L^\infty(0,T;H^{k+1}(\Omega_i(t)))\to L^\infty(0,T;H^{k+1}(\bbR^2))$
is defined as
\begin{equation}\label{def:extend}
	\hbox{for any}\; v\in L^\infty(0,T;H^{k+1}(\Omega_i(t))),\quad
	(\mathsf{E}^iv)(\cdot,t)=\mathsf{E}_t^iv(\cdot,t)\;\forall\, t\in [0,T].
\end{equation}
Following \cite[Lemma~6.1]{ma21} and arguments similar to \cite{leh19}, we have the following lemma.
\vspace{1mm}

\begin{lemma}\label{lem:extension}
	There exists a constant $C>0$ depending only on $\Omega_i(0)$
	and the exact flow map $\BX(t;0,\Omega_0)$ such that,
	for any $v\in L^\infty\big(0,T;H^{k+1}(\Omega_i(t))\big)\cap H^{k+1}(Q_{i,T})$,
	\begin{align*}
		\|\mathsf{E}^i v\|_{H^{k+1}(\bbR^2\times (0,T))} \leq\,& C \|v\|_{H^{k+1}(Q_{i,T})}, \\
		\|\mathsf{E}^i v\|_{L^\infty(0,T;H^{m}(\bbR^2))} \leq\,&
		C \|v\|_{L^\infty(0,T;H^m(\Omega_i(t)))}, \quad  1\leq m\leq k+1.
	\end{align*}
	Furthermore, for $v\in L^{\infty}(0,T;H^{m}(\Omega_i(t)))$ and
	$\partial_t v \in  L^{\infty}\big(0,T;H^{m-1}(\Omega_i(t))\big)$,
	for any $t\in [0,T]$, it holds
	\begin{equation*}
		\|\partial_t(\mathsf{E}^i v)\|_{H^{m-1}(\bbR^2)}
		\le C\big(\|v\|_{H^{m}(\Omega_i(t))} +\|\partial_t v\|_{H^{m-1}(\Omega_i(t))}\big),
		\quad 1\leq m\leq k+1.
	\end{equation*}
\end{lemma}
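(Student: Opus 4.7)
The plan is to reduce everything to three well-controlled building blocks: (a) the pullback by the forward flow map, $P_t^+ : w \mapsto w\circ\BX(t;0,\cdot)$; (b) the Stein extension $\mathsf{E}_0^i$ on the reference subdomain $\Omega_i(0)$; and (c) the pullback by the backward flow map, $P_t^- : \tilde w \mapsto \tilde w\circ\BX(0;t,\cdot)$. Since $\mathsf{E}^i_t v = P_t^- \circ \mathsf{E}_0^i \circ P_t^+ \, v$, all three estimates of the lemma follow once each factor is shown to be bounded between the relevant Sobolev spaces with a constant that is uniform in $t$.

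First I would extend $\Bw$ by zero outside $\Omega$ (Assumption~\ref{ass-1} gives $\Bw(\cdot,t)\in\BC_0^r(\Omega)$, $r\ge\max(4,k+2)$), so that $\BX(\cdot;\cdot,\cdot)$ is defined on $\bbR^2$ and equals the identity outside $\Omega$. Standard ODE regularity then yields $\BX(t;s,\cdot)\in\BC^r(\bbR^2)$ with a bound on $\|\BX(t;s,\cdot)\|_{\BC^{k+1}(\bbR^2)}$ and $\|\BX(s;t,\cdot)\|_{\BC^{k+1}(\bbR^2)}$ that depends only on $\|\Bw\|_{\BC^r([0,T]\times\bar\Omega)}$; in particular the Jacobians $\bbJ^{t,s}$ and their inverses are uniformly bounded along with their derivatives of order up to $k$. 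A Fa\`a di Bruno / chain rule computation shows that, for $0\le m\le k+1$, both $P_t^{\pm}$ are bounded on $H^m$ of their respective domains with a constant $C(\Bw,k)$ independent of $t$; boundedness of $\mathsf{E}_0^i$ comes directly from Stein's theorem since $\Omega_i(0)$ is Lipschitz. Composing the three bounds gives, for every $t\in[0,T]$ and every $1\le m\le k+1$,
\begin{equation*}
\|\mathsf{E}^iv(\cdot,t)\|_{H^m(\bbR^2)}\le C\|v(\cdot,t)\|_{H^m(\Omega_i(t))},
\end{equation*}
which is the second displayed estimate after taking the essential supremum in $t$.

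For the spacetime estimate, I would argue that the same three operators also behave well in the $(\Bx,t)$ variable jointly. Writing $\mathsf{E}^i v(\Bx,t)= (\mathsf{E}_0^i V)(\BX(0;t,\Bx),t)$ where $V(\By,t):=v(\BX(t;0,\By),t)$, any mixed derivative $\partial_t^j\partial_\Bx^\alpha \mathsf{E}^iv$ with $j+|\alpha|\le k+1$ can be expanded by the chain rule into a finite sum of terms of the form $(\text{derivative of }\mathsf{E}_0^iV)(\BX(0;t,\cdot),t)$ multiplied by bounded products of space-time derivatives of $\BX(0;t,\cdot)$. Since $\mathsf{E}_0^i$ commutes with $\partial_t$ (it is linear and $t$-independent) and is bounded from $H^m(\Omega_i(0))\to H^m(\bbR^2)$, and since $V$ satisfies $\|V\|_{H^{k+1}(\Omega_i(0)\times(0,T))}\lesssim \|v\|_{H^{k+1}(Q_{i,T})}$ by pulling back under $\BX(t;0,\cdot)$ in the space variable and expanding $\partial_t V=(\partial_t v)\circ\BX + (\Bw\cdot\nabla v)\circ\BX$, a Fubini change of variables in the $(\By,t)\to(\Bx,t)$ map (whose Jacobian is uniformly bounded above and below) delivers the first bound $\|\mathsf{E}^iv\|_{H^{k+1}(\bbR^2\times(0,T))}\lesssim \|v\|_{H^{k+1}(Q_{i,T})}$.

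For the last inequality on $\partial_t(\mathsf{E}^iv)$, the chain rule gives
\begin{equation*}
\partial_t(\mathsf{E}^iv)(\Bx,t)=\bigl[\partial_t\mathsf{E}_0^iV\bigr]\bigl(\BX(0;t,\Bx)\bigr)+\nabla\bigl[\mathsf{E}_0^iV\bigr]\bigl(\BX(0;t,\Bx)\bigr)\cdot\partial_t\BX(0;t,\Bx),
\end{equation*}
where $\partial_tV=\partial_tv\circ\BX(t;0,\cdot)+(\Bw\cdot\nabla v)\circ\BX(t;0,\cdot)$ and $\partial_t\BX(0;t,\cdot)=-\bbJ^{0,t}\,\Bw(\cdot,t)$ is uniformly bounded in $\BC^{k}(\bbR^2)$. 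Applying boundedness of $\mathsf{E}_0^i$ at level $m-1$ to the first term (with $V\mapsto\partial_tV$) and at level $m$ to the second (so the gradient lands in $H^{m-1}$), then pulling back by $\BX(0;t,\cdot)$, produces the desired bound by $\|v\|_{H^m(\Omega_i(t))}+\|\partial_tv\|_{H^{m-1}(\Omega_i(t))}$.

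The main obstacle, conceptually trivial but bookkeeping-heavy, is the Fa\`a di Bruno expansion in Step 2/3: one must verify that every term produced by differentiating the triple composition $P_t^-\circ\mathsf{E}_0^i\circ P_t^+$ up to order $k+1$ can be controlled by the Stein constant and the $\BC^{k+2}$-norm of $\BX$ in $(\Bx,t)$, uniformly in $t\in[0,T]$; once this Faà di Bruno bookkeeping is in place, all three stated inequalities follow by linearity and Fubini.
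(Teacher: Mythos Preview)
The paper does not actually prove this lemma: the sentence immediately preceding it reads ``Following \cite[Lemma~6.1]{ma21} and arguments similar to \cite{leh19}, we have the following lemma,'' and no further argument is given. Your decomposition $\mathsf{E}^i_t = P_t^-\circ\mathsf{E}_0^i\circ P_t^+$ together with the Fa\`a~di~Bruno bookkeeping on the flow maps is the standard route for such results and is correct in outline; it is almost certainly what the cited references do as well.

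One small slip: in the formula $\partial_t\BX(0;t,\cdot)=-\bbJ^{0,t}\,\Bw(\cdot,t)$ the Jacobian should be that of the \emph{backward} map, namely $\nabla_x\BX(0;t,x)$ (in the paper's convention this is $\bbJ^{t,0}=(\bbJ^{0,t})^{-1}$, not $\bbJ^{0,t}$). This follows from differentiating the identity $\BX(0;t,\BX(t;0,\By))=\By$ in $t$. Since both Jacobians are uniformly bounded in $\BC^k$ by the $\BC^r$-regularity of $\Bw$, the error is purely notational and does not affect your argument.
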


\subsection{The extended solution}

Let $\ul\Bu =(\Bu_1,\Bu_2)$, $\ul p=(p_1,p_2)$ be the true solution to \eqref{eq:model} and assume
\begin{align*}
	\begin{array}{ll}
		&\Bu_i\in \BL^\infty(0,T;\BH^{k+1}(\Omega_i(t)))\cap \BH^{k+1}(Q_{i,T}), \quad  \Div \Bv_i=0,
		\vspace{1mm}\\
		&p_i \in L^\infty(0,T;H^{k}(\Omega_i(t)))\cap H^{k}(Q_{i,T}), \quad 
		(p_1/\nu_1,1)_{\Omega_1(t)} + (p_2/\nu_2,1)_{\Omega_2(t)}=0.
	\end{array}
\end{align*}
Let $\tilde{\Bu}_i = \mathsf{E}^i\Bu_i$, $\tilde{p}_i = \mathsf{E}^ip_i$
be the extensions of $\Bu_i$ and $p_i$, respectively. Define
\begin{align*}
	\tilde\Bf_i := \frac{\D\tilde \Bu_i}{\D t}-\nu_i \Delta \tilde{\Bu}_i + \nabla \tilde{p}_i,
	\qquad
	\frac{\D\tilde \Bu_i}{\D t} :=
	\frac{\partial\tilde \Bu_i}{\partial t} +(\Bw\cdot\nabla)\tilde{\Bu}_i.
\end{align*}
It is easy to see that $\tilde\Bf_i = \Bf_i$ in $\Omega_i(t)$, $i=1,2$.
From Lemma~\ref{lem:extension}, we have
\begin{align}\label{ext-stab-up}
	\begin{cases}
		\N{\tilde \Bu_i}_{\BL^\infty(0,T;\BH^{k+1}(\Omega))}
		\lesssim \N{\Bu_i}_{\BL^\infty(0,T;\BH^{k+1}(\Omega_i(t)))},
		\vspace{1mm}\\
		\N{\tilde p_i}_{\BL^\infty(0,T;H^{k}(\Omega))} \lesssim
		\N{p_i}_{\BL^\infty(0,T;H^{k}(\Omega_i(t)))},
	\end{cases}
	\quad i=1,2.
\end{align}
For convenience, we introduce some symbols for quantities at discrete time steps
\ben
\ul{\tilde\Bu^n}=\big(\tilde{\Bu}_1^n,\tilde{\Bu}_2^n\big),\;
\ul{\tilde p^n}=\big(\tilde{p}_1^n,\tilde{p}_2^n\big), \;
\tilde{\Bu}_i^n=\tilde{\Bu}_i(\cdot,t_n),\;\tilde{p}_i^n=\tilde{p}_i(\cdot,t_n),\;
\tilde{\Bf}_i^n=\tilde{\Bf}_i(\cdot,t_n).
\een
Define $\tilde\BU_i^{m,n}:= \tilde{\Bu}_i^m\circ \BX_\tau^{n,m}$ and $\tilde\bbU_i^{n} :=\big[\tilde\BU_i^{n-k,n},\cdots,\tilde\BU_i^{n,n}\big]$.
Then the extended solutions $\tilde \Bu_i^n$ and $\tilde p_i^n$ satisfy the semi-discrete equations
\begin{align}
	&\frac{1}{\tau}\Lambda^k \tilde\bbU_i^n -\nu_i\Delta \tilde \Bu_i^n + \nabla \tilde p_i^n
	= \hat{\Bf}_i^n \quad \text{in}\; \; \Omega_{\eta,i}^n, \label{mom-ext}\\
	&\Div \tilde \Bu_i^n=0 \quad \text{in}\;\; \Omega_i(t_n)\backslash\bar\Omega_{\eta,i}^n,
	\label{div-ext}
\end{align}
where
\ben
\hat{\Bf}_i^n = \tilde\Bf_i^n +\BR_i^n, \qquad
\BR_i^n = \frac{1}{\tau}\Lambda^k \tilde\bbU_i^n -\frac{\D\tilde \Bu_i}{\D t}\Big|_{t=t_n}.
\een
However, the extended solution does not satisfy the continuity conditions on the approximate interface $\Gamma^n_\eta$, namely,
\ben
\nu_1 \partial_{\Bn} \tilde \Bu_1^n -\tilde p_1^n \Bn \ne
\nu_2 \partial_{\Bn} \tilde \Bu_2^n -\tilde p_2^n \Bn, \quad
\tilde \Bu_1^n \ne\tilde \Bu_2^n \quad \hbox{on}\;\;\Gamma^n_\eta.
\een

From \eqref{mom-ext}--\eqref{div-ext}, it is easy to see that the extended solution satisfies the discrete equations with modified right-hand sides
\begin{subequations} \label{eq:exten disc}
	\begin{align}
		&\tau^{-1}\Aprod[\Omega^n_\eta]{\Lambda^k \ul{\tilde\bbU}^n,\ul{\Bv_h}}
		+\mathscr{A}_h^n(\ul{\tilde\Bu^n},\ul{\Bv_h})
		+\mathscr{B}^n_0(\ul{\Bv_h},\ul{\tilde p^n})\notag \\
		=\;& \Aprod[\Omega^n_\eta]{\ul{\hat\Bf}^n,\ul{\Bv_h}} +\Cf^n(\ul{\Bv_h}),
		\qquad \forall \ul{\Bv_h}\in \Cv_h^n \label{eq:ext-u}\\
		&\mathscr{B}^n_0(\ul{\tilde\Bu^n},\ul{q_h})
		- \mathscr{J}_p^n(\ul{\tilde p^n},\ul{q_h})
		-\mathscr{C}_h^n(\ul{\tilde\bbU^n},\ul{\tilde p^n};\ul{q_h})\notag\\
		=\;&(\Cb_0^n\ul{\tilde\Bu^n})(\ul{q_h})
		-\gamma_1\nu_2 h^2\Aprod[\Omega_\eta^n]{\ul{\hat\Bf}^n,\nu^{-1}\nabla \ul{q_h}}, \qquad 
		\forall \ul{\Bq_h}\in \Cq_h^n \label{eq:ext-p}
	\end{align}
\end{subequations}
where
$\ul{\tilde\bbU^n}=\big(\tilde\bbU_1^n, \tilde\bbU_2^n\big)$,
$\hat{\ul\Bf}^n= \big(\hat{\Bf}_1^n,\hat{\Bf}_2^n\big)$, and
\begin{align*}
	&\Cf^n(\ul{\Bv_h}) = \int_{\Gamma_\eta^n} \jump{\nu\partial_{\Bn}\ul{\tilde\Bu}^n
		-\ul{\tilde p^n} \Bn}\Avgg{\ul{\Bv_h}}
	\notag \\
	&\hspace{20mm}+\int_{\Gamma_\eta^n}\jump{\ul{\tilde\Bu^n}} \big(\Avg{\nu \partial_\Bn \ul{\Bv_h}}
	+\frac{\gamma0}{h}\Avg{\nu}\jump{\ul{\Bv_h}}\big),\\
	&\mathscr{C}_h^n\big(\ul{\tilde\bbU^n},\ul{\tilde p^n};\ul{q_h})
	= \gamma_1\nu_2 h^2\Aprod[\Omega_\eta^n]{\tau^{-1}\Lambda^k \ul{\tilde\bbU^n}
		-\nu \Delta_h\ul{\tilde\Bu^n} +\nabla \ul{\tilde p^n},\nu^{-1}\nabla \ul{q_h}}.
\end{align*}

\subsection{Modified Stokes projections of the extended solution}

Using \eqref{def-proj} of the modified Stokes projection, we define the modified Stokes projection
$(\ul{\hat{\Bu}_h^n},\ul{\hat{p}_h^n})=\Cs^n(\ul{\tilde\Bu}^n, \ul{\tilde p}^n,\Cb_0^n \ul{\tilde\Bu^n})$.
The approximation errors can be split into two parts
\begin{align}\label{unpn}
	\ul{\tilde\Bu^n}- \ul{\Bu_h^n} =\ul{\thetabf^n} + \ul{\thetabf_h^n}, \qquad
	\ul{\tilde p^n} -\ul{p_h^n} = \ul{\rho^n} +\ul{\rho_h^n}.
\end{align}
where the error functions are defined as
\ben
\ul{\thetabf^n} = \ul{\tilde\Bu^n}-\ul{\hat{\Bu}_h^n}, \qquad
\ul{\rho^n} = \ul{\tilde p^n} -\ul{\hat{p}_h^n},\qquad
\ul{\thetabf_h^n} = \ul{\hat{\Bu}_h^n} - \ul{\Bu_h^n}, \qquad
\ul{\rho_h^n} = \ul{\hat{p}_h^n} -\ul{p_h^n}.
\een
From \eqref{ieq:stokes-L2}, we know that
\begin{align}\label{ieq:theta-rho-n}
	\dN{\ul{\thetabf^n}}_{0,\Omega^n_\eta} + h \tN{\ul{\thetabf^n}}_{\boldsymbol{\Cv}}
	+ h \tN{\ul{\rho^n}}_{\Cq}
	\lesssim h^{k+1}\wt M_k^n,
\end{align}
where by \eqref{ext-stab-up},
\begin{align*}
	\wt M_k^n:=\,&
	\sum_{i=1,2}\big(\nu_i |\tilde \Bu_i^n|_{\BH^{k+1}(\Omega)}^2
	+\nu_i^{-1}|\tilde p^n_i|_{H^k(\Omega)}^2\big) \\
	\le\,& C \sum_{i=1,2}\big(\N{\Bu_i}^2_{\BL^\infty(0,T;\BH^{k+1}(\Omega_i(t)))}
	+ \N{p_i}^2_{\BL^\infty(0,T;H^{k}(\Omega_i(t)))}\big).
\end{align*}
Let $\ul{\pi_{k-1}}(\ul{\tilde p^n})$ be the quasi-interpolation of $\ul{\tilde p^n}$.
Using \eqref{ieq:q-qh}, \eqref{ieq:theta-rho-n}, and inverse estimate, we have
\begin{align}
	\sum_{i=1}^2\dN{\nu_i^{-\frac12}\nabla\rho^n_{i}}_{0,\Omega^n_{h,i}}^2
	\lesssim & \sum_{i=1}^2\dN{\nu_{i}^{-\frac12}\nabla\big[\tilde p_i^n
		-\pi_{k-1}(\tilde p_i^n)\big]}_{0,\Omega^n_{h,i}}^2\notag\\
	&+\sum_{i=1}^2\dN{\nu_i^{-\frac12}\nabla\big[\pi_{k-1}(\tilde p_i^n)
		-\hat p_{h,i}^n\big]}_{0,\Omega^n_{h,i}}^2 \notag \\
	\lesssim\,& h^{2k-2}\wt M_k^n
	+h^{-2}\tN{\ul{\pi_{k-1}}(\ul{\tilde p^n}) -\ul{\hat p^n_h}}_{\Cq}^2 \notag \\
	\lesssim\,& h^{2k-2}\wt M_k^n
	+h^{-2}\tN{\ul{\rho^n}}_{\Cq}^2
	+h^{-2}\tN{\ul{\pi_{k-1}}(\ul{\tilde p^n}) -\ul{\tilde p^n}}_{\Cq}^2 \notag \\
	\lesssim\,&h^{2k-2}\wt M_k^n.
	\label{ieq:rho-grad}
\end{align}
It is left to estimate $\ul{\thetabf^n_h}$ and $\ul{\rho^n_h}$.

\subsection{Error estimates for $\ul{\thetabf^n_h}$ and $\ul{\rho^n_h}$}

For convenience in notation, we define
\ben
\begin{array}{ll}
	\ul{\Thetabf_h^{m,n}}=\ul{\thetabf_h^m}\circ\BX_\tau^{n,m},
	&\ul{\Thetabf^{m,n}}=\ul{\thetabf^n}\circ \BX_\tau^{n,m}, \vspace{1mm} \\
	\uuline{\Thetabf_h^n}:= \big[\ul{\Thetabf_h^{n-k,n}},\cdots,\ul{\Thetabf_h^{n,n}}\big],
	\quad
	&\uuline{\Thetabf^n}:=\big[\ul {\Thetabf^{n-k,n}},\cdots,\ul{ \Thetabf^{n,n}}\big].
\end{array}
\een
Subtracting \eqref{eq:disc} from \eqref{eq:exten disc} and using \eqref{def-proj},
we obtain the discrete formulation \vspace{1mm}
\begin{center}
	\fbox{\parbox{0.975\textwidth}{
			\begin{subequations}\label{eq:err0}
				\begin{align}
					&\frac{1}{\tau}\Aprod[\Omega_\eta^n]{\Lambda^k \uuline{\Thetabf_h^n},\ul{\Bv_h}}
					+ \mathscr{A}_h^n(\ul{\thetabf_h^n},\ul{\Bv_h})+ \mathscr{B}_{0}^n(\ul{\Bv_h},\ul{\rho_h^n})
					=\Ce^n_1(\ul{\Bv_h}),\; \forall\,\ul{\Bv_h}\in {\boldsymbol{\Cv}}^n_h\label{eq:err u}\\
					&\mathscr{B}_{0}^n(\ul{\thetabf_h^n},\ul{q_h})-\mathscr{J}_2^n(\ul{\rho_h^n},\ul{q_h})
					- \mathscr{C}_h^n(\uuline{\Thetabf_h^n},\ul{\rho_h^n};\ul{q_h})
					= \Ce_2^n(\ul{q_h}), \;\forall\,\ul{q_h}\in\Cq^n_h, \label{eq:err p}
				\end{align}
			\end{subequations}
			where
			\begin{align}
				&\Ce^n_1(\ul{\Bv_h})=-\frac{1}{\tau}\Aprod[\Omega_\eta^n]{\Lambda^k \uuline{\Theta}^n,\ul{\Bv_h}}
				+\Aprod[\Omega_\eta^n]{\ul{\hat{\Bf}^n}-\ul{\Bf}^n,\ul{\Bv_h}}
				+\Cf^n(\ul{\Bv_h}), \label{eq:Ce1}\\
				&\Ce_2^n(\ul{q_h}) = (\Cb_0^n\ul{\tilde\Bu^n})(\ul{q_h})-\gamma_1\nu_2
				h^2\Aprod[\Omega_\eta^n]{\ul{\hat{\Bf}^n}-\ul{\Bf}^n,\nu^{-1}\nabla \ul{q_h}}
				+\mathscr{C}_h^n(\uuline{\Thetabf^n},\ul{\rho^n};\ul{q_h}).\label{eq:Ce2}
	\end{align}}}
\end{center}
\vspace{2mm}

Clearly $\Ce_1^n$ provides a functional on $\boldsymbol{\Cv}^n_h$ and $\Ce_2^n$ provides a functional on $\Cq^n_h$.
Their norms are defined as
\begin{align}\label{norm-E}
	\N{\Ce_1^n}_{1,(\Cv_h^n)'} = \sup_{\ul{\Bv_h}\in\Cv_h^n}
	\frac{\Ce_1^n(\ul{\Bv_h})}{\big\|\nu^{\frac12} \ul{\Bv_h}\big\|_{0,\Omega_\eta^n}
		+\tN{\ul{\Bv_h}}_{\boldsymbol{\Cv}}},\quad
	\N{\Ce_2^n}_{1,(\Cq_h^n)'}=\sup_{\ul{q_h}\in\Cq_h^n}
	\frac{\Ce_2^n(\ul{q_h})}{\tN{\ul{q_h}}_{1,\Cq}}.
\end{align}

\begin{lemma}\label{lem:E1}
	Assume $k\ge 2$, $\eta = O(\tau^{\max(k/3,1)})$, $\gamma_0 h \le 1$, and $h=O(\tau)$. Then
	\begin{align}
		\N{\Ce_1^n}_{1,(\Cv_h^n)'} \lesssim \;&\tau^k\sum_{i=1,2}
		\nu_i^{-\frac12}\Big(\tau\dN{\tilde \Bf_i^n}_{\BH^1(\Omega)}
		+\sum_{j=0}^k\dN{\tilde \Bu_i^{n-j}}_{\BH^{k+1}(\Omega)}\Big) \notag\\
		&+\tau^{k-\frac12}\sum_{i=1,2} \nu_i^{-\frac12}
		\N{\tilde\Bu_i}_{\BH^{k+1}(\Omega\times(t_{n-k},t_n))}
		+\tau^k (\wt{M}_k^n)^{\frac12}.
		\label{eq:norm Ce1}
	\end{align}
\end{lemma}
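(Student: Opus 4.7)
The plan is to split $\Ce_1^n$ into three pieces and bound each against the hybrid quantity $\big\|\nu^{\frac12}\ul{\Bv_h}\big\|_{0,\Omega^n_\eta}+\tN{\ul{\Bv_h}}_{\boldsymbol{\Cv}}$ that appears in the denominator of $\N{\cdot}_{1,(\Cv_h^n)'}$: (i) the Stokes-projection residual carried through the BDF-$k$ stencil $\tau^{-1}\Aprod[\Omega^n_\eta]{\Lambda^k\uuline{\Thetabf^n},\ul{\Bv_h}}$; (ii) the forcing/truncation mismatch $\Aprod[\Omega^n_\eta]{\ul{\hat\Bf}^n-\ul{\Bf}^n,\ul{\Bv_h}}$, which decomposes as the BDF-$k$ local truncation $\ul{\BR^n}$ plus the extension discrepancy $\ul{\tilde\Bf^n}-\ul{\Bf}^n$; and (iii) the transmission inconsistency $\Cf^n(\ul{\Bv_h})$ stemming from $\Gamma^n_\eta\neq \Gamma(t_n)$.

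For (i), I would invoke the projection bound $\|\ul{\thetabf^{n-j}}\|_{0,\Omega^{n-j}_\eta}\lesssim h^{k+1}(\wt M_k^{n-j})^{\frac12}$ from \eqref{ieq:theta-rho-n}, pull back along the flow via \eqref{ieq:vhXn} with $\mu=0$ (the factor $(1+C\tau)^k$ is harmless), sum against $\{\lambda_j^k\}$, and apply Cauchy--Schwarz with $\|\ul{\Bv_h}\|_{0,\Omega^n_\eta}\le \nu_2^{-\frac12}\|\nu^{\frac12}\ul{\Bv_h}\|_{0,\Omega^n_\eta}$; the scaling $\tau^{-1}h^{k+1}\le \tau^k$ (using $h=O(\tau)$) delivers the $\tau^k(\wt M_k^n)^{\frac12}$ contribution. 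For (ii), setting $g_i(s):=\tilde\Bu_i(s,\BX(s;t_n,\Bx))$, the standard BDF-$k$ local truncation estimate $\|\tau^{-1}\sum_j\lambda_j^k g_i(t_{n-j})-g_i'(t_n)\|_{L^2(\Omega)}\lesssim \tau^{k-\frac12}\|g_i^{(k+1)}\|_{L^2(t_{n-k},t_n;L^2(\Omega))}$, combined with a chain-rule expansion of $g_i^{(k+1)}$ into $\partial_t^j\nabla^{k+1-j}\tilde\Bu_i$, produces the $\tau^{k-\frac12}\nu_i^{-\frac12}\|\tilde\Bu_i\|_{\BH^{k+1}(\Omega\times(t_{n-k},t_n))}$ term. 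The discrepancy between $\BX_\tau^{n,n-j}$ (used in $\ul{\tilde\bbU^n}$) and the exact flow $\BX^{n,n-j}$ is $O(\tau^{k+1})$ in $L^\infty$ by \eqref{ieq:Xmn}; together with the 2D Sobolev embedding $H^{k+1}\hookrightarrow W^{1,\infty}$ (which forces $k\ge 2$) this contributes $\tau^k\sum_j\nu_i^{-\frac12}\|\tilde\Bu_i^{n-j}\|_{\BH^{k+1}}$. Finally, since $\tilde\Bf_i-\Bf_i$ vanishes on $\Omega_i(t_n)$ and the strip $\Omega^n_{\eta,i}\setminus\Omega_i(t_n)$ has width $O(\tau^{k+1})$ by \eqref{dist-n}, the one-sided Poincar\'e-type inequality $\|g\|_{L^2(\mathrm{strip})}\lesssim \mathrm{width}\cdot\|\nabla g\|_{L^2(\Omega)}$ for $g|_{\Gamma(t_n)}=0$ (iterated normal integration) yields $\|\tilde\Bf_i^n-\Bf_i^n\|_{L^2(\Omega^n_{\eta,i}\setminus\Omega_i(t_n))}\lesssim \tau^{k+1}\|\tilde\Bf_i^n\|_{\BH^1(\Omega)}$, matching the $\tau^{k+1}\nu_i^{-\frac12}\|\tilde\Bf_i^n\|_{\BH^1}$ term.

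For (iii), both interface jumps $\jump{\nu\partial_{\Bn}\ul{\tilde\Bu^n}-\ul{\tilde p^n}\Bn}$ and $\jump{\ul{\tilde\Bu^n}}$ vanish identically on $\Gamma(t_n)$. Combining $\mathrm{dist}(\Gamma^n_\eta,\Gamma(t_n))\lesssim \tau^{k+1}$ (cf.\ \eqref{dist-n}) with the first-order Taylor bound $|g(\Bx)|\le \mathrm{dist}(\Gamma^n_\eta,\Gamma(t_n))\|\nabla g\|_{L^\infty(\Omega)}$ for $g|_{\Gamma(t_n)}=0$ and $\Bx\in\Gamma^n_\eta$, and the embedding $H^{k+1}\hookrightarrow W^{1,\infty}$ in 2D, these jumps on $\Gamma^n_\eta$ are controlled by $\tau^{k+1}\sum_i(\nu_i\|\tilde\Bu_i^n\|_{H^{k+1}}+\|\tilde p_i^n\|_{H^k})$. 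Pair each jump with the corresponding face quantity of $\ul{\Bv_h}$ using \eqref{eq:trace2} on $\|\Avg{\nu\partial_{\Bn}\ul{\Bv_h}}\|_{\BL^2(\Gamma^n_\eta)}$ and the trace inequality \eqref{ieq:trace0} plus Poincar\'e on $\|\ul{\Bv_h}\|_{\BL^2(\Gamma^n_\eta)}$ (exploiting $\Bv_{h,2}|_\Sigma=0$ together with the jump penalty for $\Bv_{h,1}$); the Nitsche penalty term is absorbed via $\sqrt{\gamma_0\avg{\nu}/h}\,\|\jump{\ul{\Bv_h}}\|_{\BL^2(\Gamma^n_\eta)}=\mathscr{J}_0^n(\ul{\Bv_h},\ul{\Bv_h})^{\frac12}\le \tN{\ul{\Bv_h}}_{\boldsymbol{\Cv}}$. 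With $h=O(\tau)$ and $\avg{\nu}\le \nu_2\le 1$, the residual factor $\tau^{k+1}h^{-\frac12}\avg{\nu}^{\frac12}\lesssim \tau^{k+\frac12}\nu_2^{\frac12}\le \tau^k\nu_i^{-\frac12}$ feeds into the $\tau^k\nu_i^{-\frac12}\sum_j\|\tilde\Bu_i^{n-j}\|_{\BH^{k+1}}$ contribution.

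The hard part will be piece (iii): the Nitsche penalty $\gamma_0/h$ together with the cut-element trace loss $h^{-1/2}$ tighten the budget against the geometric smallness $\tau^{k+1}$, and the $\nu$-scalings must be tracked precisely so the resulting bound lands in the required form with no stray factors. The Sobolev embedding in 2D enforces $k\ge 2$, and the relation $h=O(\tau)$ is essential for converting the interface-tracking error and trace losses into the overall $\tau^k$ rate.
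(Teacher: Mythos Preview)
Your decomposition into (i)--(iii) and your handling of (i) and of the BDF truncation in (ii) match the paper. There are, however, two genuine gaps.

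\textbf{The flux jump in (iii) is only $O(\tau^k)$, not $O(\tau^{k+1})$.} The quantity $\jump{\nu\partial_{\Bn}\ul{\tilde\Bu^n}-\ul{\tilde p^n}\Bn}$ involves $\Bn$, the unit normal to the \emph{approximate} interface $\Gamma^n_\eta$. The transmission condition \eqref{eq:Oseen2} asserts $\jump{(\nu\nabla\ul{\Bu^n}-\ul{p^n}\bbI)}\cdot\hat\Bn=0$ on $\Gamma(t_n)$ with $\hat\Bn$ the normal to $\Gamma(t_n)$; it says nothing about $\Bn$. By Theorem~\ref{thm:chi}, $\|\chibf_n-\hat\chibf_n\|_{C^1([0,L])}\lesssim\eta^3+\tau^k\lesssim\tau^k$, so the two normals differ by $O(\tau^k)$, and this normal perturbation is the \emph{leading} contribution to the flux jump on $\Gamma^n_\eta$: one obtains only $\|\jump{\nu\partial_{\Bn}\ul{\tilde\Bu^n}-\ul{\tilde p^n}\Bn}\|_{L^2(\Gamma^n_\eta)}\lesssim\tau^k$ (cf.\ the paper's \eqref{eq:jump Gn}, where the decomposition isolates exactly this normal--direction term). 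Your Taylor argument, which treats the flux jump as a spatial function vanishing on $\Gamma(t_n)$, overlooks this. With only $\tau^k$ available, you must pair the flux jump against $\Avgg{\ul{\Bv_h}}$ via the \emph{continuous} trace theorem on $\Omega^n_{\eta,i}$ (no $h^{-1/2}$ loss), as the paper does, rather than the elementwise \eqref{ieq:trace0}; otherwise you would land at $\tau^{k-1/2}$.

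\textbf{The one-sided Poincar\'e in (ii) does not apply.} In the strip $\Omega^n_{\eta,i}\setminus\Omega_i(t_n)\subset\Omega_{3-i}(t_n)$, the forcing entering the discrete problem is $\Bf_{3-i}^n$, so the integrand is $\tilde\Bf_i^n-\Bf_{3-i}^n$, which has no reason to vanish on $\Gamma(t_n)$ (the forcings in the two phases are generically different there). The paper instead applies the half-power strip estimate \eqref{eq:v-X0 H1}, $\|v\|_{L^2(\Omega^n_{\eta,i}\oplus\Omega_i(t_n))}\lesssim\tau^{(k+1)/2}\|v\|_{H^1(\Omega)}$, to \emph{both} factors $\tilde\Bf_i^n-\Bf_{3-i}^n$ and $\Bv_{h,i}$, recovering the full $\tau^{k+1}$ from the product.
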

\begin{proof}
	Changing variables of integrations and using \eqref{ieq:Jtau} and \eqref{ieq:stokes-L2} yield
	\begin{align}
		\frac{1}{\tau}\dN{\Lambda^k\uuline{\Thetabf^n}}_{0,\Omega_{\eta}^n}
		\lesssim \frac{1}{\tau}\sum_{j=0}^k\dN{(\Det \bbJ_\tau^{n-j,n})^{\frac12}
			\ul{\thetabf^{n-j}}}_{0,\Omega_\eta^{n-j}}
		\lesssim
		\tau^{k} 
		\sum_{j=0}^k\sum_{i=1}^2\dN{\tilde{\Bu}_i^{n-j}}_{k+1,\Omega}.\label{eq:theta pullback}
	\end{align}
	From \eqref{ieq:Jtau} and the error estimates in \eqref{ieq:Xmn}, we have
	\begin{align*}	
		\N{\BX^{n,n-j}_\tau-\BX^{m,n}}_{\BL^{\infty}(\Omega)} \lesssim \tau^{k+2}.
	\end{align*}
	Applying Taylor's formula to the right-hand side of the following equality
	\ben
	\BR_i^n = \frac{1}{\tau}\Lambda^k \tilde \bbU_i^n -\frac{\D\tilde\Bu_i}{\D t}
	+\frac{1}{\tau}\sum\limits_{j=0}^k \lambda_j^k
	\big(\tilde \Bu_i^{n-j}\circ\BX_\tau^{n,n-j} -\tilde \Bu_i^{n-j}\circ\BX^{n,n-j}\big),
	\een
	we have
	\begin{align}
		\NLtwov[\Omega_{\eta,i}^n]{\BR_i^n} \lesssim
		&\bigg\|\sum_{l=1}^k \int_{t_{n-l}}^{t_n} \frac{(t_n-\xi)^k}{\tau}
		\frac{\D^{k+1}\tilde\Bu_i}{\D t^{k+1}}\D\xi\bigg\|_{0,\Omega_{\eta,i}}
		+\tau^{k+1}\sum_{j=0}^k \SN{\tilde \Bu_i^{n-j}}_{\BH^1(\Omega)}\notag \\
		\lesssim\;& \tau^{k-\frac12}\dN{\tilde\Bu_i}_{\BH^{k+1}(\Omega\times(t_{n-k},t_n))}
		+\tau^{k+1} \sum_{j=0}^k\dN{\tilde \Bu_i^{n-j}}_{\BH^1(\Omega)}.
		\label{Ri}
	\end{align}
	Write $\ul{\tilde \Bf^n}=(\tilde \Bf_1^n,\tilde \Bf_2^n)$, 
	and note that $\tilde\Bf_i^n = \Bf_i^n$ in $\Omega_i(t_n)$.
	Using Lemma~\ref{lem:uX} and norm equivalence, we deduce that
	\begin{align*}
		\Aprod[\Omega_\eta^n]{\ul{\tilde \Bf^n}-\ul{\Bf^n},\ul{\Bv_h}}
		=&\,\sum_{i=1,2}(\tilde\Bf_i^n - \Bf_{3-i}^n,\Bv_{h,i})_{\Omega_{\eta,i}^n\backslash\Omega_i(t_n)}\\
		\lesssim &\, \tau^{k+1}\sum_{i=1,2}
		\N{\nu_i^{-\frac 12}\tilde \Bf_i^n}_{\BH^1(\Omega)}
		\TN{\ul{\Bv_h}}_{\Cv}.
	\end{align*}
	Using $\hat{\Bf}_i^n = \tilde\Bf_i^n +\BR_i^n$ and the triangular inequality, we obtain
	\begin{align}
		&\Aprod[\Omega_\eta^n]{\ul{\hat{\Bf}^n}-\ul{\Bf^n},\ul{\Bv_h}}
		\lesssim\,\bigg\{\tau^{k+1}\sum_{i=1,2}\nu_i^{-\frac 12}\Big(\dN{\tilde \Bf_i^n}_{\BH^1(\Omega)}
		+\sum_{j=0}^k\dN{\tilde \Bu_i^{n-j}}_{\BH^1(\Omega)}\Big)\notag \\
		&\; +\tau^{k-\frac{1}{2}}\sum_{i=1,2}\nu_i^{-\frac12}
		\N{\tilde\Bu_i}_{\BH^{k+1}(\Omega\times(t_{n-k},t_n))}\bigg\}
		\Big(\dN{\nu^{\frac 12}\ul{\Bv_h}}_{0,\Omega_\eta^n}
		+\tN{\ul{\Bv_h}}_{\boldsymbol{\Cv}}\Big). \label{eq:hat f-f}
	\end{align}
	Let $\Bn = (-\chi_{n,2}',\chi_{n,1}')/\SN{\chibf_n'}$ and $\hat{\Bn} = (-\hat\chi_{n,2}',\hat\chi_{n,1}')/\SN{\hat\chibf_n'}$ denote the unit normal vectors to the approximate interface $\Gamma^n_\eta$ and the exact interface $\Gamma(t_n)$, respectively.
	By Theorem~\ref{thm:chi}, we have
	\ben
	\N{\Bn\circ\chibf_{n}'-\hat{\Bn}\circ\hat{\chibf}_n'
		\SN{\hat\chibf_n'}/\SN{\chibf_n'}}_{\BL^\infty(0,L)} \lesssim \tau^k.
	\een
	Since $\jump{(\nu \nabla \ul{\Bu^n}-\ul{p^n}\bbI)}\cdot \hat{\Bn}=\textbf{0}$ on $\Gamma(t_n)$
	and $\N{\hat{\chibf}_{n}-\chibf_{n}}_{\BL^\infty([0,L])}\lesssim \tau^{k+1}$,
	by \eqref{chi-err} and the trace theorem, we obtain
	\begin{align}
		&\dN{\jump{\nu \partial_{\Bn}\ul{\tilde\Bu^n} - \ul{\tilde p^n} \Bn}}^2_{\Ltwo[\Gamma_\eta^n]}
		=\int_0^L \Big|\jump{(\nu \nabla \ul{\tilde \Bu^n} - \ul{\tilde p^n} \bbI)
			\circ \chibf_{n}}\cdot(\Bn\circ \chibf_n)\Big|^2\SN{\chibf_{n}'}\notag \\
		\lesssim& \int_0^L 
		\Big|\jump{(\nu \nabla \ul{\tilde \Bu^n}
			-\ul{\tilde p^n} \bbI)\circ \chibf_{n}}\cdot(\Bn\circ \chibf_{n})\notag\\
		&\hspace{5mm}	-\jump{(\nu \nabla \ul{\tilde \Bu^n} -\ul{\tilde p^n} \bbI)\circ \chibf_{n}}\cdot
		(\hat{\Bn}\circ \hat{\chibf}_{n})\SN{\hat\chibf_n'}/\SN{\chibf_n'}
		\Big|^2\SN{\chibf_{n}'}\notag\\
		&+\int_0^L\Big|\jump{(\nu \nabla \ul{\tilde \Bu^n} - \ul{\tilde p^n}\bbI)\circ\chibf_{n}}\cdot
		(\hat{\Bn}\circ \hat{\chibf}_n)\notag \\
		&\hspace{5mm}-\jump{(\nu \nabla \ul{\tilde \Bu^n} -\ul{\tilde p^n} \bbI)\circ \hat{\chibf}_{n}}\cdot (\hat{\Bn}\circ\hat{\chibf}_{n})\Big|^2\SN{\hat\chibf_{n}'}^2/\SN{\chibf_n'} \notag \\
		\lesssim \;&\tau^{2k} \sum_{i=1}^2 (\nu_i\N{\tilde{\Bu}_i^n}_{\BH^3(\Omega)}^2
		+\N{\tilde{p}_i^n}_{\BH^2(\Omega)}^2).\label{eq:jump Gn}
	\end{align}
	Using the trace theorem again, we have
	\begin{align*}
		&\NLtwo[\Gamma_\eta^n]{\Avgg{\ul{\Bv_h}}}^2 \lesssim
		\NHonev[\Omega_{\eta,1}^n]{\kappa_2 \Bv_{h,1}}^2
		+\NHonev[\Omega_{\eta,2}^n]{\kappa_1 \Bv_{h,2}}^2
		\lesssim \sum_{i=1}^2\nu_i
		\NHonev[\Omega_{\eta,i}^n]{\Bv_{h,i}}^2.
	\end{align*}
	Similarly, since $\jump{\ul \Bu^n\circ \hat{\chibf}_n}=0$ on $\Gamma(t_n)$, we deduce from Theorem~\ref{thm:chi} that
	\begin{equation}\label{eq: GammaEta u}
		\NLtwo[\Gamma_\eta^n]{\jump{\ul{\tilde \Bu^n}}}^2
		\lesssim \int_0^L |\jump{\ul{\tilde \Bu^n} \circ \chibf_n}
		-\jump{\ul{\tilde \Bu^n} \circ \hat{\chibf}_n}|^2\SN{\chibf_n'}
		\lesssim \tau^{2k+2} \sum_{i=1}^2 \N{\tilde \Bu_i^n}_{\BH^2(\Omega)}^2.
	\end{equation}
	Combining \eqref{eq:jump Gn} and \eqref{eq: GammaEta u} and using the assumption that $\gamma_0 h\le 1$, we get
	\begin{equation}\label{eq:F Gamma}
		\SN{\Cf^n(\ul{\Bv_h})} \lesssim \tau ^k  (\wt M_k^n)^{\frac 12}
		\big(\big\|\nu^{\frac12}\ul{\Bv_h}\big\|_{0,\Omega^n_\eta}
		+ \tN{\ul{\Bv_h}}_{\boldsymbol{\Cv}}\big) .
	\end{equation}
	The proof is finished by inserting \eqref{eq:theta pullback}, \eqref{eq:hat f-f}, and \eqref{eq:F Gamma} into \eqref{eq:Ce1} and using the Cauchy-Schwarz inequality.
\end{proof}

\begin{lemma}\label{lem:E2}
	Let the assumptions in Lemma~\ref{lem:E1} be satisfied and assume $\tau\le\nu_2$. Then
	\begin{align}
		\N{\Ce_2^n}_{1,(\Cq_h^n)'}
		\lesssim \;&  \gamma_1\nu_2 \tau^{k+2}\sum_{i=1,2}
		\nu^{-\frac 12}_i\Big(\sum_{j=0}^k\dN{\tilde\Bu_i^{n-j}}_{\BH^{k+1}(\Omega))}
		+\dN{\tilde\Bf_i^n}_{\BH^1(\Omega)}\notag \\
		&+\tau^{-\frac 12}\dN{\tilde \Bu_i}_{\BH^{k+1}(\Omega\times(t_{n-k},t_n))} \Big)
		+\gamma_1\nu_2 \tau^{k+1}(\wt M_k^n)^{\frac 12}\notag \\
		&+\tau^{k+1}\sum_{i=1,2}\nu^{\frac 12}_i \N{\tilde \Bu_i^n}_{\BH^2(\Omega)}.
		\label{eq:norm Ce2}
	\end{align}
\end{lemma}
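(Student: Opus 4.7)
The plan is to split $\Ce_2^n(\ul{q_h})$ into the three summands that appear in its definition~\eqref{eq:Ce2} — namely, $(\Cb_0^n\ul{\tilde\Bu^n})(\ul{q_h}) = \mathscr{B}^n_0(\ul{\tilde\Bu^n},\ul{q_h})$, the data mismatch $-\gamma_1\nu_2 h^2\Aprod[\Omega_\eta^n]{\ul{\hat\Bf^n}-\ul{\Bf^n},\nu^{-1}\nabla\ul{q_h}}$, and the projection residual $\mathscr{C}_h^n(\uuline{\Thetabf^n},\ul{\rho^n};\ul{q_h})$ — and to estimate each against $\tN{\ul{q_h}}_{1,\Cq}$. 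The leading $\tau^{k+1}\sum_i\nu_i^{1/2}\N{\tilde\Bu_i^n}_{\BH^2(\Omega)}$-term will come entirely from $\mathscr{B}^n_0(\ul{\tilde\Bu^n},\ul{q_h})$; the $\gamma_1\nu_2\tau^{k+2}$-terms from the data-mismatch piece; and the $\gamma_1\nu_2\tau^{k+1}(\wt M_k^n)^{1/2}$-term from $\mathscr{C}_h^n$.

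For the first and most delicate summand, I would write $\mathscr{B}^n_0(\ul{\tilde\Bu^n},\ul{q_h}) = -\Aprod[\Omega_\eta^n]{\Div\ul{\tilde\Bu^n},\ul{q_h}} + \int_{\Gamma_\eta^n}\bigl(\jump{\ul{\tilde\Bu^n}}\cdot\Bn\bigr)\Avg{\ul{q_h}}$. Since $\Div\tilde\Bu_i=0$ on $\Omega_i(t_n)$, the volume integral is supported on the thin strip $\Omega_{\eta,i}^n\setminus\Omega_i(t_n)$; I would then apply \eqref{eq:v-X0 H1} of Lemma~\ref{lem:uX} \emph{twice} — once to $\Div\tilde\Bu_i\in H^1(\Omega)$ and once to the Scott--Zhang extension of $q_{h,i}$ guaranteed by \eqref{ieq:vh-ext} — so that two factors of $\tau^{(k+1)/2}$ multiply to yield $\tau^{k+1}\nu_i^{1/2}\N{\tilde\Bu_i^n}_{\BH^2(\Omega)}\tN{\ul{q_h}}_{1,\Cq}$, where $\nu_i^{1/2}$ arises from $\tN{\ul{q_h}}_{1,\Cq}\gtrsim\nu_i^{-1/2}\N{q_{h,i}}_{H^1(\Omega^n_{h,i})}$. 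For the interface integral, inequality \eqref{eq: GammaEta u} (which uses $\jump{\ul\Bu^n}=0$ on $\Gamma(t_n)$ and $\N{\chibf_n-\hat\chibf_n}_{\BL^\infty}\lesssim\tau^{k+1}$) gives $\N{\jump{\ul{\tilde\Bu^n}}}_{\BL^2(\Gamma_\eta^n)}\lesssim\tau^{k+1}\sum_i\N{\tilde\Bu_i^n}_{\BH^2(\Omega)}$, which I pair with $\N{\Avg{\ul{q_h}}}_{L^2(\Gamma_\eta^n)}\lesssim\nu_2^{1/2}\tN{\ul{q_h}}_{1,\Cq}$ via a standard trace estimate on $\Omega^n_{h,i}$ together with the identity $\kappa_i\nu_i^{1/2}\le\nu_2^{1/2}$.

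For the second summand I apply Cauchy--Schwarz exactly as in \eqref{eq:hat f-f}, using $\N{\nu^{-1/2}\nabla\ul{q_h}}_{0,\Omega_\eta^n}\le\tN{\ul{q_h}}_{1,\Cq}$ and $h=O(\tau)$ so that the $\gamma_1\nu_2 h^2$ prefactor produces the $\gamma_1\nu_2\tau^{k+2}$-terms involving $\tilde\Bf$ and the time history of $\tilde\Bu$ (the $\tau^{-1/2}$ inside the displayed bound accounting for the $\tau^{k-1/2}$ factor from $\BR_i^n$). For the residual $\mathscr{C}_h^n(\uuline{\Thetabf^n},\ul{\rho^n};\ul{q_h})$, I apply Cauchy--Schwarz to each term and use: the pull-back identity behind \eqref{eq:theta pullback} to bound $\tau^{-1}\dN{\Lambda^k\uuline{\Thetabf^n}}_{0,\Omega^n_\eta}$; Theorem~\ref{thm:stokes-H1} applied to $(\ul{\hat\Bu_h^n},\ul{\hat p_h^n})$ to bound $h\dN{\nu^{1/2}\Delta_h\ul{\thetabf^n}}_{0,\Omega^n_\eta}+h\N{\nu^{-1/2}\nabla\ul{\rho^n}}_{0,\Omega^n_\eta}\lesssim h^k(\wt M_k^n)^{1/2}$; and $h=O(\tau)$, which together yield the $\gamma_1\nu_2\tau^{k+1}(\wt M_k^n)^{1/2}$-contribution. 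The single obstacle worth highlighting is the first piece of $\mathscr{B}^n_0(\ul{\tilde\Bu^n},\ul{q_h})$: a naive Cauchy--Schwarz on the thin strip gives only $\tau^{(k+1)/2}$, and one must exploit the strip smallness on \emph{both} factors — requiring the extension trick \eqref{ieq:vh-ext} for $q_{h,i}$ — in order to recover the full $\tau^{k+1}$.
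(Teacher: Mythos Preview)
Your proposal is correct and follows essentially the same route as the paper's proof. The paper decomposes $\Ce_2^n(\ul{q_h})$ into the same three pieces and estimates them in the same way: the volume part of $\mathscr{B}^n_0(\ul{\tilde\Bu^n},\ul{q_h})$ is reduced to the thin strip $\Omega^n_{\eta,i}\setminus\Omega_i(t_n)$ and bounded using Lemma~\ref{lem:uX} on \emph{both} $\Div\tilde\Bu_i\in H^1(\Omega)$ and the extended $q_{h,i}$ (your ``apply the strip estimate twice'' is exactly the point, and the paper's bound \eqref{eq:Bou} relies on this although its citation of \eqref{eq:v-X3} appears to be a typo for \eqref{eq:v-X0 H1}); the interface part uses \eqref{eq: GammaEta u}; the data--mismatch piece is handled as in \eqref{eq:hat f-f qh} (the pressure analogue of \eqref{eq:hat f-f}, where the strip estimate \eqref{eq:v-X5} is also applied to $\nabla q_{h,i}$); and the $\mathscr{C}_h^n$ piece uses Cauchy--Schwarz together with the Stokes--projection error bounds of Theorems~\ref{thm:stokes-H1}--\ref{thm:stokes-L2}, invoking the assumption $\tau\le\nu_2$ to absorb the extra $\nu_2^{-1/2}$ that arises from weighting $\Lambda^k\uuline{\Thetabf^n}$ by $\nu^{-1/2}$.
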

\begin{proof}
	Since $\tilde \Bu_i^n$ may not be divergence-free in $\Omega_{\eta,i}^n\backslash \Omega_i(t_n)$, by \eqref{eq:v-X3}, \eqref{eq: GammaEta u}, and the trace theorem, we have
	\begin{align}\label{eq:Bou}
		(\Cb_0^n\ul\Bu^n)(q_h) =\,& \sum_{i=1,2} (\Div \tilde \Bu^n_i,q_{h,i})_{\Omega_{\eta,i}^n\backslash\Omega_i(t_n)}
		+\int_{\Gamma_\eta^n} \jump{\ul{\tilde\Bu^n}}\cdot \Bn\Avg{\ul{q_h}} \notag \\
		\lesssim\,&  \tau^{k+1}\sum_{i=1,2}
		\dN{\tilde \Bu_i^n}_{\BH^2(\Omega)}\N{\ul{q_h}}_{1,\Omega_\eta^n}.
	\end{align}
	Using Lemma~\ref{lem:uX}, \eqref{Ri}, and arguments similar to \eqref{eq:hat f-f}, we obtain
	\begin{align}
		&\Aprod[\Omega_\eta^n]{\ul{\hat{\Bf}^n} - \ul{\Bf^n},\nu^{-1}\nabla\ul{q_h}}
		\lesssim \sum_{i=1}^2
		\nu_i^{-1}\big( \tau^{k+\frac 12}\dN{\tilde\Bf_i^n}_{1,\Omega}
		+\N{\BR^n_i}_{0,\Omega_{\eta,i}^n}\big)
		\dN{\nabla q_{h,i}}_{0,\Omega_{h,i}^n}\notag \\
		\lesssim\,& \tau^k \sum_{i=1}^2\nu_i^{-1}\Big(\tau^{\frac 12}
		\dN{\tilde\Bf_i^n}_{\BH^1(\Omega)}
		+\tau^{-\frac 12}\N{\tilde\Bu_i}_{\BH^{k+1}(\Omega\times(t_{n-k},t_n))}\Big)
		\NLtwov[\Omega^n_{h,i}]{\nabla q_{h,i}} \notag\\
		&+\tau^{k+1} \sum_{j=0}^k\sum_{i=1}^2\nu_i^{-1}
		\dN{\tilde\Bu_i^{n-j}}_{\BH^1(\Omega)}
		\NLtwov[\Omega^n_{h,i}]{\nabla q_{h,i}}. \label{eq:hat f-f qh}
	\end{align}

	Then using Theorems~\ref{thm:stokes-H1} and \ref{thm:stokes-L2}, we arrive at
	\begin{align}
		&\mathscr{C}_h^n(\uuline{\Thetabf^n},\ul{\rho}^n;\ul{q_h^n})\notag \\
		\lesssim&\, \gamma_1\nu_2 \tau^{k+1}\Big(
		\sum_{j=0}^k \sum_{i=1}^2\nu^{-\frac 12}_i \tau\dN{\tilde\Bu_i^{n-j}}_{k+1,\Omega}
		+(\wt M_k^n)^{\frac 12} \Big)
		\dN{\nu^{-\frac 12}\nabla \ul{q_h}}_{0,\Omega_\eta^n}. \label{eq:Ch theta}
	\end{align}
	The proof is finished by inserting \eqref{eq:Bou}--\eqref{eq:Ch theta} into \eqref{eq:Ce2}.
\end{proof}


\begin{lemma}\label{lem:stab of rho}
	Assume that the penalty parameter $\gamma_0$ in $\mathscr{J}_0^n$ is large enough.
	Then
	\begin{align*}
		\tN{\ul{\rho_h^n}}_{\Cq}^2\lesssim\,& \mathscr{J}_p^n(\ul{\rho_h^n},\ul{\rho_h^n})
		+ (\nu_2 \tau^2)^{-1}\dN{\Lambda^k \uuline{\Thetabf_h^n}}_{0,\Omega_\eta^n}^2
		+\tN{\ul{\thetabf_h^n}}_{\boldsymbol{\Cv}}^2+\N{\Ce_1^{n}}_{1,(\Cv_h^n)'}^2.
	\end{align*}
\end{lemma}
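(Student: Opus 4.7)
The plan mirrors the strategy used in Lemma~\ref{lem:phL2}: I invoke the inf-sup property to turn the $\tN{\cdot}_\Cq$-norm of $\ul{\rho_h^n}$ into a bound involving $\mathscr{B}_0^n$, and then use the error momentum equation \eqref{eq:err u} to control $\mathscr{B}_0^n$ by the residual $\Ce_1^n$ plus consistency contributions.

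\textbf{Step 1 (inf-sup test function).} By Lemma~\ref{lem:infsup}, applied to $\ul{q_h}=\ul{\rho_h^n}$, there exists $\Vv_h\in\BV_{h,0}(k,\Omega)$ such that $\ul{\Bv_h}\equiv(\Vv_h|_{\Omega^n_{h,1}},\Vv_h|_{\Omega^n_{h,2}})\in\boldsymbol{\Cv}^n_h$ satisfies
\begin{align*}
\tN{\ul{\Bv_h}}_{\boldsymbol{\Cv}}^2\lesssim\tN{\ul{\rho_h^n}}_\Cq^2
\lesssim\Aprod[\Omega^n_\eta]{\Div\ul{\Bv_h},\ul{\rho_h^n}}+\mathscr{J}_p^n(\ul{\rho_h^n},\ul{\rho_h^n}).
\end{align*}
Crucially, because $\Vv_h$ is a single $H^1$-function on $\Omega$, we have $\jump{\ul{\Bv_h}}=\ul{0}$ across $\Gamma^n_\eta$, hence
$\mathscr{B}_0^n(\ul{\Bv_h},\ul{\rho_h^n})=-\Aprod[\Omega^n_\eta]{\Div\ul{\Bv_h},\ul{\rho_h^n}}$.

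\textbf{Step 2 (use the error equation).} Testing \eqref{eq:err u} with this $\ul{\Bv_h}$ yields
\begin{align*}
\Aprod[\Omega^n_\eta]{\Div\ul{\Bv_h},\ul{\rho_h^n}}
=-\Ce_1^n(\ul{\Bv_h})+\tau^{-1}\Aprod[\Omega_\eta^n]{\Lambda^k\uuline{\Thetabf_h^n},\ul{\Bv_h}}
+\mathscr{A}_h^n(\ul{\thetabf_h^n},\ul{\Bv_h}).
\end{align*}

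\textbf{Step 3 (term-by-term bounds).} I control each term on the right, using throughout the norm-equivalence relations \eqref{norm-equi0}, the Poincar\'e inequality on $\Vv_h\in\BH^1_0(\Omega)$ giving $\N{\ul{\Bv_h}}_{0,\Omega^n_\eta}\lesssim\nu_2^{-1/2}\tN{\ul{\Bv_h}}_{\boldsymbol{\Cv}}$, the continuity of $\mathscr{A}_h^n$ from Lemma~\ref{lem:Ah} (which requires $\gamma_0$ large enough, hence the assumption), and the definition \eqref{norm-E} of $\N{\Ce_1^n}_{1,(\Cv_h^n)'}$. Concretely,
\begin{align*}
|\Ce_1^n(\ul{\Bv_h})|&\lesssim\N{\Ce_1^n}_{1,(\Cv_h^n)'}\tN{\ul{\Bv_h}}_{\boldsymbol{\Cv}},\\
\tau^{-1}|\Aprod[\Omega_\eta^n]{\Lambda^k\uuline{\Thetabf_h^n},\ul{\Bv_h}}|
&\lesssim(\nu_2\tau^2)^{-1/2}\dN{\Lambda^k\uuline{\Thetabf_h^n}}_{0,\Omega_\eta^n}\tN{\ul{\Bv_h}}_{\boldsymbol{\Cv}},\\
|\mathscr{A}_h^n(\ul{\thetabf_h^n},\ul{\Bv_h})|&\lesssim\tN{\ul{\thetabf_h^n}}_{\boldsymbol{\Cv}}\tN{\ul{\Bv_h}}_{\boldsymbol{\Cv}}.
\end{align*}

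\textbf{Step 4 (absorption).} Using $\tN{\ul{\Bv_h}}_{\boldsymbol{\Cv}}\lesssim\tN{\ul{\rho_h^n}}_\Cq$ from Step~1 and applying Young's inequality to each bilinear pairing, the factor $\tN{\ul{\rho_h^n}}_\Cq$ is absorbed into the left-hand side, yielding the asserted inequality.

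The main technical point is the Poincar\'e step used to estimate $\Aprod[\Omega_\eta^n]{\Lambda^k\uuline{\Thetabf_h^n},\ul{\Bv_h}}$, which is what produces the $(\nu_2\tau^2)^{-1}$ weight in the second term on the right; everything else is a routine Cauchy-Schwarz/Young manipulation once the inf-sup test function and the error equation have been combined. No new ingredient beyond Lemmas~\ref{lem:infsup} and \ref{lem:Ah} is required.
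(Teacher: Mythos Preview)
Your proof is correct and follows exactly the route the paper intends: the paper's own proof reads ``In view of \eqref{eq:err u}, the proof is parallel to that of Lemma~\ref{lem:phL2},'' and your Steps~1--4 reproduce precisely that parallel. Your explicit observation that $\jump{\ul{\Bv_h}}=0$ (since the inf-sup witness comes from a single $\Vv_h\in\BV_{h,0}(k,\Omega)$) in fact slightly streamlines the argument compared to the template in Lemma~\ref{lem:phL2}, where the boundary term $\int_{\Gamma^n_\eta}\jump{\ul{\Bv_h}}\cdot\Bn\,\Avg{\ul{p^n_h}}$ is kept and absorbed via the $\gamma_0^{-1/2}$ factor.
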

\begin{proof}
	In view of \eqref{eq:err u},
	the proof is parallel to to that of Lemma~\ref{lem:phL2}. We omit the details.
\end{proof}
\vspace{1mm}

\begin{lemma}\label{lem:stokes-L2-theta}
	Let Assumption~\ref{ass-2}, Assumption~\ref{ass-3}
	and the assumptions in Lemma~\ref{lem:E1} be satisfied.
	Then the modified Stokes projection
	$\big(\ul{\hat{\Thetabf}_h^{n-1,n}},\ul{\hat{\rho}_h^{n-1,n}}\big)
	=\Cs^n\big(\ul{\Thetabf_h^{n-1,n}},\ul{0},0\big)$ satisfies
	\begin{align}
		&\tN{\big(\ul{\hat{\Thetabf}_h^{n-1,n}},\ul{\hat{\rho}_h^{n-1,n}}\big)}_{\boldsymbol{\Cv},\Cq}
		\lesssim\,	\tN{\ul{\thetabf_h^{n-1}}}_{\boldsymbol{\Cv}},\label{eq:err-proj0}\\
		&\dN{\ul{\Thetabf_h^{n-1,n}} - \ul{\hat{\Thetabf}_h^{n-1,n}}}_{0,\Omega_\eta^n}
		\lesssim \, \gamma_1\nu_2 h\Big\{\dN{\nu^{-\frac12}\Lambda^k\uuline{\Thetabf_h^{n-1}}}_{0,\Omega^{n-1}_\eta}\notag \\
		&\hspace{40mm}+h\big\|\nu^{-\frac12}\nabla\ul{\rho_h^{n-1}}\big\|_{0,\Omega^n_\eta}\Big\}
		+\N{\Ce_2^{n-1}}_{1,(\Cq_h^{n-1})'}	\notag\\
		&\hspace{40mm}+h\Big\{\tN{\ul{\thetabf_h^{n-1}}}_{\boldsymbol{\Cv}}
		+\mathscr{J}_p^{n-1}(\ul{\rho_h^{n-1}},\ul{\rho^{n-1}_h})^{\frac12}\Big\}.\label{eq:err-proj1}
	\end{align}
\end{lemma}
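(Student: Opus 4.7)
\textbf{Proof proposal for Lemma~\ref{lem:stokes-L2-theta}.} The plan is to mimic the proof of Corollary~\ref{cor:stokes-L2-1} verbatim, with the pair $(\ul{\Thetabf_h^{n-1,n}},\ul{0})$ playing the role of $(\ul{\BU_h^{n-1,n}},\ul{0})$. The conceptual difference is that, while $\ul{\Bu_h^{n-1}}$ satisfies the discrete equation \eqref{eq:disc}, here $\ul{\thetabf_h^{n-1}}$ satisfies the error equations \eqref{eq:err0} at time step $n-1$; consequently the right-hand data $\ul{\Bf}^{n-1},\ul{\Lambda^k\bbU^{n-1}_h},\ul{\rho_h^{n-1}},\mathscr{J}_p^{n-1}$ appearing in Corollary~\ref{cor:stokes-L2-1} get replaced by the residual-like quantities $\Lambda^k\uuline{\Thetabf_h^{n-1}}$, $\mathscr{J}_p^{n-1}(\ul{\rho_h^{n-1}},\cdot)$, $\mathscr{C}_h^{n-1}$, and $\Ce_2^{n-1}$.

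\emph{Step 1 (stability).} Apply Theorem~\ref{thm:stokes-proj} to the defining system of $\big(\ul{\hat{\Thetabf}_h^{n-1,n}},\ul{\hat{\rho}_h^{n-1,n}}\big)=\Cs^n(\ul{\Thetabf_h^{n-1,n}},\ul{0},0)$. Since the second and third arguments vanish, it yields $\tN{(\ul{\hat\Thetabf_h^{n-1,n}},\ul{\hat\rho_h^{n-1,n}})}_{\boldsymbol{\Cv},\Cq}\lesssim \tN{\ul{\Thetabf_h^{n-1,n}}}_{\boldsymbol{\Cv}}$, and the pull-back estimates of Remark~\ref{rem:Un}, applied to the finite-element function $\ul{\thetabf_h^{n-1}}$, turn the right-hand side into $\tN{\ul{\thetabf_h^{n-1}}}_{\boldsymbol{\Cv}}$, giving \eqref{eq:err-proj0}.

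\emph{Step 2 (reduction to estimating $\Cb_0^n\ul{\Thetabf_h^{n-1,n}}$).} Theorem~\ref{thm:stokes-L2} gives
\[
\dN{\ul{\Thetabf_h^{n-1,n}}-\ul{\hat\Thetabf_h^{n-1,n}}}_{0,\Omega_\eta^n}
\lesssim h\tN{(\ul{\Thetabf_h^{n-1,n}}-\ul{\hat\Thetabf_h^{n-1,n}},\ul{\hat\rho_h^{n-1,n}})}_{\boldsymbol{\Cv},\Cq}
+\N{\Cb_0^n\ul{\Thetabf_h^{n-1,n}}}_{1,(\Cq_h^n)'}.
\]
Step 1, Remark~\ref{rem:Un}, and the triangle inequality control the first term on the right by $h\tN{\ul{\thetabf_h^{n-1}}}_{\boldsymbol{\Cv}}$, so the remaining job is to bound the dual norm. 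For any $\ul{q_h}\in\Cq_h^n$, define $\ul{\hat q_h}$ as in Lemma~\ref{lem:BUhqh} (by extending each component, composing with $\BX_\tau^{n-1,n}$, and then applying $\pi_{1,\Omega}$). Since $\ul{\thetabf_h^{n-1}}\in\boldsymbol{\Cv}_h^{n-1}$ by construction of the modified Stokes projection, Lemma~\ref{lem:BUhqh} applies verbatim with $\ul{\Bu_h^{n-1}}$ replaced by $\ul{\thetabf_h^{n-1}}$ and yields
\[
\big|\mathscr{B}_0^n(\ul{\Thetabf_h^{n-1,n}},\ul{q_h})-\mathscr{B}_0^{n-1}(\ul{\thetabf_h^{n-1}},\ul{\hat q_h})\big|
\lesssim h\tN{\ul{\thetabf_h^{n-1}}}_{\boldsymbol{\Cv}}\tN{\ul{q_h}}_{1,\Cq}.
\]

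\emph{Step 3 (using the error equation at $t_{n-1}$).} Substitute $\ul{\hat q_h}$ into \eqref{eq:err p} at step $n-1$ to obtain
\[
\mathscr{B}_0^{n-1}(\ul{\thetabf_h^{n-1}},\ul{\hat q_h})
=\mathscr{J}_p^{n-1}(\ul{\rho_h^{n-1}},\ul{\hat q_h})
+\mathscr{C}_h^{n-1}(\uuline{\Thetabf_h^{n-1}},\ul{\rho_h^{n-1}};\ul{\hat q_h})
+\Ce_2^{n-1}(\ul{\hat q_h}).
\]
The first summand is controlled by $h\,\mathscr{J}_p^{n-1}(\ul{\rho_h^{n-1}},\ul{\rho_h^{n-1}})^{1/2}\tN{\ul{q_h}}_{1,\Cq}$ using Cauchy--Schwarz and \eqref{int-errK}, exactly as in Corollary~\ref{cor:stokes-L2-1}. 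The second summand is estimated by expanding the definition of $\mathscr{C}_h^{n-1}$, pulling $h$ out through inverse estimates, and using Lemma~\ref{lem:ch-up}, giving the bound $\gamma_1\nu_2 h\big(\dN{\nu^{-1/2}\Lambda^k\uuline{\Thetabf_h^{n-1}}}_{0,\Omega_\eta^{n-1}}+h\dN{\nu^{-1/2}\nabla\ul{\rho_h^{n-1}}}_{0,\Omega_\eta^{n-1}}+\tN{\ul{\thetabf_h^{n-1}}}_{\boldsymbol{\Cv}}\big)\tN{\ul{q_h}}_{1,\Cq}$. The third summand is handled by the duality definition \eqref{norm-E} together with the Scott--Zhang stability $\tN{\ul{\hat q_h}}_{1,\Cq}\lesssim \tN{\ul{q_h}}_{1,\Cq}$. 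Collecting these with the Lemma~\ref{lem:BUhqh} residual from Step~2 gives the stated bound on $\N{\Cb_0^n\ul{\Thetabf_h^{n-1,n}}}_{1,(\Cq_h^n)'}$, hence \eqref{eq:err-proj1}.

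\emph{Main obstacle.} The analysis in Steps 1--2 is a mechanical transcription of the corresponding steps in Corollary~\ref{cor:stokes-L2-1}. The real work is in Step~3, in carefully verifying the transport stability $\tN{\ul{\hat q_h}}_{1,\Cq}\lesssim \tN{\ul{q_h}}_{1,\Cq}$ (which requires controlling the Scott--Zhang interpolation of the pulled-back function $\ul{q_h}\circ\BX_\tau^{n-1,n}$ on $\Omega_{h,i}^{n-1}$ in terms of its $H^1$-norm on $\Omega_{h,i}^n$, using Lemma~\ref{lem:uhX} and \eqref{ieq:Jtau}) and in bookkeeping the $\mathscr{C}_h^{n-1}$ contribution so that the sub-optimal $h$-weights in \eqref{eq:err-proj1} are produced with the exact pre-factors $\gamma_1\nu_2 h$ and $\gamma_1\nu_2 h^2$ shown on the right-hand side.
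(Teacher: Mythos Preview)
Your proposal is correct and follows essentially the same route as the paper's own proof: stability via Theorem~\ref{thm:stokes-proj}/Remark~\ref{rem:Un}, reduction to $\N{\Cb_0^n\ul{\Thetabf_h^{n-1,n}}}_{1,(\Cq_h^n)'}$ via Theorem~\ref{thm:stokes-L2} and Lemma~\ref{lem:BUhqh}, and then invoking the error equation \eqref{eq:err p} at step $n-1$ to split $\mathscr{B}_0^{n-1}(\ul{\thetabf_h^{n-1}},\ul{\hat q_h})$ into the $\mathscr{J}_p^{n-1}$, $\mathscr{C}_h^{n-1}$, and $\Ce_2^{n-1}$ pieces, each bounded exactly as you describe. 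The only cosmetic difference is that the paper cites Theorem~\ref{thm:stokes-H1} rather than Theorem~\ref{thm:stokes-proj} for the stability step, and handles the $\Ce_2^{n-1}(\ul{\hat q_h})$ term by pointing to \eqref{eq:norm Ce2} and \eqref{int-errK} rather than explicitly invoking the transport stability $\tN{\ul{\hat q_h}}_{1,\Cq}\lesssim \tN{\ul{q_h}}_{1,\Cq}$ that you single out; your formulation is in fact the cleaner one.
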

\begin{proof}
	The stability of $\ul{\hat{\Thetabf}_h^{n-1,n}}$ and $\ul{\hat{\rho}_h^{n-1,n}}$ comes directly from Theorem~\ref{thm:stokes-H1} and Remark~\ref{rem:Un}.\vspace{1mm}
	Following the proof in Corollary~\ref{cor:stokes-L2-1}, we have
	\begin{equation*}
		\dN{\ul{\Thetabf_h^{n-1,n}}-\ul{\hat{\Thetabf}_h^{n-1,n}}}_{0,\Omega_\eta^n}
		\lesssim h\tN{\thetabf_h^{n-1}}_{\boldsymbol{\Cv}}
		+\sup_{\ul{q_h}\in \Cq_h^n} \frac{\mathscr{B}_0^{n-1}(\ul{\thetabf_h^{n-1}},\ul{\hat{q}_h})}{\tN{\ul{q_h}}_{1,\Cq}},
	\end{equation*}
	where we have extended $q_{h,1},q_{h,2}$ to $\Omega$ by the convention in \eqref{ieq:vh-ext}, and
	$\hat q_{h,i} =\pi_{1,\Omega}\big(q_{h,i}\circ \BX_\tau^{n-1,n}\big)\in V_h(1,\Ct_h)$ is the Scott-Zhang interpolation.

	From \eqref{eq:err p}, we have
	\begin{equation*}
		\mathscr{B}_0^n(\ul{\thetabf_h^{n-1}},\ul{\hat{q}_h}) = \Ce_2^{n-1}(\ul{\hat{q}_h}) +
		\mathscr{J}_p^{n-1}(\ul{\rho_h^{n-1}},\ul{\hat{q}_h})
		+\mathscr{C}_h^{n-1}(\uuline{\Thetabf_h^{n}},\ul{\rho_h^{n-1}},\ul{\hat{q}_h}).
	\end{equation*}
	By arguments similar to the proof of Corollary~\ref{cor:stokes-L2-1}, we have
	\begin{align*}
		\big|\mathscr{J}_p^{n-1}(\ul{\rho_h^{n-1}},\ul{\hat{q}_h})\big|
		\lesssim\,& h \mathscr{J}_p^{n-1}(\ul{\rho_h^{n-1}},\ul{\rho_h^{n-1}})^{\frac12}
		\tN{\ul{q_h}}_{1,\Cq}, \\
		\big|\mathscr{C}_h^{n-1}(\uuline{\Thetabf_h^{n}},\ul{\rho_h^{n-1}},\ul{\hat{q}_h})\big|
		\lesssim\,& \gamma_1\nu_2 h \Big(\tN{\ul{\thetabf_h^{n-1}}}_{\boldsymbol{\Cv}}
		+\dN{\nu^{-\frac 12}\Lambda^k \uuline{\Thetabf_h^{n-1}}}_{0,\Omega_\eta^{n-1}}\\
		&+h\dN{\nu^{-\frac 12}\nabla \ul{\rho_h^{n-1}}}_{0,\Omega_\eta^n}\Big)
		\tN{\ul{q_h}}_{1,\Cq}.
	\end{align*}
	Together with \eqref{eq:norm Ce2} and \eqref{int-errK}, they yields the desired result
	\begin{align*}
		\sup_{\ul{q_h}\in \Cq_h^n} \frac{\mathscr{B}_0^{n-1}(\ul{\thetabf_h^{n-1}},\ul{\hat{q}_h})}{\tN{\ul {q_h}}_{1,\Cq}}
		\lesssim\,& \gamma_1\nu_2h\Big(\tN{\ul{\thetabf_h^{n-1}}}_{\boldsymbol{\Cv}}
		+\dN{\nu^{-\frac 12}\Lambda^k \uuline{\Thetabf_h^{n-1}}}_{0,\Omega_\eta^{n-1}}\\
		&+h\dN{\nu^{-\frac 12}\nabla \ul{\rho_h^{n-1}}}_{0,\Omega_\eta^n}\Big) 
		+h \mathscr{J}_p^{n-1}(\ul{\rho_h^{n-1}},\ul{\rho_h^{n-1}})^{\frac12}\\
		&+\N{\Ce_2^{n-1}}_{1,(\Cq_h^{n-1})'}.
	\end{align*}
	The proof is finished.
\end{proof}

\begin{theorem}\label{thm:rhoh-err}
	Let the assumptions in Theorem~\ref{thm:uh-stab} and Lemma~\ref{lem:E1} be satisfied.
	Assume $\max(\tau,h)\le \gamma_1\nu_2$ and that the penalty parameter $\gamma_0$ in $\mathscr{J}^n_0$ satisfies
	$\gamma_0^{-1}+\gamma_0h \ll 1$.
	there holds for any $k\leq m\leq N$,
	\begin{align*}
		&\dN{\ul{\thetabf_h^m}}_{0,\Omega_\eta^m}^2
		+\sum_{n=k}^m\tau\Big(\tN{\ul{\thetabf_h^n}}^2_{\boldsymbol{\Cv}}
		+\gamma_1\nu_2 h^2\tN{\ul{\rho_h^n}}_{1,\Cq}^2
		+\mathscr{J}_p^n(\ul{\rho_h^n},\ul{\rho_h^n})\Big) \notag\\
		\lesssim& \sum_{n=0}^{k-1}\tau \Big(\tN{\ul{\thetabf_h^n}}^2_{\boldsymbol{\Cv}}
		+\tN{\ul{\rho_h^n}}^2_{\Cq}\Big)
		+ \sum_{n=k}^m\Big(\tau\N{\Ce_1^n}_{1,(\Cv_h^n)'}^2
		+ (\tau \gamma_1\nu_2)^{-1} \dN{\Ce_2^n}_{1,(\Cq_h^n)'}^2\Big).
	\end{align*}
\end{theorem}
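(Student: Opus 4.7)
The plan is to mimic the stability argument for Theorem~\ref{thm:uh-stab} applied to the error system \eqref{eq:err u}--\eqref{eq:err p}, treating $(\ul{\thetabf_h^n},\ul{\rho_h^n})$ as ``discrete solutions'' driven by the consistency residuals $\Ce_1^n$ and $\Ce_2^n$. First I would rewrite \eqref{eq:err u} in the BDF-telescope form
\[
\Aprod[\Omega_\eta^n]{\Lambda^k \uuline{\Thetabf_h^n},\ul{\Bv_h}}
+\tau\mathscr{K}_0\big((\ul{\thetabf_h^n},\ul{\rho_h^n}),(\ul{\Bv_h},\ul{q_h})\big)
+\tau\mathscr{C}_h^n(\uuline{\Thetabf_h^n},\ul{\rho_h^n};\ul{q_h})
=\tau\Ce_1^n(\ul{\Bv_h})-\tau\Ce_2^n(\ul{q_h}),
\]
and then, exactly as in the proof of Theorem~\ref{thm:uh-stab}, pick the test pair
$(\ul{\Bv_h},\ul{q_h}) = \big(2\ul{\thetabf_h^n}-\ul{\hat{\Thetabf}_h^{n-1,n}},\,\ul{\rho_h^n}\big)$, where $\big(\ul{\hat{\Thetabf}_h^{n-1,n}},\ul{\hat{\rho}_h^{n-1,n}}\big)=\Cs^n(\ul{\Thetabf_h^{n-1,n}},\ul{0},0)$ is the modified Stokes projection supplied by Lemma~\ref{lem:stokes-L2-theta}. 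This choice activates Lemma~\ref{lem:tel} (with $k=4$ taken as the representative case, $a_0=a_1=1$) so that the BDF term produces the telescoping identity
\[
\sum_{l=1}^{k+1}\dN{\Psibf_l^k(\uuline{\Thetabf_h^n})}_{0,\Omega_\eta^n}^2
-\sum_{l=1}^{k}\dN{\Phibf_l^k(\uuline{\Thetabf_h^n})}_{0,\Omega_\eta^n}^2,
\]
which telescopes cleanly after a shift from $\Phibf_l^k$ at time $n$ to $\Psibf_l^k$ at time $n-1$ using \eqref{ieq:vhXn} (at the price of lower-order cross terms absorbed by the left-hand side).

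Next I would follow the chain of estimates \eqref{est:Rnh1}--\eqref{est:K0} to produce a lower bound for $\mathscr{K}_0^n$ in the form
\[
\mathscr{K}_0^n\ge (1.3-\epsilon)\tN{\ul{\thetabf_h^n}}_{\boldsymbol{\Cv}}^2
-(0.5+\epsilon)\tN{\ul{\thetabf_h^{n-1}}}_{\boldsymbol{\Cv}}^2
+c\,\gamma_1\nu_2 h^2\tN{\ul{\rho_h^n}}_{1,\Cq}^2
+c\,\mathscr{J}_p^n(\ul{\rho_h^n},\ul{\rho_h^n}),
\]
with lower-order slop absorbed using Lemma~\ref{lem:stab of rho} in place of Lemma~\ref{lem:phL2}. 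The coupling term $\tau\mathscr{C}_h^n(\uuline{\Thetabf_h^n},\ul{\rho_h^n};\ul{\rho_h^n})$ on the left is treated by Cauchy--Schwarz and the smallness assumption $h,\tau\le\gamma_1\nu_2$, yielding a contribution that is absorbed into the pressure gradient term once $\gamma_1$ and $\varepsilon$ are taken small enough. The key new ingredient relative to Theorem~\ref{thm:uh-stab} is that the right-hand side now involves the residual functionals: by \eqref{norm-E}
\[
\tau|\Ce_1^n(\ul{\Bv_h})|\le \tau\N{\Ce_1^n}_{1,(\Cv_h^n)'}\big(\dN{\nu^{\frac12}\ul{\Bv_h}}_{0,\Omega^n_\eta}+\tN{\ul{\Bv_h}}_{\boldsymbol{\Cv}}\big),\qquad
\tau|\Ce_2^n(\ul{\rho_h^n})|\le \tau\N{\Ce_2^n}_{1,(\Cq_h^n)'}\tN{\ul{\rho_h^n}}_{1,\Cq},
\]
and Young's inequality with weights $\tau$ and $(\tau\gamma_1\nu_2)^{-1}$ delivers exactly the two residual squares appearing in the target bound, while $\tN{\ul{\Bv_h}}_{\boldsymbol{\Cv}}$ is controlled through \eqref{eq:err-proj0}.

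Finally, summing the resulting one-step inequality over $n=k,\ldots,m$, using that $\Psibf_1^k(\uuline{\Thetabf_h^m})$ is a nonzero multiple of $\ul{\thetabf_h^m}$ (e.g.\ $0.06\ul{\thetabf_h^m}$ for $k=4$), and choosing $\varepsilon$, $\gamma_1$, $h$, $\gamma_0^{-1}$, and $\gamma_0 h$ small enough yields
\[
\dN{\ul{\thetabf_h^m}}_{0,\Omega_\eta^m}^2+\sum_{n=k}^m\tau\bigl(\tN{\ul{\thetabf_h^n}}_{\boldsymbol{\Cv}}^2+\gamma_1\nu_2 h^2\tN{\ul{\rho_h^n}}_{1,\Cq}^2+\mathscr{J}_p^n(\ul{\rho_h^n},\ul{\rho_h^n})\bigr)
\]
is bounded by a constant times the initial data term, the residual terms on the right-hand side of the target inequality, plus $C\sum_{n=k}^m\tau\dN{\ul{\thetabf_h^n}}_{0,\Omega^n_\eta}^2$ which is closed by the discrete Gronwall inequality.

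The main obstacle I anticipate is the careful absorption of the cross term $\tau\mathscr{C}_h^n(\uuline{\Thetabf_h^n},\ul{\rho_h^n};\ul{\rho_h^n})$ together with the inconsistency bound $h\big\|\nu^{-1/2}\nabla\ul{\rho_h^{n-1}}\big\|_{0,\Omega^{n-1}_\eta}$ coming from \eqref{eq:err-proj1}: the former must be dominated by $\gamma_1\nu_2 h^2\tN{\ul{\rho_h^n}}_{1,\Cq}^2$, the latter produces a time-lagged gradient of the pressure error that can only be hidden in the left-hand sum after the telescoping in $n$, so the bookkeeping of constants in the $\gamma_1$, $h$ and $\gamma_0$ smallness conditions has to be done in the same order as in \eqref{est:anh}--\eqref{est:K0} of Theorem~\ref{thm:uh-stab}, with the extra requirement $\max(\tau,h)\le \gamma_1\nu_2$ exploited precisely to absorb these residual-projection contributions into the already-controlled terms on the left.
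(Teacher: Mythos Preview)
Your proposal is correct and follows the paper's own proof essentially step by step: the same test pair $(2\ul{\thetabf_h^n}-\ul{\hat{\Thetabf}_h^{n-1,n}},\,\cdot)$ built from the modified Stokes projection of Lemma~\ref{lem:stokes-L2-theta}, the BDF telescope of Lemma~\ref{lem:tel}, the $\mathscr{K}_0$ lower bound via the analogues of \eqref{est:Rnh1}--\eqref{est:K0}, Lemma~\ref{lem:stab of rho} in place of Lemma~\ref{lem:phL2}, the Young-inequality weights $\tau$ and $(\tau\gamma_1\nu_2)^{-1}$ on $\Ce_1^n,\Ce_2^n$, and the discrete Gronwall closure. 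One small clarification: the term $\mathscr{C}_h^n(\uuline{\Thetabf_h^n},\ul{\rho_h^n};\ul{\rho_h^n})$ is not merely a cross term to be absorbed---it is precisely the source of the positive contribution $\gamma_1\nu_2 h^2\|\nu^{-1/2}\nabla\ul{\rho_h^n}\|_{0,\Omega_\eta^n}^2$ (as in \eqref{est:Rnh1}), while its remaining pieces (the $\Lambda^k\uuline{\Thetabf_h^n}$ and $\Delta_h\ul{\thetabf_h^n}$ parts) are the ones bounded by Cauchy--Schwarz; once you fold $\mathscr{C}_h^n$ into the ``follow \eqref{est:Rnh1}--\eqref{est:K0}'' step rather than treating it separately afterwards, your outline coincides with the paper's argument.
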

\begin{proof}
	We only prove the theorem for $k=4$. The proofs for	other cases are similar.
	Let
	\ben
	\big(\ul{\hat{\Thetabf}_h^{n-1,n}},\ul{\hat{\rho}_h^{n-1,n}}\big)
	=\Cs^n\big(\ul{\Thetabf_h^{n-1,n}},\ul{0},0\big),
	\qquad \ul{\Bv_h} =2\ul{\thetabf_h^{n}}-\ul{\hat{\Thetabf}_h^{n-1,n}},\qquad
	\ul{q_h} = \ul{0}.
	\een
	From the telescope formula in Lemma~\ref{lem:tel}, we have
	\begin{align}\label{eq:err0-theta}
		\sum_{l=1}^5 \dN{\Psibf_l^4(\uuline{\Thetabf_h^n})}_{0,\Omega^n_{\eta}}^2 -
		\sum_{l=1}^4 \dN{\Phibf_l^4(\uuline{\Thetabf_h^n})}_{0,\Omega^n_{\eta}}^2
		+ \tau \mathscr{K}_0((\ul{\thetabf_h^n},\ul{\rho_h^n});(\ul{\Bv_h},\ul{0}))
		= \tau (B_1 +B_2),
	\end{align}
	where $B_1=\Ce_1^n(\ul{\Bv_h})$, $B_2^n=\tau^{-1}\Aprod[\Omega_\eta^n]
	{\Lambda^4 \uuline{\Thetabf_h^n},\ul{\hat{\Thetabf}_h^{n-1,n}}-\ul{\Thetabf_h^{n-1,n}}}$, and
	\begin{align*}
		\mathscr{K}_0((\ul{\thetabf_h^n},\ul{\rho_h^n});(\ul{\Bv_h},\ul{0}))
		=\,& 2\mathscr{A}_h^n(\ul{\thetabf_h^n},\ul{\thetabf_h^n})-a_h^n(\ul{\Thetabf_h^{n-1,n}},\ul{\thetabf_h^n})
		+2\mathscr{J}_p^n(\ul{\rho_h^n},\ul{\rho_h^n})\notag\\
		&+\mathscr{C}_h^n(\uuline{\Thetabf_h^n},\ul{\rho_h^n};2\ul{\rho_h^n}+\ul{\hat{\rho}_h^{n-1,n}})
		+\Ce_2^n(2\ul{\rho_h^n}+\ul{\hat{\rho}_h^{n-1,n}}).
	\end{align*}

	Using arguments similar to \eqref{est:Rnh1}-\eqref{eq:K00},
	for some $\varepsilon\in (0,1)$ to be specified later, we have
	\begin{align}
		\mathscr{K}_0((\ul{\thetabf_h^n},\ul{\rho_h^n});(\ul{\Bv_h},\ul{0})) \geq\,&
		(1.3-2\varepsilon-C\gamma_1\nu_2)\tN{\ul{\thetabf_h^n}}_{\boldsymbol{\Cv}}^2
		-(0.5+2\varepsilon)\tN{\ul{\thetabf_h^{n-1}}}_{\boldsymbol{\Cv}}^2\notag\\
		&	+2\mathscr{J}_p^n(\ul{\rho_h^n},\ul{\rho_h^n})
		+ \gamma_1\nu_2 h^2\dN{\nu^{-\frac 12}\nabla \ul{\rho_h^n}}^2_{0,\Omega_\eta^n}\notag \\
		&-C\dN{\Lambda^4\uuline{\Thetabf_h^n}}_{0,\Omega_\eta^n}^2
		+\Ce_2^n(2\ul{\rho_h^n}+\ul{\hat{\rho}_h^{n-1,n}}),\label{eq:err K0}	
	\end{align}
	where we have used the assumption that $h$ and $\gamma_0^{-1}$ are small enough such that
	\ben
	C(\gamma_0^{-1}+h\gamma_0)\leq \varepsilon^2\ll 1.
	\een
	From Lemma~\ref{lem:tel} and arguments similar to \eqref{stab-est1}, we have
	\begin{align}
		\dN{\Phibf_l^4 (\uuline{\Thetabf_h^n})}_{0,\Omega_{\eta}^n}^2
		\le\,& \dN{\Psibf_l^4 (\uuline{\Thetabf_h^{n-1}})}_{0,\Omega_{\eta}^{n-1}}^2\notag\\
		&+ C\tau \sum_{j=1}^l\Big\{\big\|\ul{\thetabf_h^{n-j}}\big\|_{0,\Omega^{n-j}_\eta}^2
		+h^4\nu_2^{-1}\tN{\ul{\thetabf_h^{n-j}}}^2_{\boldsymbol{\Cv}}\Big\}. 	\label{eq:err Thetabf back}
	\end{align}
	Inserting  \eqref{eq:err K0} and \eqref{eq:err Thetabf back} into \eqref{eq:err0-theta} and using Cauchy-Schwarz inequality, we obtain
	\begin{align}
		&\sum_{l=1}^4 \dN{\Psibf_l(\uuline{\Thetabf_h^{n}})}_{0,\Omega_\eta^n}^2
		+(1.3-2\varepsilon -C\gamma_1\nu_2) \tau \tN{\ul{\thetabf_h^n}}^2_{\boldsymbol{\Cv}}
		+   \gamma_1\nu_2  \tau h^2 \dN{\nu^{-\frac 12}\nabla \ul{\rho_h^n}}_{0,\Omega_\eta^n}^2\notag \\
		&\hspace{5mm}+ 2\tau\mathscr{J}_p^n(\ul{\rho_h^n},\ul{\rho_h^n})   \notag \\
		\leq\;& \sum_{l=1}^4 \dN{\Psibf_l(\uuline{\Thetabf_h^{n-1}})}_{0,\Omega_\eta^{n-1}}^2
		+C\tau \sum_{j=1}^4\Big\{\big\|\ul{\thetabf_h^{n-j}}\big\|_{0,\Omega^{n-j}_\eta}^2
		+ \nu_2^{-1}h^4\tN{\ul{\thetabf_h^{n-j}}}^2_{\boldsymbol{\Cv}}\Big\}\notag\\
		&+ (0.5 + 2\varepsilon)\tau \tN{\ul{\thetabf_h^{n-1}}}_{\boldsymbol{\Cv}}^2 
		+C(1+\varepsilon^{-1})\tau\dN{\Lambda^4 \uuline{\Thetabf_h^n}}_{0,\Omega_\eta^n}^2
		+C\tau \varepsilon^{-1}\N{\Ce_1^n}_{1,(\Cv_h^n)'}^2\notag \\
		&+\frac{C}{\tau \gamma_1\nu_2\varepsilon} \dN{\Ce_2^n}_{1,(\Cq_h^n)'}^2
		+\varepsilon \tau^{-1}\dN{\uline{\hat{\Thetabf}_h^{n-1,n}}
			-\uline{\Thetabf_h^{n-1,n}}}_{0,\Omega_\eta^n}^2 \notag \\
		&+ \tau \varepsilon \Big(\gamma_1\nu_2 h^2\tN{2\ul{\rho_h^n}+\ul{\hat{\rho}_h^{n-1,n}}}^2_{1,\Cq}
		+\dN{\nu^{\frac 12}\ul{\Bv_h}}_{0,\Omega_\eta^n}^2
		+ \tN{\ul{\Bv_h}}_{\boldsymbol{\Cv}}^2\Big).
		\label{err0}
	\end{align}
	
	Next we estimate the right-hand side of \eqref{err0}.
	From \eqref{ieq:vhXn} and \eqref{eq:temp3}, we have
	\begin{align}
		\tN{\Lambda^4 \uuline{\Thetabf_h^n}}_{0,\Omega_\eta^{n-j}}^2
		\lesssim\,& \sum_{j=0}^4 \Big(\dN{\ul{\thetabf_h^{n-j}}}_{0,\Omega_\eta^n}^2 +
		h^5\nu_2^{-1}\tN{\ul{\thetabf_h^{n-j}}}_{\boldsymbol{\Cv}}^2\Big),
		\label{Theta-est1}\\
		\dN{\nu^{\frac 12}\ul{\Thetabf_h^{n-1,n}}}_{0,\Omega_\eta^n}^2 \leq\,&
		(1+C\tau)\dN{\ul{\thetabf_h^{n-1}}}_{0,\Omega_\eta^{n-1}}^2
		+C\tau^7 \tN{\ul{\thetabf_h^{n-1}}}_{\boldsymbol{\Cv}}^2.
		\label{Theta-est2}
	\end{align}
	Moreover, using \eqref{eq:err-proj0}, the fact that $\nu\le 1$, and inverse estimates, we find that
	\begin{align}
		&\tN{2\ul{\rho_h^n}+\ul{\hat{\rho}_h^{n-1,n}}}_{1,\Cq}
		\leq 2\tN{\ul{\rho_h^n}}_{1,\Cq} + Ch^{-1}\tN{\ul{\thetabf_h^{n-1}}}_{\boldsymbol{\Cv}},
		\label{eq: rho 1} \\
		&\tN{\ul{\Bv_h}}_{\boldsymbol{\Cv}} \leq  2\tN{\ul{\thetabf_h^{n}}}_{\boldsymbol{\Cv}}
		+\tN{\ul{\hat{\Thetabf}_h^{n-1,n}}}_{\boldsymbol{\Cv}}
		\leq  2\tN{\ul{\thetabf_h^{n}}}_{\boldsymbol{\Cv}}
		+C\tN{\ul{\thetabf_h^{n-1}}}_{\boldsymbol{\Cv}}, \label{eq: vh v} \\
		&\dN{\nu^{\frac 12}\ul{\Bv_h}}_{0,\Omega_\eta^n}
		\leq 2\dN{\nu^{\frac 12}\ul{\thetabf_h^n}}_{0,\Omega_\eta^n}
		+\dN{\ul{\hat{\Thetabf}_h^{n-1,n}}-\ul{\Thetabf_h^{n-1,n}}}_{0,\Omega_{\eta}^n}
		+\dN{\nu^{\frac 12}\ul{\Thetabf_h^{n-1,n}}}_{0,\Omega_{\eta}^n}.
		\label{eq:vh L2}
	\end{align}
	Substitute all estimates above into \eqref{err0} and take the sum over
	$4\leq n\leq m$. By the assumption $h\le \gamma_1\nu_2$ and proper arrangements, we end up with
	\begin{align}
		&\dN{\Psibf_1^4 (\ul{\Thetabf_h^m})}_{0,\Omega_\eta^m}^2
		+\sum_{n=4}^m\tau\Big\{(0.8-C\varepsilon -C\gamma_1\nu_2) \tN{\ul{\thetabf_h^n}}^2_{\boldsymbol{\Cv}}\notag \\
		&+\gamma_1\nu_2 h^2\dN{\nu^{-\frac 12}\nabla \ul{\rho_h^n}}_{0,\Omega_\eta^n}^2
		+ 2\mathscr{J}_p^n(\ul{\rho_h^n},\ul{\rho_h^n})\Big\}\notag \\
		\leq\,& \sum_{n=0}^3\sum_{l=1}^4 \dN{\Psibf_l(\uuline{\Thetabf_h^{n}})}_{0,\Omega_\eta^n}^2
		+ C(1+\varepsilon^{-1})\sum_{n=0}^m\tau \big\|\ul{\thetabf_h^n}\big\|_{0,\Omega^n_\eta}^2\notag \\
		&+ 4\varepsilon \tau \gamma_1\nu_2 h^2 \TN{\ul{\rho_h^n}}_{1,\Cq}^2
		+\big(\varepsilon\tau^{-1}+C\varepsilon\tau\big)
		\dN{\uline{\hat{\Thetabf}_h^{n-1,n}}-\uline{\Thetabf_h^{n-1,n}}}_{0,\Omega_\eta^n}^2\notag \\
		&+ C \tau\varepsilon^{-1}\N{\Ce_1^n}_{1,(\Cv_h^n)'}^2
		+ C(\gamma_1\nu_2\varepsilon\tau)^{-1} \dN{\Ce_2^n}_{1,(\Cq_h^n)'}^2 . \label{err:finial}
	\end{align}
	Then combining \eqref{eq:err-proj1} and \eqref{Theta-est1} leads to
	\begin{align}
		\dN{\uline{\hat{\Thetabf}_h^{n-1,n}}-\uline{\Thetabf_h^{n-1,n}}}_{0,\Omega_\eta^n}^2
		\leq&\,  C\gamma_1^2 \nu_2h^2\sum_{j=1}^5\Big(\dN{\ul{\thetabf_h^{n-j}}}_{0,\Omega_\eta^{n-j}}^2
		+h^4\tN{\ul{\thetabf_h^{n-j}}}_{\boldsymbol{\Cv}}^2\Big)\notag\\
		&+h^2\mathscr{J}_p^{n-1}(\ul{\rho_h^{n-1}},\ul{\rho^{n-1}_h})
		+\N{\Ce_2^{n-1}}_{1,(\Cq_h^{n-1})'}^2
		\notag\\	
		&+h^2\tN{\ul{\thetabf_h^{n-1}}}^2_{\boldsymbol{\Cv}}
		+ C(\gamma_1\nu_2)^2h^4\dN{\nu^{-\frac 12}\nabla\ul{\rho_h^{n-1}}}^2_{0,\Omega_\eta^n} . 
		\label{Theta-est3}
	\end{align}
	Using the estimate of $\ul{\rho_h^n}$ in Lemma~\ref{lem:stab of rho}, we obtain
	\begin{align}
		&\dN{\nu^{-\frac 12}\nabla \ul{\rho_h^n}}_{0,\Omega_\eta^n}^2 \geq	
		\tN{\ul{\rho_h^n}}_{1,\Cq}^2-\tN{\ul{\rho_h^n}}_{\Cq}^2-Ch^{-2}
		\mathscr{J}_p^n(\ul{\rho_h^n},\ul{\rho_h^n}) \notag \\
		\geq&\, \tN{\ul{\rho_h^n}}_{1,\Cq}^2
		- Ch^{-2}\mathscr{J}_p^n(\ul{\rho_h^{n}},\ul{\rho_h^{n}})
		-C \tN{\ul{\thetabf_h^n}}_{\boldsymbol{\Cv}}^2
		-C\N{\Ce_1^{n}}_{1,(\Cv_h^n)'}^2
		\notag \\ 
		&-C\nu_2^{-1}h^{-2}
		\sum_{j=0}^4 \left( \dN{\ul{\thetabf_h^{n-j}}}_{0,\Omega_\eta^{n-j}}^2
		+h^{4}\nu_2^{-1} \tN{\ul{\thetabf_h^{n-j}}}_{\boldsymbol{\Cv}}^2\right). \label{eq:rho H1}
	\end{align}
	 Inserting \eqref{Theta-est3}  and \eqref{eq:rho H1} into \eqref{err:finial} yields
	\begin{align*}
		&\dN{\Psibf_1^4 (\ul{\Thetabf_h^m})}_{0,\Omega_\eta^m}^2
		+\sum_{n=4}^m\tau\Big\{(0.8-C_0\varepsilon -C_0\gamma_1\nu_2) \tN{\ul{\thetabf_h^n}}^2_{\boldsymbol{\Cv}}\\
		&+(2-C_0\varepsilon) \mathscr{J}_p^n(\ul{\rho_h^n},\ul{\rho_h^n}) 
		+ (1-C_0\varepsilon)\gamma_1\nu_2 h^2\dN{\nu^{-\frac 12}\nabla \ul{\rho_h^n}}_{0,\Omega_\eta^n}^2\Big\} \notag\\
		\leq\,& C_1(1+\frac{1}{\varepsilon})\sum_{n=0}^m\dN{\ul{\thetabf_h^{n}}}_{0,\Omega_\eta^n}^2
		+ \frac{C_1}{\varepsilon}\sum_{n=4}^m\Big(\tau\N{\Ce_1^n}_{1,(\Cv_h^n)'}^2
		+ (\tau \gamma_1\nu_2)^{-1} \dN{\Ce_2^n}_{1,(\Cq_h^n)'}^2\Big)\\
		&+C_1\sum_{n=0}^3\tau \Big(\tN{\ul{\thetabf_h^n}}^2_{\boldsymbol{\Cv}}
		+\tau \mathscr{J}_p^n(\ul{\rho_h^n},\ul{\rho_h^n})
		+h^4\dN{\nu^{-\frac 12}\nabla \ul{\rho_h^n}}_{0,\Omega_\eta^n}^2\Big).
	\end{align*}
	Choose $\varepsilon = 0.2/C_0$, $\gamma_1\nu_2 \le 0.2/C_0$, and $h \le 0.2/C_0$ and
	note that $\Psibf_1^4(\uuline{\Thetabf_h^m})=0.06\ul{\thetabf_h^m}$. The proof is finished upon using
	Gronwall's inequality and inverse estimates.
\end{proof}

\subsection{The main theorem}

Now we are ready to present the main theorem of this paper.
\textit{\begin{center}
		\fbox{\parbox{0.975\textwidth}{
				\begin{theorem}\label{thm:uh-err}
					Let the assumptions in Theorem~\ref{thm:rhoh-err} be satisfied and
					suppose that the pre-calculated initial solutions satisfy for 
					$j=0,1,\cdots,k-1$,
					\begin{equation}\label{init-err}
						\dN{\ul{\tilde\Bu^j}-\ul{\Bu_h^j}}^2_{0,\Omega_\eta^j}
						+\tau \tN{\ul{\tilde\Bu^j}-\ul{\Bu_h^j}}_{\boldsymbol{\Cv}}^2
						+ \tau^2\tN{\ul{\tilde p_j} -\ul{p_h^j}}^2_{1,\Cq}
						\le C_0\tau^{2k}.
					\end{equation}
					Then upon a hidden constant independent of $\eta,\tau,h$, and material parameters, such that for $ k\leq m\leq N$
					\begin{align*}
						\dN{\ul{\tilde\Bu^m}-\ul{\Bu_h^m}}_{0,\Omega^m_\eta}^2
						+\sum_{n=k}^{m}\tau \Big(\tN{\ul{\tilde\Bu^n}-\ul{\Bu_h^n}}_{\boldsymbol{\Cv}}^2
						+\gamma_1\nu_2h^2 \tN{\ul p^n-\ul{p_h^n}}^2_{1,\Cq}\Big)
						\lesssim C_N\tau^{2k},
					\end{align*}
					where
					\begin{align*}
						C_N:=\,&  C_0  + \sum_{n=1}^N\tau\wt M_k^n
						+ \nu_2^{-1}\sum_{n=1}^N\tau\sum_{i=1,2}\Big(h^2\dN{\tilde \Bf_i^n}^2_{\BH^2(\Omega)}
						+\nu_i\N{\tilde \Bu_i^n}_{\BH^2(\Omega)}^2\Big)\\
						&+ \sum_{i=1,2} \nu_i^{-1}\N{\tilde\Bu_i}^2_{\BH^{k+1}(\Omega\times(0,T))}.
					\end{align*}
		\end{theorem}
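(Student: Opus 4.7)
Following the splitting \eqref{unpn}, I would decompose
$\ul{\tilde\Bu^n}-\ul{\Bu_h^n} = \ul{\thetabf^n}+\ul{\thetabf_h^n}$ and $\ul{\tilde p^n}-\ul{p_h^n} = \ul{\rho^n}+\ul{\rho_h^n}$, and control the two parts separately. For the projection error, \eqref{ieq:theta-rho-n} and \eqref{ieq:rho-grad} already give, under $h=O(\tau)$, the pointwise-in-time bound $\dN{\ul{\thetabf^n}}_{0,\Omega^n_\eta}^2 + h^2\tN{\ul{\thetabf^n}}_{\boldsymbol{\Cv}}^2 + h^4\tN{\ul{\rho^n}}_{1,\Cq}^2 \lesssim h^{2k+2}\wt M_k^n \lesssim \tau^{2k+2}\wt M_k^n$, so that after weighting by $\tau$ and $\gamma_1\nu_2 h^2$ as appropriate and summing, these contributions are absorbed by the $\sum_n\tau\wt M_k^n$ term in $C_N$. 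For the remaining part $(\ul{\thetabf_h^n},\ul{\rho_h^n})$ I would invoke Theorem~\ref{thm:rhoh-err} directly, reducing the task to estimating its right-hand side.

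\textbf{Bounding the consistency functionals.} The core step is to prove $\sum_{n=k}^{N}\bigl(\tau\|\Ce_1^n\|_{1,(\Cv_h^n)'}^2+(\tau\gamma_1\nu_2)^{-1}\|\Ce_2^n\|_{1,(\Cq_h^n)'}^2\bigr)\lesssim \tau^{2k}C_N$. Squaring Lemma~\ref{lem:E1}, multiplying by $\tau$, and summing produces exactly the quantities $\sum_n\tau\wt M_k^n$ and $\sum_n\tau\nu_i^{-1}\|\tilde\Bu_i^{n-j}\|_{H^{k+1}}^2$ appearing in $C_N$; the Taylor-remainder term $\tau^{-1}\|\tilde\Bu_i\|_{H^{k+1}(\Omega\times(t_{n-k},t_n))}^2$ telescopes into $k\,\nu_i^{-1}\|\tilde\Bu_i\|_{H^{k+1}(\Omega\times(0,T))}^2$ because each time slab is counted at most $k$ times. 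Similarly, squaring Lemma~\ref{lem:E2}, dividing by $\tau\gamma_1\nu_2$ and summing yields three groups of terms: a group of the form $\gamma_1\nu_2\tau^{2k}\cdot(\text{regularity sum})$ which is already in $C_N$ since $\gamma_1=O(1)$; the term $(\gamma_1\nu_2)^{-1}\tau^{2k+1}\sum_n\nu_i\|\tilde\Bu_i^n\|_{H^2}^2$ that matches the $\nu_2^{-1}\sum\tau\nu_i\|\tilde\Bu_i^n\|_{H^2}^2$ entry in $C_N$; and a term of the shape $\gamma_1\nu_2 h^2\tau^{2k}\sum_n\tau\nu_i^{-1}\|\tilde\Bf_i^n\|_{H^1}^2$, which after absorbing $\gamma_1\nu_2=O(1)$ and using $h=O(\tau)$ fits into the $h^2\nu_2^{-1}\sum\tau\|\tilde\Bf_i^n\|^2$ part of $C_N$.

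\textbf{Initial data and assembly.} The initial contribution $\sum_{n=0}^{k-1}\tau(\tN{\ul{\thetabf_h^n}}_{\boldsymbol{\Cv}}^2+\tN{\ul{\rho_h^n}}_{\Cq}^2)$ on the right-hand side of Theorem~\ref{thm:rhoh-err} would be split via the triangle inequality as the user-supplied error $\tN{\ul{\tilde\Bu^n}-\ul{\Bu_h^n}}_{\boldsymbol{\Cv}}^2+\tN{\ul{\tilde p^n}-\ul{p_h^n}}_{\Cq}^2$ plus the projection error at $t_n$; by assumption \eqref{init-err} the former is $\lesssim C_0\tau^{2k-1}$, while the latter is $\lesssim h^{2k}\wt M_k^n\lesssim \tau^{2k}\wt M_k^n$ thanks to \eqref{ieq:theta-rho-n} and $h=O(\tau)$. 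Combining the projection part with the discrete part via $\dN{\ul{\tilde\Bu^m}-\ul{\Bu_h^m}}_{0,\Omega^m_\eta}^2\le 2(\dN{\ul{\thetabf^m}}_{0,\Omega^m_\eta}^2+\dN{\ul{\thetabf_h^m}}_{0,\Omega^m_\eta}^2)$ and analogous splittings for the $\tN{\cdot}_{\boldsymbol{\Cv}}$ and $\tN{\cdot}_{1,\Cq}$ norms, then summing over $n$, yields the stated conclusion.

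\textbf{Main obstacle.} The delicate point is the pressure in the weighted $H^1$-norm: $\tN{\ul{\rho^n}}_{1,\Cq}^2$ is only $O(h^{2k-2})$ pointwise in time, so the presence of the $\gamma_1\nu_2 h^2$ weight in the target inequality is precisely what recovers the advertised $\tau^{2k}$ rate; without this weighting the bound would only be $\tau^{2k-2}$. Equally subtle is the $(\tau\gamma_1\nu_2)^{-1}$ prefactor multiplying $\|\Ce_2^n\|_{1,(\Cq_h^n)'}^2$: extracting the factor $\gamma_1\nu_2$ sitting in front of the residual $\mathscr{R}_h^n$-type terms of Lemma~\ref{lem:E2} is what allows division by $\tau\gamma_1\nu_2$ without loss of order, and this interplay is what ultimately constrains the smallness condition $\max(\tau,h)\le\gamma_1\nu_2$ inherited from Theorem~\ref{thm:rhoh-err}. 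All other steps are bookkeeping once these scaling balances are verified.
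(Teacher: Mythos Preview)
Your proposal is correct and follows essentially the same approach as the paper's proof: split via \eqref{unpn}, bound the projection part by \eqref{ieq:theta-rho-n}--\eqref{ieq:rho-grad}, feed the discrete part through Theorem~\ref{thm:rhoh-err}, and estimate its right-hand side using Lemmas~\ref{lem:E1}--\ref{lem:E2} together with the initial-data assumption \eqref{init-err}. Your write-up is in fact more explicit than the paper's about how the squared bounds from Lemmas~\ref{lem:E1}--\ref{lem:E2} sum into the various pieces of $C_N$ and why the weight $\gamma_1\nu_2 h^2$ on the pressure is essential; the paper simply cites these lemmas and asserts the conclusion.
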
}}
\end{center}}
\vspace{1mm}

\begin{proof}
	From \eqref{unpn}--\eqref{ieq:theta-rho-n} and assumption \eqref{init-err}, there exists a constant $C>0$ independent of $\eta$, $\tau$, and $h$, such that
	for $0\leq j\leq k-1$,
	\begin{align*}
		\begin{cases}
			\dN{\ul{\thetabf^j_h}}_{0,\Omega_\eta^j}
			\le \dN{\ul{\tilde\Bu^j}-\ul{\Bu_h^j}}_{0,\Omega_\eta^j}
			+ \dN{\ul{\thetabf^j}}_{0,\Omega_\eta^j}
			\le C_0\tau^k + C\,(\wt M_k^j)^{\frac12}h^{k+1},
			\vspace{1mm} \\
			\tN{\ul{\thetabf^j_h}}_{\boldsymbol{\Cv}}
			\le \tN{\ul{\tilde\Bu^j}-\ul{\Bu_h^j}}_{\boldsymbol{\Cv}}
			+ \tN{\ul{\thetabf^j}}_{\boldsymbol{\Cv}}
			\le C_0\tau^{k-1/2} +C\,(\wt M_k^j)^{\frac12}h^k,
			\vspace{1mm} \\
			\tN{\ul{\rho^j_h}}_{1,\Cq}
			\le \tN{\ul{\tilde p^j}-\ul{p_h^j}}_{1,\Cq}
			+ \tN{\ul{\rho^j}}_{1,\Cq}
			\le C_0\tau^{k-1} +C\, (\wt M_k^j)^{\frac12}h^{k-1}.
		\end{cases}
	\end{align*}
	From Theorem~\ref{thm:rhoh-err} and Lemmas~\ref{lem:E1}--\ref{lem:E2},
	we conclude that
	\begin{align*}
		\dN{\ul{\thetabf_h^m}}_{0,\Omega_\eta^m}^2
		+\sum_{n=k}^m\tau\Big(\tN{\ul{\thetabf_h^n}}^2_{\boldsymbol{\Cv}}
		+\gamma_1 \nu_2 h^2\TN{\ul{\rho_h^n}}_{1,\Cq}^2
		+\mathscr{J}_p^n(\ul{\rho_h^n},\ul{\rho_h^n})\Big)
		\lesssim C_N\tau^{2k}.    
	\end{align*}
	Using \eqref{ieq:theta-rho-n}--\eqref{ieq:rho-grad}, we finish the proof.
\end{proof}



\section{Numerical experiment}
\label{sec:num}

Now we use a numerical experiment on severely deforming domain to verify the convergence orders of the finite element method for $k=3,4$. The segment size for interface-tracking is set by
$\eta =0.5\tau^{\max(1,k/3)}$ in Algorithm~\ref{alg:spline} for computing $\BX_\tau^{n-1,n}$.
In order to capture large deformations of the domain accurately, 
we apply the cubic MARS (Mapping and Adjusting Regular Semialgebraic sets) algorithm in real computations \cite{zha18}.
The algorithm is a slight modification of Algorithm~\ref{alg:spline} by creating new markers or removing old markers adaptively.

The whole domain is taken as $\Omega=(0,1)\times(0,1)$ and the time interval is $[0,T]$ with $T=1.5$.
The initial sub-domain $\Omega_1(0)$ is a disk of radius $0.15$ and centering at $(0.5,0.75)$.
The flow velocity which drives the interface $\Gamma(t)$ is taken as
\begin{equation*}
	\Bw(\Bx,t) =\cos(\pi t/3) \left(\sin^2 (\pi x_1) \sin (2\pi x_2),
	-\sin^2 (\pi x_2 )\sin (2\pi x_1)\right)^\top,
\end{equation*}
At the final time, $\Omega_{1}(T)$  is stretched into a snake-like domain (see Fig.~\ref{fig:Omegat}).

To test the high-order error estimates, we set the true solution by smooth velocity and smooth pressure in
each sub-domain
\begin{align*}
	&
	\begin{cases}
		\Bu_1(\Bx,t)=\cos t\big(\cos(\pi x_1) \sin(\pi x_2),\, -\sin (\pi x_1) \cos(\pi x_2)\big)^\top, \\
		p_1(\Bx,t) = \cos(0.5\pi x_1)\sin(0.5\pi x_2),
	\end{cases}
	\quad \Bx\in\Omega_1(t), \\
	&
	\begin{cases}
		\Bu_2(\Bx,t)= e^{x_1}\big(\sin(\pi x_2+\pi t),\, \pi^{-1}\cos(\pi x_2+\pi t)\big)^\top,\\
		p_2(\Bx,t) = \sin(0.5\pi x_1)\cos(0.5\pi x_2).\,
	\end{cases}
	\quad \Bx\in\Omega_2(t).
\end{align*}
The coefficients of viscosity are taken as $\nu_1 =1$ and $\nu_2=10^{-3}$.
The body forces in \eqref{eq:Oseen0} are defined as
\ben
\Bf_i(t) = \partial_t \Bu_i + (\Bw\cdot\nabla)\Bu_i -\nu_i \Delta \Bu_i + \nabla p_i
\quad \hbox{in}\;\; \Omega_i(t),\quad i=1,2.
\een
However, the jumps $\jump{\ul{\Bu}}$ and $\jump{\nu \partial_{\Bn} \ul{\Bu}-\ul{p}\Bn}$ do not match the interface conditions in \eqref{eq:Oseen2}. To remedy the situation, we define
\ben
\Bg_0^n = \Bu_1-\Bu_2, \quad
\Bg_1^n = \nu_1 \partial_\Bn(\Bu_1-\Bu_2) - (p_1-p_2) \Bn \quad \hbox{on}\;\;\Gamma^n_\eta,
\een
and replace the right-hand sides of \eqref{eq:disc-u} and \eqref{eq:disc-p}, respectively, with
\ben
\Aprod[\Omega^n_\eta]{\ul\Bf^n,\ul{\Bv_h}} +
\int_{\Gamma^n_\eta}\big(\Bg^n_1\cdot\Avgg{\ul{\Bv_h}}
-\Bg^n_0\cdot \avg{\nu\partial_\Bn\ul{\Bv_h}}\big),
\qquad \int_{\Gamma^n_\eta}\Bg^n_0\cdot\Bn \avg{\ul{q_h}}.
\een
The approximation errors are measured with the following quantities
\begin{align*}
	\begin{array}{ll}
		e_{u,0} = \dN{\ul{\Bu^N}-\ul{\Bu_h^{N}}}_{0,\Omega_\eta^N},
		&e_{u,1} = \Big(\sum\limits_{n=k}^N\tau\dN{\nu^{\frac12}\nabla(\ul{\Bu^n}
			-\ul{\Bu_h^n})}_{0,\Omega_\eta^n}^2\Big)^{\frac12},   \vspace{1mm} \\
		e_{p,0} = \Big(\sum\limits_{n=k}^N \tau\dN{\nu^{-\frac12}(\ul{p^n}
			-\ul{p_h^n})}_{0,\Omega_\eta^n}^2\Big)^{\frac12},
		&e_{p,1} =\Big(\sum\limits_{n=k}^N \tau \dN{\nu^{-\frac12}
			\nabla(\ul{p^n}-\ul{p_h^n})}_{0,\Omega_\eta^n}^2\Big)^{\frac12}.
	\end{array}
\end{align*}
To simplify the computation, we set pre-calculated initial values by the exact solution, namely, $\ul{\Bu^j_h}=\ul\Bu(t_j)$ for $0\le j\le k-1$. Throughout the section, we set $\gamma_0=10^3$, $\gamma_1=1$, and $h=\tau$.

Numerical results for $k=3,4$ are shown in Tables~\ref{tab:k=3 velocity}--\ref{tab:k=4 pressure}. They show that optimal convergence is obtained for both the third- and fourth-order methods. Moreover, Tables~\ref{tab:k=3 pressure} and \ref{tab:k=4 pressure} show that the discrete pressure is of the $k^{\rm th}$-order convergence under both the weighted $L^2$-norm and the weighted $H^1$-norm. Finally, Fig.~\ref{fig:Omegat} shows the shapes of tracked interfaces at $t_n=0.0$, $0.5$, $1.0$, and $1.5$, respectively. We find that the interface has a large deformation at $t_n=T$. Nevertheless, our numerical experiment shows that optimal convergence is obtained for such a challenging simulation.

\begin{table}[http!]
	\center
	\setlength{\tabcolsep}{3mm}
	\begin{tabular}{ l|cc|cc}
		\hline
		$h=\tau$     &$e_{u,0}$       &order   &$e_{u,1}$       &order      \\ \hline
		1/16   &6.478e$-$03  &---   &6.259e$-$03  &--- \\
		1/32   &8.120e$-$04  &3.00  &8.679e$-$04  &2.85  \\
		1/64   &9.941e$-$05  &3.03  &1.119e$-$04  &2.96  \\
		1/128  &1.228e$-$05  &3.02  &1.413e$-$05  &2.99  \\  \hline
	\end{tabular}
	\caption{Convergence orders of the discrete velocity for $k=3$.} \label{tab:k=3 velocity}
\end{table}
\begin{table}[http!]
	\center
	\setlength{\tabcolsep}{3mm}
	\begin{tabular}{ l|cc|cc}
		\hline
		$h=\tau$     &$e_{p,0}$       &order   &$e_{p,1}$       &order      \\ \hline
		1/16    &1.83e$-$01  &---   &1.47e$+$00  &--- \\
		1/32    &2.41e$-$02  &2.93  &1.97e$-$01  &2.90  \\
		1/64    &3.07e$-$03  &2.97  &2.51e$-$02  &2.97  \\
		1/128   &3.87e$-$04  &2.99  &3.18e$-$03  &2.98  \\  \hline
	\end{tabular}
	\caption{Convergence orders of the discrete pressure for $k=3$.} \label{tab:k=3 pressure}
\end{table}
\begin{table}[http!]
	\center
	\setlength{\tabcolsep}{3mm}
	\begin{tabular}{ l|cc|cc}
		\hline
		$h=\tau$     &$e_{u,0}$    &order   &$e_{u,1}$       &order    \\ \hline
		1/16   &1.46e$-$03  &---   &4.20e$-$03  &---\\
		1/32   &8.70e$-$05  &4.07  &2.71e$-$04  &3.95  \\
		1/64   &5.17e$-$06  &4.07  &1.67e$-$05  &4.02  \\
		1/128  &3.16e$-$07  &4.03  &1.03e$-$06  &4.02  \\  \hline
	\end{tabular}
	\caption{Convergence orders of the discrete velocity for $k=4$.} \label{tab:k=4 velocity}
\end{table}

\begin{table}[http!]
	\center
	\setlength{\tabcolsep}{3mm}
	\begin{tabular}{ l|cc|cc}
		\hline
		$h=\tau$     &$e_{p,0}$       &order   &$e_{p,1}$    &order    \\ \hline
		1/16   &6.80e$-$02  &---   &6.51e$-$01  &---   \\
		1/32   &4.73e$-$03  &3.85  &4.56e$-$02  &3.84  \\
		1/64   &3.08e$-$04  &3.94  &2.97e$-$03  &3.94  \\
		1/128  &1.96e$-$05  &3.97  &1.89e$-$04  &3.97  \\  \hline
	\end{tabular}
	\caption{Convergence orders of the discrete pressure for $k=4$.} \label{tab:k=4 pressure}
\end{table}

\begin{figure}[http!]
	\centering
	\includegraphics[width=0.2\textwidth]{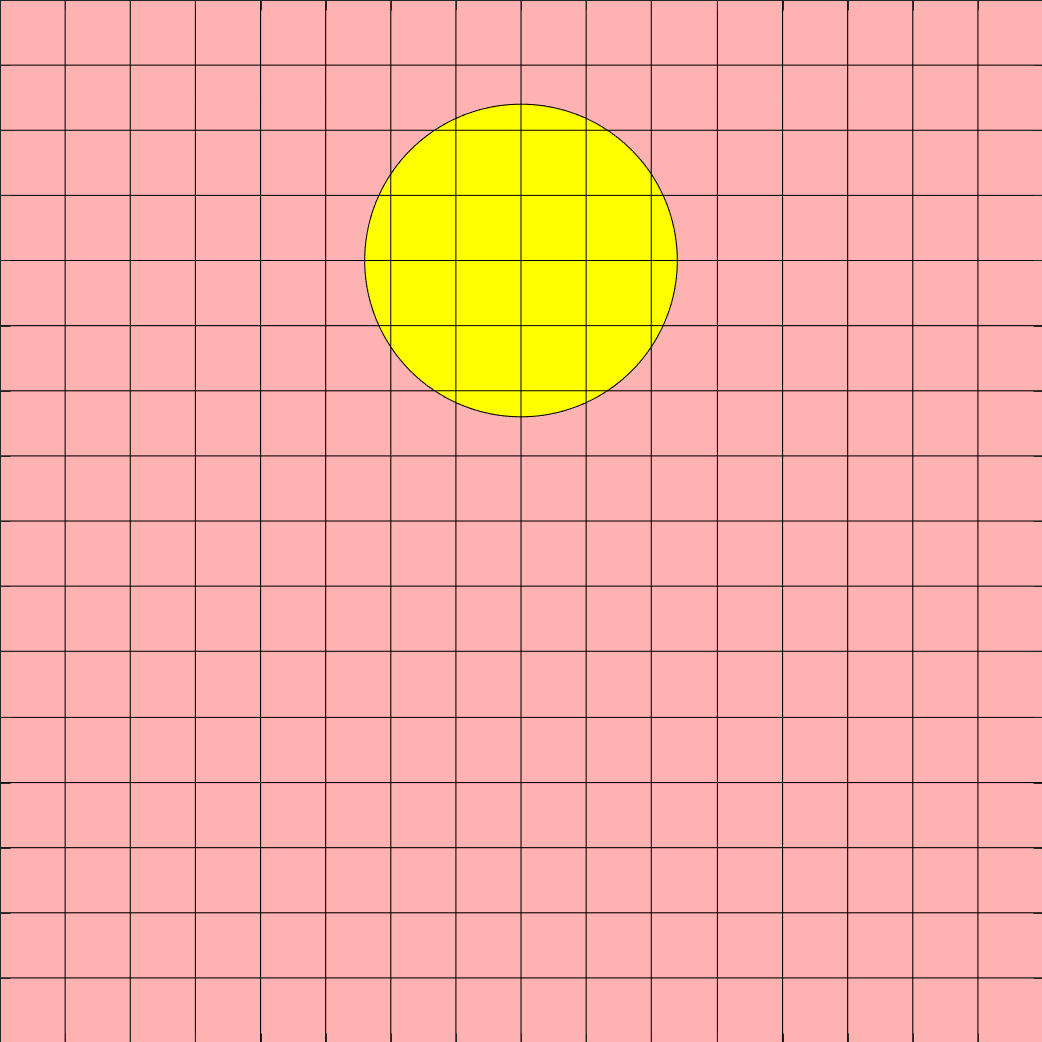}
	\includegraphics[width=0.2\textwidth]{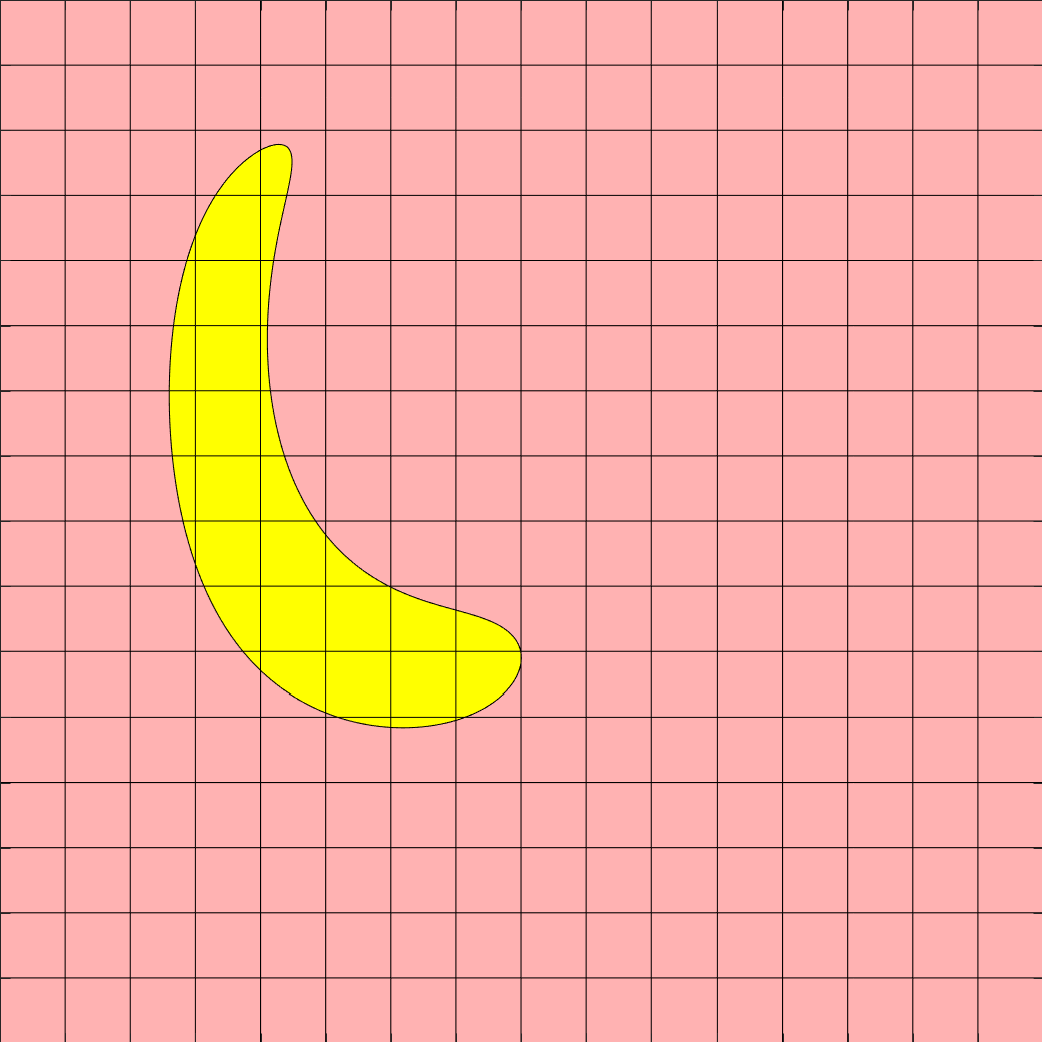}
	\includegraphics[width=0.2\textwidth]{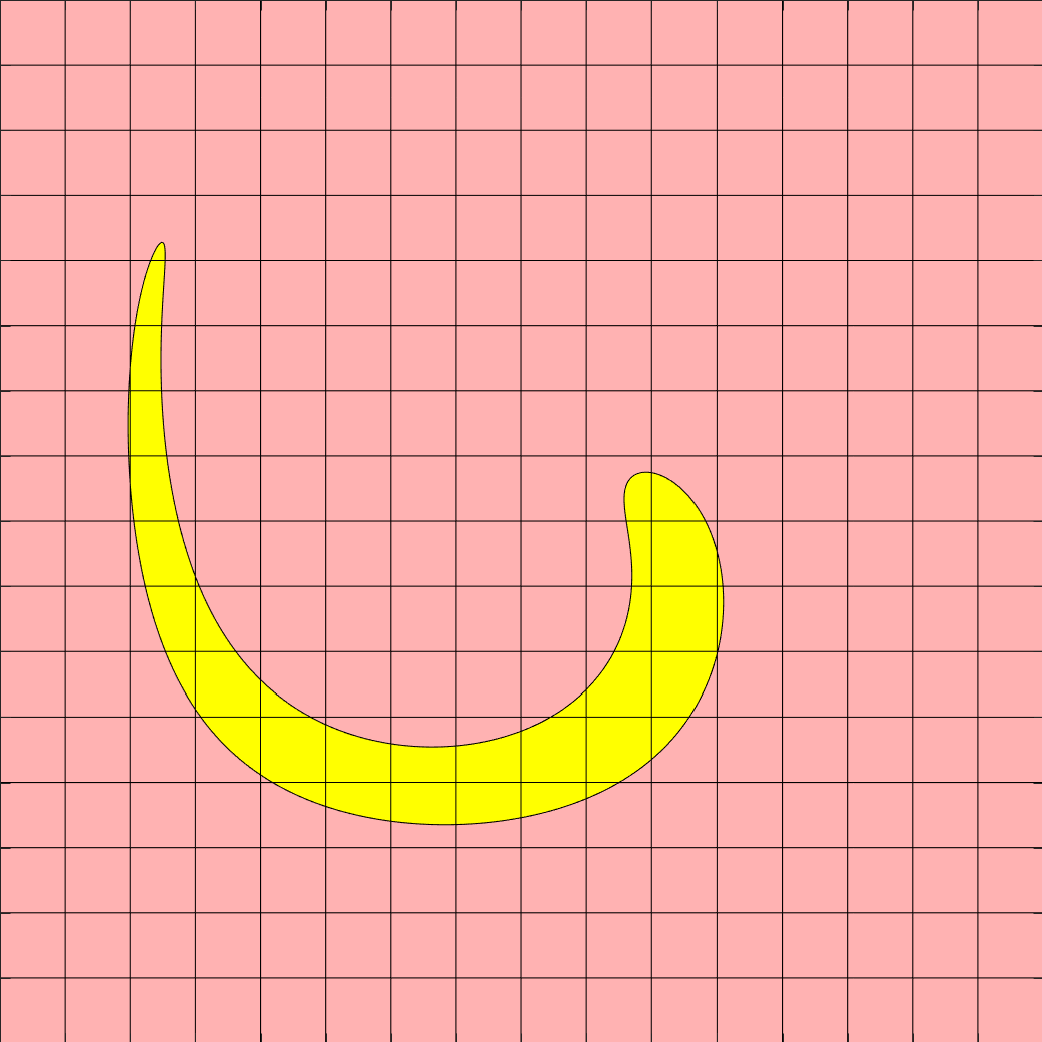}
	\includegraphics[width=0.2\textwidth]{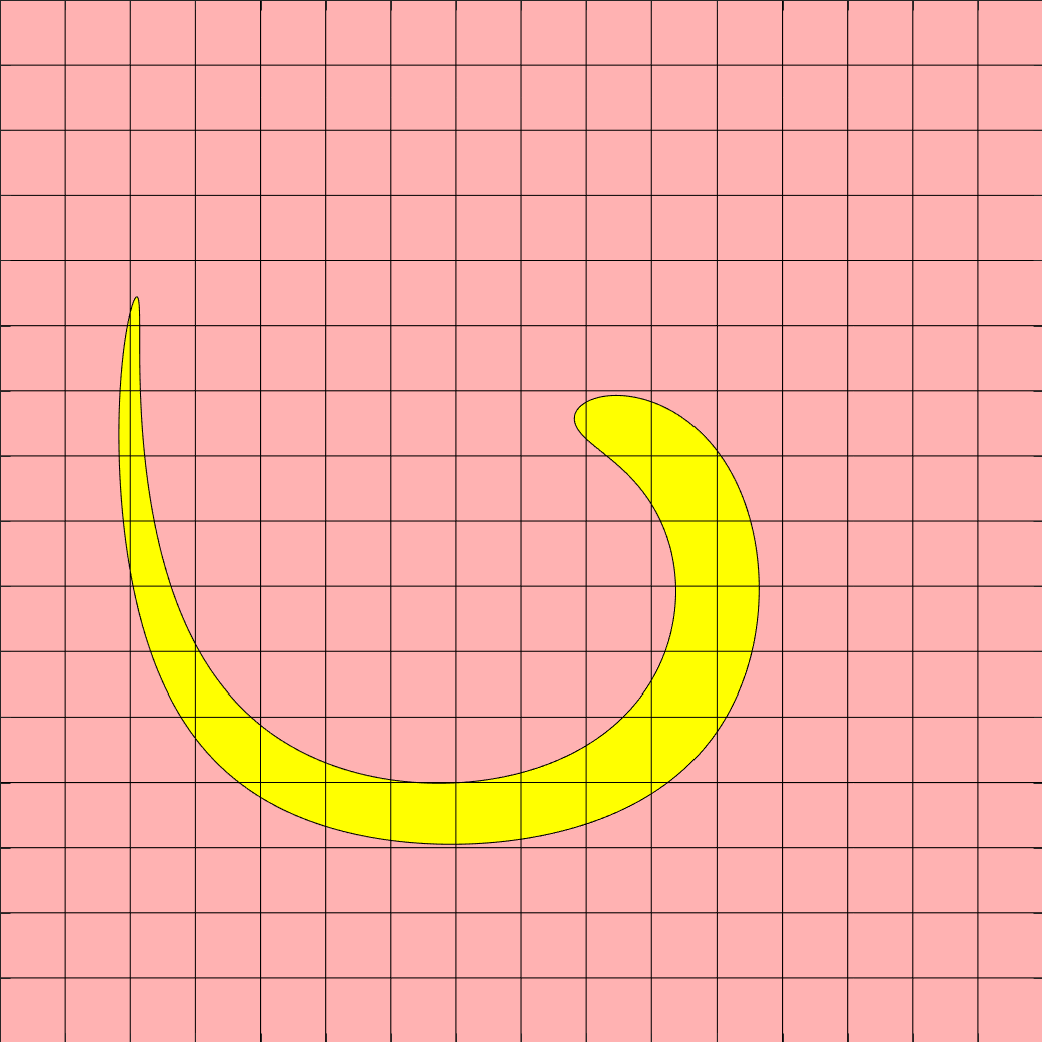}	
	\caption{Tracked interfaces $\Gamma^n_\eta$ at $t_n=0.0$, $0.5$, $1.0$, and $1.5$, respectively ($h=1/16$).}	\label{fig:Omegat}
\end{figure}



\normalem

\bibliographystyle{abbrv}

\begin{thebibliography}{100}
	\bibitem{adj19}
	S. Adjerid and K. Moon.
	An immersed discontinuous Galerkin method for acoustic wave propagation in inhomogeneous media. SIAM J. Sci. Comput., 41 (2019), pp. A139--A162.
	
	\bibitem{bab70}
	I. Babu\v{s}ka.  The finite element method for elliptic equations with discontinuous coefficients. Computing, 5 (1970), pp. 207--213.
	
	\bibitem{bec09}
	R. Becker, E. Burman and P. Hansbo.
	A Nitsche extended finite element method for incompressible elasticity with discontinuous modulus of elasticity. Comput. Methods Appl. Mech. Engrg., 198 (2009), pp. 3352--3360.
	
	\bibitem{bes12}
	M. Besier and W. Wollner.
	On the pressure approximation in nonstationary incompressible flow simulations on dynamically varying spatial meshes.
	Internat. J. Numer. Methods Fluids, 69 (2012), pp. 1045--1064.
	
	\bibitem{bor17}
	S.P.A. Bordas, E. Burman, M.G. Larson and M.A. Olshanskii.
	Geometrically Unfitted Finite Element Methods and Applications.
	Lecture Notes in Computational Science and Engineering, vol. 121, Springer, 2017.
	
	\bibitem{bre12}
	W.P. Breugem.
	A second-order accurate immersed boundary method for fully resolved simulations of particle-laden flows. J. Comput. Phys., 231 (2012), pp. 4469--4498.
	
	\bibitem{bre91}
	F. Brezzi and R. S. Falk.  Stability of high-order Hood-Tayler methods. SIAM J. Numer. Anal., 28 (1991), pp. 581--590.
	
	\bibitem{bur10}
	E. Burman and P. Hansbo.
	Fictitious domain finite element methods using cut elements: I. A stabilized Lagrange multiplier method. Comput. Methods Appl. Mech. Engrg., 199 (2010), pp. 2680--2686.
	
	\bibitem{bur12}
	E. Burman and P. Hansbo.
	Fictitious domain finite element methods using cut elements: II. A stabilized Nitsche method. Appl. Numer. Math., 62 (2012), pp. 328--341.
	
	\bibitem{bur15}
	E. Burman, S. Claus, P. Hansbo and M.G. Larson.
	A. Massing, CutFEM: discretizing geometry and partial differential equations.
	Internat. J. Numer. Methods Engrg. 104 (2015), pp. 472-501.
	
	\bibitem{cai11}
	Z.~Cai, X.~Ye and S.~Zhang.
	Discontinuous Galerkin Finite Element Methods for Interface Problems: A Priori and A Posteriori Error Estimations. SIAM J. Numer. Anal., 49 (2011), pp. 1761--1787.
	
	\bibitem{chor1979}
	A. J. Chorin and J. E. Marsden.
	A mathematical introduction to fluid mechanics.
	Springer-Verlag, New York, Heidelberg, Berlin, 1979.
	
	
	\bibitem{fri09}
	T. P. Fries and A. Zilian.
	On time integration in the XFEM. Internat. J. Numer. Methods Engrg., 79 (2009), pp. 69--93.
	
	\bibitem{glo01}
	R. Glowinski, T. W. Pan, T. I. Hesla, D. D. Joseph and J. P\'{a}riaux.
	A fictitious domain approach to the direct numerical simulation of incompressible viscous flow past moving rigid bodies: application to particulate flow. J. Comput. Phys., 169 (2001), pp. 363--426.
	
	
	\bibitem{guo21}
	R. Guo.
	Solving parabolic moving interface problems with dynamical immersed spaces on unfitted meshes:fully discrete analysis. SIAM J. Numer. Anal., 59 (2021), pp. 797--828.
	
	\bibitem{guz18}
	J. Guzm\'{a}n and M. Olshanskii.
	Inf-sup stability of geometrically unfitted Stokes finite elements. Math. Comp., 87 (2018), pp. 2091--2112.
	
	\bibitem{han02}
	A. Hansbo and P. Hansbo.  An unfitted finite element method, based on Nitsche's method, for elliptic interface problems. Comput. Methods Appl. Mech. Engrg., 191 (2002), pp. 5537--5552.
	
	
	\bibitem{han14}
	P.Hansbo, M. G. Larson and S. Zahedi.
	A cut finite element method for a Stokes interface problem.
	Applied Numerical Mathematics, 85 (2014), pp. 90--114.
	
	\bibitem{hua17}
	P. Huang, H. Wu, and Y. Xiao.  An unfitted interface penalty finite element method for elliptic interface problems. Comput. Methods Appl. Mech. Engrg., 323 (2017) pp. 439--460.
	
	\bibitem{jar09}
	H. Jaroslav and R. Yves .
	A new fictitious domain approach inspired by the extended finite element method. SIAM J.Numer. Anal. 47 (2009), pp. 1474--1499.
	
	\bibitem{leh13}
	C. Lehrenfeld and R. Arnold.
	Analysis of a Nitsche XFEM-DG discretization for a class of two-phase mass transport problems. SIAM J.Numer. Anal. 51 (2013), pp. 958--983.
	
	\bibitem{leh15}
	C. Lehrenfeld.
	The Nitsche XFEM-DG space-time method and its implementation in three space dimensions. SIAM J. Sci. Comput., 37 (2015), pp. A245-A270.
	
	\bibitem{leh19}
	C. Lehrenfeld and M.A. Olshanaskii.
	An Eulerian finite element method for PDEs in time-dependent domains. ESAIM Math. Model. Numer. Anal., 53 (2019), pp. 585--614.
	
	
	\bibitem{lev94}
	R.J. LeVeque and Z. Li.
	The immersed interface method for elliptic equations with discontinuous coefficients and singular sources. SIAM J.Numer. Anal. 31 (1994), pp. 1019--1044.
	
	\bibitem{li98}
	Z. Li.  The immersed interface method using a finite element formulation. Appl. Numer. Math., 27 (1998), pp. 253--267.
	
	\bibitem{li03}
	Z. Li, T. Lin, and X. Wu.  New Cartesian grid methods for interface problems using the finite element formulation. Numer. Math., 96 (2003), pp. 61--98.
	
	\bibitem{li06}
	Z. Li and K. Ito.
	The Immersed Interface Method: Numerical Solutions of PDEs Involving Interfaces and Irregular Domains. Frontiers in Applied Mathematics, Society for Industrial and Applied Mathematics, 2006.
	
	\bibitem{lin09}
	T. Lin, Y. Lin and X. Zhang.
	Partially penalized immersed finite element methods for elliptic interface problems. SIAM J.Numer. Anal. 79 (2009), pp. 69--93.
	
	\bibitem{liu20}
	H. Liu, L. Zhang, X. Zhang, and W. Zheng.  Interface-penalty finite element methods for interface problems in $H^1$, $\BH(\curl)$, and $\BH(\Div)$. J. Comput. Appl. Math., 367 (2020), 113137.
	
	
	\bibitem{liu13}
	J.~Liu.  Simple and efficient ALE methods with provable temporal accuracy up to fifth order for the Stokes equations on time-varying domains.
	SIAM J. Numer. Anal., 51 (2013), pp. 743--772.	
	
	\bibitem{lou21}
	Y. Lou and C. Lehrenfeld.
	Isoparametric unfitted BDF-Finite element method for PDEs on evolving domains.
	arXiv:2105.09162\,.
	
	
	\bibitem{ma21}
	C. Ma, Q. Zhang, and W. Zheng.  A high-order fictitious-domain method for the advection-diffusion equation on time-varying domain. arXiv:2104.01870\,.
	
	\bibitem{mas14}
	A. Massing, M.G. Larson, A. Logg, and M.E. Rognes.  A stabilized Nitsche fictitious domain method for the Stokes problem. J. Sci. Comput., 61 (2014), pp. 604--628.
	
	\bibitem{mit05}
	R. Mittal and G. Iaccarino.  Immersed boundary methods.
	Ann. Rev. Fluid Mech., 37 (2005), pp. 239--261.
	
	\bibitem{nic06}
	S. Nicaise and S. A. Sauter.  Efficient numerical solution of Neumann problems on complicated domains.
	Calcolo, 43 (2006), pp. 95--120.
	
	\bibitem{ols06}
	M. A. Olshanskii and A. Reusken.  Analysis of a Stokes interface problem.
	Numer. Math., 103 (2006), pp. 129--149.
	
	\bibitem{pes77}
	C. S. Peskin.  Numerical analysis of blood flow in the heart. J. Comput. Phys., 25 (1977), pp. 220--252.
	
	\bibitem{ste70}
	E. Stein.  Singular Integrals and Differentiability Properties of Functions. Princeton University Press, Princeton, New Jersey, 1970.
	
	\bibitem{sco90}
	L. R. Scott and S. Zhang.  Finite element interpolation of nonsmooth functions satisfying boundary conditions. Math. Comp., 54 (1990), pp. 483-¨C493.
	
	
	\bibitem{vou18}
	I. Voulis and A. Reusken.
	A time dependent Stokes interface problem: well-posedness and space-time finite element discretization.
	ESAIM Math. Model. Numer. Anal., 52 (2018), pp. 2187--2213.
	
	\bibitem{wah20}
	H. v. Wahl, T. Richter and C. Lehrenfeld.
	An unfitted Eulerian finite element method for the time-dependent Stokes problem on moving domains. arXiv:2002.02352\,.
	
	
	\bibitem{wu19}
	H. Wu and Y. Xiao.  An unfitted hp-interface penalty finite element method for elliptic interface problems. J. Comp. Math., 37 (2019), pp. 316--339.
	
	\bibitem{zha18}
	Q.~Zhang and A.~Fogelson.
	Fourth-and higher-order interface tracking via mapping and adjusting regular semianalytic sets represented by cubic splines. SIAM J. Sci. Comput., 40 (2018), pp. A3755--A3788.
	
	\bibitem{zha16}
	Q.~Zhang and A.~Fogelson.
	MARS: An analytic framework of interface tracking via mapping and adjusting regular semialgebraic sets.
	SIAM J. Numer. Anal., 54 (2016), pp. 530--560.
	
	\bibitem{zun13}
	P. Zunino.
	Analysis of backward Euler/extended finite element discretization of parabolic problems with moving interfaces.
	Comput. Methods Appl. Mech. Engrg., 258 (2013), pp. 152--165.				
\end{thebibliography}

\end{document}